\newtheorem{theorem}{Theorem}[section]
\newtheorem{lemma}[theorem]{Lemma}
\newtheorem{proposition}[theorem]{Proposition}
\newtheorem{corollary}[theorem]{Corollary}
\newtheorem{construction}[theorem]{Construction}
\theoremstyle{definition}
\newtheorem{definition}[theorem]{Definition}
\newtheorem{convention}[theorem]{Convention}
\newtheorem{question}[theorem]{Question}
\newtheorem{remark}[theorem]{Remark}
\newcommand{\RR}{{\mathbf R}}
\newcommand{\QQ}{{\mathbf Q}}
\newcommand{\CC}{{\mathbf C}}
\newcommand{\ZZ}{{\mathbf Z}}
\newcommand{\Hyp}{{\mathbb{H}^3}}
\newcommand{\pt}{{*}}
\newcommand{\SO}{{\mathrm{SO}}}
\newcommand{\Isom}{{\mathrm{Isom}}}
\newcommand{\id}{{\mathrm{id}}}
\newcommand{\imunit}{{\mathrm{i}}}
\newcommand{\ocurves}{{\mathbf{\Gamma}}}
\newcommand{\opants}{{\mathbf{\Pi}}}
\newcommand{\vistorus}{{\mathscr{N}}}
\newcommand{\ocobordism}{{\mathbf{\Omega}}}
\newcommand{\chlen}{{\mathbf{hl}}}
\newcommand{\clen}{{\mathbf{l}}}
\newcommand{\meas}{{\mathcal{M}}}
\newcommand{\zmeas}{{\mathcal{ZM}}}
\newcommand{\bmeas}{{\mathcal{BM}}}
\newcommand{\fboundary}{{\partial^\sharp}}
\newcommand{\nboundary}{{\partial^\flat}}
\newcommand{\ini}{{\mathtt{ini}}}
\newcommand{\ter}{{\mathtt{ter}}}
\newcommand{\phasor}{{\boldsymbol{\lambda}}}
\title[Homology and QF subsurfaces]{Homology of curves and surfaces in closed hyperbolic $3$-manifolds}
\author[Y.~ Liu]{%
        Yi Liu}
\address{%
    Mathematics 253-27\\
    California Institute of Technology\\
    Pasadena, CA 91125}
\email{%
    yliumath@caltech.edu}
\author[{V.~ Markovic}]{
	Vladimir Markovic}
\address{
	Mathematics 253-27\\
	California Institute of Technology\\
	Pasadena, CA 91125}
\email{
	markovic@caltech.edu}
\thanks{Supported by NSF grant No.~DMS 1308836}
\subjclass[2010]{Primary 57M05; Secondary 20H10}
\date{%
 \today}
\begin{document}

\begin{abstract} Among other things, we prove the following two topologcal statements about closed hyperbolic $3$-manifolds. First,
every rational second homology class of a closed hyperbolic $3$-manifold  has a positve integral multiple  represented by an oriented connected closed $\pi_1$-injectively immersed quasi-Fuchsian subsurface.
Second, every rationally null-homologous,  $\pi_1$-injectively immersed oriented closed $1$-submanifold in a closed hyperbolic $3$-manifold has an equidegree finite cover which bounds an oriented connected compact 
$\pi_1$-injective immersed quasi-Fuchsian subsurface.  In part, we  exploit  techniques developed by Kahn and Markovic in \cite{KM-surfaceSubgroup,KM-Ehrenpreis}, but  we only distill geometric and topological ingredients from those papers  so no hard analysis is involved in this paper. 
\end{abstract}

\maketitle

\section{Introduction}\label{Sec-introduction}

	In this paper, we are concerned about
	the construction problem of homologically interesting
	connected  quasi-Fuchsian subsurfaces in
	closed hyperbolic $3$-manifolds.
	We show that in a closed hyperbolic $3$-manifold,
	it is always possible to construct
	an oriented compact connected $\pi_1$-injectively immersed
	quasi-Fuchsian subsurface which is virtually bounded by
	prescribed multicurves
	and which virtually represents a prescribed rational relative second homology class
	(Theorem \ref{main-qfSurface}).

 The following two results are  motivational special cases of Theorem \ref{main-qfSurface}. For simplicity we  state them first. 
		
	\begin{corollary}\label{qfSurfaceClosed}
		Every rational second homology class of a closed hyperbolic $3$-manifold 
		has a positve integral multiple 
		represented by an oriented connected closed $\pi_1$-injectively
		immersed quasi-Fuchsian subsurface.
	\end{corollary}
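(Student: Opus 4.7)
The plan is to assemble the quasi-Fuchsian representative from Kahn--Markovic \emph{good pants}. Recall that for any closed hyperbolic $3$-manifold $M$ and any sufficiently large $R>0$, the collection of $(R,\epsilon)$-nearly-regular immersed pairs of pants with cuffs of complex half-length close to $R$ is abundant and well-distributed: each good curve is the cuff of many good pants, with the incident ``feet'' in its unit normal bundle equidistributed up to an exponentially small error. The core idea is to exploit this equidistribution to build closed pants chains whose homology class can be freely prescribed.

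First I would set up the homological framework in the language of formal sums. Let $C_1$ be the free $\QQ$-module on oriented good curves and let $C_2$ be the free $\QQ$-module on oriented good pants, with boundary map sending a pair of pants to its three cuffs counted with sign. A balanced rational combination $P=\sum q_i \Pi_i$ with $\partial P=0$ can be realized, after clearing denominators and choosing for each cuff a small-shear matching of the feet entering and leaving along it, as an immersed closed surface $S_P\to M$. By the Kahn--Markovic good-pants gluing criterion, if all matchings are $\epsilon$-short on every cuff then $S_P$ is a $\pi_1$-injective quasi-Fuchsian subsurface, and it carries a well-defined class $[S_P]\in H_2(M;\QQ)$.

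Next I would show surjectivity of $P\mapsto[S_P]$ onto $H_2(M;\QQ)$. Starting from a smooth closed oriented surface $\Sigma\to M$ representing a prescribed class, I would triangulate $\Sigma$ and replace each triangle by a nearby good pants piece with appropriate orientation; this produces a rational chain $P$ whose boundary $\partial P$ is a small ``error'' in $C_1$ supported where approximations of adjacent pieces fail to match exactly. The error can be cancelled by adding balanced correction pants: each good curve $\gamma$ is the cuff of many good pants, and in particular each $\Pi$ admits an orientation-reverse $\bar\Pi$ with identical cuffs but opposite boundary in $C_1$. The equidistribution of feet allows one to assemble a chain of such canceling pairs killing the error without changing the realized homology class. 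Clearing denominators yields the integral multiple asserted by the corollary.

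The main obstacle is obtaining a \emph{connected} representative, since the raw pants chain $P$ above will generically assemble into a disconnected surface. To fix this I would exploit the mixing of the frame flow on $M$, which is built into the good pants machinery: any two good pants can be joined by a chain of good pants sharing successive cuffs, and one has enough freedom to choose this chain so as to be balanced. Adding such a ``bridge'' chain with total homology class zero tubes all components of $S_P$ into a single one without altering $[S_P]$. The output is an oriented connected closed $\pi_1$-injectively immersed quasi-Fuchsian subsurface representing a positive integer multiple of the prescribed class.
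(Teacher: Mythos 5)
Your overall architecture (formal pants chains with vanishing boundary, the Kahn--Markovic small-shear gluing for $\pi_1$-injectivity and the quasi-Fuchsian property, and a final bridging step for connectedness) is the right skeleton, but the central step --- surjectivity of $P\mapsto[S_P]$ onto $H_2(M;\QQ)$ --- has a genuine gap. First, ``replace each triangle by a nearby good pants piece'' has no meaning as stated: a good pair of pants has cuffs of length about $2R$, so it is not close to a small triangle of a triangulated representative surface, and the resulting boundary mismatch is not a ``small error'' in any useful sense. Second, and more fundamentally, the claim that the error can be cancelled by balanced correction pants \emph{without changing the realized homology class} begs the question: knowing that the error multicurve bounds some balanced pants chain gives no control over the $H_2$-class of the resulting closed surface, and deciding which classes are realized by closed panted surfaces is exactly the problem to be solved. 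Equidistribution of feet and mixing of the frame flow cannot supply this: by the Spine Principle (Lemma \ref{spinePrinciple}), anything built purely from the Connection Principle is $1$-spined over its input and therefore carries no new information about second homology. The missing input is a priori knowledge of $\pi_1(M)$ --- in the paper, a finite triangular presentation (equivalently, a one-vertex triangulation with edges realized as long, sharp geodesic loops). One then needs the panted cobordism isomorphism $\ocobordism_{R,\epsilon}(M)\cong H_1(\SO(M);\ZZ)$ (Theorem \ref{theoremPantedCobordism}), whose proof requires the $\SO(M)$ lift to resolve a $\ZZ_2$ ambiguity and the swapping/rotation/antirotation constructions (note the unavoidable factor $2$ in the identical-chirality case, Lemma \ref{triangularRelation}), and the homologous substitution of Section \ref{Sec-pantifyingSecondHomologyClasses}: each edge is replaced by a specific multicurve $L(g)$, each triangular $2$-cell by a panted surface $F(\sigma)$ bounded by the corresponding multicurves, and the preservation of the homology class is verified via the Spine Principle and a Mayer--Vietoris diagram chase (Theorem \ref{secondHomologyClass}), after which Theorem \ref{homologyViaPants} supplies the measure fed into Theorem \ref{gluing}.

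Your connectedness step is also thinner than it needs to be, though the spirit is right: adding a balanced ``bridge'' chain of pants that shares curves with the various components does not by itself force the small-shear matching you subsequently choose (via Hall's marriage argument) to connect them. The paper handles this with the hybriding trick (Lemma \ref{hybriding}): one takes a nearly unit shearing gluing, finds two cuffs over the same curve on different components whose feet nearly agree (this uses the nearly evenly footed hypothesis and the connectedness of the visual torus), and reglues in a cross fashion; for this to work the glued cuffs must be nonseparating, which is why the richness condition and the passage to double and cyclic covers (i.e.\ to multiples of the measure) enter. This is repairable along the lines you indicate, but the homology-control gap above is not repairable by equidistribution arguments alone.
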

	
	\begin{corollary}\label{qfSurfaceBoundary}
		Every rationally null-homologous, 
		$\pi_1$-injectively immersed oriented closed
		$1$-submanifold in a closed hyperbolic $3$-manifold
		has an equidegree finite cover which bounds an oriented connected compact 
		$\pi_1$-injective immersed quasi-Fuchsian subsurface.
	\end{corollary}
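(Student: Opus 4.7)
The plan is to deduce Corollary \ref{qfSurfaceBoundary} as a direct specialization of Theorem \ref{main-qfSurface}. The only work needed is to convert the null-homology hypothesis into a rational relative second homology class with the right boundary, so that the main theorem can then produce a quasi-Fuchsian subsurface virtually bounded by the given multicurve.

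Let $\ocurves$ denote the given $\pi_1$-injectively immersed oriented closed $1$-submanifold in the closed hyperbolic $3$-manifold $M$, and write $[\ocurves]_M \in H_1(M;\QQ)$ for the class it represents, which vanishes by hypothesis. I would first consider the fragment
\[
H_2(M, \ocurves; \QQ) \xrightarrow{\partial} H_1(\ocurves; \QQ) \longrightarrow H_1(M; \QQ)
\]
of the long exact sequence of the pair $(M, \ocurves)$. Since the image of the fundamental class $[\ocurves] \in H_1(\ocurves;\QQ)$ is exactly $[\ocurves]_M = 0$, exactness provides a rational relative class $\alpha \in H_2(M, \ocurves; \QQ)$ with $\partial \alpha = [\ocurves]$. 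The key feature of this choice is that $\alpha$ assigns the \emph{same} rational weight to every component of $\ocurves$, since the fundamental class does.

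Next I would invoke Theorem \ref{main-qfSurface} with $\ocurves$ as the prescribed multicurve and $\alpha$ as the prescribed rational relative second homology class. This should produce an oriented compact connected $\pi_1$-injectively immersed quasi-Fuchsian subsurface $S$ in $M$ whose boundary is a finite cover of $\ocurves$ and whose relative fundamental class equals some positive integer multiple $n\alpha$. Because $\partial\alpha$ weights all components of $\ocurves$ equally, the induced cover $\partial S \to \ocurves$ has the same total degree $n$ over every component of $\ocurves$, hence is an equidegree finite cover, exactly as required.

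The main obstacle in this corollary is really Theorem \ref{main-qfSurface} itself; the deduction above is genuinely short. The one subtle point worth being careful about is precisely the equidegree claim, which would fail if one merely asked $\alpha$ to have boundary some arbitrary rational class supported on $\ocurves$. Choosing $\alpha$ so that $\partial\alpha$ is the canonical fundamental class forces the boundary multiplicities to be uniform, and hence forces the finite cover of $\ocurves$ provided by Theorem \ref{main-qfSurface} to be equidegree.
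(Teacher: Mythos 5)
Your deduction is exactly how the paper intends Corollary \ref{qfSurfaceBoundary} to be obtained: the corollary is stated as a motivational special case of Theorem \ref{main-qfSurface}, and the intended argument is precisely yours, namely using rational null-homology and the long exact sequence of the pair $(M,L)$ to produce $\alpha\in H_2(M,L;\QQ)$ with $\partial\alpha=[L]$, then applying the main theorem and reading off the equidegree property of the cover $\partial F\to L$ from the fact that $F$ represents $n\alpha$, so its boundary represents $n[L]$ and hence has uniform degree over every component. Your proposal is therefore correct and essentially identical to the paper's (implicit) proof of this corollary.
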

	
	Here the closed $1$-submanifold being $\pi_1$-injectively immersed
	means that all components are homotopically nontrivial, and 
	a finite cover being equidegree means that the covering degree
	does not vary over different components of the $1$-submanifold. 
	However, we do not
	require the finite cover to be connected restricted to any component
	of the closed $1$-submanifold.
	
	Corollary \ref{qfSurfaceClosed} was a question that was recently (and informally) raised by William Thurston.
	Note that if not requiring the subsurface to be connected, one may easily
	obtain a componentwise quasi-Fuchsian embedded incompressible subsurface 
	representing a second homology class that is nontrivial and non-fibered,
	or obtain a componentwise $\pi_1$-injectively
	immersed quasi-Fuchsian representative subsurface, 
	using the Cooper--Long--Reid construction \cite{CLR} in the fibered case
	or the Kahn--Markovic construction \cite{KM-surfaceSubgroup} in the trivial case.
	In the paper \cite{CW}, Danny Calegari and Alden Walker show that
	in a random group at any positive density, 
	many second homology classes can be rationally represented by
	quasiconvex (closed) surface subgroups (cf.~Remark 6.4.2 of \cite{CW}).	
	Corollary \ref{qfSurfaceBoundary} answers a question
	of Calegari in the case of closed hyperbolic $3$-manifold groups.
	Calegari proved the surface group case \cite{Calegari}
	but his question remains widely open for hyperbolic groups in general.	

	Next,  we state our main result Theorem \ref{main-qfSurface}. A compact immersed subsurface $F$ of 
	a closed hyperbolic $3$-manifold $M$ is \emph{quasi-Fuchsian} if it is an 
	essential subsurface
	of a closed $\pi_1$-injectively immersed quasi-Fuchsian subsurface of $M$.
	Perhaps it would be better to call $F$ `quasi-Schottky'
	if it is quasi-Fuchsian with nonempty boundary.
	
	\begin{theorem}\label{main-qfSurface}
		Let $M$ be a
		closed hyperbolic $3$-manifold, and $L\subset M$
		be the (possibly empty) 
		union of finitely many mutually disjoint, $\pi_1$-injectively embedded
		loops. Then for any relative homology class
		$\alpha\in H_2(M,L;\,\QQ)$, there exists an oriented connected compact surface $F$,
		and an immersion of the pair
			$$j:\,(F,\partial F)\looparrowright (M,L),$$
		such that $j$ is $\pi_1$-injective and quasi-Fuchsian,
		and that $F$ represents a positive integral multiple of $\alpha$.
	\end{theorem}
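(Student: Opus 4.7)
The plan is to distill the Kahn--Markovic random gluing construction from \cite{KM-surfaceSubgroup, KM-Ehrenpreis} into a relative-homological form that allows the constructed surface to terminate on $L$. The scheme has three main steps: first, express a positive rational multiple of $\alpha$ as a formal $\mathbb{Q}$-linear combination of nearly-geodesic immersed pants whose cuffs either cancel in oriented pairs or project to $L$ with the prescribed winding multiplicities; second, realize this combinatorial data by the Kahn--Markovic gluing as an actual nearly totally geodesic immersed subsurface; third, upgrade to a connected surface by local cuff-swap moves.

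In Step 1 I would introduce the space of rational ``panted cobordisms from $L$ to itself'': formal $\mathbb{Q}$-measures supported on good pants (those whose cuffs are nearly unit-length, nearly orthogonally framed good geodesics in $M$, in the sense of \cite{KM-surfaceSubgroup}) whose induced measure on the set of good curves matches, after projection to $L$ through short framing cobordisms, a prescribed rational $1$-cycle on $L$. Each such measure has a well-defined class in $H_2(M, L; \mathbb{Q})$, and I would show the resulting map is $\mathbb{Q}$-surjective. In the closed case $L = \emptyset$ this follows from the fact that good pants $\mathbb{Q}$-generate $H_2(M; \mathbb{Q})$, a consequence of the density and transitivity properties of good geodesics and good pants established in \cite{KM-surfaceSubgroup}. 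For general $L$ one augments by short framing cobordisms connecting each component of $L$ (which is $\pi_1$-injective by hypothesis) to a nearby good closed geodesic in its free homotopy class, after passing to a sufficiently high finite power.

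In Step 2 I would clear denominators to obtain an integral panted-cobordism measure realizing a positive integer multiple of $\alpha$, and invoke the Kahn--Markovic equidistribution and perfect-matching arguments (the purely combinatorial--geometric version, as advertised in the abstract) to realize the measure as a genuine immersion $j : (F, \partial F) \looparrowright (M, L)$. Internally paired cuffs are glued with uniformly small shear so that the immersed surface is nearly totally geodesic; this simultaneously guarantees $\pi_1$-injectivity and quasi-Fuchsian-ness of the image.

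In Step 3 I would modify $F$ to be connected through a finite sequence of cuff-swap moves: whenever two distinct components of $F$ share a good cuff, swapping the gluing across that cuff amalgamates them into a single component without altering either the shear bound or the relative homology class. The main obstacle I anticipate is Step 1, where one must simultaneously hit the prescribed class $\alpha$, arrange the boundary measure to be supported on $L$ with exactly the prescribed windings, and keep all coefficients nonnegative so that the resulting rational measure can be cleared to an integral one within the regime where the Kahn--Markovic gluing produces a quasi-Fuchsian surface; everything downstream is then delivered by the existing Kahn--Markovic apparatus used as a black box of geometric and topological facts.
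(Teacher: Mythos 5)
Your overall scaffolding (measures of good pants, Kahn--Markovic matching and small shear for $\pi_1$-injectivity and the quasi-Fuchsian property, cuff surgery for connectedness) matches the paper's methodology, but Step~1 contains a genuine gap, and it is exactly where the real work of the paper lies. You assert that the map from panted measures to $H_2(M,L;\QQ)$ is $\QQ$-surjective because ``good pants $\QQ$-generate $H_2(M;\QQ)$, a consequence of the density and transitivity properties of good geodesics and good pants established in \cite{KM-surfaceSubgroup}.'' This is not a consequence of anything in \cite{KM-surfaceSubgroup}: equidistribution and the Connection Principle produce plenty of good curves and pants, but the Spine Principle (Lemma \ref{spinePrinciple}) shows that every object built by such constructions is, up to homotopy, obtained from the input data by attaching cells of dimension at most one, so these constructions by themselves carry no information about second homology. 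Surjectivity onto $H_2(M;\ZZ)$ is Theorem \ref{secondHomologyClass}, whose proof requires an external input --- a finite triangular presentation of $\pi_1(M)$ --- and the homologous-substitution argument of Section \ref{Sec-pantifyingSecondHomologyClasses}. Likewise, your ``short framing cobordisms'' do not settle the boundary correction: deciding when a null-homologous good multicurve bounds a good panted surface is the content of Theorem \ref{theoremPantedCobordism}, which identifies the panted cobordism group with $H_1(\SO(M);\ZZ)$ and reveals a central $\ZZ_2$ obstruction (the invariant $\sigma$ of Theorem \ref{main-pantedSurface}); this is why the paper bounds $2L$ rather than $L$ and why integral coefficients force working in $\SO(M)$ rather than $M$. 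Without these two results your Step~1 is an assertion, not a proof, and it cannot be repaired by citing \cite{KM-surfaceSubgroup} alone.

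Two smaller points. First, to make the boundary land on $L$ you need all components of $L$ (or fixed powers of them) to become $(R,\epsilon)$-curves for a \emph{single} $R$; your ``sufficiently high finite power'' gestures at this, but one needs the simultaneous approximation of the complex lengths as in Lemma \ref{goodCurveCovers}. Second, the connectedness upgrade is not just ``swap whenever two components share a good cuff'': to preserve the near-unit-shear condition the feet on the corresponding sides of the two cuffs must be $(\epsilon/R)$-close (this is where the nearly-evenly-footed hypothesis and the connectedness of the visual torus enter), the relevant cuffs must be nonseparating in their components (arranged via the richness condition and passing to a double cover, Lemmas \ref{gluedCuffs} and \ref{nonseparatingGluedCuffs}), and irreducibility of the measure is needed to guarantee that suitable cuff pairs exist at all; this is the hybriding argument of Lemma \ref{hybriding}, and the naive swap does not suffice as stated.
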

	
	The reader is referred to Subsection \ref{Subsec-descriptionOfTheProblem} for
	more explanation about the formulation.
	In fact, the proof also implies that
	the claimed immersed subsurface 
	is nearly geodesic and nearly regularly panted (cf.~Section \ref{Sec-methodology}).	

	In the course of proving Theorem \ref{main-qfSurface},  we revisit
	the techniques developed in the work of
	Kahn--Markovic in \cite{KM-surfaceSubgroup,KM-Ehrenpreis},
	with an attempt to distill the topological ingredients from those papers.
	Specifically, we quote Theorems 2.1, 3.4, and 4.2 from \cite{KM-surfaceSubgroup}
	as black boxes, so details in quasi-conformal geometry or dynamics of 
	frame flow are not involved in our proofs.
	In order to prove results in this paper we are not required to generalize
	the quantitative aspect of the good correction theory, which is Thereom 3.3 (3)
	of \cite{KM-Ehrenpreis}, so randomization techniques are not
	required for our discussion.
	On the other hand, we recall the gluing construction of \cite{KM-surfaceSubgroup}, and 
	reform the topological part of the good correction theory of \cite{KM-Ehrenpreis}.
	The treatment of this paper is
	completely self-contained except for the quoted results from \cite{KM-surfaceSubgroup}.

	The connectedness of the surface $F$ in the conclusion of Theorem \ref{main-qfSurface}
	comes from improving the gluing construction of \cite{KM-surfaceSubgroup}.
	The idea of the construction of \cite{KM-surfaceSubgroup} is to build a closed $\pi_1$-injectively immersed
	quasi-Fuchsian subsurface in a closed hyperbolic $3$-manifold by gluing a sufficiently
	large finite collection of nearly regular pairs of pants with nearly evenly distributed feet.
	A crucial criterion was proved in \cite[Theorem 2.1]{KM-surfaceSubgroup}, asserting that
	a nearly unit shearing gluing yields the $\pi_1$-injectiveness and the quasi-Fuchsian property.
	In Section \ref{Sec-methodology}, we will review the program in more details with emphasis
	on the boundary operator on measures of nearly regular pairs of pants.
	However, the criterion of \cite{KM-surfaceSubgroup} does not necessarily produce a
	connected surface, so we provide a slightly stronger criterion (Theorem \ref{gluing})
	which ensures connectedness of the output. 
	The new criterion will be proved in Section \ref{Sec-quasiFuchsianConnectedGluing}
	by applying a trick called \emph{hybriding}. On the other hand, the assumptions
	of the new criterion
	are not hard to be satisfied, for instance, cf.~Theorem \ref{homologyViaPants}.
		
	The control of the homology class of the surface $F$ in the conclusion of Theorem \ref{main-qfSurface}
	comes from extending and strengthening the non-random good correction theory of \cite{KM-Ehrenpreis} in the
	$3$-dimensional case. For an oriented closed hyperbolic $3$-manifold $M$,
	we will reformulate the Good Pants Homology introduced in \cite{KM-Ehrenpreis}
	as the \emph{nearly regularly panted cobordism group} $\ocobordism_{R,\epsilon}(M)$
	 (Definition \ref{pantedCobordismGroup}). In Section \ref{Sec-pantedCobordismGroup},
	we will find a canonical isomorphism $\Phi$
	between $\ocobordism_{R,\epsilon}(M)$ and 
	the first integral homology of the special orthonormal frame bundle $\SO(M)$
	over $M$ (Theorem \ref{theoremPantedCobordism}).
	This isomorphism fully characterizes the structure of $\ocobordism_{R,\epsilon}(M)$,
	and improves the treatment of non-random good correction theory 
	of \cite{KM-Ehrenpreis} in that it accounts for the torsion part 
	which was previously ignored. In Section \ref{Sec-pantifyingSecondHomologyClasses}, we will further show that 
	any second integral homology class of $M$ can be represented by an oriented
	closed nearly regularly panted subsurface (Theorem \ref{secondHomologyClass}). With an extra
	property called \emph{nearly regularly panted connectedness}
	introduced in Section \ref{Sec-pantedConnectedness}, our study of nearly regularly panted
	cobordisms can be summarized by the following Theorem \ref{main-pantedSurface},
	stated in a form analogous to Theorem \ref{main-qfSurface} (cf.~Section \ref{Sec-methodology}
	for the notations). Note that Corollaries \ref{qfSurfaceBoundary}
	and \ref{qfSurfaceClosed} are also parallel to 
	Theorems \ref{theoremPantedCobordism} and \ref{secondHomologyClass} 
	in their statements respectively. 
	These results are all based on geometric constructions
	using $\partial$-framed segments as we will study in Section \ref{Sec-basicConstructions}.
	
	\begin{theorem}\label{main-pantedSurface}
		Let $M$ be closed hyperbolic $3$-manifold. 
		For any universally small positive constant $\epsilon$ 
		and any sufficiently large positive constant $R$ depending only on
		$M$ and $\epsilon$, the following holds.
		There exists a nontrivial invariant
		$\sigma(L)$ valued in $\ZZ_2$, defined for all null-homologous 
		oriented $(R,\epsilon)$-multicurve $L$ in $M$, satisfying
		the following.
		\begin{enumerate}
			\item For any null-homologous oriented $(R,\epsilon)$-multicurve $L_1,L_2$,
				$$\sigma(L_1\sqcup L_2)=\sigma(L_1)+\sigma(L_2).$$
			\item The invariant $\sigma(L)$ vanishes if and only if $L$ bounds a connected 
			compact oriented $(R,\epsilon)$-panted subsurface $F$ immersed in $M$.
			\item When $\sigma(L)$ vanishes, every relative
			homology class $\alpha\in H_2(M,L;\ZZ)$	with $\partial\alpha$ equal to 
			the fundamental class $[L]\in H_1(L;\ZZ)$
			is represented by a connected 
			compact oriented $(R,\epsilon)$-panted immersed subsurface $F$ bounded by $L$.
		\end{enumerate}
	\end{theorem}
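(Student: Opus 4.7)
The plan is to define $\sigma$ using the canonical isomorphism $\Phi:\ocobordism_{R,\epsilon}(M)\to H_1(\SO(M);\ZZ)$ provided by Theorem \ref{theoremPantedCobordism}. Since every closed orientable $3$-manifold is parallelizable, the frame bundle is diffeomorphic to a product $\SO(M)\cong M\times\SO(3)$, so there is a splitting
$$H_1(\SO(M);\ZZ)\;\cong\;H_1(M;\ZZ)\oplus H_1(\SO(3);\ZZ)\;\cong\;H_1(M;\ZZ)\oplus\ZZ_2$$
compatible with the bundle projection $\SO(M)\to M$. For a null-homologous oriented $(R,\epsilon)$-multicurve $L$, the class $\Phi([L])$ projects to the trivial class in $H_1(M;\ZZ)$, so it lies in the $\ZZ_2$ summand, and I set $\sigma(L)$ to be this $\ZZ_2$ component. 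Property (1) is then immediate from the additivity of $\Phi$ together with $[L_1\sqcup L_2]=[L_1]+[L_2]$ in the panted cobordism group.

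For property (2), by construction $\sigma(L)=0$ if and only if $\Phi([L])=0$ in $H_1(\SO(M);\ZZ)$, which by Theorem \ref{theoremPantedCobordism} happens exactly when $L$ bounds an $(R,\epsilon)$-panted immersed subsurface, a priori possibly disconnected. To upgrade such a bounding surface to a connected one I would invoke the connected gluing criterion Theorem \ref{gluing}, engineered for exactly this purpose via the hybriding trick alluded to in the introduction. The nontriviality of $\sigma$ requires exhibiting at least one null-homologous $(R,\epsilon)$-multicurve with $\sigma(L)=1$, which should be arranged by direct geometric construction: a suitably chosen nearly regular panted closed curve traces a nontrivial loop in the $\SO(3)$-fiber, and this should fall out of the explicit description of $\Phi$ developed in Section \ref{Sec-pantedCobordismGroup}.

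For property (3), given $\alpha\in H_2(M,L;\ZZ)$ with $\partial\alpha=[L]$, I would first apply (2) to obtain a connected $(R,\epsilon)$-panted subsurface $F_0$ bounded by $L$, representing some class $\beta\in H_2(M,L;\ZZ)$ with $\partial\beta=[L]$. The difference $\alpha-\beta$ lies in the image of $H_2(M;\ZZ)\to H_2(M,L;\ZZ)$ and hence, by Theorem \ref{secondHomologyClass}, is represented by a closed oriented $(R,\epsilon)$-panted subsurface $F_1$. The disjoint union $F_0\sqcup F_1$ is then an $(R,\epsilon)$-panted subsurface bounded by $L$ whose relative class is precisely $\alpha$, and a final application of the hybriding technique of Theorem \ref{gluing} merges the components into a single connected $(R,\epsilon)$-panted subsurface.

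The main obstacle throughout is promoting the abstract cobordism-level statements to genuine geometric surfaces of the prescribed panted type that are moreover connected. Naive topological surgeries such as tubing would destroy the delicate $(R,\epsilon)$-regularity. The hybriding procedure of Section \ref{Sec-quasiFuchsianConnectedGluing} is the essential technical device that performs the merging while remaining within the class of $(R,\epsilon)$-panted surfaces, so arranging the outputs of (2) and (3) in a form that feeds into that procedure, and verifying that no residual components are lost, constitutes the core of the argument.
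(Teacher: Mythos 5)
Your definition of $\sigma$ via $\Phi$, the additivity in (1), the use of Theorem \ref{theoremPantedCobordism} to characterize when $L$ bounds some $(R,\epsilon)$-panted surface, and the use of Theorem \ref{secondHomologyClass} to adjust the relative class by a closed panted surface all match the paper's argument. (Two small remarks: since a null-homologous $L$ has $\Phi([L]_{R,\epsilon})$ in the kernel of the projection to $H_1(M;\ZZ)$, you do not need any choice of splitting of $\SO(M)\cong M\times\SO(3)$, only the canonical central $\ZZ_2$ submodule; and nontriviality of $\sigma$ needs no separate geometric construction --- it follows from surjectivity of $\Phi$ together with the fact that the projection of $\Phi([L]_{R,\epsilon})$ is $[L]$, so the nontrivial central class is realized by some automatically null-homologous multicurve.)

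The genuine gap is the connectedness step in (2) and (3). You propose to merge components by invoking Theorem \ref{gluing} and its hybriding trick, but that machinery does not apply here and would not give what the statement demands. Theorem \ref{gluing} (and Lemma \ref{hybriding} inside its proof) requires a measure that is irreducible, $(R,\epsilon)$-nearly evenly footed, and rich --- hypotheses you have no way to verify for the measure of pants underlying an arbitrary panted surface bounded by $L$ --- and, crucially, it only produces a surface subordinate to a \emph{positive integral multiple} of the measure, so its boundary is a multiple of $L$ rather than $L$ itself, and its relative class is a multiple of $\alpha$. Parts (2) and (3) of Theorem \ref{main-pantedSurface} allow no such multiples: the surface must be bounded exactly by $L$ and represent exactly $\alpha$. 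The paper's device for this is entirely different: Proposition \ref{pantedConnectedness} shows any two curves of $\ocurves_{R,\epsilon}$ appear together on the boundary of a connected $(R,\epsilon)$-panted surface, and Lemma \ref{connectedPantedSurface} then merges the components of a disconnected panted surface by inserting two oppositely oriented copies of such a connector (glued along all boundary pairs except those matched to cuffs of the given components), which preserves both the boundary $L$ and the relative homology class. Without this (or an equivalent boundary- and class-preserving merging argument), your proofs of (2) and (3) only yield a possibly disconnected representative, or a connected one bounding a multiple of $L$.
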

	
	In fact, the universal bound for $\epsilon$ can be explicitly taken to be $10^{-2}$. 
	Note that the $(R,\epsilon)$-panted immersed subsurface $F$ is not required to be $\pi_1$-injective
	or quasi-Fuchsian.
	
	The invariant $\sigma(L)$ is defined as $\Phi([L]_{R,\epsilon})$,
	where $[L]_{R,\epsilon}$ is the $(R,\epsilon)$-panted cobordism class
	of $L$, so $\sigma(L)$ lies in a canonical submodule of
	$H_1(\SO(M);\ZZ)$ isomorphic to $\ZZ_2$. 
	The proofs of Theorems \ref{main-qfSurface} and \ref{main-pantedSurface} will be completed
	in Section \ref{Sec-boundedQuasiFuchsianSubsurfaces}. A few further questions will be
	proposed in Section \ref{Sec-conclusions}.

	\bigskip\noindent\textbf{Acknowledgement}. The authors thank Hongbin Sun for pointing out
	errors in a previous draft of this paper, and Danny Calegari
	for valuable comments. The authors also thank the anonymous referees 
	for suggestions and corrections.

\section{Methodology}\label{Sec-methodology}
    For a typical construction problem of quasi-Fuchsian subsurfaces in a closed hyperbolic
    $3$-manifold, such as addressed in Theorem \ref{main-qfSurface}, one may generally follow two steps: first,
    decide a suitable finite collection
    of (oriented) nearly geodesic pairs of pants whose cuff lengths are nearly equal; secondly,
    glue these pairs of pants up along boundary in a well controlled fashion to output a connected quasi-Fuchsian
    subsurface. The second step is supposed to be automatic
    once we have fed in the collection of pairs of pants as initial data, so the real task is to provide
    such a collection. Regarding the collection as a finite measure over the set of pairs of pants, we will
    translate the compatibility condition for the gluing into a linear system of
    equations of that measure, and we will introduce properties 
    on solutions to ensure a suitable gluing.
    In other words, we will be interested in certain solutions 
    of the linear system of equations associated to
    a \emph{boundary operator} on measures of pants.
    The purpose of this section is to set up the framework,
    and to divide the discussion into several aspects
    that can be treated separatedly in the rest of this paper.
	
	\subsection{Measures of pants}\label{Subsec-boundaryOfPants}
        Throughout this
        subsection, $M$ will be a closed hyperbolic $3$-manifold. Identifying the universal cover
        $\widehat{M}$ of $M$ as the $3$-dimensional hyperbolic space $\Hyp$, we will regard the deck transformation
        group $\pi_1(M)$ as a torsion-free cocompact discrete subgroup of the group of isometries $\Isom(\Hyp)$.

        \subsubsection{Curves and pants} 
        	Let $S^1$ be the topological circle with a fixed orientation.
        	An \emph{oriented curve} in $M$, or simply a \emph{curve},
			is the free homotopy class of a $\pi_1$-injective immersion
            $\gamma:S^1\looparrowright M$.
            We often abuse the notations for curves and their representatives, and write a curve
            as 
            $$\gamma\looparrowright M.$$ 
            Every curve can be homotoped to a unique oriented closed geodesic in $M$ with the length
            parametrization up to a rotation, so we define the \emph{visual torus} $\vistorus_\gamma$
            of $\gamma$ to be the unit normal vector bundle of the geodesic representative.
            We think of the visual torus to be a holomorphic torus, and the name comes from the fact
            that we may alternatively define $\vistorus_\gamma$ as follows.
            Let $\hat\gamma$ be any elevation of a curve $\gamma$ in $\Hyp$. 
            As $\hat\gamma$ is a quasi-geodesic with endpoints $p,q$ on the sphere at infinity $\hat\CC$,
            we may define $\vistorus_\gamma$ to be the holomorphic cylinder $\hat\CC\setminus\{p,q\}$
            quotiented by the stabilizer $\mathrm{Stab}_{\pi_1(M)}(\hat\gamma)$. The two definitions
            of $\vistorus_\gamma$ are certainly equivalent, but the latter might be more natural from a perspective of geometric
            group theory.
            
            The collection of curves in $M$ will be denoted as $\ocurves(M)$, or simply $\ocurves$.
            The quotient of $\ocurves$ under the free involution induced by orientation reversion
            of curves is the collection of \emph{unoriented curves}
            in $M$, and we will denote it as $|\ocurves|$.
            
        	Let $\Sigma_{0,3}$ be a topological pair of pants, namely,
			a compact three-holed sphere. For convenience, we will fix
			an orientation of $\Sigma_{0,3}$.
			An \emph{unmarked oriented pair of pants} in $M$, or simply a \emph{pair of pants},
			is the homotopy class of a $\pi_1$-injective immersion $\Pi:\Sigma_{0,3}\looparrowright M$,
			up to orientation-preserving self-homeomorphisms of $\Sigma_{0,3}$.
			We often abuse the notations for homotopy classes and their representatives, and write a pair of
			pants as $$\Pi\looparrowright M.$$
			
			The \emph{cuffs} of $\Sigma_{0,3}$ are the three boundary curves of $\Sigma_{0,3}$, and the \emph{seams}
			of $\Sigma_{0,3}$ are three mutually disjoint, properly embedded arcs connecting the three pairs of cuffs,
			which are unique up to orientation-preserving self-homeomorphisms of $\Sigma_{0,3}$.
			Every pair of pants can be homotoped so that the cuffs are the unique geodesic closed curves,
			and that the seams are the unique geodesic arcs orthogonal to the adjacent cuffs, or possibly points
			in the degenerate case.
			We say a pair of pants in $M$
			is \emph{nonsingular} if no seam degenerates to a point under the straightening as above.
			
			The collection of nonsingular pants in $M$ will be denoted as
			$\opants(M)$, or simply $\opants$. The quotient of $\opants$ under 
			the free involution induced by orientation reversion
            of pants is the collection of \emph{unoriented nonsingular pants}
            in $M$, and we will denote it as $|\opants|$.
		
			Suppose $\Pi\looparrowright M$ is a nonsingular pair of pants, straightened so that the cuffs
			are geodesic and the seams are geodesic and orthogonal to the cuffs. For every pair of cuffs $\gamma$ and $\gamma'$,
			the seam $\eta$ from $\gamma$ to $\gamma'$ defines a unit normal vector $v$ at $\gamma$, pointing
			along $\eta$ towards $\gamma'$.
			We call $v\in\vistorus_\gamma$ the \emph{foot} of $\Pi$ at $\gamma$ toward $\gamma'$, and it is the
			`visual direction of the nearest point' as we observe $\gamma'$ from $\gamma$. There are exactly six
			feet of $\Pi$, two at each cuff toward the other two cuffs respectively.

        \subsubsection{Boundary operators}
        	
        	Throughout this paper, a measure is always considered to be nonnegative.
        	Let $\meas(\opants)$ denote all finitely-supported finite measures 
        	on the set of nonsingular pants $\opants$ in $M$.
           	We usually write a nontrivial
        	element of $\meas(\opants)$ as a finite formal sum of elements of $\opants$ with positive
        	coefficients. Similarly, let $\meas(\ocurves)$ denote all finitely-supported 
        	finite measures on the set of curves
        	$\ocurves$ in $M$. There is a natural \emph{boundary operator}
				$$\partial:\,\meas(\opants)\,\to\,\meas(\ocurves),$$
			defined by assigning $\partial\Pi$ to be the sum of the cuffs of $\Pi$.
			We will consider two related notions: the footed boundary $\fboundary$, which is a geometric refinement
			of $\partial$ remembering the feet; and the net boundary $\nboundary$, which is an algebraic reduction of $\partial$
			forgetting the orientation.
        	    	
        	\begin{definition}
        		Let $\meas(\vistorus_\gamma)$ denote all Borel measures on the visual torus of any curve $\gamma$ in $M$,
        		and let $\meas(\vistorus_\ocurves)$ denote the direct sum of $\meas(\vistorus_\gamma)$ as $\gamma$ runs over
       			all curves $\ocurves$.
        		The \emph{footed boundary operator} is the homomorphism:
        			$$\fboundary:\,\meas(\opants)\,\to\,\meas(\vistorus_\ocurves),$$
        		defined by assigning $\fboundary \Pi$ 
        		to be one half of the sum of the atomic measures supported 
        		at the six feet of $\Pi$, where $\Pi\in\opants$ is any
        		nonsingular pants.
        	\end{definition}
        	
        	\begin{remark}
        		The normalization coefficient $\frac12$ has been chosen so that the total measure
        		on $\vistorus_\gamma$ of each cuff $\gamma$ is equal to $1$.
        	\end{remark}
        	
        	\begin{definition}
        		Let $\meas(|\ocurves|)$ denote all finite measures on the set of unoriented curves $|\ocurves|$ in $M$.
        		We identify $\meas(|\ocurves|)$ as the subspace of $\meas(\ocurves)$ fixed under the free involution
        		induced by the orientation reversion $\gamma\mapsto\bar\gamma$, 
        		in other words, regard the atomic measure 
        		supported on the unoriented class $\{\gamma,\bar\gamma\}$ as
        		the measure $\frac12(\gamma+\bar\gamma)$.
        		The \emph{net boundary operator} is the homomorphism:
        			$$\nboundary:\,\meas(\opants)\,\to\,\meas(|\ocurves|),$$
        		defined by
        			$$\nboundary\mu\,=\,\frac12\left|\partial\mu-\overline{\partial\mu}\right|.$$        		
        	\end{definition}
			
			\begin{remark} 
				If we regard $\meas(|\opants|)$ as the subspace of $\meas(\opants)$ fixed under the
				orientation reverion, then $\meas(|\opants|)$ lies in the kernel of $\nboundary$.
			\end{remark}

			We have the following commutative diagram relating various operators:				
			\begin{displaymath}
			\xymatrix{
					&\meas(\vistorus_\ocurves) \ar[d]^{\mathrm{Tot}}\\
				\meas(\opants)	\ar[ru]^{\fboundary} \ar[r]^{\partial} \ar[rd]_{\nboundary}
					&\meas(\ocurves) \ar[d]^{\mathrm{Net}} 
				\\
					&\meas(|\ocurves|)
			}
			\end{displaymath}
			Here $\mathrm{Tot}$ is the componentwise total
			$\mathrm{Tot}(\mu)\,=\,\sum_{\gamma\in\ocurves}\,\mu(\vistorus_\gamma)\,\gamma$, and
			$\mathrm{Net}$ is the unorientation reduction defined by linearly extending 
			$\mathrm{Net}(\gamma)=\frac12|\gamma-\bar\gamma|$ for all $\gamma\in\ocurves$.
%
		\subsubsection{Shape controlling}
			For most of our treatment we will focus on pairs of pants in $M$ that are nearly geodesic 
			with cuffs of nearly equal length, or \emph{nearly regular pants} as we will introduce below.
			
			First recall that
			for a boundary-framed segment in $\Hyp$
			(with the canonical orientation), 
			the (oriented geometric) \emph{complex length} of it can be
			defined as a complex value in
				$$(0,+\infty)\,+\,(-\pi,\pi]\,\imunit.$$
			More precisely, an oriented \emph{$\partial$-framed segment} is an
			oriented geodesic arc with a unit normal vector at each endpoint,
			so the real part of the complex length is the usual length of the geodesic arc,
			and the imaginary part is the signed angle from the initial normal vector
			to the parallel transportation of the terminal normal vector to the
			initial point of the geodesic arc, with respect to the initial tangent vector.
			The complex length does not change if we reverse the orientation
			of the $\partial$-framed segment. 
			It is clear that the complex length of $\partial$-framed segments 
			also makes sense in any oriented hyperbolic $3$-manifold $M$. 
			For a geodesic loop in $M$, we may pick a normal vector at a point, and define
			the \emph{complex length} of the geodesic loop 
			as the complex length of the boundary-framed
			segment obtained from cutting the geodesic loop 
			along the chosen point, and endowing both the
			end-points with the same chosen normal vector.
			The definition is clearly independent of the choices of the point or the normal vector.
			
			Let $M$ be a closed hyperbolic $3$-manifold. Suppose $\Pi\looparrowright M$ be a nonsingular
			pair of pants, straightened by homotopy so that the cuffs and seams are geodesic and orthogonal
			as before. Observe that each cuff $\gamma$ of $\Pi$ is bisected into two boundary-framed segments
			with the boundary framing given by the two feet. In fact,
			these two boundary-framed segments, called \emph{half cuffs},
			have the same complex length. We define the \emph{complex half length}
			of the cuff $\gamma$ of $\Pi$ to be the complex length of either of 
			the half cuffs, denoted as $\chlen_\Pi(\gamma)$.
			We denote the complex length of $\gamma$ as $\clen(\gamma)$.
			When the imaginary part of $\chlen_\Pi(\gamma)$ is at most the right angle,
			$\clen(\gamma)$ is equal to twice $\chlen_\Pi(\gamma)$.
			
			\begin{definition}\label{nearlyRegular}
				Let $M$ be a closed hyperbolic $3$-manifold. Suppose
				$(R,\epsilon)$ is any pair of positive constants.
				\begin{enumerate}
					\item We say that a curve $\gamma\looparrowright M$ is \emph{$(R,\epsilon)$-nearly hyperbolic}, if
						$$\left|\clen(\gamma)-R\right|\,<\,\epsilon.$$
					The subcollection of $(R,\epsilon)$-nearly hyperbolic curves in
					$M$ will be denoted as $\ocurves_{R,\epsilon}\subset\ocurves.$
					\item We say that a nondegenerate pair of pants
					$\Pi\looparrowright M$ is \emph{$(R,\epsilon)$-nearly regular}, if for each cuff $\gamma$ of $\Pi$,
						$$\left|\chlen_\Pi(\gamma)-\frac{R}2\right|\,<\,\frac\epsilon2.$$
					The subcollection of $(R,\epsilon)$-nearly regular pants 
					in $M$ will be denoted as $\opants_{R,\epsilon}\subset\opants$.
				\end{enumerate}
				We often simply say \emph{nearly hyperbolic} or \emph{nearly regular} with the usage
				explained in the following Convention \ref{nearly}.
			\end{definition}
			
			\begin{convention}\label{nearly}
				When ambiguously saying \emph{nearly} instead of \emph{$(R,\epsilon)$-nearly},
				we suppose that $(R,\epsilon)$ are
				understood from the context. Presumably,
				$\epsilon$ will be universally small, and $R$ will be sufficiently large, depending on $M$ and
				$\epsilon$. This precisely means that
				for some universal constant $\hat\epsilon>0$ to be determined, $\epsilon$ is assumed to
				satisfy $0<\epsilon<\hat\epsilon$, and that for any given closed hyperbolic 
				$3$-manifold $M$, and for some constant $\hat{R}=\hat{R}(M,\epsilon)>0$ to be determined, 
				$R$ is assumed to satisfy $R>\hat{R}$.
			\end{convention}
			
			From Definition \ref{nearlyRegular}, it follows that the restriction of the boundary operator yields:
				$$\partial:\,\meas(\opants_{R,\epsilon})\to\meas(\ocurves_{R,\epsilon}),$$
			and similarly for $\partial^\sharp$ and $\partial^\flat$.
		
	\subsection{From pants measures to panted subsurfaces}
	
		For any finite collection of pairs of pants in a closed hyperbolic $3$-manifold
		$M$, we can try to glue them along common cuffs with opposite induced orientations,
		and this will give rise to a panted subsurface in $M$, precisely as follows.
						
		\begin{definition}\label{pantedSubsurfaceDefinition}
			An \emph{$(R,\epsilon)$-nearly regularly panted subsurface}
			of $M$, or simply an \emph{$(R,\epsilon)$-panted subsurface},
			is a (possibly disconnected) compact oriented surface
			$F$ with a pants decomposition, 
			and with an immersion $j:F\looparrowright M$ into $M$
			such that the restriction of $j$ to each component 
			pair of pants is $(R,\epsilon)$-nearly regular.
			Let $\mu\in\meas(\opants_{R,\epsilon})$ be the integral 
			measure	such that for each $\Pi\in\opants_{R,\epsilon}$, there are 
			exactly $\mu(\{\Pi\})$ copies of $\Pi$ in
			all component pairs of pants of $F$ immersed via $j$.
			Then we say that the $(R,\epsilon)$-panted
			subsurface is \emph{subordinate to} $\mu$.
		\end{definition}

		In general, the panted subsurface 
		would be neither $\pi_1$-injective quasi-Fuchsian nor connected.
		However, we wish to introduce conditions on $\mu$ 
		to ensure that some quasi-Fuchsian connected panted subsurface
		therefore exists and is subordinate to $\mu$.
		
		Recall that for a metric space $(X,d)$, and
		for a positive number $\delta$, two Borel measures 
		$\mu,\mu'$ are said to be \emph{$\delta$-equivalent}, 
		if $\mu(X)$ equals $\mu'(X)$, and if for every Borel subset $A$ of $X$, $\mu(A)\leq\mu'(\mathcal{N}_\delta(A))$, 
		where $\mathcal{N}_\delta(A)\subset X$ is the $\delta$-neighborhood of $A$. 
		Note that $\delta$-equivalence is a symmetric relation.
		For any nonvanishing finite Borel measure $\mu$ on $X$, we may speak of $\delta$-equivalence
		after normalization, namely, after dividing $\mu$ by $\mu(X)$.

		\begin{definition}\label{gluingConditions}
			Let $M$ be a closed hyperbolic $3$-manifold. 
			Let $\mu\in\meas(\opants_{R,\epsilon})$ be a measure of nearly regular pants.
			\begin{enumerate}
				\item We say that $\mu$ is \emph{ubiquitous},
				if $\mu$ is positive at every $\Pi\in\opants_{R,\epsilon}$, and if
				$\partial\mu$ is positive at every $\gamma\in\ocurves_{R,\epsilon}$.
				\item We say that $\mu$ is \emph{irreducible}, 
				if for any nontrivial decomposition	$\mu=\mu'+\mu''$,
				$\mu'$ and $\mu''$ have adjacent supports, namely, 
				that there is a curve $\gamma\in\ocurves_{R,\epsilon}$ 
				which lies in the support of $\partial\mu'$ and 
				the orientation-reversal of which lies in the support of
				$\partial\mu''$.
				\item We say that $\mu$ is \emph{$(R,\epsilon)$-nearly evenly footed},
				if for every curve $\gamma\in\ocurves_{R,\epsilon}$ on which
				$\partial\mu$ is nonvanishing,
				the normalization of $(\fboundary\mu)|_{\vistorus_\gamma}$
				is $(\frac\epsilon{R})$-equivalent to the normalization of the Lebesgue measure,
				with respect to the Euclidean metric
				on the visual torus $\vistorus_\gamma$
				induced from the unit normal vector bundle of the geodesic representative of $\gamma$. 
				We often simply say \emph{nearly evenly footed}
				following Convention \ref{nearly}.
				\item We say that $\mu$ is \emph{rich}, if the net boundary
				of $\mu$ at any unoriented curve is 
				a relatively small portion compared to the cancelled
				part, or specifically for our application, 
				that $\nboundary\mu(|\gamma|)\,\leq\,\frac15\,\partial\mu(\{\gamma,\bar{\gamma}\})$.
				Here $|\gamma|$ means the unoriented class $\{\gamma,\bar\gamma\}$ for any curve 
				$\gamma\subset\ocurves_{R,\epsilon}$. 
			\end{enumerate}
		\end{definition} 
		
		The following criterion about connected quasi-Fuchsian gluing will
		be proved in Section \ref{Sec-quasiFuchsianConnectedGluing}.
		
		\begin{theorem}\label{gluing}
			Let $M$ be a closed hyperbolic $3$-manifold. 
			For any universally small positive $\epsilon$
			and any sufficiently large positive $R$ depending on $M$ and $\epsilon$, 
			the following statement holds.
			For any nontrivial rational measure
			$\mu\in\meas(\opants_{R,\epsilon})$, if $\mu$ is irreducible,
			$(R,\epsilon)$-nearly evenly footed, and rich, then 
			there exists an oriented, connected, compact,
			$\pi_1$-injectively immersed quasi-Fuchsian subsurface: 
				$$j:\,F\looparrowright M,$$
			which is $(R,\epsilon)$-nearly regularly
			panted subordinate to a positive integral multiple of $\mu$.
		\end{theorem}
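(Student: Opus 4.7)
My plan is to follow the Kahn--Markovic gluing scheme and insert a hybriding step to enforce connectedness. First, clear denominators so that $\mu\in\meas(\opants_{R,\epsilon})$ becomes integer-valued, and then replace $\mu$ by a large integer multiple $N\mu$ so that $\partial\mu(\gamma)$ is large wherever it is nonzero. The argument has three parts: match feet on visual tori to prescribe a gluing with nearly unit shearing; glue to obtain an $(R,\epsilon)$-panted immersed surface $F_0\looparrowright M$; and iteratively hybridize to merge components of $F_0$, at each step preserving nearly unit shearing so that the Kahn--Markovic criterion continues to apply.

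For the initial matching, at each curve $\gamma$ with $\partial\mu(\gamma)>0$ the normalized restriction $\fboundary\mu|_{\vistorus_\gamma}$ is $(\epsilon/R)$-equivalent to Lebesgue by the nearly evenly footed hypothesis. For each unordered pair $\{\gamma,\bar\gamma\}$ with both sides nontrivial, identify $\vistorus_\gamma$ with $\vistorus_{\bar\gamma}$ in the canonical way, apply the unit-shearing shift on one side, and use a standard bipartite matching argument to pair $\min(\partial\mu(\gamma),\partial\mu(\bar\gamma))$ atoms per side so that matched feet agree to within $O(\epsilon/R)$ on $\vistorus_\gamma$. The residue $|\partial\mu(\gamma)-\partial\mu(\bar\gamma)|=2\,\nboundary\mu(|\gamma|)$ becomes boundary of $F_0$. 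Carrying out this matching across all paired curves specifies how to identify the pants in the support of $\mu$, yielding an $(R,\epsilon)$-panted immersion $F_0\looparrowright M$ subordinate to $\mu$ with nearly unit shearing everywhere. Applying \cite[Theorem 2.1]{KM-surfaceSubgroup} componentwise, extended to surfaces with boundary by capping off each boundary cuff with an auxiliary panted piece (as in the definition of quasi-Fuchsian for subsurfaces with boundary), shows that $F_0$ is $\pi_1$-injectively immersed and quasi-Fuchsian.

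If $F_0=F_0^{(1)}\sqcup\cdots\sqcup F_0^{(k)}$ with $k\geq 2$, write $\mu=\mu^{(1)}+\cdots+\mu^{(k)}$ according to which pants lie in which component. Irreducibility yields, for some $i\neq j$, an oriented curve $\gamma$ with $\partial\mu^{(i)}(\gamma)>0$ and $\partial\mu^{(j)}(\bar\gamma)>0$. Pick a $\gamma$-cuff $c$ of $F_0^{(i)}$ glued to some $\bar\gamma$-cuff $c'$ of $F_0^{(i)}$, and a $\bar\gamma$-cuff $d$ of $F_0^{(j)}$ glued to some $\gamma$-cuff $d'$ of $F_0^{(j)}$. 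The hybrid swap replaces the gluings $c\leftrightarrow c'$ and $d\leftrightarrow d'$ with $c\leftrightarrow d$ and $d'\leftrightarrow c'$, fusing the two components into one. To preserve nearly unit shearing one chooses $(c,c')$ and $(d,d')$ so that the feet of $c$ and $d$ are close on $\vistorus_\gamma$, and likewise for $c'$ and $d'$; this is where richness enters, guaranteeing that $\fboundary\mu^{(i)}|_{\vistorus_\gamma}$ retains a substantial fraction of the mass of $\fboundary\mu|_{\vistorus_\gamma}$, hence remains approximately Lebesgue, so that such close-feet pairs exist. Iterating this $O(k)$ times yields a single connected $F$, and a final application of the Kahn--Markovic criterion produces the desired $\pi_1$-injectivity and quasi-Fuchsianness.

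The principal obstacle is the shearing control during hybriding: a generic swap could perturb shearings by an uncontrolled amount, violating the hypothesis of \cite[Theorem 2.1]{KM-surfaceSubgroup}. The crucial point is that richness forbids the footed boundary of any single component from being too concentrated, so the nearly evenly footed hypothesis effectively survives for each $\mu^{(i)}$ on every relevant $\vistorus_\gamma$; consequently one can always select swap pairs whose feet agree to within $O(\epsilon/R)$, and shearings are perturbed by the same order. A secondary subtlety is that iterated hybriding could gradually degrade these quantitative controls, so the multiplicative constant $N$ must be fixed at the outset so as to leave a sufficient reservoir of swap options at each cuff throughout the merging process; the required size of $N$ depends only on the combinatorics of $\mu$ and on the constants $R,\epsilon$.
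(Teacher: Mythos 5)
Your overall scheme (Hall-type matching to get a nearly unit shearing gluing, then hybrid swaps to merge components) is the right one, but the two steps you lean on to make the hybriding work both have genuine gaps. First, your mechanism for finding swap pairs with close feet is unjustified: you claim that richness forces $\fboundary\mu^{(i)}|_{\vistorus_\gamma}$ to retain a substantial, approximately Lebesgue fraction of $\fboundary\mu|_{\vistorus_\gamma}$ for each component $i$. Richness only bounds the net boundary $\nboundary\mu$ against $\partial\mu$ for the \emph{total} measure; it says nothing about how the mass at a given curve is split among components. A component could contribute a single pair of pants at $\gamma$, so its foot measure on $\vistorus_\gamma$ is a couple of atoms and is in no sense close to Lebesgue; then there is no reason the specific pair $(i,j)$ handed to you by irreducibility admits cuffs with $O(\epsilon/R)$-close feet. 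The paper's argument is structured differently precisely to avoid this: only the total $\fboundary\mu$ is assumed evenly footed, one observes that the $(\epsilon/R)$-neighborhoods of the per-component foot supports cover all of $\vistorus_\gamma$, and connectedness of the torus then chains together the components having glued cuffs at $\gamma$; irreducibility is used only to show the resulting component-adjacency graph is connected (the pair actually hybridized need not be the pair produced by irreducibility), and one hybridizes along a maximal tree of that graph.

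Second, your swap need not merge components at all: if the glued cuff of $F_0^{(i)}$ (the image of $c\leftrightarrow c'$) is \emph{separating} in $F_0^{(i)}$, and likewise in $F_0^{(j)}$, then cutting and regluing in the cross fashion again yields two components, so the component count does not drop. This nonseparating requirement is where richness actually enters in the paper: it is used to select a subunion of cuffs $\mathcal{C}$ in which every pair of pants keeps at least two cuffs (possible because $(m_\gamma-m_{\bar\gamma})/m_\gamma\le\tfrac13$), so the dual graph has valence at least two and a double cover makes every glued cuff nonseparating; a further cyclic-cover step guarantees each component has many cuff copies per curve class, so all tree-edge swaps use distinct cuffs and the shearing error degrades only once, from $\epsilon$ to $2\epsilon$. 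Your iterative scheme has neither safeguard: besides the separating issue, a cuff could be re-used in later swaps, and "choose $N$ large to leave a reservoir of swap options" does not by itself prevent the shearing error from accumulating over the $O(k)$ iterations. These are the missing ideas you would need to supply to close the argument.
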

							
	\subsection{Homology via pants measures}\label{Subsec-homologyViaPants}
		For a closed hyperbolic $3$-manifold $M$, 
		we wish to understand the structure of the boundary operator 
		$\partial:\meas(\opants_{R,\epsilon})\to\meas(\ocurves_{R,\epsilon})$.
		More specifically, the following Theorem \ref{homologyViaPants} should be
		viewed from this perspective. 
		
		Let $\mathcal{L}\subset\ocurves_{R,\epsilon}$ be a collection
		of distinct curves, invariant under orientation reversion. We write $|\mathcal{L}|\subset|\ocurves_{R,\epsilon}|$
		for the corresponding unoriented curves, namely, the quotient of $\mathcal{L}$ by orientation reversion.		
		Let 
			$$\zmeas(\opants_{R,\epsilon},|\mathcal{L}|)$$
		denote the subset of $\meas(\opants_{R,\epsilon})$ consisting of measures $\mu$
		with the net boundary $\nboundary\mu$ supported on (possibly a proper subset of)
		the unoriented curves $|\mathcal{L}|$.
		Choosing a collection of mutually disjoint, embedded unoriented loops 
		$k_1,\cdots,k_r$ representing elements
		of $|\mathcal{L}|$, we write $H_2(M,|\mathcal{L}|;\RR)$ for $H_2(M,k_1\cup\cdots \cup k_r;\RR)$.
		Note that $H_2(M,|\mathcal{L}|;\RR)$ is well defined up to natural isomorphisms for different choices
		of the loops $k_i$. Thus there is a natural homomorphism between semimodules over 
		the semiring of nonnegative real numbers:
			$$[\cdot]\,:\,\zmeas(\opants_{R,\epsilon},|\mathcal{L}|)\to H_2(M,|\mathcal{L}|;\,\RR).$$
		
		In particular, when $\mathcal{L}$ is empty, we denote the kernel of the homomorphism $[\cdot]$ above
		as
			$$\bmeas(\opants_{R,\epsilon})\,\subset\,\zmeas(\opants_{R,\epsilon},\emptyset).$$
		Nevertheless, $\bmeas(\opants_{R,\epsilon})$ is naturally contained in 
		$\zmeas(\opants_{R,\epsilon},|\mathcal{L}|)$ for any $\mathcal{L}$
		as well.

		\begin{theorem}\label{homologyViaPants}
			Let $M$ be a closed hyperbolic $3$-manifold. For any sufficiently small positive $\epsilon$ depending on $M$,
			and any sufficiently large positive $R$ depending on $M$ and $\epsilon$, the following
			statements hold. Suppose $\mathcal{L}\subset\ocurves_{R,\epsilon}$
			is a collection of distinct curves invariant under orientation reversion.
			\begin{enumerate}
				\item There is a short exact sequence of semimodules over 
				the semiring of nonnegative real numbers:
				$$0\longrightarrow\bmeas(\opants_{R,\epsilon})\longrightarrow
				\zmeas(\opants_{R,\epsilon},|\mathcal{L}|)\longrightarrow
				H_2(M,|\mathcal{L}|;\,\RR)\longrightarrow0.$$
				\item There exists a nontrivial measure 
				$\mu_0\in \bmeas(\opants_{R,\epsilon})$ which is ubiquitous,
				irreducible, $(R,\epsilon)$-nearly evenly footed, and rich.
				Moreover,
				every measure in $\zmeas(\opants_{R,\epsilon},|\mathcal{L}|)$ can be adjusted to satisfy
				the same properties, by adding some measure in $\bmeas(\opants_{R,\epsilon})$.
			\end{enumerate}
			Furthermore, the same statements hold for rational coefficients instead of real coefficients as well.
		\end{theorem}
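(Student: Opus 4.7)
The plan is to establish parts (1) and (2) in tandem: first construct a universal reference measure $\mu_0 \in \bmeas(\opants_{R,\epsilon})$ enjoying all four good properties, then use $\mu_0$ to bootstrap both the surjectivity of $[\cdot]$ and the adjustment claim.

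For part (1), the map $[\cdot]$ is well defined because each nearly regular pants determines a singular $2$-chain whose boundary is the sum of its oriented cuffs, so $\mu \in \zmeas(\opants_{R,\epsilon},|\mathcal{L}|)$ extends linearly to a relative cycle of $(M,|\mathcal{L}|)$. Surjectivity onto $H_2(M,|\mathcal{L}|;\RR)$ is the main topological content: starting from a smooth surface representative of a given relative class, I would triangulate it finely and approximate each small oriented triangle by an $(R,\epsilon)$-nearly regular panted piece using the $\partial$-framed segment constructions of Section \ref{Sec-basicConstructions}, obtaining a measure $\mu$ whose class is a rational multiple of the target. For the identification of the kernel with $\bmeas$: if $[\mu] = 0$ in $H_2(M,|\mathcal{L}|;\RR)$, the oriented cuffs of $\mu$ already bound a chain near $|\mathcal{L}|$, and this bounding chain can itself be realized by a panted measure using the same constructions applied in a tubular neighborhood of $|\mathcal{L}|$; after subtracting this panted bound, what remains is a measure with no net boundary that is homologically trivial in $M$, i.e.\ an element of $\bmeas(\opants_{R,\epsilon})$.

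For part (2), the reference measure $\mu_0$ is built by Kahn--Markovic style averaging: using equidistribution of the frame flow on $\SO(M)$ at time $R/2$, one selects a finite symmetric family of nearly closed $\partial$-framed segments and pairs them orthogonally to produce $(R,\epsilon)$-nearly regular pants, then sums these together with their orientation reverses to form $\mu_0$. The symmetrization forces $\nboundary \mu_0 = 0$ (so $\mu_0$ is trivially rich) and $[\mu_0] = 0$ in $H_2(M;\RR)$ (so $\mu_0 \in \bmeas$). Ubiquity and $(R,\epsilon)$-near even footedness both follow from quantitative equidistribution of the frame flow, while irreducibility follows from mixing, which guarantees that the oriented cuff graph of $\mu_0$ is strongly connected. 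The adjustment claim is then a direct consequence: given $\mu \in \zmeas(\opants_{R,\epsilon},|\mathcal{L}|)$, replace it by $\mu + N\mu_0$ for a sufficiently large positive rational $N$; this remains in the same coset of $\bmeas$, inherits ubiquity and near even footing from the dominant contribution of $N\mu_0$, satisfies the richness ratio since $\nboundary(\mu + N\mu_0) = \nboundary \mu$ is fixed while $\partial(\mu + N\mu_0)$ grows linearly in $N$, and is irreducible because $N\mu_0$ alone already is.

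The hardest step is the construction of $\mu_0$ with simultaneous control of all four properties. It rests on quantitative equidistribution of the frame flow at scale $\epsilon/R$ on the visual tori, together with mixing strong enough to guarantee connectivity of the cuff graph at the discrete combinatorial level; these ingredients should be supplied by the $\partial$-framed segment machinery of Section \ref{Sec-basicConstructions}. The rational coefficient version follows with no extra work, since all constructions above produce finitely supported rational-valued measures.
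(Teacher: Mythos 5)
Your part (2) is essentially the paper's argument: a nearly evenly footed measure with full cuff support (quoted from Kahn--Markovic), symmetrized so that $\nboundary$ and the homology class vanish, made ubiquitous by adding the sum of all pants, with irreducibility coming from panted connectedness of $\ocurves_{R,\epsilon}$ (Proposition \ref{pantedConnectedness}, which is where the Connection Principle enters), and the adjustment $\xi\mapsto\xi+N\mu_0$; the only point you gloss over is that $\mu_0$ must be taken $(R,\frac{\epsilon}{2})$-nearly evenly footed so that the perturbed measure stays $(R,\epsilon)$-nearly evenly footed.

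The genuine gap is in the surjectivity step of part (1). Your plan --- triangulate a surface representative finely and ``approximate each small oriented triangle by an $(R,\epsilon)$-nearly regular panted piece'' using the $\partial$-framed segment constructions --- does not work as stated, for two concrete reasons. First, there is a scale mismatch: a panted piece has boundary consisting of closed geodesics of length about $2R$, so it cannot have boundary matching the short edges of a fine triangulation; one must replace each edge $g$ by a long $(R,\epsilon)$-multicurve (this is exactly the map $\Psi$ of Section \ref{Sec-pantedCobordismGroup}) and then fill each triangle by a panted surface bounded by the three edge multicurves, which is precisely Lemma \ref{triangularRelation} --- a delicate construction involving sharp tripods, chirality in $\SO(M)$, and an unavoidable factor of $2$ (Remark \ref{remarkTriangularRelation}). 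Second, and more fundamentally, whether a prescribed $(R,\epsilon)$-multicurve bounds an $(R,\epsilon)$-panted surface at all is obstructed by the isomorphism $\ocobordism_{R,\epsilon}(M)\cong H_1(\SO(M);\ZZ)$ of Theorem \ref{theoremPantedCobordism}: there is a residual $\ZZ_2$ (the center of $\pi_1(\SO(M))$), which is why the paper can only fill $2L$ rather than $L$, and why its surjectivity argument produces $2\alpha$ as $F_L\sqcup F_\beta$ with $F_\beta$ supplied by Theorem \ref{secondHomologyClass}. The Spine Principle (Lemma \ref{spinePrinciple}) makes the point explicitly: $\partial$-framed segment constructions alone are $1$-spined and cannot manufacture second homology; the paper injects the missing global input through a triangular finite presentation of $\pi_1(M,\pt)$ and the homologous substitution of Section \ref{Sec-pantifyingSecondHomologyClasses}. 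Your sketch contains neither the $\SO(M)$-valued obstruction theory nor the presentation-complex input, so the central claim that each triangle (and the boundary multicurve on $|\mathcal{L}|$) can be realized by panted pieces is unsupported. The kernel identification, by contrast, is essentially formal (vanishing of the relative class forces $\nboundary\mu=0$, and $H_2(M;\RR)\to H_2(M,|\mathcal{L}|;\RR)$ is injective since $H_2$ of a union of circles vanishes), so your somewhat muddled description there is not the real issue.
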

		
		In fact, the constant $\epsilon$ is required to be bounded by the injectivity radius of $M$
		so as to guarantee the existence of $\mu_0$.
		Theorem \ref{homologyViaPants} will be proved in Section \ref{Sec-boundedQuasiFuchsianSubsurfaces}.
		As Theorem \ref{homologyViaPants} feeds Theorem \ref{gluing} with workable input, homologically
		interesting connected quasi-Fuchsian subsurfaces can be produced in closed hyperbolic $3$-manifolds
		under fairly general conditions.

\section{Quasi-Fuchsian connected gluing}\label{Sec-quasiFuchsianConnectedGluing}
	In this section, we prove Theorem \ref{gluing},
	restated as Proposition \ref{gluingProp} in terms of gluing.
	Let $M$ be a closed hyperbolic
	$3$-manifold, and let $(R,\epsilon)$ be a pair of
	positive constants.
	
	Given a panted surface $F$ of which the pants structure is given
	by a union of disjoint simple closed curves $C\subset F$, we may cut $F$ along $C$
	to obtain a disconnected surface $\mathcal{F}$ whose components 
	are all pairs of pants. Denote the union
	of all the new boundary components of $\mathcal{F}$
	coming from the cutting as $\mathcal{C}\subset \partial\mathcal{F}$. Then the panted surface $F$ can 
	be recovered as the quotient of $\mathcal{F}$ by an 
	orientation-reversing involution $\phi:\mathcal{C}\to \mathcal{C}$,
	which sends any preimage component of $C$ 
	to its opposite boundary component. With this
	in mind, we introduce the notion of gluing as follows.
	
	\begin{definition}
		For any integral measure $\mu\in\meas(\opants_{R,\epsilon})$,
		let $\mathcal{F}$ be the finite disjoint union of
		copies of nearly regular pants prescribed by $\mu$, namely, such that for
		any $\Pi\in\opants_{R,\epsilon}$, there are exactly $\mu(\{\Pi\})$ copies
		of $\Pi$ in $\mathcal{F}$. 
		By a \emph{gluing} of $\mathcal{F}$, we mean a pair $(\mathcal{C},\phi)$, such that
			$$\mathcal{C}\subset\partial\mathcal{F}$$
		is a subunion of cuffs, and that
			$$\phi:\,\mathcal{C}\to\mathcal{C}$$
		is a free involution which sends each cuff $c\subset\mathcal{C}$
		to its orientation-reversal, regarded as 
		in $\ocurves_{R,\epsilon}$. We say that $(\mathcal{C},\phi)$ is \emph{maximal}
		if $\phi$ cannot be extended to any subunion of 
		cuffs $\mathcal{C}'\subset\partial\mathcal{F}$
		larger than $\mathcal{C}$. Since 
		the quotient of $\mathcal{F}$ by $\phi$ yields
		a compact oriented $(R,\epsilon)$-panted subsurface
		$j:\,F\looparrowright M$,
		the quotient image of any cuff $c\subset\mathcal{C}$ in $F$
		will be called a \emph{glued cuff}.
	\end{definition}
		
	\begin{proposition}\label{gluingProp}
		Let $M$ be a closed hyperbolic $3$-manifold.
		For any universally small positive $\epsilon$
		and any sufficiently large
		positive $R$ depending on $M$ and $\epsilon$, the following holds.
		If a rational nontrivial pants measure
		$\mu\in\meas(\opants_{R,\epsilon})$ is irreducible,
		$(R,\epsilon)$-nearly evenly footed, and rich,
		then possibly after passing to a positive integral multiple
		of $\mu$, the prescribed oriented compact surface
		$\mathcal{F}$ admits a maximal gluing $(\mathcal{C},\phi)$, which
		yields a $\pi_1$-injectively immersed, quasi-Fuchsian, 
		and connected subsurface $j:F\looparrowright M$.
	\end{proposition}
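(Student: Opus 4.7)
\textbf{Proof plan for Proposition \ref{gluingProp}.}
The plan is to produce the desired immersion in two stages: first construct a maximal gluing $(\mathcal{C},\phi)$ whose shearing at every identified cuff is close to $+1$, which by the Kahn--Markovic criterion \cite[Theorem 2.1]{KM-surfaceSubgroup} will guarantee $\pi_1$-injectivity and quasi-Fuchsianness; then use an elementary swap, the \emph{hybriding} of the introduction, to merge components while preserving the shearing control. Passing first to a sufficiently divisible positive integral multiple of $\mu$ is harmless, since all conditions in Definition \ref{gluingConditions} scale linearly, and gives us the granularity needed for the combinatorial arguments below.

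For the matching step, I would treat each unoriented curve class $|\gamma|$ on which $\partial\mu$ is supported separately. The rich condition ensures that for each such $|\gamma|$ the atomic mass of $\partial\mu$ at $\gamma$ equals that at $\bar\gamma$ up to a small net discrepancy, namely $\nboundary\mu(|\gamma|)\le\tfrac15\partial\mu(\{\gamma,\bar\gamma\})$, so all but a small fraction of the feet on $\vistorus_\gamma$ and $\vistorus_{\bar\gamma}$ can in principle be paired. The nearly evenly footed condition says that both $(\fboundary\mu)|_{\vistorus_\gamma}$ and $(\fboundary\mu)|_{\vistorus_{\bar\gamma}}$ are $(\epsilon/R)$-equivalent, after normalization, to Lebesgue measure. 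Pulling back the second measure by the canonical orientation-reversing isometry $\vistorus_{\bar\gamma}\to\vistorus_\gamma$ followed by the unit-length shift along the cuff direction, both the source and target of the shifted pairing are $O(\epsilon/R)$-close to Lebesgue. By a discrete Hall/transportation argument applied after passing to a large multiple, this yields a free involution $\phi$ on a cosupport $\mathcal{C}$ that realizes all but the net-boundary mass at each $|\gamma|$ and for which every matched pair of feet has shearing within $O(\epsilon/R)$ of $+1$. The leftover unmatched cuffs contribute to $\partial\mathcal{F}\setminus\mathcal{C}$, and by construction $(\mathcal{C},\phi)$ is maximal. Applying \cite[Theorem 2.1]{KM-surfaceSubgroup} to the resulting immersed surface $j:F\looparrowright M$ gives $\pi_1$-injectivity and quasi-Fuchsianness.

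The remaining and hardest step is to upgrade $F$ to a connected surface. If $F$ is not connected, decompose $\mu=\mu'+\mu''$ according to which component each pair of pants lands in. By irreducibility, $\partial\mu'$ and $\partial\mu''$ share some cuff, so there is a curve $\gamma$ such that both $F'=j(F')$ and $F''=j(F'')$ have glued cuffs mapping to $\gamma$. Since $\mu$ is nearly evenly footed, the feet of $\mu'$ on $\vistorus_\gamma$ and the feet of $\mu''$ on $\vistorus_\gamma$ each fill $\vistorus_\gamma$ on the $(\epsilon/R)$-scale, so by pigeonhole (after an extra multiplicative enlargement of $\mu$ if necessary) we can find a foot $v'\in\vistorus_\gamma$ belonging to $F'$ and a foot $v''\in\vistorus_\gamma$ belonging to $F''$ that are within distance $\delta\ll 1$ of one another. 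The involution $\phi$ matches $v'$ with some foot $w'$ on $\vistorus_{\bar\gamma}$ (on $F'$) and $v''$ with some $w''$ (on $F''$), with $w'$ and $w''$ necessarily at distance $O(\delta)+O(\epsilon/R)$ of one another because both matched pairs have shearing near $+1$. The \emph{hybrid} swap, replacing $v'\leftrightarrow w'$ and $v''\leftrightarrow w''$ by $v'\leftrightarrow w''$ and $v''\leftrightarrow w'$, glues $F'$ and $F''$ into a single component, and the new shearings are still within $O(\delta)+O(\epsilon/R)$ of $+1$, hence small enough to keep the Kahn--Markovic criterion applicable. Iterating this hybriding finitely many times reduces the number of components to one.

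The principal difficulty is quantitative book-keeping in the last paragraph: each hybrid swap slightly degrades the shearing of two pairs, and one must verify that the cumulative degradation after all swaps stays inside the admissible window of \cite[Theorem 2.1]{KM-surfaceSubgroup}. I would handle this by organizing the hybriding so that at each step $\delta$ can be chosen as small as desired (again by pigeonhole on a sufficiently enlarged multiple of $\mu$), and by noting that the number of hybriding steps is bounded by the initial number of components, which is itself bounded in terms of the total mass of $\mu$. The freedom to enlarge $\mu$ by any positive integer, combined with the uniform denseness coming from the nearly evenly footed hypothesis, gives exactly the slack needed to control both the matching and the merging simultaneously.
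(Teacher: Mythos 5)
Your overall strategy is the same as the paper's: match feet with nearly unit shearing via a Hall/transportation argument, invoke \cite[Theorem 2.1]{KM-surfaceSubgroup} for $\pi_1$-injectivity and the quasi-Fuchsian property, and then hybrid components together. However, there is a genuine gap in the hybriding step: the cut-and-cross-reglue move along a glued cuff $c'\subset F'$ and $c''\subset F''$ merges the two components only if the cuffs are nonseparating (if $c'$ separates $F'$ into $A'\cup B'$ and $c''$ separates $F''$ into $A''\cup B''$, the crosswise regluing produces $A'\cup B''$ and $A''\cup B'$, i.e.\ still two components, and your iteration need not make progress). You never arrange for nonseparating glued cuffs, and this is precisely where the \emph{rich} hypothesis does its real work in the paper: richness gives $\nboundary\mu(|\gamma|)\le\frac13\,\partial\mu(\{\gamma\})$-type control that lets one choose the glued subunion $\mathcal{C}$ so that every pair of pants contributes at least two cuffs to $\mathcal{C}$ (Lemma \ref{gluedCuffs}); then the dual graph has valence at least two and admits a double cover in which every edge is nonseparating (Lemma \ref{nonseparatingGluedCuffs}), after which hybriding is legitimate. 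In your write-up richness is used only for the (easy) observation that most cuffs can be paired, which is not where it is needed.

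A secondary problem is your pigeonhole claim that the feet of $\mu'$ and of $\mu''$ \emph{each} fill $\vistorus_\gamma$ at scale $\epsilon/R$: the nearly-evenly-footed hypothesis constrains only the total measure $\fboundary\mu$, not each component's share, so a single component's feet may well be concentrated in a small region of the torus. The paper instead runs a chaining argument: the union of the $(\epsilon/R)$-neighborhoods of the components' feet covers the connected torus $\vistorus_\gamma$, so some two (possibly different) components must have nearly coincident feet; building the connectivity graph $X$ this way, irreducibility forces $X$ to be connected, and all swaps are performed at once along a maximal tree on \emph{distinct} cuffs (secured by first passing to cyclic covers so each component has many glued cuffs in the class $\gamma$). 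This also resolves your book-keeping worry more cleanly than iterated $\delta$-refinement: the shearing degrades only once, from $(R,\epsilon)$ to $(R,2\epsilon)$, and one simply requires $2\epsilon<\hat\epsilon$ in Lemma \ref{resultingQF}.
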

	
	The key technique to ensure the connectedness of the resulting surface
	is a trick called \emph{hybriding}.
	To illustrate the idea, the reader may assume for simplicity that $\nboundary\mu$
	is zero, so that any panted surface $F$	resulted from a maximal gluing is closed.
	We say that a gluing is \emph{nearly unit shearing}, if 
	for any glued cuff $c$ on the resulting $(R,\epsilon)$-panted surface $F$,
	the feet of the pair of pants on one side of $c$ is almost
	exactly opposite to the feet of the pair of pants on the other side of
	$c$ after a parallel transportation along $c$ of distance $1$
	(Definition \ref{nearlyUnitShearing}).
	By the construction of \cite{KM-surfaceSubgroup},
	the assumption that $\mu$ is $(R,\epsilon)$-nearly evenly
	footed implies that such a maximal gluing always exists,
	resulting in a $\pi_1$-injectively immersed surface 
	which is quasi-Fuchsian. Since $F$ might be disconnected,
	we wish to slightly modify the gluing without affecting the nearly
	unit shearing property, nevertheless the number of components
	of $F$ can be decreased in that case.
	Denote the components of $F$ as $F_1,\cdots,F_r$, where $r$ is
	at least two. If two components of $F$, say $F_1$ and $F_2$,
	has glued cuffs $c_1\subset F_1$ and $c_2\subset F_2$ that 
	are homotopic to each other, supposing that $c_i$ is nonseparating on $F_i$,
	we may modify the gluing by cutting $F_i$ along $c_i$, and regluing
	in a cross fashion. Then the new resulting surface $F'$ has a connected
	component $F_{12}$ instead of the previous two components $F_1$ and $F_2$.
	We say that $F_{12}$ is obtained by \emph{hybriding} $F_1$
	and $F_2$ along $c_1$ and $c_2$. 
	To preserve the nearly unit shearing property,
	we need to require that the feet of pants on one side of 
	$c_1$ is almost the same as the feet of pants on the same side of
	$c_2$. 
	Such $F_i$ and $c_i$ can be found by the following argument.
	First, the irreducibility of $\mu$ implies that there is some 
	curve class $\gamma\in\ocurves_{R,\epsilon}$, such that
	there are at least two distinct components of $F$ that have glued cuffs 
	homotopic to $\gamma$.
	Because the footed boundary $\partial^{\sharp}\mu$ restricted to
	the unit normal vector bundle $\vistorus_\gamma$ 
	over $\gamma$ is nearly evenly distributed,
	the connectedness of $\vistorus_\gamma$ implies that at least
	distinct two components $F_1$ and $F_2$ of $F$ (not necessarily the components
	that we started with) have glued cuffs $c_1$ and $c_2$ homotopic to $\gamma$
	with their feet on the same side 
	very close to those of each other. Therefore, performing the hybriding
	on these $F_i$ along $c_i$ will decrease the number of 
	components of the resulting surface, preserving the nearly unit shearing
	property. Iterating the process until the resulting surface become
	connected, then we are done. A minor point here
	is that the hybriding trick
	also require that $c_i$ be nonseparating on $F_i$.
	In fact, with the somewhat technical assumption
	that $\mu$ is rich, 
	the nonseparating property of glued cuffs 
	can be satisfied if we pass to a cyclic finite cover
	of $F$, and hence pass to positive multiple
	of $\mu$. Note also that the arguments above certainly
	works as well when $F$ has boundary.
	In practice, one need to be slightly careful to control the error
	so that the new resulting surface remains $(R,\epsilon)$-panted,
	but the general idea of hybriding follows the outline above.
	
	Roughly speaking, the assumption that $\mu$ is nearly evenly footed allows us to control the shape
	of $F$ along each glued cuff, 
	which ensures the $\pi_1$-injectivity and the quasi-Fuchsian property;
	the assumption that $\mu$ is rich allows us to construct $F$ so that glued
	cuff is nonseparating in $F$, so combined with the assumption that $\mu$ is 
	irreducible, we may perform a hybriding trick to obtain a connected $F$, 
	possibly after passing to a further positive integral multiple of $\mu$. 

	In the rest of this section, we prove Proposition \ref{gluingProp}.
	In Subsection \ref{Subsec-nonseparatingGluedCuffs} we explain how to control
	the gluing so that the glued cuffs are nonseparating; in Subsection \ref{Subsec-gluingWithNearlyUnitShearing},
	we review the nearly unit shearing condition that is used in \cite{KM-surfaceSubgroup};
	Subsection \ref{Subsec-hybridingDisconnectedComponents} is the hybriding argument;
	Subsection \ref{Subsec-proofOfPropositionGluingProp} summarizes
	the proof of Proposition \ref{gluingProp}.
	
	It will be convenient to introduce a measure
			$$\nu^\sharp_{\mathcal{C}}\,\in\,\meas(\vistorus_{\ocurves_{R,\epsilon}}),$$
	naturally
	associated to any subunion of cuffs $\mathcal{C}\subset\partial\mathcal{F}$. 
	This measure records the contribution
	to the footed boundary $\fboundary\mu$ from those
	pairs of pants which contain components of $\mathcal{C}$.
	More concretely, each component $c\subset \mathcal{C}$
	lies in a unique pair of pants $P_c\subset\mathcal{F}$.
	If $c\subset\mathcal{C}$ is a copy of $\gamma\in \ocurves_{R,\epsilon}$, 
	and if $P_c$ is a copy of $\Pi\in \opants_{R,\epsilon}$, we define 
	the \emph{marked footed boundary} $\partial^\sharp_c(P_c)\in\meas(\vistorus_\gamma)$
	to be sum of the two feet (as atomic measures) of $\Pi$ at the cuff corresponding to
	$c\subset \partial P_c$. Note that potentially $P_c$ could have other cuffs
	which are copies of $\gamma$ but which might not come from $\mathcal{C}$, so we need to
	specify $c$ rather than just mentioning $\gamma$. 
	We define
		$$\nu^\sharp_{\mathcal{C}}\,=\,\sum_{c\subset\mathcal{C}}\,\partial^\sharp_c(P_c).$$
	In particular, we also write
		$$\nu^\sharp_{c}\,=\,\partial^\sharp_c(P_c).$$
			
	\subsection{Nonseparating glued cuffs}\label{Subsec-nonseparatingGluedCuffs}
		The lemma below essentially follows from the condition that $\mu$ is rich.
				
		\begin{lemma}\label{gluedCuffs}
			Possibly after passing to a positive integral
			multiple of $\mu$, we may assume that the prescribed disjoint union of pants $\mathcal{F}$ admits
			a subunion of cuffs $\mathcal{C}\subset\partial\mathcal{F}$, satisfying the following:
			\begin{itemize}
				\item Any gluing $(\mathcal{C},\phi)$ of $\mathcal{F}$ along $\mathcal{C}$ is maximal;
				\item Restricted to any $\vistorus_\gamma$, the measure $\nu^\sharp_\mathcal{C}$ is a positive rational 
				multiple of $\fboundary\mu$; 
				\item Any pair of pants $P\subset\mathcal{F}$ contains at least two cuffs from $\mathcal{C}$.
			\end{itemize}
		\end{lemma}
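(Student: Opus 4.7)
The plan is to construct $\mathcal{C}$ by selecting cuffs of each pants in a prescribed proportion that depends only on the curve class. For each $\gamma \in \ocurves_{R,\epsilon}$ with $N_\gamma := \partial\mu(\gamma) > 0$, I would set
$$p_\gamma = \min\left(1,\,\frac{N_{\bar\gamma}}{N_\gamma}\right),$$
and arrange $\mathcal{C}$ to contain exactly a fraction $p_\gamma$ of the $\gamma$-cuffs contributed by each $\Pi \in \opants_{R,\epsilon}$. The total $\gamma$-cuff count in $\mathcal{C}$ will then be $p_\gamma N_\gamma = \min(N_\gamma, N_{\bar\gamma})$, matching the $\bar\gamma$-cuff count and so guaranteeing the existence of some gluing involution $\phi$ on $\mathcal{C}$. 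Maximality of any such $\phi$ is automatic, since for each pair $\{\gamma,\bar\gamma\}$ the unselected cuffs will all lie on the larger-count side, leaving no cross-oriented pair available. The same pantwise uniformity forces $\nu^\sharp_\mathcal{C}|_{\vistorus_\gamma}$ to equal $2p_\gamma\cdot \fboundary\mu|_{\vistorus_\gamma}$, which is a positive rational multiple since $p_\gamma>0$; this handles the second bullet.

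To realize this abstract prescription integrally, I would replace $\mu$ by $n\mu$ for a positive integer $n$ that clears the denominators of $p_\gamma\cdot \mu(\{\Pi\})$ for all $\Pi$ and $\gamma$. After rescaling, for each pants $\Pi$ with $n_\Pi := n\mu(\{\Pi\})$ copies and cuff classes $\gamma_1,\gamma_2,\gamma_3$ indexing the three boundary components of $\Pi$, choosing $\mathcal{C}$ reduces to specifying subsets $A_i \subset \{1,\dots,n_\Pi\}$ of prescribed sizes $|A_i| = p_{\gamma_i}\,n_\Pi$, where $A_i$ records which copies of $\Pi$ have their $i$-th cuff included in $\mathcal{C}$.

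The main obstacle is the third bullet, which requires every copy of every pants to contribute at least two cuffs to $\mathcal{C}$; this is exactly where richness enters and is calibrated. The bound $\nboundary\mu(|\gamma|)\leq \tfrac{1}{5}\partial\mu(\{\gamma,\bar\gamma\})$ translates to $|N_\gamma-N_{\bar\gamma}|\leq \tfrac{1}{5}(N_\gamma+N_{\bar\gamma})$, which rearranges to $p_\gamma\geq 2/3$ whenever $N_\gamma>0$, and therefore
$$p_{\gamma_1}+p_{\gamma_2}+p_{\gamma_3}\;\geq\;2$$
for every pants $\Pi$. Passing to the complements $B_i = \{1,\dots,n_\Pi\}\setminus A_i$, the inequality becomes $\sum_i |B_i| = \bigl(3-\sum_i p_{\gamma_i}\bigr)n_\Pi\leq n_\Pi$, so the $B_i$ can be chosen pairwise disjoint; each copy of $\Pi$ will then lie outside at most one $A_i$ and hence inside at least two, giving the third bullet. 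Carrying out this assembly pants-by-pants yields the desired $\mathcal{C}$.
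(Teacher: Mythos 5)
Your construction is correct and is essentially the paper's own argument stated in complementary form: where you select, per pants copy and per cuff position, subsets $A_i$ of proportion $p_{\gamma_i}$ and use richness ($p_\gamma\geq 2/3$, hence $\sum_i |B_i|\leq n_\Pi$) to make the excluded sets $B_i$ pairwise disjoint, the paper marks the excluded cuffs directly via the measures $\tilde{\mu}_\gamma=\frac{m_\gamma-m_{\bar\gamma}}{m_\gamma}\mu_\gamma$, whose sum is at most $\mu$ by the same richness bound, marking at most one cuff per pants and weighting the marking evenly over repeated cuff classes. The cuff counting, the role of richness, and the verification of the three bullets coincide with the paper's proof.
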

		
		\begin{proof}
			For simplicity, we write $m_\gamma$ for $(\partial\mu)(\{\gamma\})$, and $n_\Pi$ for
			$\mu(\{\Pi\})$. Let $k_{\gamma,\Pi}\in\{0,1,2,3\}$ be the number of times
			that a curve $\gamma$ occurs as the cuff of a pair of pants $\Pi$.
			For any curve $\gamma\in\ocurves_{R,\epsilon}$,
			let 
				$$\mu_\gamma\,=\,\sum_{\Pi\in\opants_{R,\epsilon}}\,n_\Pi\,k_{\gamma,\Pi}\cdot\Pi.$$
			Let
				$$\tilde{\mu}_\gamma\,=\,\frac{m_\gamma-m_{\bar\gamma}}{m_\gamma}\cdot\mu_\gamma,$$
			if $m_\gamma\,>\,m_{\bar\gamma}$; otherwise, let $\tilde{\mu}_\gamma\,=\,0$.
			Since $\mu$ is rich, it is clear by Definition \ref{gluingConditions}
			that
				$$\frac{m_\gamma-m_{\bar\gamma}}{m_\gamma}\leq\frac13,$$
			whenever $m_\gamma\,>\,m_{\bar\gamma}$. It follows that
				$$\sum_{\gamma\in\ocurves_{R,\epsilon}}\,\tilde{\mu}_\gamma\,\leq\,\mu,$$
			because every pair of pants has only three cuffs. Furthermore, possibly after passing to
			a positive integral multiple of $\mu$, we may assume that 
			$\mu$ and all $\tilde{\mu}_\gamma$ are
			integral. Therefore, in the disjoint union of pairs of pants $\mathcal{F}$ 
			prescribed by $\mu$, we may find mutually disjoint subunions $\mathcal{F}_\gamma$ prescribed
			by $\tilde{\mu}_\gamma$, and for each component $P\subset\mathcal{F}_\gamma$, we may mark one cuff
			$c\subset P$ which is a copy of $\gamma$. When $P$ has more than one cuffs homotopic to $\gamma$,
			we require the marking to be evenly weighted on these cuffs: In other words,
			if $P$ has only two cuffs $c_1$ and $c_2$ homotopic to $\gamma$, 
			among all the pairs of pants in $\mathcal{F}_\gamma$ that are copies of $P$,
			there will be half of them having the copy cuff $c_1$ marked, and the other half having the copy cuff 
			$c_2$ marked; similarly will we mark those pairs of pants $P$ with all the cuffs homotopic to $\gamma$.
			Let
				$$\mathcal{C}\subset\partial\mathcal{F}$$
			be the union of all the unmarked cuffs. 
			
			It is straightforward to check that 
			the three listed properties about $\mathcal{C}$ are satisfied by our construction.
			In fact, for any $\gamma,\bar{\gamma}\in\ocurves_{R,\epsilon}$, 
			suppose without loss of generality that $m_\gamma\geq m_{\bar{\gamma}}$. Because
			$\mathcal{C}$ has exactly $m_{\bar{\gamma}}$ components homotopic to $\gamma$ 
			and exactly $m_{\bar{\gamma}}$ components
			homotopic to $\bar{\gamma}$, any gluing $(\mathcal{C},\phi)$ is maximal.
			The measure $\nu^\sharp_{\mathcal{C}}$ 
			restricted to $\vistorus_\gamma$ equals $\frac{m_{\bar\gamma}}{m_\gamma}$
			times $\partial^\sharp\mu$ and restricted to $\vistorus_{\bar{\gamma}}$
			equals $\partial^\sharp\mu$,
			both proportional to $\partial^\sharp\mu$. 
			For any $P\in\mathcal{F}$, we marked at most one cuff $c\subset P$ in the construction above,
			so it contains at least two cuffs from $\mathcal{C}$.		
		\end{proof}
		
		Suppose $(\mathcal{C},\phi)$ is a gluing of $\mathcal{F}$ prescribed by $\mu$. 
		For a disjoint union of pants $\tilde{\mathcal{F}}$ prescribed by a
		positive integral multiple of $\mu$,
		we say that a gluing $(\tilde{\mathcal{C}},\tilde{\phi})$ \emph{covers} $(\mathcal{C},\phi)$,
		if $\tilde{\mathcal{C}}$ is the preimage of $\mathcal{C}$ under the natural covering
		$\kappa:\tilde{\mathcal{F}}\to\mathcal{F}$, and if the following diagram commutes:
		\begin{displaymath}
			\xymatrix{
				\tilde{\mathcal{C}}	\ar[r]^{\tilde{\phi}} \ar[d]^{\kappa} &\tilde{\mathcal{C}}\ar[d]^{\kappa}\\
				\mathcal{C} \ar[r]^{\phi}	&\mathcal{C}
			}
		\end{displaymath}
		In this case, the associated surface $\tilde{F}$ naturally covers $F$ as well.
		
		\begin{lemma}\label{nonseparatingGluedCuffs}
			Let $\mathcal{C}\subset\mathcal{F}$ be a subunion of cuffs satisfying the conclusion of Lemma
			\ref{gluedCuffs}. Suppose that $(\mathcal{C},\phi)$ is a gluing of 
			$\mathcal{F}$ prescribed by $\mu$. Then
			the disjoint union of pants $\tilde{\mathcal{F}}$ 
			prescribed by $2\mu$ admits a gluing $(\tilde{\mathcal{C}},\tilde{\phi})$
			covering $(\mathcal{C},\phi)$, such that every glued cuff in the resulted surface
			$\tilde{F}$ is nonseparating.
		\end{lemma}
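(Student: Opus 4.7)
The plan is to translate the question into graph theory. Associate to the gluing $(\mathcal{C},\phi)$ the pants graph $\Gamma$: vertices are the pants of $\mathcal{F}$ and edges are the $\phi$-orbits in $\mathcal{C}$. A glued cuff of $F$ is non-separating in its component exactly when the corresponding edge of $\Gamma$ is not a bridge in its component, so the goal reduces to choosing a covering gluing $(\tilde{\mathcal{C}},\tilde\phi)$ such that no edge of the resulting cover graph $\tilde\Gamma$ is a bridge. The third conclusion of Lemma~\ref{gluedCuffs} guarantees that every vertex of $\Gamma$ has degree at least two. A covering gluing of $\tilde{\mathcal{F}}=\mathcal{F}\sqcup\mathcal{F}$ is specified by a free binary choice $\epsilon_e\in\mathbb{Z}/2$ per edge $e$ of $\Gamma$ (swap sheets or not), and $\tilde\Gamma$ is the $2$-fold covering of $\Gamma$ classified by the cohomology class $\chi=[\epsilon]\in H^1(\Gamma;\mathbb{Z}/2)$.

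I will verify two claims by direct case analysis. First, when $e$ is \emph{not} a bridge of $\Gamma$, both lifts of $e$ in $\tilde\Gamma$ lie on cycles regardless of $\chi$: a cycle $C$ of $\Gamma$ through $e$ lifts to a cycle through a lift of $e$ either directly when $\chi([C])=0$, or, after doubling back through the other sheet, when $\chi([C])=1$. Second, when $e$ is a bridge with $\Gamma\setminus e=A\sqcup B$ and endpoints $v\in A$, $w\in B$, the lift $\tilde e_0=(v^0,w^{\chi(e)})$ is a non-bridge in $\tilde\Gamma$ iff $\chi|_A\neq 0$ and $\chi|_B\neq 0$: the only way to reconnect the endpoints in $\tilde\Gamma\setminus\tilde e_0$ is via the remaining lift $\tilde e_1=(v^1,w^{1+\chi(e)})$, which forces a path $v^0\leadsto v^1$ inside the preimage $\tilde A$ of $A$ and a path $w^{1+\chi(e)}\leadsto w^{\chi(e)}$ inside the preimage $\tilde B$ of $B$; these exist exactly when those covers are connected.

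It then suffices to produce $\chi$ non-trivial on each side of every bridge. This is where the minimum-degree hypothesis enters: for a bridge $e$ with endpoint $v$ on side $A$, the vertex $v$ has degree $\geq 1$ in $A$ while every other vertex of $A$ retains degree $\geq 2$; since $A$ is connected with $\geq 2$ vertices and admits at most one leaf (which must be $v$), $A$ is not a tree and must contain a cycle. Hence each bridge side meets some $2$-edge-connected subgraph of $\Gamma$ with non-zero $H^1(\,\cdot\,;\mathbb{Z}/2)$. Using the direct-sum decomposition $H^1(\Gamma;\mathbb{Z}/2)\cong\bigoplus_i H^1(B_i;\mathbb{Z}/2)$ over the $2$-edge-connected pieces $B_i$, I can choose $\chi$ to be non-trivial on every such piece, which immediately gives $\chi|_A\neq 0$ for every bridge side and completes the argument. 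The main step requiring care is the bridge case analysis and the sheet-parity bookkeeping it involves; once distilled to the clean criterion above, the graph structure forced by Lemma~\ref{gluedCuffs} does the rest.
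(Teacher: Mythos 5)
Your proposal is correct and takes essentially the same route as the paper: pass to the dual pants graph, use the conclusion of Lemma \ref{gluedCuffs} that every vertex has valence at least two, and build a double cover of the graph in which no edge is a bridge (the paper dismisses this last step as ``an easy construction'', whereas you carry it out explicitly via the classification of double covers by $H^1(\Gamma;\mathbb{Z}/2)$, the bridge case analysis, and a class chosen nontrivial on every $2$-edge-connected piece). The only nitpick is that a bridge side $A$ may consist of a single vertex $v$ carrying a loop, so your ``$A$ has $\geq 2$ vertices, hence is not a tree'' phrasing skips a degenerate case, but the needed conclusion that $A$ contains a cycle still holds there, so the argument goes through.
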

		
		\begin{proof}
			The glued cuffs induces a decomposition
			of $F$ into pairs of pants, and let $\Lambda$ be the (possibly disconnected)
			dual graph. By Lemma \ref{gluedCuffs}, the valence of any vertex of
			$\Lambda$ is at least two. It follows from an easy construction
			that $\Lambda$ admits a double cover
			$\tilde{\Lambda}$ in which every edge is nonseparating. 
			In other words, there is a double cover $\tilde{F}$ of $F$ induced
			from a cover $(\tilde{\mathcal{C}},\tilde{\phi})$
			of the gluing $(\mathcal{C},\phi)$, in which every glued cuff is nonseparating.
			Note also that $\tilde{\mathcal{F}}$ can be identified with $\tilde{F}$ cut along the
			glued cuffs, so it is prescribed by $2\mu$.
		\end{proof}
			
	\subsection{Gluing with nearly unit shearing}\label{Subsec-gluingWithNearlyUnitShearing}
		To control the shape of $F$ along a glued cuff, we will require the gluing $\phi$
		to be \emph{nearly unit shearing}, which can be described on the visual tori as follows.
		
		Observe that for any nearly purely hyperbolic curve $\gamma\in\ocurves_{R,\epsilon}$ 
		(or indeed for any curve), there is a natural action of 
		the additive group of complex numbers on $\vistorus_\gamma$. More precisely, for any $\zeta\in\CC$,
		there is an isomorphism between holomorphic tori:
			$$A_\zeta:\,\vistorus_\gamma\to\vistorus_\gamma,$$
		satisfying that for any $r\in\RR$, 
		$A_r$ parallel transports any normal vector along $\gamma$ by signed distance $r$,
		and that for any $\theta\in\RR$, $A_{\theta\imunit}$ rotates the direction
		of any normal vector by a signed angle $\theta$.
		It is clear that the kernel of the action is the lattice in $\CC$ generated by
		$2\pi\imunit$ and the complex length $\clen(\gamma)$ of $\gamma$.
		There is also a canonical anti-isomorphism
			$$\bar{}\,:\, \vistorus_\gamma\to\vistorus_{\bar\gamma},$$
		taking any unit normal vector $(p,v)$
		to the opposite vector $\overline{(p,v)}=(p,-v)$ at the same point $p\in |\gamma|$, where $|\gamma|$ 
		is regarded as the unoriented geodesic representative.
		Note that an anti-isomorphism
		is orientation-reversing.
		We will think of the composition of the bar anti-isomorphism with $A_\zeta$ 
		as the model of a $\zeta$-shearing gluing along $\gamma$,
		denoted as:
			$$\overline{A_\zeta}:\,\vistorus_\gamma\to\vistorus_{\bar\gamma}.$$
		In other words, a 
		nearly unit shearing gluing $(\mathcal{C},\phi)$ should 
		behave very much like $\overline{A_1}$ 
		along each glued cuff.
%
		\begin{definition}\label{nearlyUnitShearing}
			A gluing $(\mathcal{C},\phi)$ of $\mathcal{F}$ 
			is said to be \emph{$(R,\epsilon)$-nearly unit shearing}, if for every pair of cuffs $c,c'\subset\mathcal{C}$ 
			with $c'$ equal to $\phi(c)$, the feet measure $\nu^\sharp_{c'}$ is $(\frac\epsilon{R})$-equivalent
			to $(\overline{{A}_1})_*(\nu^\sharp_c)$ on $\vistorus_{\bar\gamma}$,
			where $\gamma\in\ocurves_{R,\epsilon}$ is the curve class of $c$.
		\end{definition}
		
		\begin{remark}
			A reader familiar with the Kahn--Markovic construction 
			should recognize the definition above as
			equivalent to the condition 
				$$|s(c)-1|\,<\,\frac{\epsilon}R$$
			in \cite[Theorem 2.1]{KM-surfaceSubgroup}.
		\end{remark}
		
		The lemma below is a consequence of the condition
		that $\mu\in\meas(\opants_{R,\epsilon})$ is $(R,\epsilon)$-nearly evenly footed.
		
		\begin{lemma}\label{nearlyUnitShearingGluing}
			With the notations of Proposition \ref{gluingProp}, there is
			a gluing $(\mathcal{C},\phi)$ of $\mathcal{F}$, which is $(R,\epsilon)$-nearly unit shearing.
		\end{lemma}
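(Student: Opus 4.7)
The plan is to build the gluing $\phi$ by prescribing a bijection on cuffs one unoriented curve class at a time. Denote by $\mathcal{C}_\gamma\subset\mathcal{C}$ the subunion of cuffs homotopic to a curve $\gamma\in\ocurves_{R,\epsilon}$. For each $|\gamma|$ on which $\mathcal{C}$ has cuffs, Lemma \ref{gluedCuffs} (third bullet) ensures $|\mathcal{C}_\gamma|=|\mathcal{C}_{\bar\gamma}|$, and its second bullet ensures that $\nu^\sharp_{\mathcal{C}_\gamma}$ is a positive rational multiple of $\fboundary\mu|_{\vistorus_\gamma}$, and similarly for $\bar\gamma$. Combining this with the nearly evenly footed hypothesis, both atomic measures, after normalization, are $(\epsilon/R)$-equivalent to Lebesgue. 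Transporting via the anti-isomorphism $\overline{A_1}\colon\vistorus_\gamma\to\vistorus_{\bar\gamma}$, which preserves the flat metric up to orientation, brings everything to a common torus $\vistorus_{\bar\gamma}$, where we have two atomic measures of equal total mass, both $(\epsilon/R)$-close to the uniform measure.

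I would then construct the desired bijection $\phi_\gamma\colon\mathcal{C}_\gamma\to\mathcal{C}_{\bar\gamma}$ via a bipartite matching argument. Form a bipartite graph on $\mathcal{C}_\gamma\sqcup\mathcal{C}_{\bar\gamma}$ in which $c$ is adjacent to $c'$ exactly when $\nu^\sharp_{c'}$ is $(\epsilon/R)$-equivalent to $(\overline{A_1})_*\nu^\sharp_c$. A perfect matching in this graph would provide $\phi_\gamma$. Its existence should follow from Hall's marriage theorem, whose condition is verified by a counting argument: any subset $S\subset\mathcal{C}_\gamma$ transports to a configuration of feet pairs in $\vistorus_{\bar\gamma}$ whose $(\epsilon/R)$-thickening, by the near-uniformity of $\nu^\sharp_{\mathcal{C}_{\bar\gamma}}$, must contain the feet pairs of at least $|S|$ cuffs in $\mathcal{C}_{\bar\gamma}$.

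The main obstacle is that each cuff contributes a \emph{linked} pair of feet rather than two free atoms, so the marriage condition must be verified at the level of pairs, not single atoms. The resolution exploits the structural constraint from $(R,\epsilon)$-nearly regularity: the two feet at a single cuff differ on the visual torus by the complex half-length $\chlen_\Pi(\gamma)$, which lies within $\epsilon/2$ of $R/2$. Consequently, each cuff's feet pair is essentially encoded by a single half-cuff, namely a $\partial$-framed segment of complex length $\approx R/2$. Reformulating the matching at the level of half-cuffs, the problem becomes a single-point bipartite matching whose marginals on both sides are $(\epsilon/R)$-uniform, which is precisely the setup treated in \cite[Theorem 2.1]{KM-surfaceSubgroup}; the resulting cuff bijection is then nearly unit shearing, as noted in the remark following Definition \ref{nearlyUnitShearing}.
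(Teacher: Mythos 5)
Your route is the same as the paper's: the paper's entire proof is to take $\mathcal{C}$ from Lemma \ref{gluedCuffs} and invoke the Hall marriage argument of Kahn--Markovic (the correct pointer is \cite[Theorem 3.2 and Subsection 3.5]{KM-surfaceSubgroup}, not Theorem 2.1, which is the gluing criterion; Theorem 2.1 is only relevant for the remark identifying the nearly unit shearing condition). Your expansion --- working one curve class at a time, transporting by $\overline{A_1}$, and verifying Hall's condition by the counting argument from $(\frac{\epsilon}{R})$-equivalence --- is exactly what that reference does.

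The one step whose justification as written is too weak is the resolution of the ``linked pair'' issue. Knowing only that $\chlen_\Pi(\gamma)$ is $(\frac{\epsilon}{2})$-close to $\frac{R}{2}$ does not suffice: if the offsets between the two feet could vary by up to $\epsilon$ from one pair of pants to another, then matching one foot to precision $\frac{\epsilon}{R}$ would control the second foot only to precision about $\epsilon$, which is far coarser than the $(\frac{\epsilon}{R})$-equivalence demanded in Definition \ref{nearlyUnitShearing}. What saves the encoding is an exact fact, not an approximate one: since the two half cuffs of $\Pi$ at $\gamma$ have equal complex length, $2\chlen_\Pi(\gamma)$ equals $\clen(\gamma)$ modulo $2\pi\imunit$, and the near-regularity bound $|\chlen_\Pi(\gamma)-\frac{R}2|<\frac\epsilon2$ selects a single branch; hence the translation $A_{\chlen_\Pi(\gamma)}$ carrying one foot to the other is the \emph{same} isometry of $\vistorus_\gamma$ for every $\Pi\in\opants_{R,\epsilon}$ with cuff $\gamma$. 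With that, each feet pair is faithfully a single point of the quotient torus $\vistorus_\gamma$ modulo this involution (this is precisely why Kahn--Markovic match on the torus of the ``square root'' of $\gamma$), and single-point matching at scale $\frac{\epsilon}{R}$ does control both feet at the same scale. With this correction, and modulo the routine constant bookkeeping (two measures each $(\frac{\epsilon}{R})$-equivalent to Lebesgue are a priori only $(\frac{2\epsilon}{R})$-equivalent to each other, a slack the paper absorbs exactly as you do), your argument matches the intended proof.
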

		
		\begin{proof}
			Let $\mathcal{C}\subset\partial\mathcal{F}$ be a subunion of cuffs as ensured by the conclusion
			of Lemma \ref{gluedCuffs}. The lemma follows from the Hall Marriage argument, cf.~
			\cite[Theorem 3.2 and Subsection 3.5]{KM-surfaceSubgroup}.
		\end{proof}
		
		\begin{lemma}\label{resultingQF}
			There exists $\hat\epsilon>0$ depending on $M$, and for any $0<\epsilon<\hat\epsilon$, there exists
			$\hat{R}>0$ depending on $M$ and $\epsilon$, such that for any $R>\hat{R}$,
			the following holds.
			If a gluing $(\mathcal{C},\phi)$ of $\mathcal{F}$ is 
			$(R,\epsilon)$-nearly unit shearing, then the resulting surface
			$j:F\looparrowright M$ is $\pi_1$-injectively immersed and quasi-Fuchsian.
		\end{lemma}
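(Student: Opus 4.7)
The plan is to reduce this lemma directly to Theorem 2.1 of \cite{KM-surfaceSubgroup}, which is the black-box result flagged in the introduction. That theorem asserts that if one glues $(R,\epsilon)$-nearly regular pairs of pants along common cuffs with shearing parameter satisfying $|s(c)-1|<\epsilon/R$ at every glued cuff $c$, then the resulting surface is $\pi_1$-injectively immersed and quasi-Fuchsian, provided $\epsilon$ is smaller than some universal constant $\hat\epsilon$ and $R$ is sufficiently large in terms of $M$ and $\epsilon$. Consequently, the constants $\hat\epsilon$ and $\hat R=\hat R(M,\epsilon)$ in the present statement will simply be inherited from that theorem.

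The substantive work is to verify the translation asserted in the remark after Definition \ref{nearlyUnitShearing}, namely that the $(\frac{\epsilon}{R})$-equivalence of $\nu^\sharp_{c'}$ with $(\overline{A_1})_{*}(\nu^\sharp_c)$ on the visual torus $\vistorus_{\bar\gamma}$ is equivalent (up to a benign multiplicative constant) to the shearing bound $|s(c)-1|<\epsilon/R$. For this, I would recall that each $\nu^\sharp_c$ is the sum of two atoms located at the feet of the adjacent pair of pants on $\vistorus_\gamma$; since the pants are $(R,\epsilon)$-nearly regular, those two feet sit at positions on $\vistorus_\gamma$ that are nearly antipodal along the $A_{R/2}$-orbit and whose joint location encodes the pants' geometry. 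The shearing parameter $s(c)$ is by definition the complex translation on $\vistorus_\gamma$ needed to send the atoms of $\nu^\sharp_c$ to the atoms of $(\bar{\,})_{*}(\nu^\sharp_{c'})$ under the identification $\vistorus_\gamma\cong\vistorus_{\bar\gamma}$. The $(\frac{\epsilon}{R})$-equivalence on the visual torus therefore translates into displacement of the atoms by at most $\epsilon/R$ in the Euclidean metric on $\vistorus_{\bar\gamma}$, which is precisely the shearing discrepancy $|s(c)-1|<\epsilon/R$.

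Once this translation is in place at every glued cuff, the hypothesis of \cite[Theorem 2.1]{KM-surfaceSubgroup} is satisfied by the glued surface $j:F\looparrowright M$, and the conclusion that $j$ is $\pi_1$-injective and quasi-Fuchsian follows immediately. The main (and only) obstacle is to make the measure-theoretic to shearing-parameter dictionary precise; the underlying hyperbolic geometry is already packaged in the black box, so no further estimates are needed beyond this translation.
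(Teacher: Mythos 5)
Your reduction works only in the special case that the gluing is along \emph{all} cuffs of $\mathcal{F}$, i.e.\ when $F$ is closed; in that case the lemma is indeed nothing more than \cite[Theorem 2.1]{KM-surfaceSubgroup}, and the measure-to-shearing dictionary you spell out is exactly the content of the remark following Definition \ref{nearlyUnitShearing} (two unit atoms per side, so $(\frac{\epsilon}{R})$-equivalence is the displacement bound $|s(c)-1|<\frac{\epsilon}{R}$). The genuine gap is that in the lemma a gluing $(\mathcal{C},\phi)$ only involves a subunion of cuffs $\mathcal{C}\subset\partial\mathcal{F}$, so the resulting $F$ is in general a compact surface with nonempty boundary. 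The Kahn--Markovic theorem says nothing directly about such an $F$: there is no "shearing parameter" at an unglued cuff, and, more to the point, in this paper \emph{quasi-Fuchsian} for a bounded subsurface is defined (Section \ref{Sec-introduction}) as being an essential subsurface of a \emph{closed} $\pi_1$-injectively immersed quasi-Fuchsian subsurface, so one must actually produce such a closed surface containing $F$. Your proposal never addresses this.

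The paper's proof supplies exactly this missing step: take a ubiquitous, $(R,\epsilon)$-nearly evenly footed measure $\mu_0$ (Theorem \ref{homologyViaPants}\,(2)), choose $N$ large so that $N\mu_0-\mu$ is still ubiquitous and nearly evenly footed, identify $\mathcal{F}$ as a subunion of the family $\mathcal{F}'$ prescribed by $N\mu_0$, and extend $(\mathcal{C},\phi)$ to a gluing $(\partial\mathcal{F}',\phi')$ of \emph{all} cuffs of $\mathcal{F}'$ that is still $(R,\epsilon)$-nearly unit shearing (a Hall-marriage argument as in Lemma \ref{nearlyUnitShearingGluing}, using the even footedness of the complementary measure). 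Applying \cite[Theorem 2.1]{KM-surfaceSubgroup} to this maximal gluing yields a closed, componentwise $\pi_1$-injective quasi-Fuchsian $F'$, and $F$ sits inside $F'$ as an essential subsurface, whence it is $\pi_1$-injective and quasi-Fuchsian in the sense of the paper. To repair your proposal you would need to add this extension/completion argument (or some equivalent way of closing up $F$ while preserving the nearly unit shearing condition); the dictionary between the feet-measure condition and $|s(c)-1|<\frac{\epsilon}{R}$ alone does not suffice.
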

		
		\begin{proof}
			This is exactly \cite[Theorem 2.1]{KM-surfaceSubgroup} if $F$ is closed.
			In the general case, recall that a compact immersed subsurface $F$ of 
			$M$ is \emph{quasi-Fuchsian} in our sense if it is an 
			essential subsurface of a closed immersed quasi-Fuchsian subsurface $F'$ of $M$
			(Section \ref{Sec-introduction}).
			We may take a ubiquitous $(R,\epsilon)$-nearly evenly footed
			measure $\mu_0\in\meas(\opants_{R,\epsilon})$, for instance, as
			guaranteed by Theorem \ref{homologyViaPants} (2). Then for
			a sufficiently large integer $N$, we may assume that $N\mu_0-\mu$
			is still ubiquitous and $(R,\epsilon)$-nearly evenly footed.
			Let $\mathcal{F}'$ be the disjoint union of $(R,\epsilon)$-nearly regular
			pants prescribed by $N\mu_0$. We may identify $\mathcal{F}$ as a subunion
			of components of $\mathcal{F}'$. It is not hard to see that
			the gluing $(\mathcal{C},\phi)$ can be extended to be a gluing 
			$(\partial\mathcal{F}',\phi')$ which is still $(R,\epsilon)$-nearly unit shearing,
			provided $N$ sufficiently large.
			Then the gluing $(\partial\mathcal{F}',\phi')$ yields a possibly disconnected,
			componentwise $\pi_1$-injectively immersed quasi-Fuchsian 
			closed subsurface $F'\looparrowright M$ by \cite[Theorem 2.1]{KM-surfaceSubgroup}.
			The subsurface $F$ obtained via the gluing $(\mathcal{C},\phi)$ of $\mathcal{F}$
			is an essential subsurface of $F'$, so it is $\pi_1$-injectively immersed
			and quasi-Fuchsian.
		\end{proof}

	\subsection{Hybriding disconnected components}\label{Subsec-hybridingDisconnectedComponents}
	The following lemma
	uses the condition that $\mu$ is irreducible.
	
	\begin{lemma}\label{hybriding}
		Suppose that $(\mathcal{C},\phi)$ is a gluing of 
		$\mathcal{F}$ prescribed by a positive integral multiple of $\mu$ which is
		$(R,\epsilon)$-nearly unit shearing with all glued cuffs nonseparating
		on the resulting surface $F$. Then possibly after
		passing to a further positive multiple of $\mu$,
		there is a gluing $(\mathcal{C},\phi')$,
		which is $(R,2\epsilon)$-nearly unit shearing with all glued cuffs nonseparating
		on the resulting surface $F'$, and moreover,
		$F'$ is connected.
	\end{lemma}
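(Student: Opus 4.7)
The plan is to induct on the number of connected components of $F$, reducing the count by one at each step via a single hybriding swap, and passing to a double cover after each step if necessary to restore the nonseparating condition on glued cuffs. Write $F = F_1 \sqcup \cdots \sqcup F_r$ with $r \geq 2$, let $\mathcal{F}_i \subset \mathcal{F}$ be the subunion of pants mapping to $F_i$, and let $\mu_i$ be the associated submeasure, so $\mu = \mu_1 + \cdots + \mu_r$. The goal of a single step is to modify $\phi$ at exactly two glued cuffs lying in different $\mathcal{F}_i$, thereby merging the corresponding $F_i$.

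To choose the pair of cuffs, I first invoke irreducibility of $\mu$ applied to the nontrivial decomposition $\mu = \mu_1 + (\mu - \mu_1)$: the two summands must have adjacent supports, so there is a curve $\gamma \in \ocurves_{R,\epsilon}$ which is a cuff of some pants in $\mathcal{F}_a$ and whose reverse $\bar\gamma$ is a cuff of some pants in $\mathcal{F}_b$ for distinct indices $a,b$. By Lemma \ref{gluedCuffs} the restriction $\nu^\sharp_{\mathcal{C}}|_{\vistorus_\gamma}$ is a positive multiple of $\fboundary\mu|_{\vistorus_\gamma}$, which by the $(R,\epsilon)$-nearly even footedness of $\mu$ is $(\epsilon/R)$-equivalent to the normalized Lebesgue measure on the visual torus. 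Since $\vistorus_\gamma$ is connected and the subpopulations of feet contributed by pants in $\mathcal{F}_a$ and by pants in $\mathcal{F}_b$ both carry positive total measure, a pigeonhole argument on $\vistorus_\gamma$ produces cuffs $c_a \subset \mathcal{F}_a \cap \mathcal{C}$ and $c_b \subset \mathcal{F}_b \cap \mathcal{C}$, each of class $\gamma$, whose marked feet satisfy $\nu^\sharp_{c_a} \sim_{(\epsilon/R)} \nu^\sharp_{c_b}$ on $\vistorus_\gamma$.

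Now set $d_a = \phi(c_a)$ and $d_b = \phi(c_b)$ (both of class $\bar\gamma$), and define $\phi'$ to agree with $\phi$ everywhere except that the gluings $c_a \leftrightarrow d_a$ and $c_b \leftrightarrow d_b$ are swapped to $c_a \leftrightarrow d_b$ and $c_b \leftrightarrow d_a$. At the new pair $(c_a, d_b)$ the triangle inequality
\[
\nu^\sharp_{d_b} \,\sim_{(\epsilon/R)}\, (\overline{A_1})_* \nu^\sharp_{c_b} \,\sim_{(\epsilon/R)}\, (\overline{A_1})_* \nu^\sharp_{c_a},
\]
in which the first step uses the original shearing hypothesis on $\phi$ and the second uses our choice of $c_a, c_b$ together with the fact that $\overline{A_1}$ is an isometry of the Euclidean metric, yields $\nu^\sharp_{d_b} \sim_{(2\epsilon/R)} (\overline{A_1})_* \nu^\sharp_{c_a}$, the required $(R,2\epsilon)$-nearly unit shearing condition; the pair $(c_b, d_a)$ is handled symmetrically, and all other glued pairs retain their original $(\epsilon/R)$ equivalence. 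Because $c_a$ is nonseparating in $F_a$ and $c_b$ is nonseparating in $F_b$, the crossed regluing merges $F_a$ and $F_b$ into a single component, dropping the total component count by one. The swapped cuffs might become separating in the merged surface, so I apply Lemma \ref{nonseparatingGluedCuffs} to the new gluing $(\mathcal{C},\phi')$ to pass to a double cover in which every glued cuff is nonseparating, at the cost of replacing $\mu$ by $2\mu$. After at most $r-1$ iterations the surface is connected.

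The main obstacle I foresee is preventing the error bound from compounding through the iteration: if a later hybriding step swaps a cuff whose current equivalence to the bar-shear model is already $(2\epsilon/R)$, the triangle inequality at the new swap would produce $(3\epsilon/R)$, and so on. To keep the final bound at $(R,2\epsilon)$ rather than $(R,2^{r-1}\epsilon)$, I would arrange each step to swap cuffs that still satisfy the original $(\epsilon/R)$ equivalence against the bar-shear model, which is possible after passing to a sufficiently large multiple of $\mu$ because this provides an abundant reservoir of fresh cuffs of every class $\gamma$ that the argument calls for. Reconciling this freshness requirement with the nonseparating condition enforced through Lemma \ref{nonseparatingGluedCuffs} between successive steps is the delicate combinatorial point; once it is set up, the uniform $(R,2\epsilon)$ bound persists across all iterations.
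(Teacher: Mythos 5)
Your overall strategy (swap matched glued cuffs to merge components, using irreducibility plus even footedness on the visual tori) is the same as the paper's, but the one-cuff-at-a-time induction you propose has two genuine gaps. First, the matching step is not justified as stated: irreducibility applied to $\mu_1+(\mu-\mu_1)$ only tells you that some $\gamma$ lies in the support of $\partial\mu_1$ with $\bar\gamma$ in the support of the rest; it does not give you cuffs $c_a\subset\mathcal{F}_a$, $c_b\subset\mathcal{F}_b$ with $\nu^\sharp_{c_a}$ and $\nu^\sharp_{c_b}$ $(\epsilon/R)$-close. The feet contributed to $\vistorus_\gamma$ by $\mathcal{F}_a$ and $\mathcal{F}_b$ can sit in far-apart regions, with the near-even total supplied by feet of \emph{other} components; moreover the relevant cuff of class $\bar\gamma$ in $\mathcal{F}_b$ might be an unglued boundary cuff, so you also need the richness hypothesis (which you never invoke) to know that the \emph{glued} cuffs' feet still nearly fill the torus. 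What is actually true, and what the paper proves, is only that \emph{some} two distinct components admit matched glued cuffs; to connect all of them the paper builds the adjacency graph $X$ on the components, proves $X$ is connected (this is exactly where richness, even footedness, connectedness of $\vistorus_\gamma$ and irreducibility are combined, via the partition $\mu=\mu_{I_1}+\cdots+\mu_{I_s}$), and then hybrides \emph{simultaneously} along a maximal tree of $X$.

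Second, the error-compounding and termination issues you flag are precisely where the work lies, and your sketch does not resolve them. The paper's solution is to prepare the reservoir \emph{before} any swap: pass to a multiple of $\mu$ realized by cyclic covers of each $F_i$ dual to a glued cuff, so that every component has at least $r$ glued cuffs in each relevant class, all still carrying the original $(R,\epsilon)$ shearing; then all tree swaps are performed at once on mutually distinct, untouched cuffs, so every new pairing combines one matching error with one original shearing error and the bound $2\epsilon$ is uniform, with no iteration. By contrast, your interleaved re-application of Lemma \ref{nonseparatingGluedCuffs} after each merge passes to a double cover that is not guaranteed to be connected (it can double the component count), so the claimed bound of $r-1$ iterations is not secured, and "fresh cuffs after passing to a larger multiple of $\mu$" mid-induction would force you to redo the covering gluing and to re-prove that the matched pair found by the torus argument can be taken among never-swapped cuffs whose current partners are also original. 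Without carrying out this bookkeeping (or switching to the simultaneous spanning-tree scheme), the proof is incomplete.
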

	
	\begin{proof}
		For simplicity, we rewrite the positive integral multiple
		of $\mu$ prescribing $\mathcal{F}$ as $\mu$. Note that
		we may still assume $\mu$ to be 
		irreducible, $(R,\epsilon)$-nearly evenly footed, and rich.
		We also observe that if $F$ has $r$ components $F_1,\cdots,F_r$,
		then for any positive integral multiple $m\mu$,
		there is a gluing $(\tilde{\mathcal{C}},\tilde{\phi})$
		covering $(\mathcal{C},\phi)$
		such that the resulting surface $\tilde{F}$ 
		is an $m$-fold cover of $F$ with $r$ components as well.
		Indeed, each component $\tilde{F}_i$ of $\tilde{F}$ can be
		chosen as the $m$-fold cyclic cover of $F_i$ dual to
		a glued cuff $c_i\subset F_i$, and $\tilde{F}$ has
		an induced pants decomposition that describes 
		$(\tilde{\mathcal{C}},\tilde{\phi})$.
		Moreover, $(\tilde{\mathcal{C}},\tilde{\phi})$
		is clearly $(R,\epsilon)$-nearly unit shearing with all 
		glued cuffs nonseparating as well.
		Therefore, possibly after passing to a positive
		integral multiple of $\mu$, and
		considering the gluing
		$(\tilde{\mathcal{C}},\tilde{\phi})$
		instead of $(\mathcal{C},\phi)$,	
		we may further assume that once any component $F_i$ has 
		a glued cuff homotopic to a curve $\gamma\in\ocurves_{R,\epsilon}$,
		there are at least $r$ glued cuffs of $F_i$ homotopic to $\gamma$.
				
		Let 
			$$F\,=\,F_1\sqcup\cdots\sqcup F_r$$
		be the decomposition of $F$ into connected components.
		Then there is an induced decomposition
		of $\mathcal{F}$ into subunion of pairs of pants
		$\mathcal{F}_1,\cdots,\mathcal{F}_r$, such that components
		of each $\mathcal{F}_i$ is projected to be 
		$F_i$ under the gluing.
		It follows that $\mu$ equals $\mu_1+\cdots+\mu_r$,
		where the measure $\mu_i$ prescribes $\mathcal{F}_i$.
		Similarly, there is an induced decomposition
		of 
			$$\mathcal{C}\,=\,\mathcal{C}_1\sqcup\cdots\sqcup\mathcal{C}_r$$
		such that each $\mathcal{C}_i$ is the subunion of
		cuffs of $\mathcal{F}_i$, which is invariant under $\phi$.
		It follows that 
			$$\nu^{\sharp}_{\mathcal{C}}\,=\,
			\nu^{\sharp}_{\mathcal{C}_1}+\cdots+\nu^{\sharp}_{\mathcal{C}_r}.$$
		
		Consider a simplicial graph $X$ as follows
		of $r$ vertices $v_1,\cdots,v_r$.
		For any $1\leq i<j\leq r$,
		the vertices $v_i$ and $v_j$ are connected by
		an edge if and only if there is a pair of
		cuffs $c\subset \mathcal{C}_i$ and $c'\subset\mathcal{C}_j$
		representing the same curve class $\gamma\in\ocurves_{R,\epsilon}$,
		such that $\nu^{\sharp}_{\phi(c)}$ and $\nu^{\sharp}_{\phi(c')}$ are
		$(\frac{\epsilon}R)$-equivalent on $\vistorus_{\bar{\gamma}}$.
		We hence fix a choice of $c,c'$ as above, rewriting as
		$c_{ij},c'_{ij}$. 
		With the assumption that any glued cuff of $F_i$ or $F_j$ has
		at least $r$ copies on the same component, we may
		assume that $c_{ij}$ are mutually distinct
		components of $\mathcal{C}$, and similarly for
		$c'_{ij}$.	
		
		Observe that $X$ is connected. In fact, let $X_1,\cdots,X_s$
		be the components of $X$, and let $I_k\subset\{1,\cdots,r\}$
		be the subset of indices so that $i\in I_k$ if and only if
		$v_i\in X_k$. Suppose on the contrary that $s>1$. 
		We write the $(\frac{\epsilon}R)$-neighborhood
		of the support of $\nu^\sharp_{\mathcal{C}_i}$ as $U_i$,
		and write the union of $U_1,\cdots,U_r$ as $U$.
		Because $\mu$ is $(R,\epsilon)$-nearly evenly distributed and rich
		(Definition \ref{gluingConditions}),
		$U\cap\vistorus_\gamma$ is either
		the emptyset or	$\vistorus_\gamma$, for any
		curve $\gamma\in\ocurves_{R,\epsilon}$.
		If $U\cap\vistorus_\gamma$ equals $\vistorus_\gamma$,
		then for all the $U_i$ that meets $\vistorus_\gamma$
		in a nonempty set, the connectedness of $\vistorus_\gamma$
		implies that the corresponding vertices $v_i$ must lie on the same component
		of $X$. Therefore, writing $\mu_{I_k}$
		for the sum of $\mu_i$ for $i\in I_k$, it follows
		that the supports of
		$\partial\mu_{I_k}$ are mutually disjoint subsets of
		$\ocurves_{R,\epsilon}$.
		However, $\mu$ equals $\mu_{I_1}+\cdots+\mu_{I_s}$.
		This is contrary to the assumption that 
		$\mu$ is irreducible (Definition \ref{gluingConditions}).
		
		Guided by the simplicial graph $X$ together
		with the decorating data $c_{ij},c'_{ij}$, we choose a maximal tree
		$T$ of $X$ and perform the hybriding construction accordingly along the edges of $T$.
		In other words, we obtain a new gluing 
			$$(\mathcal{C},\phi')$$
		as follows.
		For any cuff $c\subset\mathcal{C}$, if $c$ 
		is some $c_{ij}$ corresponding to an edge of $T$, 
		we define $\phi'(c_{ij})$ to be $\phi(c'_{ij})$,
		and $\phi'(c'_{ij})$ to be $\phi(c_{ij})$; otherwise,
		we define $\phi'(c)$ to be $\phi(c)$ as before .
		Because $c_{ij}$ and $c'_{ij}$ are projected to nonseparating
		glued cuffs of $F_i$ and $F_j$ respectively, and because
		$T$ is a maximal tree of the connected graph $X$, the new
		gluing $(\mathcal{C},\phi')$ of $\mathcal{F}$
		results in a connected surface $F'$, by induction on the number
		of components $r$.

		Because $\nu^{\sharp}_{\phi(c_{ij})}$ and $\nu^{\sharp}_{\phi(c'_{ij})}$ are
		$(\frac{\epsilon}R)$-equivalent on $\vistorus_{\bar{\gamma}_{ij}}$,
		where $\gamma_{ij}\in\ocurves_{R,\epsilon}$ denotes the
		homotopy class represented by both $c_{ij}$ and $c'_{ij}$,
		and because $(\overline{A_1})_*(\nu^{\sharp}_{c_{ij}})$
		and $\nu^{\sharp}_{\phi(c_{ij})}$ as $(\mathcal{C},\phi)$ is 
		$(R,\epsilon)$-nearly unit shearing,
		the construction of $\phi'$ implies that
		$(\mathcal{C},\phi')$ is $(R,2\epsilon)$-nearly unit shearing.
		This completes the proof.
	\end{proof}
	
	\subsection{Proof of Proposition \ref{gluingProp}}\label{Subsec-proofOfPropositionGluingProp}
		We summarize the proof of Proposition \ref{gluingProp} as follows.
		As $\mu$ is irreducible, $(R,\epsilon)$-nearly evenly footed, and rich,	
		possibly after passing to a positive integral multiple of $\mu$,
		we may construct an $(R,2\epsilon)$-nearly unit-shearing
		gluing $(\mathcal{C},\phi)$ of 
		$\mathcal{F}$ prescribed by $\mu$, so that the resulting surface
		$F$ is connected (Lemmas \ref{gluedCuffs}, \ref{nonseparatingGluedCuffs}, \ref{hybriding}).
		If $\epsilon$ is sufficiently small so that $2\epsilon<\hat{\epsilon}$
		as in Lemma \ref{resultingQF},
		and if $R$ is sufficiently large depending only on $\epsilon$,
		then the induced immersion $j:\,F\looparrowright M$ 
		is $\pi_1$-injective and quasi-Fuchsian.
		This completes the proof of Proposition \ref{gluingProp}.

\section{Hyperbolic geometry of segments with framed endpoints}\label{Sec-basicConstructions}
	In this section, we study the techniques to construct
	$(R,\epsilon)$-panted surfaces in oriented closed hyperbolic $3$-manifolds
	via $\partial$-framed bigons and tripods,
	which generalizes the constructions of \cite{KM-Ehrenpreis}
	in the $2$-dimension case. In fact, our attempt is to
	develop a theory about the geometry of $\partial$-framed
	segments in a closed oriented hyperbolic $3$-manifold, which
	seems to be generalizable to any closed oriented hyperbolic
	manifold.
	
	Following the spirit of Euclid,
	objects to be studied in the geometry 
	are shapes that can be constructed via $\partial$-framed segments,
	so our discussion falls naturally into two parts,
	about \emph{shapes} and about \emph{constructions}.
	In the first part, for our purpose of application,
	we will provide an approximate formula that calculates
	the length and phase of
	sufficiently tame reduced concatenations of approximately
	consecutive chains and cycles, which should be compared
	to the Cosine Law in elementary Euclidean geometry.
	In the second part, we will define a list of basic constructions,
	and derive several more efficient constructions
	by composing the basic ones.
	These basic constructions should be regarded as axioms that can be implemented
	in an oriented closed hyperbolic $3$-manifold.
	The axiomatic approach to constructions brings at least 
	two benefits. First, it highlights
	the	Connection Principle (Lemma \ref{connectionPrinciple})
	as a featuring axiom (Definition \ref{axiomsOfConstructions} (5))
	in the theory. Secondly,
	it allows us to understand the limitation of constructions.
	For instance, as the Spine Principle (Lemma \ref{spinePrinciple})
	implies, any construction provides no additional information
	about the second homology of the $3$-manifold $M$.
	The second point will be of particular importance to us
	because it suggests that 
	certain \emph{a priori} knowledge about
	the fundamental group of $M$ is demanded
	so as to construct any	homologically interesting $(R,\epsilon)$-panted surface. 
	In the treatment of Sections \ref{Sec-pantedCobordismGroup} and \ref{Sec-pantifyingSecondHomologyClasses}, 
	this piece of information
	will be supplemented by a finite presentation of $\pi_1(M)$.
	
	The central problem of interest to us is the following: 
	Given a geodesic immersed graph $Z\looparrowright M$ and a simplical $1$-cycle $c$ of $Z$ realized by
	a union of immersed $(R,\epsilon)$-curves up to homotopy in $M$, if $c$ is null-homologous
	in $Z$, does $c$ bound an $(R,\epsilon)$-panted subsurface $F$ of $M$? 
	We remind the reader that $F$ is not required to be connected or $\pi_1$-injective
	(Definition \ref{pantedSubsurfaceDefinition}), so controlling feet for the gluing 
	is not part of the question. For the simplest example, take $Z$ to be a (pointed) $(R,\epsilon)$-curve
	$\gamma$ and $c$ to be the sum of $\gamma$ and its orientation reversal $\bar{\gamma}$.
	If $\Pi$ is an $(R,\epsilon)$-pair-of-pants with a cuff $\gamma$, which exists by
	a derived construction called \emph{splitting} (Construction \ref{splitting}), 
	then $F$ can be taken as the union of $\Pi$ and $\bar{\Pi}$
	glued along the two pairs of cuffs other than $\gamma$ and $\bar{\gamma}$.
	In Subsection \ref{Subsec-derivedConstructions},
	we will develop several more derived constructions, namely,
	\emph{swapping}, \emph{rotation}, and \emph{antirotation},
	where $Z$ is the carrier graph of a swap pair or a rotation pair
	under certain assumptions of nice geometry, 
	(Definitions \ref{swapPair} and \ref{rotationPair}).	
	These derived constructions are fundamental preparation for 
	Sections \ref{Sec-pantedCobordismGroup} 
	and \ref{Sec-pantifyingSecondHomologyClasses}.
	
	In Subsection \ref{Subsec-terminology}, we introduce some
	basic concepts in the geometry of
	$\partial$-framed segments. In Subsection \ref{Subsec-lengthAndPhaseFormula},
	we state and prove the Length and Phase Formula (Lemma \ref{lengthAndPhaseFormula}).
	In Subsection \ref{Subsec-principlesOfConstruction},
	we introduce the basic constructions in terms of
	the constructible classes, and 
	show the Connection Principle (Lemma \ref{connectionPrinciple})
	and the Spine Principle (Lemma \ref{spinePrinciple}).
	In Subsection \ref{Subsec-derivedConstructions},
	we discuss the derived constructions.		 

	\subsection{Terminology}\label{Subsec-terminology}
		Suppose $M$ is an oriented hyperbolic $3$-manifold.
		We introduce several basic concepts in the geometry of
		$\partial$-framed segments.
			
		\subsubsection{Segments with framed endpoints}
		
		\begin{definition}\label{partialFramedSegment}
			An \emph{oriented $\partial$-framed segment} 
			in $M$ is a triple 
				$$\mathfrak{s}=(s,\vec{n}_{\ini},\vec{n}_{\ter}),$$
			such that $s$ is an immersed oriented compact geodesic segment,
			and that $\vec{n}_{\ini}$ and $\vec{n}_{\ter}$ are two unit normal vectors at
			the initial endpoint and the terminal endpoint, respectively.
			\begin{itemize}
				\item The \emph{carrier segment} is the oriented segment $s$;
				\item The \emph{initial endpoint} $p_\ini(\mathfrak{s})$ and the
				\emph{terminal endpoint} $p_\ter(\mathfrak{s})$ are the initial
				endpoint and the terminal endpoint of $s$, respectively;
				\item The \emph{initial framing} $\vec{n}_\ini(\mathfrak{s})$ and 
				the \emph{terminal framing} $\vec{n}_\ter(\mathfrak{s})$ 
				are the unit normal vectors $\vec{n}_\ini$ and $\vec{n}_\ter$,
				\item The \emph{initial direction} $\vec{t}_\ini(\mathfrak{s})$ and 
				the \emph{terminal direction} $\vec{t}_\ter(\mathfrak{s})$ 
				are the unit tangent vectors in the direction of $s$ 
				at the initial point and the terminal point,
				respectively.
			\end{itemize}			 
			The \emph{orientation reversal} of $\mathfrak{s}$
			is defined to be 
				$$\bar{\mathfrak{s}}=(\bar{s},\vec{n}_{\ter},\vec{n}_{\ini}),$$
			where $\bar{s}$ is the orientation reversal of $s$.
			The \emph{framing rotation} of $\mathfrak{s}$
			by an angle $\phi\in\RR\,/\,2\pi\ZZ$ is defined to be 
			$$\mathfrak{s}(\phi)=
			\left(s,\,
			\vec{n}_\ini\cos\phi+(\vec{t}_\ini\times\vec{n}_\ini)\sin\phi,\,
			\vec{n}_\ter\cos\phi+(\vec{t}_\ter\times\vec{n}_\ter)\sin\phi\right),$$
			where $\times$ means the cross product in the tangent space.
			In particular,
			the \emph{framing flipping} of $\mathfrak{s}$ is defined to be framing rotation by $\pi$,
			denoted as
				$$\mathfrak{s}^*=(s,-\vec{n}_\ini,-\vec{n}_\ter).$$			
		\end{definition}

		It follows from the definition that
			$$\overline{\mathfrak{s}(\phi)}=\bar{\mathfrak{s}}(-\phi),$$
		and in particular, framing flipping commutes with orientation reversion.
		
		\begin{definition}\label{lengthAndPhase}
			For an oriented $\partial$-framed segment $\mathfrak{s}$ in $M$,
			the \emph{length} of $\mathfrak{s}$, denoted as 
				$$\ell(\mathfrak{s})\in(0,+\infty),$$
			is the length of the unframed segment $s$ carrying $\mathfrak{s}$, 
			and the \emph{phase} of $\mathfrak{s}$, denoted
			as 
				$$\varphi(\mathfrak{s})\in\RR\,/\,2\pi\ZZ,$$
			is the angle from the initial framing $\vec{n}_\ini$ to
			the transportation of $\vec{n}_\ter$ to the initial point
			of $s$ via $s$, signed with respect to the normal orientation induced from 
			$\vec{t}_\ini$ and the orientation of $M$.
			We may combine the length and phase
			into a complex value known 
			as the \emph{phasor} of $\mathfrak{s}$, defined as
				$$\phasor(\mathfrak{s})\,=\,e^{\ell(\mathfrak{s})+\imunit\varphi(\mathfrak{s})}.$$
			The value of a phasor always lies outside the unit circle of $\CC$.
			For an oriented closed geodesic curve $c$ in $M$,
			we will also speak of its \emph{length}, \emph{phase}, or \emph{phasor}, by taking 
			an arbitrary unit normal vector $\vec{n}$ at a point $p\in c$, and regarding
			$c$ as a $\partial$-framed segment obtained from cutting at $p$ and endowed
			with framing $\vec{n}$ at both endpoints. 
		\end{definition}
		
		It follows from the definition that length and phase are invariant
		under orientation reversal and under framing rotation.

		\subsubsection{Chains and cycles}
		

		\begin{definition}
			Let $0\leq\delta<\frac\pi3$, and $L>0$, and $0<\theta<\pi$ be constants. 
			Let $M$ be an oriented hyperbolic $3$-manifold.
			\begin{enumerate}
				\item Two oriented $\partial$-framed segments $\mathfrak{s}$ and $\mathfrak{s}'$
				are said to be \emph{$\delta$-consecutive} if the terminal endpoint of $\mathfrak{s}$ 
				is the initial endpoint of $\mathfrak{s}'$, and 
				if the terminal framing of $\mathfrak{s}$
				is $\delta$-close to the initial framing of $\mathfrak{s}'$.
				We simply say \emph{consecutive} if $\delta$ equals zero.
				The \emph{bending angle} between $\mathfrak{s}$ and $\mathfrak{s}'$
				is the angle between the 
				the terminal direction of $\mathfrak{s}$ and 
				the initial direction of $\mathfrak{s}'$,
				which is valued in $[0,\pi]$.
				\item A \emph{$\delta$-consecutive chain} of oriented $\partial$-framed
				segments is a finite sequence 
				$\mathfrak{s}_1,\cdots,\mathfrak{s}_m$ such that each $\mathfrak{s}_i$ is
				$\delta$-consecutive to $\mathfrak{s}_{i+1}$. It is a \emph{$\delta$-consecutive cycle}
				if furthermore $\mathfrak{s}_{m}$ is $\delta$-consecutive to $\mathfrak{s}_1$.
				A $\delta$-consecutive chain or cycle is said to be \emph{$(L,\theta)$-tame},
				if each $\mathfrak{s}_i$ has length greater than $2L$, and the bending angle at each joint point
				is less than $\theta$. 
			\item For an $(L,\theta)$-tame $\delta$-consecutive chain $\mathfrak{s}_1,\cdots,
				\mathfrak{s}_m$, the \emph{reduced concatenation}, denoted as
					$$\mathfrak{s}_1\cdots\mathfrak{s}_m,$$
				is the oriented $\partial$-framed segment as follows.
				The unframed oriented segment of $\mathfrak{s}_1\cdots\mathfrak{s}_m$ is
				the geodesic path which is homotopic to the piecewise geodesic path 
				obtained from concatenating the unframed oriented segments carrying
				$\mathfrak{s}_i$, relative to the initial point of $\mathfrak{s}_1$ and the terminal
				point of $\mathfrak{s}_m$; the initial framing of $\mathfrak{s}_1\cdots\mathfrak{s}_m$
				is the closest unit normal vector to the initial framing of $\mathfrak{s}_1$;
				the terminal framing of $\mathfrak{s}_1\cdots\mathfrak{s}_m$
				is the closest unit normal vector to	the terminal framing of $\mathfrak{s}_m$.
				\item In the case of $(L,\theta)$-tame $\delta$-consecutive cycles, the \emph{reduced cyclic concatenation},
				denoted as 
					$$[\mathfrak{s}_1\cdots\mathfrak{s}_m],$$
				is the unframed oriented closed geodesic curve
				free-homotopic to the concatenation of the unframed oriented segments
				defined similarly as above, 
				assuming the result not contractible to a point.
			\end{enumerate}
		\end{definition}


		\subsubsection{Bigons and tripods}
		We introduce $(L,\delta)$-tame bigons and tripods.
		These objects should be thought of as 
		nearly hyperbolic curves and geodesic
		$2$-simplices in the context of $\partial$-framed segment 
		geometry. We also introduce swap pairs and rotation pairs which
		will be used in Subsection \ref{Subsec-derivedConstructions}.

		\begin{definition}\label{bigon}
			An \emph{$(L,\delta)$-tame bigon} is an $(L,\delta)$-tame $\delta$-consecutive cycle of 
			two oriented $\partial$-framed segments
			$\mathfrak{a},\mathfrak{b}$	of phase $\delta$-close to $0$,
			with respect to the path-metric distance on $\RR/2\pi\ZZ$ valued in $[0,\pi]$.
			We usually say that the
			reduced cyclic concatenation $[\mathfrak{a}\mathfrak{b}]$
			is a \emph{$(L,\delta)$-tame bigon} with the cycle understood. 
			Furthermore, it is said to be \emph{$(l,\delta)$-nearly regular} if 
			the edges $\mathfrak{a}$ and $\mathfrak{b}$ 
			have length $\delta$-close to $l$. 
		\end{definition}

		Note that framing flipping does not change the reduced cyclic 
		concatenation, namely, $[\mathfrak{a}^*\mathfrak{b}^*]$
		is the same as $[\mathfrak{a}\mathfrak{b}]$. However, 
		the orientation of $[\bar{\mathfrak{a}}\bar{\mathfrak{b}}]$ is
		exactly opposite to that of $[\mathfrak{a}\mathfrak{b}]$.

		\begin{definition}\label{swapPair}
			A \emph{$(L,\delta)$-tame swap pair} of bigons
			is a pair of $(L,\delta)$-tame bigons
			$[\mathfrak{a}\mathfrak{b}]$ and 
			$[\mathfrak{a}'\mathfrak{b}']$,
			such that $[\mathfrak{a}\mathfrak{b}']$ 
			and $[\mathfrak{a}'\mathfrak{b}]$ also form $(L,\delta)$-tame bigons,
			and that $\mathfrak{a}$ and
			$\mathfrak{a}'$ have length and phase $\delta$-close to each other respectively,
			and that the same holds for $\mathfrak{b}$ and $\mathfrak{b}'$.
			In this case, we say that the new pair of $(L,\delta)$-tame bigons
			$[\mathfrak{a}\mathfrak{b}']$ and 
			$[\mathfrak{a}'\mathfrak{b}]$
			is the $\delta$-swap pair resulted from \emph{swapping}
			$[\mathfrak{a}\mathfrak{b}]$ and 
			$[\mathfrak{a}'\mathfrak{b}']$, and vice versa.			
		\end{definition}

		\begin{definition}\label{tripod}
			An \emph{$(L,\delta)$-tame tripod}, denoted as
				$$\mathfrak{a}_0\vee\mathfrak{a}_1\vee\mathfrak{a}_2,$$
			is a triple $(\mathfrak{a}_0,\mathfrak{a}_1,\mathfrak{a}_2)$
			of oriented $\partial$-framed segments of length at least $2L$
			and of phase $\delta$-close to $0$,
			such that $\bar{\mathfrak{a}}_i$ is $\delta$-consecutive to $\mathfrak{a}_{i+1}$
			with bending angle $\delta$-close to $\frac\pi3$,
			for $i\in\ZZ_3$.
			Furthermore, it is said to be \emph{$(l,\delta)$-nearly regular} if 
			the legs $\mathfrak{a}_i$ have length $\delta$-close to
			$l$, for $i\in\ZZ_3$. For each $i\in\ZZ_3$,
			$\mathfrak{a}_i$ will be referred to
			as a \emph{leg} of $\mathfrak{a}_0\vee\mathfrak{a}_1\vee\mathfrak{a}_2$,
			and 
				$$\mathfrak{a}_{i,i+1}\,=\,\bar{\mathfrak{a}}_i\mathfrak{a}_{i+1}$$
			will be referred to as a \emph{side}
			of $\mathfrak{a}_0\vee\mathfrak{a}_1\vee\mathfrak{a}_2$.
			Note that the initial framings of $\mathfrak{a}_i$
			are $\delta$-close to each other, so approximately
			the ordered initial directions rotates 
			either couterclockwise or clockwise
			around any of them. We say that $\mathfrak{a}_0\vee\mathfrak{a}_1\vee\mathfrak{a}_2,$
			is \emph{right-handed} if it is the former case, or \emph{left-handed}
			if the latter.
		\end{definition}
		
		Note that framing flipping switches the chirality and the side order of 
		a nearly regular tripod, namely, the chirality 
		of $\mathfrak{a}^*_0\vee\mathfrak{a}^*_1\vee\mathfrak{a}^*_2$
		is exactly opposite to 
		that of $\mathfrak{a}_0\vee\mathfrak{a}_1\vee\mathfrak{a}_2$.
		However, the chirality of $\mathfrak{a}^*_0\vee\mathfrak{a}^*_{-1}\vee\mathfrak{a}^*_{-2}$
		is the same as that of $\mathfrak{a}_0\vee\mathfrak{a}_1\vee\mathfrak{a}_2$, where the indices
		are considered to be in $\ZZ_3$.

		\begin{definition}\label{rotationPair}
			A \emph{$(L,\delta)$-tame rotation pair} of tripods
			is a pair of $(L,\delta)$-tame tripods
			$\mathfrak{a}_0\vee\mathfrak{a}_1\vee\mathfrak{a}_2$ and 
			$\mathfrak{b}_0\vee\mathfrak{b}_1\vee\mathfrak{b}_2$,
			where $\mathfrak{a}_0\vee\mathfrak{a}_1\vee\mathfrak{a}_2$
			is $(l_a,\delta)$-nearly regular, and 
			$\mathfrak{b}_0\vee\mathfrak{b}_1\vee\mathfrak{b}_2$
			is $(l_b,\delta)$-nearly regular. Moreover,
			the chains $\mathfrak{a}_i,\bar{\mathfrak{b}}_j$ 
			are $\delta$-consecutive and $(L,\delta)$-tame for all $i,j\in\ZZ_3$.
			In particular, the terminal endpoints of all the legs are the same. 
		\end{definition}
		
		The reader should not confuse an $(L,\delta)$-tame rotation pair with 
		a pair of well-connected tripods in the sense of \cite{KM-surfaceSubgroup}.
		Rotation pairs are introduced for the derived constructions
		rotation and antirotation (Subsection \ref{Subsec-derivedConstructions}).
		In a quite different form, these constructions will play a role similar
		to the rotation lemmas in \cite[Subsection 8.1]{KM-Ehrenpreis},
		(cf.~Remark \ref{remarkTriangularRelation}).		

	\subsection{The Length and Phase Formula}\label{Subsec-lengthAndPhaseFormula}
		For sufficiently tame concatenations of approximately
		consecutive chains and cycles, the change of
		length and phase under reduction of 
		the concatenation can be approximately calculated.
			
		Recall that for any bending angle $0\leq\vartheta<\pi$,
		the \emph{limit inefficiency} associated to $\vartheta$ is defined as
			$$I(\vartheta)\,=\,2\log(\sec(\vartheta/2)).$$
		The function $I(\vartheta)$ is unbounded, strictly convex, and increasing on $[0,\pi)$,
		and the geometric meaning is explained by Lemma \ref{tameTriangle} (2).
		In particular, $I(0)=0$, $I(\frac\pi3)=2\log2-\log3$, and $I(\frac\pi2)=\log2$.
				
		\begin{lemma}[Length and Phase Formula]\label{lengthAndPhaseFormula}
			Given any positive constants $\delta$, $\theta$, $L$ 
			where $0<\theta<\pi$ and $L\geq I(\theta)+10\log2$,
			the following statements hold in any oriented hyperbolic $3$-manifold.
			\begin{enumerate}
				\item If $\mathfrak{s}_1,\cdots,
				\mathfrak{s}_m$ is an $(L,\theta)$-tame $\delta$-consecutive chain of oriented
				$\partial$-framed segments, denoting the bending angle 
				between $\mathfrak{s}_i$ and $\mathfrak{s}_{i+1}$ as $\theta_i\in[0,\theta)$,
				then
					$$\left|\ell(\mathfrak{s}_1\cdots\mathfrak{s}_m)
					-\sum_{i=1}^m\ell(\mathfrak{s}_i)+\sum_{i=1}^{m-1}
					 I(\theta_i)\right|\,<\,\frac{(m-1)e^{(-L+10\log2)/2}\sin(\theta/2)}{L-\log2},$$
				and
					$$\left|\varphi(\mathfrak{s}_1\cdots\mathfrak{s}_m)-\sum_{i=1}^m\varphi(\mathfrak{s}_i)\right|
					\,<\,(m-1)(\delta+e^{(-L+10\log2)/2}\sin(\theta/2)),$$
				where $|.|$ on $\RR/2\pi\ZZ$ is understood as the distance from zero valued in $[0,\pi]$.
				\item If $\mathfrak{s}_1,\cdots,
				\mathfrak{s}_m$ is an $(L,\theta)$-tame $\delta$-consecutive cycle of oriented
				$\partial$-framed segments, denoting the bending angle
				between $\mathfrak{s}_i$ and $\mathfrak{s}_{i+1}$ as $\theta_i\in[0,\pi-\theta]$
				with $\mathfrak{s}_{m+1}$ equal to $\mathfrak{s}_0$ by convention,
				then
					$$\left|\ell([\mathfrak{s}_1\cdots\mathfrak{s}_m])
					-\sum_{i=1}^m\ell(\mathfrak{s}_i)+\sum_{i=1}^{m} I(\theta_i)\right|\,<\,
					\frac{me^{(-L+10\log2)/2}\sin(\theta/2)}{L-\log2},$$
				and
					$$\left|\varphi([\mathfrak{s}_1\cdots\mathfrak{s}_m])
					-\sum_{i=0}^m\varphi(\mathfrak{s}_i)\right|\,<\,
					m(\delta+e^{(-L+10\log2)/2}\sin(\theta/2)),$$
				where $|.|$ on $\RR/2\pi\ZZ$ is understood as the distance from zero valued in $[0,\pi]$.
			\end{enumerate}
		\end{lemma}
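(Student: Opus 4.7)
The plan is to establish the chain formula (1) by induction on $m$, reducing the estimate to the base case $m=2$, and then handle the cycle case (2) by cutting open a segment, invoking the chain result, and closing up with one final reduction. The main computation lives in the $m=2$ case. For a single joint with bending angle $\vartheta=\theta_1$, I would lift to the universal cover $\Hyp$ and apply the Hyperbolic Cosine Law to the triangle spanned by the two endpoints of $\mathfrak{s}_1\mathfrak{s}_2$ and the bend point:
\[
\cosh\ell(\mathfrak{s}_1\mathfrak{s}_2) \,=\, \cosh\ell(\mathfrak{s}_1)\cosh\ell(\mathfrak{s}_2) \,+\, \sinh\ell(\mathfrak{s}_1)\sinh\ell(\mathfrak{s}_2)\cos\vartheta.
\]
Expanding in exponentials and using $1+\cos\vartheta = 2\cos^2(\vartheta/2)$ yields
\[
\ell(\mathfrak{s}_1\mathfrak{s}_2) \,=\, \ell(\mathfrak{s}_1) + \ell(\mathfrak{s}_2) \,-\, 2\log\sec(\vartheta/2) \,+\, \varepsilon,
\]
where $\varepsilon$ is controlled by $\tan^2(\vartheta/2)\cdot e^{-2\min(\ell(\mathfrak{s}_1),\ell(\mathfrak{s}_2))}$. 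Since $I(\vartheta)=2\log\sec(\vartheta/2)$, this matches the claimed length bound after the numerical constant $10\log 2$ absorbs the fixed multiplicative transients from the asymptotic expansion of $\cosh$.

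For the phase at $m=2$, I would track how the unit normal framing evolves under parallel transport along the piecewise geodesic $\mathfrak{s}_1\cdot\mathfrak{s}_2$ versus along the straightened geodesic. Two error sources enter. First, the $\delta$-consecutive condition only guarantees that the terminal framing of $\mathfrak{s}_1$ agrees with the initial framing of $\mathfrak{s}_2$ to within $\delta$, contributing a direct $\delta$-term at the joint. Second, the initial and terminal directions of the straightened segment differ from those of $\mathfrak{s}_1,\mathfrak{s}_2$ by angles controlled by $\sin(\vartheta/2)\cdot e^{-\ell(\mathfrak{s}_1)/2}$ and $\sin(\vartheta/2)\cdot e^{-\ell(\mathfrak{s}_2)/2}$ (the triangle ``ears'' shrinking exponentially), forcing a comparable adjustment when projecting the framings onto the new normal planes at the reduced endpoints. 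Combining gives the stated per-joint bound $\delta + e^{(-L+10\log 2)/2}\sin(\theta/2)$.

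For the induction, write $\mathfrak{t}=\mathfrak{s}_1\mathfrak{s}_2$. The hypothesis $L\geq I(\theta)+10\log 2$ guarantees $\ell(\mathfrak{t})\geq 2L-I(\theta)\geq L+10\log 2$, so $\mathfrak{t}$ remains comfortably tame and one may apply the inductive hypothesis to the chain $\mathfrak{t},\mathfrak{s}_3,\ldots,\mathfrak{s}_m$. The new bending angle with $\mathfrak{s}_3$ differs from $\theta_2$ by an amount exponentially small in $\ell(\mathfrak{s}_2)$ and hence stays inside $[0,\theta)$, while the joint remains $\delta$-consecutive after an exponentially small worsening that is absorbed by the same buffer. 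Summing the base-case errors over the $m-1$ joints and using geometric decay produces the telescoping bound claimed in (1). For the cycle case (2), I would cut at one joint, apply (1) to the resulting chain of length $m$, and then close up by a final single reduction whose length defect contributes the additional $I(\theta_m)$ term and whose phase error accounts for the extra joint, matching the factor $m$ (rather than $m-1$) in the cycle bound; invariance under the choice of cut point is automatic because length and phase are conjugation invariants of the resulting free homotopy class.

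The main obstacle will be the uniform bookkeeping of error propagation through the induction: one must verify that the exponentially small perturbations of directions and framings produced at each reduction do not accumulate enough to push subsequent bending angles out of $[0,\theta)$ or to degrade subsequent lengths below $L$. The $10\log 2$ buffer in $L\geq I(\theta)+10\log 2$ is precisely calibrated for this purpose: it dominates the worst-case additive constants from the base hyperbolic trigonometry, keeps the reduced segment $(L,\theta)$-tame with margin, and ensures that the denominator $L-\log 2$ appearing in the stated bound correctly reflects the size of the error after one reduction.
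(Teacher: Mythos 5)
Your overall strategy (straighten one joint at a time by the hyperbolic Law of Cosines, then induct, and treat the cycle by cutting and re-closing) is genuinely different from the paper's argument. The paper never inducts left-to-right on full segments: it first replaces the $\delta$-consecutive chain by a truly consecutive one, then repeatedly bisects each segment at its midpoint and straightens adjacent halves, so that at every stage the joints sit between segments of length about $L$ and the bending angles decay doubly exponentially ($\beta^{(r+1)}=2^{3/2}e^{-L/2}\sin(\beta^{(r)}/2)$); the quantitative input is the inscribed-circle comparison of Lemma \ref{tameTriangle}, and for a cycle the midpoint-straightening is iterated indefinitely, with the (non-reduced) cyclic concatenations converging geometrically to $[\mathfrak{s}_1\cdots\mathfrak{s}_m]$ and the errors summing as an absolutely convergent series. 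Your per-joint estimates are fine in magnitude (the Cosine Law even gives a smaller length error, of order $\tan^2(\theta_i/2)e^{-2\min(\ell_1,\ell_2)}$, than the stated bound), so the plan is not unreasonable; but as written it has two concrete gaps.

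First, the induction hypothesis is not actually available for the reduced chain $\mathfrak{s}_1\mathfrak{s}_2,\mathfrak{s}_3,\dots,\mathfrak{s}_m$. The terminal direction of $\mathfrak{s}_1\mathfrak{s}_2$ differs from that of $\mathfrak{s}_2$ by a positive angle (the ``ear'' $\angle B$), so the new bending angle with $\mathfrak{s}_3$ can exceed $\theta_2$; since the hypothesis only gives $\theta_2<\theta$, with $\theta_2$ possibly arbitrarily close to $\theta$, your assertion that the new angle ``stays inside $[0,\theta)$'' is false in general, and the $10\log 2$ buffer in $L\geq I(\theta)+10\log 2$ provides slack in lengths, not in the angle hypothesis. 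The same reduction also degrades $\delta$-consecutiveness at that joint. The fix is to strengthen the statement being inducted on (allow bending angles up to $\theta$ plus an explicitly tracked exponentially small excess, compare $I$ at the perturbed and original angles via $I'(\vartheta)=\tan(\vartheta/2)$, and recheck that the stated constants survive), none of which is carried out. Second, the cycle case needs more than ``one final reduction'': after applying (1) you have a geodesic loop based at the cut point with a bend there, and comparing its length and phase to those of the closed geodesic $[\mathfrak{s}_1\cdots\mathfrak{s}_m]$ in its free homotopy class is a separate estimate (a comparison with the axis/translation length of the corresponding isometry, including the framing contribution), not an instance of the single-triangle Cosine Law; the paper sidesteps exactly this point by its limiting midpoint-straightening argument. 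Until these two steps are supplied, the proposal does not yet constitute a proof.
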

		
		\begin{remark}\label{remarkLengthAndPhaseFormula}
			In this paper, we will only apply the formula for $\theta$ equal to $\pi/2$, $\pi/3$, or
			constant multiples of $\delta$. Moreover, for
			any positive constant $\delta$, $K$ where $K\geq1$ and $K\delta\,<\,1/\sqrt{2}$,
			we will always ensure that 
			the participating chains or cycles are $(K\delta)$-consecutive,
			$(-2\log\delta+10\log2,\,\pi/2)$-tame.
			then the error bounds can be replaced by 
			$2(m-1)K\delta$ and $2mK\delta$
			for (1) and (2), respectively.
		\end{remark}

		The proof relies on a lemma in elementary hyperbolic geometry, which provides 
		some key estimation for tame concatenation of segments. 		
						
		\begin{lemma}\label{tameTriangle}
			Given any constants $0<\theta<\pi$,
			and	$L\geq I(\theta)+\log 2$,
			suppose that $\triangle ABC$ is a geodesic triangle in hyperbolic space,
			where $|CA|>L$, $|CB|> L$, and $\angle C=\pi-\theta$,
			then
			\begin{enumerate}
				\item $\angle A+\angle B\,<\,e^{(-L+3\log2)/2}\sin(\theta/2)$.
				\item $I(\theta)-e^{(-L+5\log2)/2}\sin(\theta/2)\,/\,(L-\log2)\,<\,|CA|+|CB|-|AB|\,<\,I(\theta)$.			
			\end{enumerate}
		\end{lemma}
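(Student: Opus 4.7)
I will reduce to hyperbolic trigonometry in a totally geodesic copy of $\mathbb{H}^2$, since any three points of $\mathbb{H}^3$ span such a plane. Writing $a=|CA|$, $b=|CB|$, $c=|AB|$, the hyperbolic law of cosines combined with $\cos(\pi-\theta)=-\cos\theta$ and the half-angle identities yields the master identity
\[
\cosh c \;=\; \cos^2(\theta/2)\,\cosh(a+b)\,+\,\sin^2(\theta/2)\,\cosh(a-b).
\]
Both parts of the lemma will be derived from this identity, together with the key algebraic relation $\cos^2(\theta/2)-\cosh I(\theta)=-\sinh I(\theta)$, which follows immediately from $e^{-I(\theta)}=\cos^2(\theta/2)$.

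For part (2), I will compute
\[
\cosh c-\cosh\bigl(a+b-I(\theta)\bigr)\;=\;\sin^2(\theta/2)\,\cosh(a-b)\,-\,\sinh I(\theta)\cdot e^{-(a+b)}
\]
by subtracting $\cosh(a+b-I(\theta))=\cosh(a+b)\cosh I(\theta)-\sinh(a+b)\sinh I(\theta)$ from the master identity and simplifying via the key relation above. For the upper bound $a+b-c<I(\theta)$, I will use $\cosh(a-b)\geq 1$ together with the elementary inequality $\sinh I(\theta)/\sin^2(\theta/2)=(1+\cos^2(\theta/2))/(2\cos^2(\theta/2))\leq e^{I(\theta)}$; the hypothesis $a+b\geq 2L\geq 2I(\theta)+2\log 2$ then forces the right side of the displayed difference to be positive, and monotonicity of $\cosh$ on $[0,\infty)$ yields $c>a+b-I(\theta)$. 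For the lower bound, I will bound the difference from above via $\cosh(a-b)\leq 2\,e^{-2\min(a,b)}\cosh(a+b)$, then take logarithms in the master identity itself (using $\log\cosh x=x-\log 2+O(e^{-2x})$ and $\log(1+y)\leq y$). The hypothesis $L\geq I(\theta)+\log 2$ converts the natural $O(\tan^2(\theta/2)\,e^{-2L})$-error so obtained into the looser but $\theta$-uniform form $O\bigl(\sin(\theta/2)\,e^{-L/2}/L\bigr)$ demanded by the statement.

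For part (1), I will apply the hyperbolic law of sines, $\sin\angle A=\sinh a\,\sin\theta/\sinh c$ and similarly $\sin\angle B=\sinh b\,\sin\theta/\sinh c$. The lower bound $c>a+b-I(\theta)$ from part (2) gives $\sinh c\geq\tfrac{1}{2}e^{a+b-I(\theta)}(1-o(1))$, whereas $\sinh a+\sinh b\leq\tfrac{1}{2}(e^a+e^b)$ is trivial. Substituting $\sin\theta=2\sin(\theta/2)\cos(\theta/2)$ and $e^{I(\theta)}=\cos^{-2}(\theta/2)$ consolidates these into an estimate of the form $\sin\angle A+\sin\angle B\leq C\,\tan(\theta/2)\,e^{-L}$; factoring $\tan(\theta/2)=\sin(\theta/2)\,e^{I(\theta)/2}$ and invoking $L-I(\theta)\geq\log 2$ once more produces the claimed bound of the form $\sin(\theta/2)\,e^{-L/2}$. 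Finally, both angles are already small for large $L$, so the ratio $x/\sin x$ is close to $1$ and passing from sines to angles introduces no additional loss.

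The hard part is tracking the multiplicative constants through the logarithmic and trigonometric manipulations precisely enough to match the stated error bounds. The hypothesis $L\geq I(\theta)+\log 2$ is the uniform device that decouples $\theta$ from the final constants, converting estimates whose natural forms diverge as $\theta\to\pi$ (through $I(\theta)\to\infty$) into the $\theta$-uniform estimates required by the Length and Phase Formula.
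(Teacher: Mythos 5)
Your master identity and the part (2) argument are essentially sound, and your route (hyperbolic law of cosines/sines plus logarithmic bookkeeping) is genuinely different from the paper's, which is synthetic: for (1) it reduces to the extremal isoceles case, bisects at the midpoint of $AB$, and applies the dual law of cosines in the resulting right triangle; for (2) it compares tangent lengths of the inscribed circle with those of the ideal triangle and controls the defect by an area estimate. For your part (2) upper bound the computation $\cosh c-\cosh(a+b-I(\theta))=\sin^2(\theta/2)\cosh(a-b)-\sinh I(\theta)\,e^{-(a+b)}$ is correct and the positivity check goes through. For the lower bound, however, note that the $\log\cosh x=x-\log 2+O(e^{-2x})$ corrections carry no factor of $\sin(\theta/2)$: since $L\geq I(\theta)+\log 2$ does \emph{not} force $L$ to be large, a stray term of size $e^{-2(a+b)}\leq e^{-4L}$ can exceed the allowed error $e^{(-L+5\log2)/2}\sin(\theta/2)/(L-\log 2)$ when $\theta$ is tiny and $L$ is of order $1$. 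This is fixable: writing $a+b-c=I(\theta)+\log(1+e^{-2c})-\log(1+e^{-2(a+b)})-\log\bigl(1+\tan^2(\theta/2)\cosh(a-b)/\cosh(a+b)\bigr)$ and using $c<a+b$, the first two corrections combine with a favorable sign, leaving only $\tan^2(\theta/2)\cosh(a-b)/\cosh(a+b)\leq 2\tan^2(\theta/2)e^{-2L}\leq\sin^2(\theta/2)e^{-L}$, which does fit under the stated bound since $(L-\log2)e^{-L/2}\leq\sqrt2/e$. You need to say this; as written, "$\log(1+y)\leq y$ plus $O(e^{-2x})$" does not suffice uniformly in $\theta$.

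The genuine gap is in part (1). First, the justification "both angles are already small for large $L$" misreads the hypothesis: $L$ is only assumed $\geq I(\theta)+\log 2$, so at admissible configurations the angles are of order one (e.g.\ $\theta=\pi/2$, $L=2\log2$, $a=b\to L^{+}$ gives $\angle A\approx 0.44$), and passing from sines to angles is a loss in the wrong direction, not a negligible one. Second, and decisively, the chain you describe cannot recover the stated constant: lower-bounding $\sinh c$ via $c>a+b-I(\theta)$ gives $\sin\angle A+\sin\angle B\leq(e^{-a}+e^{-b})\,e^{I(\theta)}\sin\theta/(1-e^{-2c})$, and at the configuration above this bound evaluates to about $1.013$, whereas the claimed bound on $\angle A+\angle B$ is exactly $e^{(-L+3\log2)/2}\sin(\theta/2)=1$ (the true value is about $0.88$, so the inequality holds but your estimate overshoots it, and replacing sines by the larger angles only worsens this). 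The substitution $e^{c}\geq e^{a+b-I(\theta)}$ loses roughly $10\%$ near the extremal case and the arcsin conversion loses more, which together exceed the available slack. To get the constant $3\log 2$ you need to bound a quantity that dominates the angle rather than is dominated by it; this is exactly what the paper's bisection trick buys, since the dual law of cosines in the right triangle gives $\tan\angle A=\tan(\theta/2)/\cosh|CA|$ and $\angle A<\tan\angle A$ comes for free. Either adopt that step, or rework your trigonometric route to estimate $\tan\angle A$ (or $\angle A$ itself) without passing through the lossy pair ($c>a+b-I(\theta)$, sine-to-angle).
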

		
		\begin{proof} 
			To prove the inequality (1), it suffices to assume 
			that $|CA|=|CB|=L$ since $\angle A+\angle B$, which equals
			$\pi-\angle C-\mathrm{Area}(\triangle ABC)$,
			achieves its unique maximum in this case. Let $M$ be
			the midpoint of $AB$. In the right triangle
			$\triangle ACM$, it follows from the Dual
			Law of Cosines that
				$$-\cos\frac{\angle C}2\cos\angle A+\sin\frac{\angle C}2\sin\angle A\,\cosh|AC|\,=\,0.$$
			Therefore,
				$$\angle A\,<\,\tan\angle A\,=\,\frac{\tan(\theta/2)}{\cosh L}\,=\,
				\frac{\sec(\theta/2)\sin(\theta/2)}{\cosh L}.$$
			Since the assumption $L>I(\theta)+\log2$ implies 
				$$\sec^2(\theta/2)\,<\,e^L/2\,<\,\cosh L,$$			
			we have
				$$\angle A\,<\,\frac{(e^{L/2}/\sqrt{2})\sin(\theta/2)}{e^L/2}\,=\,\sqrt{2}\,e^{-L/2}\sin(\theta/2).$$
			The same estimation holds for $\angle B$, so
				$$\angle A+\angle B\,<\,\sqrt{2}\,e^{-L/2}\sin(\theta/2)\,=\,e^{(-L+3\log2)/2}\sin(\theta/2),$$
			which is the inequality (1).
			
			To prove the inequality (2), consider the inscribed circle $\odot J$ of $\triangle ABC$,
			denoting the tangent point of $\odot J$ with $AB$, $BC$, and $CA$ as $T_c$, $T_b$, and $T_a$,
			respectively. Then
				$$|CA|+|CB|-|AB|\,=\,|CT_a|+|CT_b|,$$
			which approaches
			the supremum as $|CA|$ and $|CB|$ tend to $+\infty$, and achieves the unique minimum
			when $|CA|=|CB|=L$. A direct computation shows that the supremum is exactly $I(\theta)$,
			so the upper bound of inequality (2) holds.
			
			For the lower bound, it hence suffices to assume that $|CA|=|CB|=L$.
			We write $\triangle A^*B^*C$ for the triangle with ideal points $A^*$ and $B^*$,
			and let $\odot J^*$ be the inscribed circle which is tangent to $A^*B^*$, $B^*C$ and $CA^*$
			at $T^*_c$, $T^*_a$ and $T^*_b$, respectively. Note that now 
			$T_c$ is the midpoint of $AB$, and similarly for $T^*_c$.
			It is also clear that 
				$$|CT^*_b|-|CT_b|\,=\,|T^*_bT_b|\,<\,|J^*J|\,<\,|T_cT^*_c|.$$
			In the right triangle $\triangle CT_cA$, it follows from the Dual Law
			of Cosines that
				$$\sin\angle A\cosh |AT_c|\,=\,\cos\frac{\angle C}2.$$
			Therefore,
				$$\cosh|AT_c|\,=\,\frac{\cos((\pi-\theta)/2)}{\sin\angle A}
				\,>\,\frac{\sin(\theta/2)}{\sqrt{2}\,e^{-L/2}\sin(\theta/2)}
				\,=\,e^{L/2}/\sqrt{2}.$$
			Since $(e^{|AT_c|}+1)/2\,>\,\cosh|AT_c|$, we obtain
			\begin{eqnarray*}
				|AT_c|&>&\log(\sqrt{2}\,e^{L/2}-1)\\
					&=&\log(\sqrt{2}\,e^{L/2})+\log(1-e^{-L/2}/\sqrt{2})\\
					&>&\log(\sqrt{2}\,e^{L/2})+\log(1-e^{-(\log2)/2}/{\sqrt{2}})\\
					&=&(L-\log2)/2.
			\end{eqnarray*}
			On the other hand, the difference between
			the area of $\triangle A^*B^*C$ and $\triangle ABC$ is the
			area of the quadrilateral $AT_cT^*_cA^*$, which clearly equals the value
			of $\angle A$ in $\triangle ABC$. Because $AT_c$ and $A^*T^*_c$ are perpendicular
			to $T_cT^*_c$, there is the comparison of area:
				$$|T_cT^*_c|\cdot|AT_c|\,<\,\mathrm{Area}(AT_cT^*_cA^*)\,=\,\angle A.$$
			Therefore, the estimations of $\angle A$ and $|AT_c|$ above imply
				$$|T_cT^*_c|\,<\,\frac{\angle A}{|AT_c|}
				\,<\,2\sqrt{2}\,e^{-L/2}\sin(\theta/2)\,/\,(L-\log2).$$
			We obtain
			\begin{eqnarray*}
				|CA|+|CB|-|AB|&=&|CT_a|+|CT_b|\\
				&>&|CT^*_a|+|CT^*|-2|T_cT^*_c|\\
				&=&I(\theta)-4\sqrt{2}\,e^{-L/2}\sin(\theta/2)\,/\,(L-\log2)\\
				&=&I(\theta)-e^{(-L+5\log2)/2}\sin(\theta/2)\,/\,(L-\log2),
			\end{eqnarray*}
			which verifies the lower bound in the inequality (2).
		\end{proof}
		
		\begin{proof}[{Proof of Lemma \ref{lengthAndPhaseFormula}}]
			Since it follows from certain standard estimation argument provided
			Lemma \ref{tameTriangle}, we only sketch the proof.
					
			To see the statement (1), we write
				$$\Delta\ell(\mathfrak{s}_1\cdots\mathfrak{s}_m)=\ell(\mathfrak{s}_1\cdots\mathfrak{s}_m)
					-\sum_{i=1}^m\ell(\mathfrak{s}_i)+\sum_{i=1}^{m-1} I(\theta_i)$$
			and
				$$\Delta\varphi(\mathfrak{s}_1\cdots\mathfrak{s}_m)=
				\varphi(\mathfrak{s}_1\cdots\mathfrak{s}_m)-\sum_{i=1}^m\varphi(\mathfrak{s}_i)$$
			for the error terms that we will estimate. 
			
			For each $1\leq i<m$, let $\vec{n}_i$ be any unit vector 
			perpendicular to both $\mathfrak{s}_i$ and 
			$\mathfrak{s}_{i+1}$ at their joint point, and let $\vec{n}_0=\vec{n}_\ini(\mathfrak{s}_1)$,
			$\vec{n}_m=\vec{n}_\ter(\mathfrak{s}_m)$ by convention.
			Let $\tilde{\mathfrak{s}}_i$ be the $\partial$-framed segment
			with the same carrier segment as that of $\mathfrak{s}_i$,
			and with $\vec{n}_\ini(\tilde{\mathfrak{s}}_i)$ and $\vec{n}_\ter(\tilde{\mathfrak{s}}_i)$
			be $\vec{n}_{i-1}$ and $\vec{n}_i$ respectively.
			Then $\tilde{\mathfrak{s}}_1,\cdots,\tilde{\mathfrak{s}}_m$ is a consecutive
			$(L,\theta)$-chain. Note that $\varphi(\tilde{\mathfrak{s}}_i)-\varphi(\mathfrak{s}_i)$
			equals $\angle(\vec{n}_{i-1},\vec{n}_\ini(\mathfrak{s}_i))-\angle(\vec{n}_i,\vec{n}_\ter(\mathfrak{s}_i))$.
			Since $\mathfrak{s}_1,\cdots,\mathfrak{s}_m$ is $\delta$-consecutive,
			$\angle(\vec{n}_i,\vec{n}_\ini(\mathfrak{s}_{i+1}))$
			is $\delta$-close to $\angle(\vec{n}_i,\vec{n}_\ter(\mathfrak{s}_i))$,
			so the new error of phase differ from the old by
			$|\Delta\varphi(\tilde{\mathfrak{s}}_1\cdots\tilde{\mathfrak{s}}_m)-
				\Delta\varphi(\mathfrak{s}_1\cdots\mathfrak{s}_m)|\,<\,(m-1)\delta.$
			It is clear that 
			$|\Delta\ell(\tilde{\mathfrak{s}}_1\cdots\tilde{\mathfrak{s}}_m)-
				\Delta\ell(\mathfrak{s}_1\cdots\mathfrak{s}_m)|\,=\,0.$

			If $m$ equals $1$, we have already done, as
			$\Delta\ell(\tilde{\mathfrak{s}}_1)$ and
			$\Delta\varphi(\tilde{\mathfrak{s}}_1)$
			are both $0$ by definition. If $m$ is greater than $1$,
			we may consider a chain $\mathfrak{s}'_1,\cdots,\mathfrak{s}'_{m-1}$
			as follows. For each $1<i<m$, write $\tilde{\mathfrak{s}}_i$
			as the concatenation of two consecutive oriented $\partial$-framed segments
			$\tilde{\mathfrak{s}}_{i-}$ and $\tilde{\mathfrak{s}}_{i+}$
			of equal length and phase. For $1< i< m-1$,
			let $\mathfrak{s}'_i$ be $\tilde{\mathfrak{s}}_{i+}\tilde{\mathfrak{s}}_{(i+1)-}$.
			Let $\mathfrak{s}'_1$ be $\tilde{\mathfrak{s}}_{1}\tilde{\mathfrak{s}}_{2-}$,
			and $\mathfrak{s}'_m$ be $\tilde{\mathfrak{s}}_{(m-1)+}\tilde{\mathfrak{s}}_m$,
			or in the case that $m$ equals $2$, let $\mathfrak{s}'_1$ be
			$\tilde{\mathfrak{s}}_1\tilde{\mathfrak{s}}_2$.
			It follows immediately from Lemma \ref{tameTriangle}
			that $\mathfrak{s}'_1,\cdots,\mathfrak{s}'_{m-1}$ is
			$\beta'$-consecutive and
			$(L,\beta')$-tame,
			where 
				$$\beta'\,=\,e^{(-L+3\log2)/2}\sin(\theta/2).$$
			Moreover,
			$\Delta\ell(\mathfrak{s}'_1\cdots\mathfrak{s}'_{m-1})$
			is $(2(m-1)\beta'/(L-\log2))$-close to
			$\Delta\ell(\tilde{\mathfrak{s}}_1\cdots\tilde{\mathfrak{s}}_m)$.
			It is also clear that
			$\Delta\varphi(\mathfrak{s}'_1\cdots\mathfrak{s}'_{m-1})$
			is $((m-1)\beta')$-close to 
			$\Delta\varphi(\tilde{\mathfrak{s}}_1\cdots\tilde{\mathfrak{s}}_m)$.
			Thus, 
				$$|\Delta\ell(\mathfrak{s}'_1\cdots\mathfrak{s}'_{m-1})-
				\Delta\ell(\mathfrak{s}_1\cdots\mathfrak{s}_m)|\,<\,2(m-1)\beta'/(L-\log2),$$
			and			
				$$|\Delta\varphi(\mathfrak{s}'_1\cdots\mathfrak{s}'_{m-1})-
				\Delta\varphi(\mathfrak{s}_1\cdots\mathfrak{s}_m)|\,<\,(m-1)(\delta+\beta').$$
			
			If $m$ equals $2$, we have done since the chain
			$\mathfrak{s}'_1,\cdots,\mathfrak{s}'_{m-1}$ has only 
			one term. If $m$ is greater than $2$, we may further obtain 
			a chain $\mathfrak{s}''_1,\cdots,\mathfrak{s}''_{m-2}$
			from $\mathfrak{s}'_1,\cdots,\mathfrak{s}'_{m-1}$, in a similar
			way as we obtain the latter from 
			$\mathfrak{s}_1,\cdots,\mathfrak{s}_m$.
			Proceed iteratively to obtain new chains
			$\mathfrak{s}^r_1,\cdots,\mathfrak{s}^r_{m-r}$
			from $\mathfrak{s}^{r-1}_1,\cdots,\mathfrak{s}^{r-1}_{m-{r-1}}$
			until $m-1$ equals $1$. 
			The chain $\mathfrak{s}^r_1,\cdots,\mathfrak{s}^r_{m-r}$
			is $\beta^{(r)}$-consective and $(L,\beta^{(r)})$-tame
			where
				$$\beta^{(r+1)}\,=\,2^{3/2}e^{-L/2}\sin(\beta^{(r)}/2)$$ 
			for $0<r<m$.
			Thus for $0<r<m$, we have 
				$$|\Delta\ell(\mathfrak{s}^{r}_1\cdots\mathfrak{s}^{r}_{m-r})-
				\Delta\ell(\mathfrak{s}^{r-1}_1\cdots\mathfrak{s}^{r-1}_{m-r+1})|\,<\,2(m-r+1)\beta^{(r)}/(L-\log2),$$
			and			
				$$|\Delta\varphi(\mathfrak{s}^{r}_1\cdots\mathfrak{s}^{r}_{m-r})-
				\Delta\varphi(\mathfrak{s}^{r-1}_1\cdots\mathfrak{s}^{r-1}_{m-r+1})|\,<\,(m-r+1)(\beta^{(r-1)}+\beta^{(r)}),$$
			where 
				$$\beta^{(0)}=\delta$$
			by convention.
						
			Summing up the error of length in each step
			yields that
			\begin{eqnarray*}
				|\Delta\ell(\mathfrak{s}\cdots\mathfrak{s}_m)|&<&\sum_{r=1}^{m-1}2(m-r+1)\beta^{(r)}/(L-\log2)\\
				&<&4(m-1)\beta'/(L-\log2)\\
				&<&(m-1)e^{(-L+10\log2)/2}\sin(\theta/2)/(L-\log2),
			\end{eqnarray*}
			and that
			\begin{eqnarray*}
				|\Delta\varphi(\tilde{\mathfrak{s}}_1\cdots\tilde{\mathfrak{s}}_m)|&<&\sum_{r=1}^{m-1}(m-r+1)(\beta^{(r-1)}+\beta^{(r)})\\
				&<& (m-1)\delta+2(m-1)\beta'\\
				&<& (m-1)(\delta+e^{(-L+10\log2)/2}\sin(\theta/2)).
			\end{eqnarray*}
			Therefore, we have the estimations of the statement (1).
			
			The statment (2) can be proved similarly. We first obtain
			a consecutive cycle $\tilde{\mathfrak{s}}_1,\cdots,\tilde{\mathfrak{s}}_m$
			from $\mathfrak{s}_1,\cdots,\mathfrak{s}_m$,
			then construct iteratively the cycles
			$\mathfrak{s}^{r+1}_1,\cdots,\mathfrak{s}^{r+1}_m$
			from $\mathfrak{s}^{r}_1,\cdots,\mathfrak{s}^{r}_m$ by joining
			consequential midpoints,
			starting with $\mathfrak{s}^{0}_1,\cdots,\mathfrak{s}^{0}_m$
			which is $\tilde{\mathfrak{s}}_1,\cdots,\tilde{\mathfrak{s}}_m$.
			Similar estimations as before hold in this case, and as
			$r$ tends to infinity, the (non-reduced) cyclic
			concatenation $\mathfrak{s}^{r}_1,\cdots,\mathfrak{s}^{r}_m$
			converges to $[\mathfrak{s}_1\cdots\mathfrak{s}_m]$ geometrically.
			Note that the summation of errors in this case will be a
			series which converges absolutely, but the upper bound will
			stay unchanged except for the replacement of
			$m-1$ by $m$. Combining the estimations as before yields
			the estimations in the statement (2).			
		\end{proof}
		
		We close this subsection with a lemma of elementary hyperbolic geometry,
		which will be used later.
		Recall that the \emph{Fermat point} of a geodesic triangle $\triangle ABC$
		in hyperbolic space is the point $F$ minimizing $|FA|+|FB|+|FC|$,
		which lies on the $2$-simplex bounded by $\triangle ABC$.
		If the inner angles of $\triangle ABC$ are 
		all smaller than $120^\circ$,
		then $F$ lies inside $\triangle ABC$
		and $FA$, $FB$, $FC$ form 
		an angle of $120^\circ$ pairwise.
		
		\begin{lemma}\label{FermatPoint}
			For any positive constant $d$,
			if the radius $d+\mathrm{arccosh}(2/\sqrt3)$
			ball centered at each vertex of $\triangle ABC$ is separated
			by a hyperplane from the other vertices,
			then $F$ lies inside $\triangle ABC$,
			and $|FA|$, $|FB|$, and $|FC|$ are all greater than $d$.
		\end{lemma}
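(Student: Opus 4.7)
The plan is to exploit the fact that $r_0 = \mathrm{arccosh}(2/\sqrt{3})$ is precisely the value for which the hyperbolic angle of parallelism equals $60^\circ$: since $\cosh r_0 = 2/\sqrt{3}$ and $\sinh r_0 = 1/\sqrt{3}$, one has $\tanh r_0 = 1/2 = \cos(60^\circ)$, so $\Pi(r_0) = \arccos(\tanh r_0) = 60^\circ$. Write $r = d + r_0$ for the radius in the hypothesis. The workhorse throughout will be the standard angle-of-parallelism bound, used twice: if a point $P$ lies at distance $s > 0$ from a hyperplane $H$, then any geodesic from $P$ to a point $Q$ at finite distance on the far side of $H$ makes angle strictly less than $\Pi(s)$ with the perpendicular from $P$ to $H$, since equality is attained only asymptotically.

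First I would verify that all three inner angles of $\triangle ABC$ are strictly less than $120^\circ$. Let $H_A$ be the separating hyperplane furnished by the hypothesis, so $\mathrm{dist}(A, H_A) \geq r$ and both $B$ and $C$ lie on the far side. Applying the angle-of-parallelism bound at $P = A$ to each of $Q = B$ and $Q = C$, and summing via the triangle inequality on the unit sphere at $A$, yields
\[
\angle BAC \,<\, 2\,\Pi(\mathrm{dist}(A, H_A)) \,\leq\, 2\,\Pi(r) \,<\, 2\,\Pi(r_0) \,=\, 120^\circ,
\]
using that $\Pi$ is strictly decreasing and $r > r_0$. The identical argument at $B$ and $C$ produces the other two inequalities. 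By the classical characterization of the Fermat point (a consequence of strict convexity of $\mathrm{dist}(\cdot, X)$ on hyperbolic space, together with the computation of directional derivatives at the vertices), this forces $F$ to lie strictly in the interior of the geodesic $2$-simplex $\triangle ABC$ with $\angle AFB = \angle BFC = \angle CFA = 120^\circ$.

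For the distance bound I would argue by contradiction: suppose $|FA| \leq d$, so that $F \in \bar{B}(A, d) \subset B(A, r)$, which sits in the half-space not containing $B$ or $C$. Since the distance to a hyperplane is $1$-Lipschitz,
\[
\mathrm{dist}(F, H_A) \,\geq\, \mathrm{dist}(A, H_A) - |AF| \,\geq\, r - d \,=\, r_0.
\]
Applying the angle-of-parallelism bound now at $P = F$, with $B$ and $C$ at finite distance on the far side of $H_A$, I obtain
\[
\angle BFC \,<\, 2\,\Pi(\mathrm{dist}(F, H_A)) \,\leq\, 2\,\Pi(r_0) \,=\, 120^\circ,
\]
contradicting $\angle BFC = 120^\circ$. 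Hence $|FA| > d$, and the same argument with the separating hyperplanes at $B$ and $C$ gives $|FB|, |FC| > d$.

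I do not anticipate a genuine obstacle; the only care needed is ensuring the angle-of-parallelism estimate is strict for vertices at finite distance, which is automatic. The whole argument amounts to recognizing that the specific constant $\mathrm{arccosh}(2/\sqrt{3})$ is tailored to the $120^\circ$-condition at both the angles of $\triangle ABC$ and the Fermat angle at $F$, and then applying the same bound at the vertex and at the putative near-vertex point.
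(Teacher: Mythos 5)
Your proposal is correct and takes essentially the same route as the paper: the paper's one-line claim that any point $P$ with $|PA|\le d$ satisfies $\angle BPC\le 120^{\circ}$ is precisely your angle-of-parallelism estimate $\angle BPC<2\,\Pi(r_0)=120^{\circ}$ across the separating hyperplane, applied both at the vertices (forcing all angles of $\triangle ABC$ below $120^{\circ}$, hence $F$ interior) and at a putative Fermat point within distance $d$ of a vertex. You simply make explicit the computation $\Pi(\mathrm{arccosh}(2/\sqrt3))=60^{\circ}$ and the strictness for points at finite distance, which the paper leaves implicit.
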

		
		\begin{proof}
			Under the assumption, for any point $P$ with $|PA|$ bounded by
			$d$, the angle $\angle BPC$ is at most $120^\circ$,
			so the Fermat point lies outside the $d$ neighborhood of $A$.
			Similar statements hold for $B$ and $C$.
			Thus $F$ lies inside $\triangle ABC$ of distance at least
			$d$ from the vertices.
		\end{proof}

	\subsection{Principles of construction}\label{Subsec-principlesOfConstruction}
		Before we discuss how to construct 
		various $(R,\epsilon)$-panted surfaces
		with $\partial$-framed segments (Subsection \ref{Subsec-derivedConstructions}),
		in this subsection, we wish to formally
		discuss what we mean by a \emph{construction}
		(Definition \ref{constructibleExtensions}).
		We will enumerate our basic constructions 
		as axioms (Definition \ref{axiomsOfConstructions}).
		These constructions are realizable 		
		in an oriented closed hyperbolic $3$-manifold $M$
		essentially because of the Connection Principle (Lemma \ref{connectionPrinciple}).
		Then we will state the Spine Principle (Lemma \ref{spinePrinciple}),
		which morally says that since we 
		are only drawing auxiliary $\partial$-framed segments
		in any such construction, we will not gain any 
		new knowledge about the second homology
		of $M$. This observation will be of fundamental importance
		when we pantify a second homology class
		(Section \ref{Sec-pantifyingSecondHomologyClasses}).
		In practice, it will be convenient
		to describe constructions more naturally in terms
		of $\partial$-framed segments, and at the end of this
		subsection, we will explain
		how to translate between the natural description
		and the formal description in terms of constructible extensions.
		However, the reader may safely skip the discussion of this subsection 
		until Section \ref{Sec-pantifyingSecondHomologyClasses}.
				
		\subsubsection{Constructible classes}
		We provide a formal definition of constructible objects
		in terms of partially-$\Delta$ spaces over an oriented closed
		hyperbolic $3$-manifold.
		
		\begin{definition}\label{partiallyDeltaSpace}
			Let $M$ be an oriented hyperbolic $3$-manifold.
			A \emph{partially-$\Delta$ space over $M$}
			is a triple $(X,X_{\Delta},f_X)$ as follows.
			The space $X_\Delta$ is
			a CW subspace of a CW space $X$,
			enriched with a $\Delta$-complex structure;
			the map $f_X:X\to M$ is geodesic restricted to simplices of
			$X_\Delta$, possibly degenerate.
			We often simply mention
			a partially-$\Delta$ space $X$ with $X_\Delta$ and $f_X$
			implicitly assumed.
			A \emph{partially combinatorial} map
			between two partially-$\Delta$ space $X$ and $Y$
			is a CW map $\phi:(X,X_\Delta)\to (Y,Y_\Delta)$,
			combinatorial with respect to the $\Delta$-complex
			structure of $X_\Delta$ and $Y_\Delta$,
			such that $f_X=f_Y\circ\phi$.
			For a partially-$\Delta$ space $Z$
			over $M$, an \emph{extension} of 
			$Z$ is a partially-$\Delta$ space $X$ 
			together with a partially combinatorial
			embedding $\phi:(Z,Z_\Delta)\to (X,X_\Delta)$.
		\end{definition}
		
		We list our basic constructions by the following axioms,
		and verify that they are all possible in the situation that
		we will be concerned with. Denote by 
		$[\varepsilon_0,\cdots,\varepsilon_n]$ the standard $n$-simplex
		in $\RR^{n+1}$ spanned by the standard basis vectors $\varepsilon_i$.
				
		\begin{definition}\label{axiomsOfConstructions}
			For an oriented hyperbolic $3$-manifold $M$
			and a pair of positive constants $(L,\delta)$,
			the \emph{axioms of constructions} 
			are the following statements:
			\begin{enumerate}
				\item \emph{Vertex Creation}. 
				Suppose that $p\in M$ is a point.
				If $X$ is a partially-$\Delta$ space over $M$, then
				there exists a partially-$\Delta$
				space $X'$ as follows. The space pair $(X',X'_\Delta)$ is 
				$(X\sqcup v,\,X_\Delta\sqcup v)$ where $v$ is a new vertex;
				the map $f_{X'}$ is an extension of $f_X$ such that $f_{X'}(v)=p$.
				\item \emph{Simplex Subdivision}.
				Suppose that $\vec{t}_p\in T_pM$ is a unit vector at a point $p\in M$,
				and that $\lambda$ is a positive real constant.
				If $X$ is a partially-$\Delta$ space 
				over $M$ with an $n$-simplex $\sigma:[\varepsilon_0,\cdots,\varepsilon_{n}]\to X_\Delta$,
				such that $f(\varepsilon_0)=p$,
				then there exists a partially-$\Delta$ space $X'$ as follows.
				The space pair $(X',X'_\Delta)$ is 
				$(X\cup_{\sigma}[\varepsilon_0,\cdots,\varepsilon_{n+1}],\, X_\Delta\cup_{\sigma}[\varepsilon_0,\cdots,\varepsilon_{n+1}])$,
				where $X_\Delta\cup_\sigma [\varepsilon_0,\cdots,\varepsilon_{n+1}]$ denotes the mapping cone
				of $\sigma$, and similarly for $X_\Delta\cup_\sigma [\varepsilon_0,\cdots,\varepsilon_{n+1}]$;
				the map $f_{X'}$ is the extension of $f_X$ so that $f_{X'}|_{[\varepsilon_0,\varepsilon_{n+1}]}$
				is a geodesic segment of length $\lambda$ with the initial point $p$ and
				the initial direction $\vec{t}_p$, and that $f_{X'}|_{[\varepsilon_0,\cdots,\varepsilon_{n+1}]}$ is
				geodesic.
				\item\emph{Simplex Filling}. 
				If $X$ is a partially-$\Delta$
				space over $M$, and if there is a combinatorial map $\phi:
				\cup_{i=1}^n [\varepsilon_0,\varepsilon_1,
				\cdots,\widehat{\varepsilon_i},\cdots\varepsilon_n]
				\to X_\Delta$ from the last $n$ faces
				of an $n$-simplex
				$[\varepsilon_0,\cdots,\varepsilon_n]$ to $X_\Delta$,
				then there exists a partially-$\Delta$
				space $X'$ over $M$ as follows.
				The space pair $(X',X'_\Delta)$ is 
				$(X\cup_{\phi}[\varepsilon_0,\cdots,\varepsilon_{n}],\, X_\Delta\cup_{\phi}[\varepsilon_0,\cdots,\varepsilon_{n}])$;
				the map $f_{X'}$ is the extension of $f_X$ so that $f_{X'}|_{[\varepsilon_0,\cdots,\varepsilon_n]}$
				is geodesic.				
				\item \emph{Free Loop Reduction}.
				If $X$ is a partially-$\Delta$
				space over $M$, and if there is an edge $e$
				of $X_\Delta$ with its boundary attached
				the same vertex	$v$ and with $f_X|_e$ nondegenerate,
				then there exists a partially-$\Delta$
				space $X'$ over $M$ as follows.
				The space pair $(X',X'_\Delta)$ is 
				$(X\cup v'\cup e' \cup A,\,X_\Delta\cup v'\cup e')$, where
				$v'$ is a new vertex, $e'$ is a new edge with its boundary
				attached to $v$,
				and $A$ is a new oriented annulus with the boundary 
				the disjoint union of $v\cup e$ and the orientation reversal
				of $v'\cup e'$; 
				the map $f_{X'}$ is an extension of $f_{X}$
				such that $f_{X'}|_{v'\cup e'}$ carries 
				the closed geodesic in $M$ freely homotopic
				to $f_{X}|_{v\cup e}$.
				\item \emph{Edge Connection}.
				Suppose that $\vec{t}_p,\,\vec{n}_p\in T_pM$ and $\vec{t}_q,\,\vec{n}_q\in T_{q}M$ are pairs
				of orthogonal unit vectors at points $p,q\in M$ respectively,
				and that $\lambda$ is a complex constant of modulus at least $L$.
				If $X$ is a partially-$\Delta$
				space over $M$ with (not necessarily distinct) vertices $v,w$
				of $X_\Delta$ such that
				$f_{X}(v)=p$ and $f_{X}(w)=q$, then there exists a partially-$\Delta$
				space $X'$ over $M$ as follows.
				The space pair $(X',X'_\Delta)$ is $(X\cup e,\,X_\Delta\cup e)$,
				where $e$ is a new edge from $v$ to $w$;
				the map $f_{X'}$ is an extension of $f_{X}$ such that $f_{X'}|_e$ carries
				an oriented $\partial$-framed segment $\mathfrak{s}$ 
				from $p$ to $q$ satisfying the following.
				\begin{itemize}
					\item The oriented $\partial$-framed segment
					$\mathfrak{s}$ has length and phase $\delta$-close to $\log |\lambda|$ and 
					$\arg(\lambda)$, respectively.
					The initial direction and framing of $\mathfrak{s}$
					are $\delta$-close to $\vec{t}_p$ and $\vec{n}_p$, respectively.
					The terminal direction and framing of $\mathfrak{s}$
					are $\delta$-close to $\vec{t}_q$ and $\vec{n}_q$, respectively.
				\end{itemize}
			\end{enumerate}
		\end{definition}
		
		\begin{definition}\label{constructibleExtensions}
			Suppose that $Z$ is a partially-$\Delta$ space over $M$.
			We define the \emph{class of constructible extensions of $Z$}
			with respect to $(L,\delta)$
			to be the smallest class $\mathscr{C}$ of extensions of $Z$,
			satisfying that $Z\in\mathscr{C}$, and that $X\in\mathscr{C}$
			implies	$X'\in\mathscr{C}$ for any $X'$
			obtained from $X$ by one of the axioms of constructions above.
			A partially-$\Delta$ space $U$ over $M$ is
			said to be \emph{constructible from $Z$} if there exists
			a constructible extension $X\in \mathscr{C}$ of $Z$ and
			a partially combinatorial map $\psi:U\to X$, such that $f_U=f_X\circ\psi$.				
		\end{definition}
		
		In practice, $Z$ will serve as the object that a construction starts with,
		where $Z_\Delta$ records the piece of information that are available for
		the construction; $X$ will serve as the recipe of the construction;
		and $U$ will serve as the resulting object of the construction, 
		where $U_\Delta$ 
		is meant to record certain shape properties that result should satisfy.

		For any oriented closed hyperbolic $3$-manifold $M$, the axioms of constructions
		listed in Definition \ref{axiomsOfConstructions} are all realizable
		for any positive constant $\delta$ universally small positive,
		and any positive constant
		$L$ is sufficiently	large depending only on $\delta$ and $M$.
		In fact, the first four axioms are true for any hyperbolic manifold,
		and the last axiom follows from the Connection Principle (Lemma \ref{connectionPrinciple}),
		which holds for oriented closed hyperbolic $3$-manifolds.		

		\subsubsection{The Connection Principle}
		We emphasize the following Connection Principle
		as it is the fundamental reason
		for all our constructions of nearly regular pants to work. For example,
		it implies that $\ocurves_{R,\epsilon}$ and $\opants_{R,\epsilon}$ are nonempty
		for an oriented closed hyperbolic $3$-manifold $M$, provided that
		$\epsilon$ is universally small positive and that $R$ is sufficiently large
		depending only on $M$ and $\epsilon$.		
		
		\begin{lemma}[Connection Principle]\label{connectionPrinciple}
			For any universally small positive $\delta$, and for any sufficiently
			large positive $L$ depending only on $\delta$ and $M$,
			the following statement holds.
			If 
			$\vec{t}_p,\,\vec{n}_p\in T_pM$ and $\vec{t}_q,\,\vec{n}_q\in T_{q}M$ are pairs
			of orthogonal unit vectors at points $p,q\in M$ respectively,
			and if $\lambda$ is a complex number of modulus at least $L$,
			then there exists an oriented $\partial$-framed 
			segment $\mathfrak{s}$ from $p$ to $q$ satisfying the following.
			\begin{itemize}
				\item The oriented $\partial$-framed segment
				$\mathfrak{s}$ has length and phase $\delta$-close to $\log |\lambda|$ and 
				$\arg(\lambda)$, respectively.
				The initial direction and framing of $\mathfrak{s}$
				are $\delta$-close to $\vec{t}_p$ and $\vec{n}_p$, respectively.
				The terminal direction and framing of $\mathfrak{s}$
				are $\delta$-close to $\vec{t}_q$ and $\vec{n}_q$, respectively.
			\end{itemize}
		\end{lemma}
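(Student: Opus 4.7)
The plan is to derive the Connection Principle from the mixing of the frame flow on the special orthonormal frame bundle $\SO(M)$, a classical property of closed hyperbolic $3$-manifolds. In fact the conclusion is essentially Theorem 4.2 of \cite{KM-surfaceSubgroup}, which the paper quotes as a black box, so the proof should consist of a brief reduction to that theorem. I sketch the reduction below.

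The first step is to translate the data into the frame bundle. The pair $(\vec{t}_p,\vec{n}_p)$ determines the orthonormal frame $F_p=(\vec{t}_p,\vec{n}_p,\vec{t}_p\times\vec{n}_p)\in\SO(M)$, and similarly $(\vec{t}_q,\vec{n}_q)$ determines $F_q$. The frame flow $\phi_t\colon\SO(M)\to\SO(M)$ advances the basepoint along the geodesic in the direction $\vec{t}$ while parallel-transporting the two normal vectors. In view of Definition \ref{lengthAndPhase}, a $\partial$-framed segment of length $\ell$ and phase $\varphi$ from $p$ to $q$ matching the prescribed directional and framing data corresponds precisely to a frame-flow trajectory of time $\ell$ from $F_p$ to the frame $F_q^\varphi$ obtained from $F_q$ by rotating the normal part by $-\varphi$ around $\vec{t}_q$. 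Thus producing $\mathfrak{s}$ amounts to $\phi_{\log|\lambda|}$-connecting $F_p$ to $F_q^{\arg\lambda}$, up to $\delta$-error in all data.

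The second step uses mixing. Given $\delta>0$, let $U$ and $V$ be neighborhoods of $F_p$ and $F_q^{\arg\lambda}$ in $\SO(M)$ of radius $\delta/2$ in the natural Riemannian metric; both have positive Haar measure. Mixing of the frame flow on a closed oriented hyperbolic $3$-manifold yields a constant $T_\delta$, depending only on $\delta$ and $M$, such that $\mu(\phi_t(U)\cap V)>0$ for all $t>T_\delta$, and in particular $\phi_t(U)\cap V\neq\emptyset$. Setting $L=e^{T_\delta}$, any $\lambda$ with $|\lambda|>L$ satisfies $\log|\lambda|>T_\delta$; one therefore obtains $\tilde F_p\in U$ whose image $\phi_{\log|\lambda|}(\tilde F_p)$ lies in $V$. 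The associated geodesic yields a $\partial$-framed segment with the desired approximate length, phase, initial and terminal directions, and initial and terminal framings, but with endpoints that are $\delta/2$-close to, rather than equal to, $p$ and $q$.

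The only subtle point, and where I expect the actual work to live, is eliminating the positional error. I would handle it by noting that the endpoint-evaluation map sending an initial frame in the fiber $\SO(M)_p$ to the basepoint of its $\phi_{\log|\lambda|}$-image in $M$ is a submersion near $\tilde F_p$: varying the initial direction $\vec{t}$ displaces the endpoint in the normal plane by a factor scaling like $\sinh(\log|\lambda|)$, and varying $\vec{t}$ along its own direction displaces the endpoint along the geodesic. Hence for $|\lambda|$ large the preimage of $q$ meets the $\delta$-neighborhood of $F_p$ in a smooth codimension-$3$ submanifold; any initial frame in that preimage produces an honest $\partial$-framed segment from $p$ to $q$. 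Because the required initial-frame correction is of order $O(\sinh(\log|\lambda|)^{-1}\delta)$, and hence negligible compared to $\delta$ for $L$ sufficiently large, the length, phase, and terminal data remain within $\delta$ of the prescribed values. Alternatively, once the frame-bundle reformulation is in place one may simply cite \cite[Theorem 4.2]{KM-surfaceSubgroup}, after verifying that its conclusion translates directly into the lemma as stated.
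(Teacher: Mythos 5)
Your proposal follows the paper's own route: the published proof is a two-line citation of the mixing of the frame flow (Theorem 4.2 of \cite{KM-surfaceSubgroup}) together with the argument of Lemma 4.4 there, which is exactly the mixing-plus-endpoint-correction scheme you describe. One caveat on your correction step: with the flow time fixed, varying only the initial frame in the fiber over $p$ cannot give a submersion onto $M$ (the endpoint is insensitive to rotating the normal framing, so the rank is at most $2$); the standard fix, as in the cited Lemma 4.4, is to also vary the length, or equivalently to replace the almost-connecting trajectory by the genuine geodesic segment from $p$ to $q$, whose length, phase, directions and framings remain $O(\delta)$-close because long geodesic segments in hyperbolic space depend stably on their endpoints.
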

		
		\begin{proof}
			This follows from the fact that
			the frame flow is mixing \cite[Theorem 4.2]{KM-surfaceSubgroup}.
			The argument is the same as that of \cite[Lemma 4.4]{KM-surfaceSubgroup}.
		\end{proof}

		\subsubsection{The Spine Principle}
		The Spine Principle says that any constructible extension
		of a partially-$\Delta$ space over an oriented hyperbolic $3$-manifold
		is \emph{relatively $1$-spined}, or precisely as follows. 
		
		\begin{lemma}[Spine Principle]\label{spinePrinciple}
			If $(X,X_\Delta,f_X)$ is a constructible extension of a partially-$\Delta$ space 
			$(Z,Z_\Delta,f_Z)$ over an oriented hyperbolic $3$-manifold $M$,
			then the defining inclusion $\phi:Z\to X$ can be 
			extended to be a
			homotopy equivalence $Z'\simeq X$ where $Z'$ 
			is obtained from $Z$ by attaching cells of dimension at most $1$.
		\end{lemma}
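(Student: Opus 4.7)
The plan is to prove the lemma by structural induction on the class of constructible extensions of $Z$. Let $\mathscr{S}$ denote the class of partially-$\Delta$ extensions $Z\hookrightarrow X$ for which there exists a partially-$\Delta$ space $Z'$ obtained from $Z$ by attaching finitely many cells of dimension at most $1$, together with an extension $\tilde\phi:Z'\to X$ of the defining inclusion that is a homotopy equivalence. The trivial extension $X=Z$ belongs to $\mathscr{S}$ with $Z'=Z$ and $\tilde\phi=\mathrm{id}_Z$, so since the class of constructible extensions is by definition the smallest class closed under the five construction axioms, it suffices to verify that $\mathscr{S}$ is closed under each of those axioms.

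The Vertex Creation and Edge Connection axioms add a single $0$-cell or $1$-cell to $X$; in each case one enlarges $Z'$ by the corresponding cell, with attaching data pulled back along $\tilde\phi$ (using that $\tilde\phi$ is a homotopy equivalence to locate attaching vertices in $Z'$), yielding an extension $\tilde\phi':Z''\to X'$ that remains a homotopy equivalence. The remaining three axioms attach higher-dimensional pieces, but each attached piece is homotopically trivial relative to its attaching region. For the Simplex Subdivision axiom, the newly attached $(n+1)$-simplex---glued to $X$ along the face $[\varepsilon_0,\dots,\varepsilon_n]$---deformation retracts rel this face onto the subcomplex $[\varepsilon_0,\dots,\varepsilon_n]\cup[\varepsilon_0,\varepsilon_{n+1}]$, so $X'$ is homotopy equivalent to $X$ with one new $1$-cell attached at one end to a new $0$-cell $\varepsilon_{n+1}$. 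For the Simplex Filling axiom, the attaching region $\bigcup_{i=1}^{n}[\varepsilon_0,\dots,\widehat{\varepsilon_i},\dots,\varepsilon_n]$ sits in $\partial\Delta^n$ as one closed hemisphere (topologically an $(n-1)$-disk), which is a strong deformation retract of $\Delta^n$; thus $X'\simeq X$ and no new cells are needed. For the Free Loop Reduction axiom, the attached annulus $A$ deformation retracts rel its near boundary $v\cup e$ onto $v\cup e\subset X$, carrying the far boundary loop $v'\cup e'$ along, and again $X'\simeq X$ with no new cells.

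The main technical point is that in each of the three non-elementary cases the deformation retraction of the newly attached piece rel the attaching region must extend to a deformation retraction of the entire space $X'$. This is automatic because the attaching subcomplex is a CW subcomplex, so the attaching inclusion is a cofibration and retractions extend across the pushout. Composing the inductive extension $\tilde\phi:Z'\to X$ with the homotopy equivalence $X\simeq X'$ (or $X\cup e\simeq X'$ with $e$ a new $1$-cell in the Simplex Subdivision case) and adjoining at most one $0$-cell and one $1$-cell to $Z'$ yields the required $Z''$ together with an extended homotopy equivalence $\tilde\phi':Z''\to X'$ that restricts to the original inclusion on $Z$. This verifies closure of $\mathscr{S}$ under each axiom and completes the induction, and the only step requiring any care is the explicit identification of the appropriate collapsible subspace in Axioms~2--4, all of which are elementary exercises in the topology of simplices and annuli.
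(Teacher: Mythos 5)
Your argument is correct and is exactly the inspection of the construction axioms that the paper's one-line proof alludes to: the paper simply asserts the lemma "follows immediately from inspecting the construction axioms," and your structural induction, with the observations that Vertex Creation/Edge Connection add cells of dimension at most $1$ directly while Simplex Subdivision, Simplex Filling, and Free Loop Reduction attach pieces that deformation retract (rel the attaching subcomplex, which is a CW cofibration) onto at most a whisker, is precisely that inspection carried out in detail. No gaps; this matches the paper's intended proof.
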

		
		\begin{proof}
			This follows immediately from inspecting the construction axioms
			listed in Definition \ref{constructibleExtensions}.
		\end{proof}

		\subsubsection{Describing a construction}\label{Subsubsec-describingAConstruction}
		In the rest of this paper, we will often describe a construction without
		explicitly writing down the associated partially-$\Delta$ spaces.
		Instead, the hypothesis of a construction will be stated in terms of 
		$\partial$-framed segments. 
		
		Using Simplex Subdivision of dimension $0$, $1$, or $2$, we can construct
		\begin{itemize}
			\item a segment emanating from a constructed point in a given direction;
			\item division of a constructed geodesic segment in a given ratio; or
			\item a tripod carried by a constructed $2$-simplex with a given center.
		\end{itemize}
		We can enrich these with $\partial$-framings to turn them into 
		$\partial$-framed objects.
		For example, by saying that 
		we construct a right-handed $\partial$-framed right-handed tripod 
		$\mathfrak{a}_0\vee\mathfrak{a}_1\vee\mathfrak{a}_2$ carried by
		an immersed oriented $2$-simplex in a hyperbolic $3$-manifold
		$M$, we precisely mean a procedure as follows.
		Start with a given constructible extension
		$X$ of a given partially-$\Delta$ space $Z$ over $M$, together with
		a given $2$-simplex $\sigma:[\varepsilon_0,\varepsilon_1,\varepsilon_2]\to X_\Delta$,
		such that the immersed $2$-simplex is given by $f_X\circ\sigma$;
		let $X'$ be an extension of $X$ attaching a new $3$-simplex 
		$\sigma':[\varepsilon_0,\varepsilon_1,\varepsilon_2,
		\varepsilon_3]\to X'$; we consider the leg $\mathfrak{a}_i$
		to be carried by the geodesic segment 
		$f_{X'}\circ\sigma'|_{[\varepsilon_3,\varepsilon_i]}$.
		By suitably choosing a tangent direction $\vec{t}_p$
		of $f_X\circ\sigma$ and the 
		length $\lambda$, we can control the place of the center
		of the tripod, and hence the shape of 
		$\mathfrak{a}_0\vee\mathfrak{a}_1\vee\mathfrak{a}_2$.
		In fact, there is a unique way to assign 
		$\partial$-framings of $\mathfrak{a}_i$ so that
		$\mathfrak{a}_{i,i+1}$ are all consecutive for $i\in\ZZ_3$.
		The other two constructions in dimension $0$ and $1$ can be 
		understood in a similar fashion.
		
		Using Simplex Filling of dimension $1$ and
		Free Loop Reduction, we can construct
		\begin{itemize}
			\item the reduced concatenation of a chain of $\partial$-framed segments; and
			\item the cyclically reduced concatenation of a cycle of $\partial$-framed segments.
		\end{itemize}
		We consider these to be carried by $1$-chains and $1$-cycles of
		a given constructible extension of $X$. By iterately applying Simplex Filling
		for a finite number of times,
		we can obtain the reduced concatenation of any chain
		of $\partial$-framed segments, or obtain
		a cycle of a single $\partial$-framed segment 
		from any cycle of $\partial$-framed segments.
		In the latter case,
		we can then obtain the cyclically reduced concatenation
		by further applying Free Loop Reduction once.
				
		In Subsection \ref{Subsec-derivedConstructions},
		we will describe more constructions of
		$(R,\epsilon)$-panted surfaces $j:F\to M$ 
		with the boundary prescribed in terms of 
		$\partial$-framed segments under various hypotheses.
		To translate any description there 
		into one in terms of partially-$\Delta$ spaces,
		the instruction is as follows.
		The hypothesis describes a partially-$\Delta$ space 
		$(Z,Z_\Delta,f_Z)$. Namely, $Z_\Delta$ is considered to be
		a $1$-complex such that each $\partial$-framed segment 
		in the hypothesis corresponds to
		an oriented edge, and any two edges have identified endpoints
		if and only if the corresponding $\partial$-framed segments are declared
		to be nearly consecutive; $Z$ is the same as $Z_\Delta$; 
		and $f_Z$ is considered to be
		the obvious map that send any $1$-simplex to the carrier segment
		of its defining $\partial$-framed segment.
		In the same fashion, the recipe of the construction applies the
		axioms of the construction (Definition \ref{axiomsOfConstructions})
		step by step, by indicating at each step
		that an auxiliary point, segment, or $\partial$-framed
		segment should be drawn, 
		so the procedure gives rise to an extension $X$ of $Z$.
		The result of the construction can then be translated in
		terms of a partially-$\Delta$
		space $(F,\partial F,j)$ constructible from $Z$, 
		where $\partial F$ has a preferred $1$-complex structure 
		since it is prescribed
		by the $\partial$-framed segments from the hypothesis.
		In fact, one can always write down the partially combinatorial
		map $\psi:F\to X$ explicitly,
		where $X$ is the extension of $Z$ from the recipe of the construction.

	\subsection{Derived constructions}\label{Subsec-derivedConstructions}
		In this subsection, we exhibit several constructions
		of $(R,\epsilon)$-panted surfaces using $\partial$-framed
		segments that will be applied in the rest of this paper.
		These constructions are all \emph{derived} 
		from the basic constructions listed
		in Definition \ref{axiomsOfConstructions}, and
		are all \emph{definite} in the sense that
		the number of $\partial$-framed segments involved
		are universally	bounded.

		Throughout this subsection, we assume $M$ to be an oriented closed
		hyperbolic $3$-manifold, and
		\begin{itemize}
			\item $(L,\delta)$ is any pair of positve constants
			where $\delta\leq1$ and $L\geq-2\log\delta+10\log2$,
			and $L$ satisfies the conclusion of the Connection Principle
			(Lemma \ref{connectionPrinciple}) with respect to $\delta$ and $M$.
			\item $(R,\epsilon)$ is any pair of positive constants where $\epsilon\leq 1/\sqrt{2}$.
		\end{itemize}
		The constants have been chosen according to the Length and Phase Formula 
		(Definition \ref{lengthAndPhaseFormula} and Remark \ref{remarkLengthAndPhaseFormula}).
		For each of the constructions of this subsection,
		we will need additional assumptions comparing $\epsilon$ and $\delta$.
		However, for later applications, it will be convenient to
		adopt a uniform requirement:
		\begin{itemize}
			\item $\epsilon\geq \delta\times 10^3$.
		\end{itemize}


		\subsubsection{Splitting}
		The splitting construction below
		gives rise to a nearly regular pair of pants by adding
		a bisecting segment to a nearly purely hyperbolic curve. 
		The reader should compare it with \cite[Lemma 3.2 and Remark]{KM-Ehrenpreis}.
		
		\begin{construction}[Splitting]\label{splitting}
			Suppose $\epsilon\geq10\delta$.
			Suppose that $[\mathfrak{s}\mathfrak{s}']$ 
			is an $(L,\delta)$-tame bigon with the length and phase
			of $\mathfrak{s}$ and $\mathfrak{s}'$
			$\delta$-close to each other, and that
			$[\mathfrak{s}\mathfrak{s}']\in\ocurves_{R,\epsilon}$.
			Then a pair of pants $\Pi\in\opants_{R,\epsilon}$ can be constructed, 
			with one cuff $[\mathfrak{s}\mathfrak{s}']\in\ocurves_{R,\epsilon}$, 
			and with the other two cuffs in
			$\ocurves_{R,10\delta}$.
		\end{construction}	
		
		\begin{proof}
			By the Connection Principle (Lemma \ref{connectionPrinciple}),
			draw an oriented $\partial$-framed segment 
			$\mathfrak{m}$ from $p_\ter(\mathfrak{s})$
			to $p_\ini(\mathfrak{s})$ as follows:
			\begin{itemize}
				\item The initial and terminal directions are
				$\delta$-close to $\vec{t}_\ter(\mathfrak{s})\times\vec{n}_\ter(\mathfrak{s})$ and 
				$-\vec{t}_\ini(\mathfrak{s})\times\vec{n}_\ini(\mathfrak{s})$ respectively,
				and the initial and terminal framings are $\delta$-close to 
				$\vec{n}_\ter(\mathfrak{s})$ and $\vec{n}_\ini(\mathfrak{s})$ respectively,
				and the length and phase are $\delta$-close to 
				$R-\ell(\mathfrak{s})+2I(\frac\pi2)$ and $-\varphi(\mathfrak{s})$ respectively.
			\end{itemize}
			
			\begin{figure}[htb]
			\centering
			\includegraphics{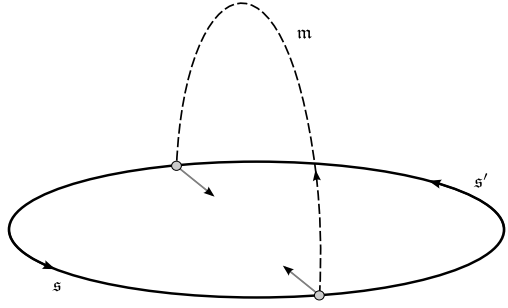}
			\caption{Splitting}\label{figSplitting}
		\end{figure}
			
			Then a pair of pants $\Pi\in\opants_{R,\epsilon}$ can be constructed,
			such that one cuff of $\Pi$ is $[\mathfrak{s}\mathfrak{s}']\in\ocurves_{R,\epsilon}$,
			and that the other two cuffs are
			$\overline{[\mathfrak{s}\mathfrak{m}]},\,
			\overline{[\bar{\mathfrak{m}}\mathfrak{s}']}\in\ocurves_{R,10\delta}$.
			The verification follows 
			from the Length and Phase Formula (Lemma \ref{lengthAndPhaseFormula}).		
		\end{proof}

		\subsubsection{Swapping}
		The swapping construction below allows us to exchange the arcs
		of two nearly purely hyperbolic curves if they fellow travel
		near a pair of common points, as long as the result are still
		purely hyperbolic curves. The reader should compare it
		with the Geometric Square Lemma \cite[Lemma 5.4]{KM-Ehrenpreis}.
		Our construction simplifies some of the arguments there.
		
		\begin{construction}[Swapping]\label{swapping}
			Suppose $\epsilon\geq\delta\times10^2$. Suppose that $[\mathfrak{a}\mathfrak{b}]$ and 
			$[\mathfrak{a}'\mathfrak{b}']$ is a $(10L,\delta)$-tame swap pair
			of bigons,
			and that $[\mathfrak{a}\mathfrak{b}],\,[\mathfrak{a}'\mathfrak{b}'],\,
			\overline{[\mathfrak{a}\mathfrak{b}']},\,
			\overline{[\mathfrak{a}'\mathfrak{b}]}\in\ocurves_{R,\epsilon}$.
			Then
			an oriented connected compact
			$(R,\epsilon)$-panted surface
			$F$ can be constructed, with exactly four boundary components
			$[\mathfrak{a}\mathfrak{b}],\,[\mathfrak{a}'\mathfrak{b}'],\,
			\overline{[\mathfrak{a}\mathfrak{b}']},\,
			\overline{[\mathfrak{a}'\mathfrak{b}]}$.
		\end{construction}
		
		\begin{proof}
			We first prove a preliminary case when the joint points are $\delta$-closely
			antipodal, and then derive the general case from the preliminary case.
		
			\medskip\noindent\textbf{Step 1}.
			Suppose $\epsilon\geq10\delta$.
			Suppose $[\mathfrak{a}\mathfrak{b}]$ and 
			$[\mathfrak{a}'\mathfrak{b}']$ are a $(L,\delta)$-tame swap pair
			of bigons,
			and that $[\mathfrak{a}\mathfrak{b}],\,[\mathfrak{a}'\mathfrak{b}'],\,
			\overline{[\mathfrak{a}\mathfrak{b}']},\,
			\overline{[\mathfrak{a}'\mathfrak{b}]}\in\ocurves_{R,10\delta}$.
			Moreover, suppose that the length and phase of $\mathfrak{a}$, $\mathfrak{b}$,
			$\mathfrak{a}'$, $\mathfrak{b}'$ are all $\delta$-close to each other.
			We construct an oriented connected compact
			$(R,\epsilon)$-panted surface
			$F$ with exactly four boundary components
			$[\mathfrak{a}\mathfrak{b}],\,[\mathfrak{a}'\mathfrak{b}'],\,
			\overline{[\mathfrak{a}\mathfrak{b}']},\,
			\overline{[\mathfrak{a}'\mathfrak{b}]}$ as follows.
			
			By the Connection Principle (Lemma \ref{connectionPrinciple}),
			draw an oriented $\partial$-framed segment 
			$\mathfrak{m}$ from $p_\ter(\mathfrak{a})$
			to $p_\ini(\mathfrak{a})$ as follows:
			\begin{itemize}
				\item The initial and terminal directions are
				$\delta$-close to $\vec{t}_\ter(\mathfrak{a})\times\vec{n}_\ter(\mathfrak{a})$ and 
				$-\vec{t}_\ini(\mathfrak{a})\times\vec{n}_\ini(\mathfrak{a})$ respectively,
				and the initial and terminal framings are $\delta$-close to 
				$\vec{n}_\ter(\mathfrak{a})$ and $\vec{n}_\ini(\mathfrak{a})$ respectively,
				and the length and phase are $\delta$-close to 
				$R-\ell(\mathfrak{a})+2I(\frac\pi2)$ and $-\varphi(\mathfrak{a})$ respectively.
			\end{itemize}
			With $\mathfrak{a}^\circ$ standing for $\mathfrak{a}$ or $\mathfrak{a}'$,
			and $\mathfrak{b}^\circ$ standing for
			$\mathfrak{b}$ or $\mathfrak{b}'$,
			there is a unique pair of pants $\Pi_{\mathfrak{a}^\circ,\mathfrak{b}^\circ}\in\opants_{R,\epsilon}$ 
			determined by its cuffs $[\mathfrak{a}^\circ\mathfrak{b}^\circ],\,
			\overline{[\mathfrak{a}^\circ\mathfrak{m}]},
			\overline{[\mathfrak{b}^\circ\bar{\mathfrak{m}}]}
			\in\ocurves_{R,\epsilon}$. Note that 
			the curve $\overline{[\mathfrak{a}^\circ\mathfrak{m}]}$			
			appears in exactly two pairs of pants as a cuff, and that the same holds
			for $\overline{[\mathfrak{b}^\circ\bar{\mathfrak{m}}]}$.
			Thus the four pairs of pants
			$\Pi_{\mathfrak{a},\mathfrak{b}}, \Pi_{\mathfrak{a}',\mathfrak{b}'}, 
			\overline{\Pi_{\mathfrak{a},\mathfrak{b}'}}, \overline{\Pi_{\mathfrak{a}',\mathfrak{b}}}\in\opants_{R,\epsilon}$
			can be glued along these cuffs, yielding the desired $(R,\epsilon)$-panted surface
			$F$, which is a torus with four holes.
			
			\begin{figure}[htb]
				\centering
				\includegraphics{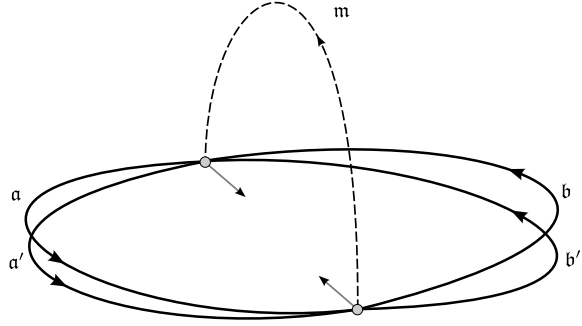}
				\caption{Swapping, Step 1}\label{figSwapping1}
			\end{figure}
			
			\medskip\noindent\textbf{Step 2}. 
			For $\epsilon\geq\delta\times10^2$, suppose that $[\mathfrak{a}\mathfrak{b}]$ and 
			$[\mathfrak{a}'\mathfrak{b}']$ is a $(10L,\delta)$-tame swap pair
			of bigons.
			Suppose $[\mathfrak{a}\mathfrak{b}],\,[\mathfrak{a}'\mathfrak{b}'],\,
			\overline{[\mathfrak{a}\mathfrak{b}']},\,
			\overline{[\mathfrak{a}'\mathfrak{b}]}\in\ocurves_{R,10\delta}$.
			We construct an oriented connected compact
			$(R,\epsilon)$-panted surface
			$F$ with exactly four boundary components
			$[\mathfrak{a}\mathfrak{b}]$, $[\mathfrak{a}'\mathfrak{b}']$,
			$\overline{[\mathfrak{a}\mathfrak{b}']}$,
			$\overline{[\mathfrak{a}'\mathfrak{b}]}$.

			By edge division, we may decompose $\mathfrak{a}$ as
			the concatenation $\mathfrak{a}_-\mathfrak{a}_+$ of
			two consecutive $\partial$-framed subsegments of equal length and
			phase, and similarly decompose $\mathfrak{b}$, $\mathfrak{a}'$,
			$\mathfrak{b}'$ as $\mathfrak{b}_-\mathfrak{b}_+$, $\mathfrak{a}'_-\mathfrak{a}'_+$,
			$\mathfrak{b}'_-\mathfrak{b}'_+$.
			Moreover, we may require the length and phase of $\mathfrak{a}_\pm$ to be $\delta$-close
			to that of $\mathfrak{a}'_\pm$, and the length and phase of $\mathfrak{b}_\pm$ to
			be $\delta$-close to $\frac{R}2-\ell(\mathfrak{a}_\mp)$ and $-\varphi(\mathfrak{a}_\mp)$
			respectively, and the length and phase of $\mathfrak{b}'_\pm$
			$\delta$-close to $\frac{R}2-\ell(\mathfrak{a}'_\mp)$ and $-\varphi(\mathfrak{a}'_\mp)$
			respectively. Choose a pair of auxiliary points $p,q\in M$. Draw five auxiliary $\partial$-framed
			segments $\mathfrak{r}$, $\mathfrak{s}_a$, $\mathfrak{s}'_a$, $\mathfrak{s}_b$, $\mathfrak{s}'_b$ as
			follows.
			\begin{itemize}
				\item The initial and terminal endpoints of $\mathfrak{r}$ 
				are $p$ and $q$ respectively, 
				and the length and phase of $\mathfrak{r}$ are $\delta$-close to $\frac{R}2-4L$
				and $0$ respectively.
				\item The initial and terminal endpoints of $\mathfrak{s}_a$ are $p_\ini(\mathfrak{a}_+)$ and
				$p$ respectively, and the length and phase of $\mathfrak{s}_a$ is $\delta$-close to
				$2L+I(\frac\pi2)$ and $-\varphi(\mathfrak{a}_+)$ respectively. The same holds for $\mathfrak{s}'_a$
				with $\mathfrak{a}_+$ replaced by $\mathfrak{a}'_+$.
				\item The initial and terminal endpoints of $\mathfrak{s}_b$ are $q$ and
				$p_\ini(\mathfrak{b}_+)$ respectively, and the length and phase of $\mathfrak{s}_b$ is $\delta$-close to
				$2L+I(\frac\pi2)$ and $-\varphi(\mathfrak{b}_+)$ respectively. The same holds for $\mathfrak{s}'_b$
				with $\mathfrak{b}_+$ replaced by $\mathfrak{b}'_+$.
			\end{itemize}
			
			\begin{figure}[htb]
				\centering
				\includegraphics{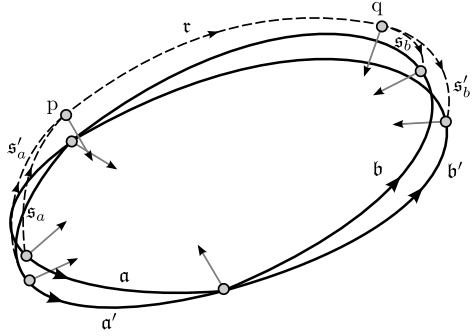}
				\caption{Swapping, Step 2}\label{figSwapping2}
			\end{figure}
			
			With $(\mathfrak{a}_-\mathfrak{s}_a)^\circ$ standing for 
			$\mathfrak{a}_-\mathfrak{s}$ or $\mathfrak{a}'_-\mathfrak{s}'_a$,
			and similarly for the notations
			$(\bar{\mathfrak{s}}_a\mathfrak{a}_+)^\circ$,
			$(\mathfrak{b}_-\mathfrak{s}_b)^\circ$, $(\bar{\mathfrak{s}}_b\mathfrak{b}_+)^\circ$,
			$\mathfrak{a}^\circ$, and $\mathfrak{b}^\circ$,
			there are four pair of pants 
			$\Pi_{\mathfrak{a}^\circ,\mathfrak{b}^\circ}\in\opants_{R,10\delta}$,
			each determined by its cuffs $[\mathfrak{a}^\circ\mathfrak{b}^\circ],\,
			\overline{[(\bar{\mathfrak{s}}_a\mathfrak{a}_+)^\circ(\mathfrak{b}_-\mathfrak{s}_b)^\circ\bar{\mathfrak{r}}]},
			\overline{[(\mathfrak{a}_-\mathfrak{s}_a)^\circ\mathfrak{r}(\bar{\mathfrak{s}}_b\mathfrak{b}_+)^\circ]}
			\in\ocurves_{R,\epsilon}$. Decompose $\mathfrak{r}$ as the concatenation
			$\mathfrak{r}_a\mathfrak{r}_b$ of two consecutive $\partial$-framed segments
			of phase $\delta$-close to $0$, such that $\ell(\mathfrak{r}_a)$ is $\delta$-close
			to $\frac{R}2-\ell(\mathfrak{a}_+)+2L$ and $\ell(\mathfrak{r}_b)$ is $\delta$-close
			to $\frac{R}2-\ell(\mathfrak{b}_+)+2L$. Note that the chains and cycles involved are
			all $(L,10\delta)$-tame and $(10\delta)$-consecutive since 
			$[\mathfrak{a}\mathfrak{b}]$ and 
			$[\mathfrak{a}'\mathfrak{b}']$ are $(10L,\delta)$-tame bigons.
			
			Since $\epsilon\geq\delta\times10^2$,  
			the preliminary case of Step 1 applies to the $(10L,10\delta)$-swap pair
			$[(\bar{\mathfrak{r}}_a\bar{\mathfrak{s}}_a\mathfrak{a}_+)(\mathfrak{b}_-\mathfrak{s}_b\bar{\mathfrak{r}}_b)]$
			and 
			$[(\bar{\mathfrak{r}}_a\bar{\mathfrak{s}}'_a\mathfrak{a}'_+)(\mathfrak{b}'_-\mathfrak{s}'_b\bar{\mathfrak{r}}_b)]$.
			Thus there is an $(R,\epsilon)$-panted four-hole torus $E_{a_+b_-}$ with boundary components
			$[(\bar{\mathfrak{r}}_a\bar{\mathfrak{s}}_a\mathfrak{a}_+)(\mathfrak{b}_-\mathfrak{s}_b\bar{\mathfrak{r}}_b)]$,
			$[(\bar{\mathfrak{r}}_a\bar{\mathfrak{s}}'_a\mathfrak{a}'_+)(\mathfrak{b}'_-\mathfrak{s}'_b\bar{\mathfrak{r}}_b)]$,
			$\overline{[(\bar{\mathfrak{r}}_a\bar{\mathfrak{s}}_a\mathfrak{a}_+)(\mathfrak{b}'_-\mathfrak{s}'_b\bar{\mathfrak{r}}_b)]}$,
			$\overline{[(\bar{\mathfrak{r}}_a\bar{\mathfrak{s}}'_a\mathfrak{a}'_+)(\mathfrak{b}_-\mathfrak{s}_b\bar{\mathfrak{r}}_b)]}$.
			Similarly, there is an $(R,\epsilon)$-panted four-hole torus $E_{b_+a_-}$
			with boundary components
			$[(\mathfrak{a}_-\mathfrak{s}_a\mathfrak{r}_a)(\mathfrak{r}_b\mathfrak{s}_b\mathfrak{b}_-)]$,
			$[(\mathfrak{a}'_-\mathfrak{s}'_a\mathfrak{r}_a)(\mathfrak{r}_b\mathfrak{s}'_b\mathfrak{b}'_-)]$,
			$\overline{[(\mathfrak{a}_-\mathfrak{s}_a\mathfrak{r}_a)(\mathfrak{r}_b\mathfrak{s}'_b\mathfrak{b}'_-)]}$,
			$\overline{[(\mathfrak{a}'_-\mathfrak{s}'_a\mathfrak{r}_a)(\mathfrak{r}_b\mathfrak{s}_b\mathfrak{b}_-)]}$.
			Glue up $\Pi_{\mathfrak{a},\mathfrak{b}}$, $\Pi_{\mathfrak{a}',\mathfrak{b}'}$,
			$\overline{\Pi_{\mathfrak{a},\mathfrak{b}'}}$, $\overline{\Pi_{\mathfrak{a}',\mathfrak{b}}}$, $E_{a_+b_-}$,
			and $E_{b_+a_-}$ along their oppositely oriented common boundary components.
			The result is an $(R,\epsilon)$-panted surface $F$ with exactly four boundary
			components $[\mathfrak{a}\mathfrak{b}]$, $[\mathfrak{a}'\mathfrak{b}']$,
			$\overline{[\mathfrak{a}\mathfrak{b}']}$,
			$\overline{[\mathfrak{a}'\mathfrak{b}]}$, as desired.
		\end{proof}

		\subsubsection{Rotation} 
		When two nearly regular tripods of opposite
		chiralities have legs almost opposite to the ones of each other,
		one can naturally build a nearly regular pair of pants spined
		on the union of the two tripods. This will be 
		the first statement of the rotation construction below.
		However, there is another case when the tripods have identical chirality.
		Thus we have two rotation constructions.
		Note that in the identical chirality case,
		we take two copies of the curves
		arising from the construction. This turns out to be necessary
		due to Theorem \ref{theoremPantedCobordism}.
				
		\begin{construction}[Rotation]\label{rotationConstruction}
			Suppose $\mathfrak{a}_0\vee\mathfrak{a}_1\vee\mathfrak{a}_2$ and 
			$\mathfrak{b}_0\vee\mathfrak{b}_1\vee\mathfrak{b}_2$ are a $(100L,\delta)$-tame rotation
			pair of  tripods. Suppose 
			$[\mathfrak{a}_{i,i+1}\bar{\mathfrak{b}}_{j,j\pm1}]\in\ocurves_{R,\epsilon}$
			for $i,j\in\ZZ_3$. Then an oriented connected compact
			$(R,\epsilon)$-panted surface $F$ can be constructed satisfying the following.
			\begin{enumerate}
				\item 
				If $\mathfrak{a}_0\vee\mathfrak{a}_1\vee\mathfrak{a}_2$ and 
				$\mathfrak{b}_0\vee\mathfrak{b}_1\vee\mathfrak{b}_2$ are of opposite charalities,
				then $F$ is a pair of pants $\Pi\in\opants_{R,\epsilon}$
				with cuffs $[\mathfrak{a}_{i,i+1}\bar{\mathfrak{b}}_{i,i+1}]$, for $i\in\ZZ_3$.
				\item Suppose $\epsilon\geq\delta\times10^3$.
				If $\mathfrak{a}_0\vee\mathfrak{a}_1\vee\mathfrak{a}_2$ and 
				$\mathfrak{b}_0\vee\mathfrak{b}_1\vee\mathfrak{b}_2$ are of identical charality,
				then $F$ has exactly six boundary components,
				namely, two copies of each $[\mathfrak{a}_{i,i+1}\bar{\mathfrak{b}}_{i,i+1}]$,
				for $i\in\ZZ_3$.
			\end{enumerate}				
		\end{construction}

		\begin{proof}
			To prove the statement (1),
			suppose that $\mathfrak{a}_0\vee\mathfrak{a}_1\vee\mathfrak{a}_2$ and 
			$\mathfrak{b}_0\vee\mathfrak{b}_1\vee\mathfrak{b}_2$ are of opposite chiralities. This is the simple case
			because $F$ can be naturally chosen as the pair of pants $\Pi\in\opants_{R,\epsilon}$ with cuffs 
			$[\mathfrak{a}_{i,i+1}\bar{\mathfrak{b}}_{i,i+1}]$ for $i\in\ZZ_3$. The
			spine of $\Pi$ is the figure-$\theta$ graph which is approximately the union of the carrier segments
			of $\mathfrak{a}_i\bar{\mathfrak{b}}_i$ for $i\in\ZZ_3$. 
			
			To prove the statement (2), suppose that $\mathfrak{a}_0\vee\mathfrak{a}_1\vee\mathfrak{a}_2$ and 
			$\mathfrak{b}_0\vee\mathfrak{b}_1\vee\mathfrak{b}_2$ are of identical chirality.
			Considering the framing flipping $\mathfrak{a}^*_0\vee\mathfrak{a}^*_1\vee\mathfrak{a}^*_2$ and 
			$\mathfrak{b}^*_0\vee\mathfrak{b}^*_1\vee\mathfrak{b}^*_2$ instead if necessary,
			we may assume that the tripods are both right-handed without loss of generality.			

			\medskip\noindent\textbf{Step 1}. 
			Suppose $\epsilon\geq \delta\times10^3$.
			Suppose $\mathfrak{a}_0\vee\mathfrak{a}_1\vee\mathfrak{a}_2$ and 
			$\mathfrak{b}_0\vee\mathfrak{b}_1\vee\mathfrak{b}_2$ are a $(10L,10\delta)$-tame rotation
			pair of right-handed tripods, and that
			$[\mathfrak{a}_{i,i+1}\bar{\mathfrak{b}}_{i,i+1}]\in\ocurves_{R,\epsilon}$
			for $i\in\ZZ_3$. Moreover, suppose that $\mathfrak{b}_0\vee\mathfrak{b}_1\vee\mathfrak{b}_2$
			can be written as $\mathfrak{c}_0\mathfrak{r}\vee\mathfrak{c}_1\mathfrak{r}\vee\mathfrak{c}_2\mathfrak{r}$
			where $\mathfrak{c}_i,\mathfrak{r}$ is a $\delta$-consecutive $(10L,\delta)$-tame
			chain for each $i\in\ZZ_3$ and where $\mathfrak{c}_0\vee\mathfrak{c}_1\vee\mathfrak{c}_2$
			is $(l_b-\ell(\mathfrak{r}),\delta)$-nearly regular.
			We construct an $(R,\epsilon)$-panted surface $F$
			with exactly six boundary components, which are
			two copies of each $[\mathfrak{a}_{i,i+1}\bar{\mathfrak{b}}_{i,i+1}]$
			for $i\in\ZZ_3$. 
			 
			Draw an auxiliary oriented $\partial$-oriented segment $\mathfrak{s}$ satisfying the following.
			\begin{itemize}
				\item The length and phase of $\mathfrak{s}$
				is $\delta$-close to $\ell(\mathfrak{r})$ and $0$
				respectively. The initial and terminal endpoints of $\mathfrak{s}$ coincide
				with $p_\ini(\mathfrak{r})$ and $p_\ter(\mathfrak{r})$ 
				respectively. The initial and terminal
				directions of $\mathfrak{s}$ are $\delta$-close to 
				$\vec{t}_\ini(\mathfrak{r})$ and $\vec{t}_\ter(\mathfrak{r})$
				respectively. The initial framing of $\mathfrak{s}$ is $\delta$-close
				to $-\vec{n}_\ini(\mathfrak{r})$, and the terminal framing of $\mathfrak{s}$
				is $\delta$-close to $\vec{n}_\ter(\mathfrak{r})$.
			\end{itemize}
			
			\begin{figure}[htb]
				\centering
				\includegraphics{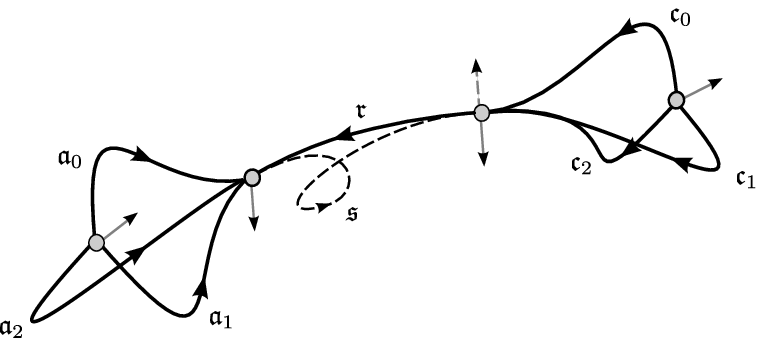}
				\caption{Rotation with identical chirality, Step 1}\label{figRotation1}
			\end{figure}
			
			It follows that $\mathfrak{c}^*_0\mathfrak{s}\vee\mathfrak{c}^*_1\mathfrak{s}\vee\mathfrak{c}^*_2\mathfrak{s}$
			is an $(l_b-\ell(\mathfrak{r}),10\delta)$-nearly regular left-handed tripod,
			where $\mathfrak{c}^*_i,\mathfrak{s}$ is a $\delta$-consecutive $(10L,\delta)$-tame
			chain for each $i\in\ZZ_3$. 
				
			Observe the following $(R,\epsilon)$-panted surfaces. 
			Since $\epsilon\geq\delta\times10^3$, by swapping (Construction \ref{swapping})
			of $(10L,10\delta)$-tame swap pairs,
			for each $i\in\ZZ_3$,
			there is an $(R,\epsilon)$-panted surface $E_i$ with boundary components
			the curves $[\mathfrak{a}_{i,i+1}\bar{\mathfrak{r}}\,\bar{\mathfrak{c}}_{i,i+1}\mathfrak{r}]$,
			$[\bar{\mathfrak{a}}_{i,i+1}\,\bar{\mathfrak{s}}\mathfrak{c}^*_{i,i+1}\mathfrak{s}]$,
			$\overline{[\bar{\mathfrak{a}}_{i,i+1}\bar{\mathfrak{r}}\,\bar{\mathfrak{c}}_{i,i+1}\mathfrak{r}]}$,
			$\overline{[\mathfrak{a}_{i,i+1}\bar{\mathfrak{s}}\mathfrak{c}^*_{i,i+1}\mathfrak{s}]}$
			in $\ocurves_{R,\epsilon}$; 
			also by swapping (Construction \ref{swapping}), for each $i\in\ZZ_3$,
			there is an $(R,\epsilon)$-panted surface $E'_i$ with boundary components
			the curves $[\mathfrak{a}_{i,i+1}\bar{\mathfrak{r}}\,\bar{\mathfrak{c}}_{i,i+1}\mathfrak{r}]$,
			$[\bar{\mathfrak{a}}^*_{i,i+1}\,\bar{\mathfrak{s}}^*\mathfrak{c}_{i,i+1}\mathfrak{s}^*]$,
			$\overline{[\mathfrak{a}_{i,i+1}\bar{\mathfrak{r}}\mathfrak{c}_{i,i+1}\mathfrak{r}]}$,
			$\overline{[\bar{\mathfrak{a}}^*_{i,i+1}\,\bar{\mathfrak{s}}^*\bar{\mathfrak{c}}_{i,i+1}\mathfrak{s}^*]}$
			in $\ocurves_{R,\epsilon}$; 
			applying rotation in the opposite chirality case (Statement (1))
			to the $(10L,10\delta)$-tame rotation pair 
			$\mathfrak{c}^*_0\mathfrak{s}\vee\mathfrak{c}^*_1\mathfrak{s}\vee\mathfrak{c}^*_2\mathfrak{s}$
			and $\mathfrak{a}_0\vee\mathfrak{a}_1\vee\mathfrak{a}_2$,
			there is a pair of pants $\Pi\in\opants_{R,\epsilon}$ with boundary components 
			the curves 
			$\overline{[\bar{\mathfrak{a}}^*_{i,i+1}\bar{\mathfrak{s}}^*\mathfrak{c}_{i,i+1}\mathfrak{s}^*]}$
			in $\ocurves_{R,\epsilon}$, where $i$ runs over $\ZZ_3$;
			for another copy $\Pi'$ of $\Pi$, we may rewrite the boundary components
			of $\Pi'$ as 
			$\overline{[\bar{\mathfrak{a}}_{i,i+1}\bar{\mathfrak{s}}\mathfrak{c}^*_{i,i+1}\mathfrak{s}]}$
			in $\ocurves_{R,\epsilon}$, where $i$ runs over $\ZZ_3$.
			Furthermore, observe the following common boundary components of opposite orientations.
			The third curve of $\partial E_i$ is the orientation reversal
			of the third curve of $\partial E'_i$; the fourth curve of $\partial E$ can be rewritten as 
			$\overline{[\mathfrak{a}^*_{i,i+1}\bar{\mathfrak{s}}^*\mathfrak{c}_{i,i+1}\mathfrak{s}^*]}$,
			which is clearly the orientation reversal of the fourth curve of $\partial E'$; 
			the orientation reversal of each curve
			of $\partial\Pi$ appears exactly once as the second curve
			in $\partial E_i$, and similarly for $\partial\Pi'$ and $\partial E'_i$.
			Gluing the $(R,\epsilon)$-panted surfaces $E_i$, $E'_i$, $\Pi$, and $\Pi'$
			along these oppositely oriented common boundary components,
			the result is an oriented connected compact surface $F$
			with exactly six boundary components, namely,
			two copies for each $[\mathfrak{a}_{i,i+1}\bar{\mathfrak{r}}\bar{\mathfrak{c}}_{i,i+1}\mathfrak{r}]$,
			where $i$ runs over $\ZZ_3$. Since
			$[\mathfrak{a}_{i,i+1}\bar{\mathfrak{r}}\,\bar{\mathfrak{c}}_{i,i+1}\mathfrak{r}]$
			equals $[\mathfrak{a}\bar{\mathfrak{b}}_{i,i+1}]$ under the additional assumption
			of the current step, the $(R,\epsilon)$-panted surface $F$ is as desired. 

			\medskip\noindent\textbf{Step 2}. 
			Suppose $\epsilon\geq\delta\times10^3$.
			Suppose $\mathfrak{a}_0\vee\mathfrak{a}_1\vee\mathfrak{a}_2$ and 
			$\mathfrak{b}_0\vee\mathfrak{b}_1\vee\mathfrak{b}_2$ are a $(10L,\delta)$-tame rotation
			pair of right-handed tripods, and that
			$[\mathfrak{a}_{i,i+1}\bar{\mathfrak{b}}_{i,i+1}]\in\ocurves_{R,\epsilon}$
			for $i\in\ZZ_3$.
			We construct an $(R,\epsilon)$-panted surface $F$
			with exactly six boundary components, which are
			two copies of each $[\mathfrak{a}_{i,i+1}\bar{\mathfrak{b}}_{i,i+1}]$
			for $i\in\ZZ_3$. 
			
			Since $\mathfrak{b}_0\vee\mathfrak{b}_1\vee\mathfrak{b}_2$ is $(l_b,\delta)$-nearly
			regular where $l_b\geq100L$,
			an auxiliary $(l_b,10\delta)$-nearly regular right-handed tripod
			$\mathfrak{b}'_0\vee\mathfrak{b}'_1\vee\mathfrak{b}'_2$ can be drawn so that
			$\mathfrak{a}_0\vee\mathfrak{a}_1\vee\mathfrak{a}_2$
			and $\mathfrak{b}'_0\vee\mathfrak{b}'_1\vee\mathfrak{b}'_2$ form a $(100L,10\delta)$-tame rotation pair,
			and that $\mathfrak{b}'_0\vee\mathfrak{b}'_1\vee\mathfrak{b}'_2$ satisfies the additional assumption
			of Step 1.

			\begin{figure}[htb]
				\centering
				\includegraphics{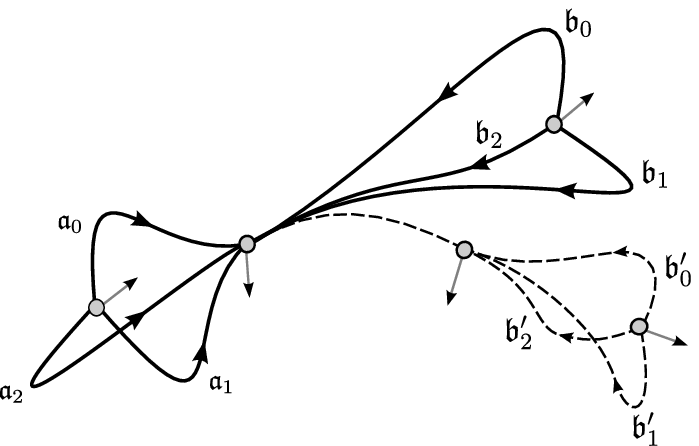}
				\caption{Rotation with identical chirality, Step 2}\label{figRotation2}
			\end{figure}
			
			Observe the following $(R,\epsilon)$-panted surfaces. Applying Step 1
			to $\mathfrak{a}_0\vee\mathfrak{a}_1\vee\mathfrak{a}_2$ and
			$\mathfrak{b}'_0\vee\mathfrak{b}'_1\vee\mathfrak{b}'_2$,
			there is an $(R,\epsilon)$-panted surface $F'$ with six boundary components,
			namely, two copies of each curve $[\mathfrak{a}_{i,i+1}\bar{\mathfrak{b}}'_{i,i+1}]$
			in $\ocurves_{R,\epsilon}$ for $i\in\ZZ_3$;
			by rotation in the opposite chirality case (Statement (1)), 
			applied to the $(10L,10\delta)$-tame rotation pair $\mathfrak{b}_0\vee\mathfrak{b}_1\vee\mathfrak{b}_2$
			and $\mathfrak{a}_0\vee\mathfrak{a}_{-1}\vee\mathfrak{a}_{-2}$,
			there is a pair of pants $\Pi\in\opants_{R,\epsilon}$ with boundary components
			$[\mathfrak{a}_{-i,-i-1}\bar{\mathfrak{b}}_{i,i+1}]$
			in $\ocurves_{R,\epsilon}$ for $i\in\ZZ_3$;
			also by rotation in the opposite chirality case (Statement (1)), 
			applied to the $(100L,10\delta)$-tame rotation pair $\mathfrak{b}'_0\vee\mathfrak{b}'_1\vee\mathfrak{b}'_2$
			and $\mathfrak{a}_0\vee\mathfrak{a}_{-1}\vee\mathfrak{a}_{-2}$,
			there is a pair of pants $\Pi'\in\opants_{R,\epsilon}$ 
			with boundary components
			$\overline{[\mathfrak{a}_{-i,-i-1}\bar{\mathfrak{b}}'_{i,i+1}]}$
			in $\ocurves_{R,\epsilon}$ for $i\in\ZZ_3$;
			since $\epsilon\geq\delta\times10^3$,
			by swapping (Construction \ref{swapping}) of $(100L,10\delta)$-tame
			swap pairs, for each $i\in\ZZ_3$, there
			is an $(R,\epsilon)$-panted surface $E_i$ with boundary components
			the curves 
			$[\mathfrak{a}_{i,i+1}\bar{\mathfrak{b}}_{i,i+1}]$
			$[\mathfrak{a}_{-i,-i-1}\bar{\mathfrak{b}}'_{i,i+1}]$
			$\overline{[\mathfrak{a}_{i,i+1}\bar{\mathfrak{b}}'_{i,i+1}]}$
			$\overline{[\mathfrak{a}_{-i,-i-1}\bar{\mathfrak{b}}_{i,i+1}]}$
			in $\ocurves_{R,\epsilon}$.
			Gluing the $(R,\epsilon)$-panted surfaces $F'$, and
			two copies of the $(R,\epsilon)$-panted surfaces
			$\Pi$, $\Pi'$, $E_0$, $E_1$, and $E_2$,
			along their oppositely oriented common boundary components,
			the result is an oriented connected compact surface $F$
			with exactly six boundary components, namely,
			two copies for each $[\mathfrak{a}_{i,i+1}\bar{\mathfrak{b}}_{i,i+1}]$
			for $i\in\ZZ_3$ as desired. This completes the proof 
			of the statement (2).	 
		\end{proof}

		\subsubsection{Antirotation} 
		The antirotation construction is a variation of rotation
		when we join the legs in an `unnatural' way. As before, 
		we have two cases depending on the chiralities
		of the pair of tripods in consideration.
		
		\begin{construction}[Antirotation]\label{antirotationConstruction}
			Suppose that $\mathfrak{a}_0\vee\mathfrak{a}_1\vee\mathfrak{a}_2$ and 
			$\mathfrak{b}_0\vee\mathfrak{b}_1\vee\mathfrak{b}_2$ are a $(100L,\delta)$-tame rotation
			pair of tripods, and that
			$[\mathfrak{a}_{i,i+1}\bar{\mathfrak{b}}_{j,j\pm1}]\in\ocurves_{R,\epsilon}$
			for $i,j\in\ZZ_3$.
			Then an oriented connected compact
			$(R,\epsilon)$-panted surface $F$ can be constructed satisfying the following.
			\begin{enumerate}
				\item
				Suppose $\epsilon\geq\delta\times10^3$. 
				If $\mathfrak{a}_0\vee\mathfrak{a}_1\vee\mathfrak{a}_2$ and 
				$\mathfrak{b}_0\vee\mathfrak{b}_1\vee\mathfrak{b}_2$ are of opposite charalities,
				then $F$ has exactly six boundary components,
				namely, two copies of each $[\mathfrak{a}_{i,i+1}\bar{\mathfrak{b}}_{i+1,i}]$
				in $\ocurves_{R,\epsilon}$, for $i\in\ZZ_3$.
				\item
				Suppose $\epsilon\geq\delta\times10^2$.
				If $\mathfrak{a}_0\vee\mathfrak{a}_1\vee\mathfrak{a}_2$ and 
				$\mathfrak{b}_0\vee\mathfrak{b}_1\vee\mathfrak{b}_2$ are of identical charality,
				then $F$ has exactly three boundary components
				namely, $[\mathfrak{a}_{i,i+1}\bar{\mathfrak{b}}_{i+1,i}]$
				in $\ocurves_{R,\epsilon}$, for $i\in\ZZ_3$.
			\end{enumerate}				
		\end{construction}
		
		\begin{proof}
			Because $[\mathfrak{a}_{01}\bar{\mathfrak{b}}_{10}]$ and $[\mathfrak{a}_{20}\bar{\mathfrak{b}}_{02}]$
			form a $(100L,\delta)$-tame swap pair, by swapping (Construction \ref{swapping}),
			there is an oriented connected compact $(R,\epsilon)$-panted surface $E$ 
			with exactly four boundary components 
			$[\mathfrak{a}_{01}\bar{\mathfrak{b}}_{10}]$,
			$[\mathfrak{a}_{20}\bar{\mathfrak{b}}_{02}]$,
			$\overline{[\mathfrak{a}_{01}\bar{\mathfrak{b}}_{02}]}$,
			$\overline{[\mathfrak{a}_{20}\bar{\mathfrak{b}}_{10}]}$.
			Thus, it suffices an oriented connected compact $(R,\epsilon)$-panted surface
			$F'$ satisfying the following. 
			\begin{enumerate}
				\item If $\mathfrak{a}_0\vee\mathfrak{a}_1\vee\mathfrak{a}_2$ and 
				$\mathfrak{b}_0\vee\mathfrak{b}_1\vee\mathfrak{b}_2$ are of opposite charalities,
				then $F'$ has exactly six boundary components,
				namely, two copies of $[\mathfrak{a}_{01}\bar{\mathfrak{b}}_{02}]$,
				$[\mathfrak{a}_{12}\bar{\mathfrak{b}}_{21}]$, and $[\mathfrak{a}_{20}\bar{\mathfrak{b}}_{10}]$.
				\item If $\mathfrak{a}_0\vee\mathfrak{a}_1\vee\mathfrak{a}_2$ and 
				$\mathfrak{b}_0\vee\mathfrak{b}_1\vee\mathfrak{b}_2$ are of identical charality,
				then $F'$ has exactly three boundary components
				namely, $[\mathfrak{a}_{01}\bar{\mathfrak{b}}_{02}]$,
				$[\mathfrak{a}_{12}\bar{\mathfrak{b}}_{21}]$, and $[\mathfrak{a}_{20}\bar{\mathfrak{b}}_{10}]$.
			\end{enumerate}
			In fact, this follows immediately from rotation (Construction \ref{rotationConstruction})
			with respect to the $(100L,\delta)$-tame rotation pair $\mathfrak{a}_0\vee\mathfrak{a}_1\vee\mathfrak{a}_2$ and 
			$\mathfrak{b}_0\vee\mathfrak{b}_2\vee\mathfrak{b}_1$.
			Note that the chirality of
			$\mathfrak{b}_0\vee\mathfrak{b}_2\vee\mathfrak{b}_1$
			is exactly opposite to that of $\mathfrak{b}_0\vee\mathfrak{b}_1\vee\mathfrak{b}_2$.
			This completes the proof.		
		\end{proof}

\section{Panted cobordism group}\label{Sec-pantedCobordismGroup}
	In this section, we introduce the $(R,\epsilon)$-panted cobordism
	group of oriented $(R,\epsilon)$-multicurves
	in an oriented closed hyperbolic $3$-manifold.
	This will serve as a correction theory which will reduce
	the relative case of Theorem \ref{homologyViaPants} (1) to the absolute case.
	
	Let $M$ be an oriented closed hyperbolic $3$-manifold.
	By an \emph{$(R,\epsilon)$-nearly hyperbolic multicurve}, or simply an
	\emph{$(R,\epsilon)$-multicurve},
	we mean a (possibly disconnected)
	nonempty immersed oriented closed $1$-submanifold of
	$M$ of which all the components are $(R,\epsilon)$-nearly hyperbolic curves.

	\begin{definition}\label{pantedCobordismGroup}
		An \emph{$(R,\epsilon)$-nearly regularly panted cobordism},
		or simply an \emph{$(R,\epsilon)$-panted cobordism},
		between two $(R,\epsilon)$-multicurves $L,L'$
		in $M$ is an $(R,\epsilon)$-panted subsurface $F$ such that 
		$\partial F$ is the disjoint union
		$L\sqcup\bar{L}'$, where $\bar{L}'$ denotes the orientation-reversal 
		of $L'$. 
		We say that $L,L'$ are \emph{$(R,\epsilon)$-panted cobordant}
		if there exists an $(R,\epsilon)$-panted cobordism between them.
		Assuming that being $(R,\epsilon)$-panted cobordant is an equivalence relation
		on the set of $(R,\epsilon)$-multicurves,
		we denote by $\ocobordism_{R,\epsilon}(M)$, or simply $\ocobordism_{R,\epsilon}$,
		the set of all $(R,\epsilon)$-panted cobordism classes 
		of $(R,\epsilon)$-multicurves,
		and by $[L]_{R,\epsilon}$		
		the cobordism class of any $(R,\epsilon)$-multicurve $L$.
	\end{definition}
	
	An $(R,\epsilon)$-panted cobordism is not necessarily connected or $\pi_1$-injective,
	(cf.~Definition \ref{pantedSubsurfaceDefinition}).
	For any universally small positive $\epsilon$ and 
	any sufficiently large positive $R$ depending only on $M$ and $\epsilon$,
	it will be verified that being $(R,\epsilon)$-panted cobordant is an equivalence relation for $(R,\epsilon)$-multicurves.
	Furthermore, $\ocobordism_{R,\epsilon}$ is an abelian
	group with the addition induced by the disjoint union operation between 
	$(R,\epsilon)$-multicurves and the inverse induced by the orientation reversion
	of $(R,\epsilon)$-multicurves, (Subsection \ref{Subsec-Phi}).
	
	\begin{theorem}\label{theoremPantedCobordism}
		Let $M$ be an oriented closed hyperbolic $3$-manifold.
		For any universally small positive $\epsilon$, 
		and any sufficiently large
		positive $R$ depending only on $M$ and $\epsilon$, 
		there is a canonical isomorphism 
			$$\Phi:\,\ocobordism_{R,\epsilon}(M)\,{\longrightarrow}\,H_1(\SO(M);\ZZ),$$
		where $\SO(M)$ denotes the bundle over $M$ 
		of special orthonormal frames with respect to 
		the orientation of $M$. Moreover,
		for all $[L]_{R,\epsilon}\in\ocobordism_{R,\epsilon}(M)$, the image of
		$\Phi([L]_{R,\epsilon})$ under the bundle projection
		is the homology class $[L]\in H_1(M;\ZZ)$.
	\end{theorem}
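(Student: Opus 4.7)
The plan is to define $\Phi$ by lifting each $(R,\epsilon)$-curve to a loop in $\SO(M)$ and taking its homology class. Given $\gamma\in\ocurves_{R,\epsilon}$, fix any unit normal $\vec{n}_0$ at a basepoint of the geodesic representative and parallel transport it along $\gamma$; since the monodromy rotates $\vec{n}_0$ by $\mathrm{Im}(\clen(\gamma))$, which is $\epsilon$-close to $0$ modulo $2\pi$, a uniform infinitesimal correction yields a closed framing and hence a loop $\tilde\gamma\subset\SO(M)$. Different initial choices of $\vec{n}_0$ differ by constant $\SO(2)$-rotations and produce homologous lifts, so the class $[\tilde\gamma]\in H_1(\SO(M);\ZZ)$ is unambiguous; extending additively over multicurves defines a set-theoretic map whose postcomposition with the bundle projection recovers $[L]\in H_1(M;\ZZ)$, settling the second assertion.

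For well-definedness on $(R,\epsilon)$-panted cobordism classes, I would observe that any immersed $(R,\epsilon)$-pair-of-pants $\Pi\looparrowright M$ admits a continuous tangent frame together with a normal vector field (using the orientations of $\Pi$ and $M$), lifting $\Pi$ to $\tilde\Pi\looparrowright\SO(M)$ and making $\partial\tilde\Pi$ nullhomologous in $\SO(M)$; the framings induced on each cuff differ from the canonical ones used for $\Phi$ only by small rotations, and on internal cuffs of a panted cobordism these cancel between the two adjacent pants. Additivity under disjoint union is immediate, and inverses come from orientation reversal combined with the splitting construction (Construction \ref{splitting}) applied to $\gamma\sqcup\bar\gamma$, so $\ocobordism_{R,\epsilon}(M)$ is an abelian group and $\Phi$ a homomorphism. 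Surjectivity onto the $H_1(M;\ZZ)$-summand uses the Connection Principle (Lemma \ref{connectionPrinciple}) to approximate any prescribed homotopy class by a cyclically reduced concatenation of $(L,\delta)$-tame $\partial$-framed segments, producing an $(R,\epsilon)$-curve in that class; the fiber $\ZZ/2$-class is realized by a carefully constructed nullhomologous $(R,\epsilon)$-multicurve whose panted representative requires an extra full twist of the normal frame, its nontriviality being forced by the identical-chirality rotation (Construction \ref{rotationConstruction}), in which only the \emph{doubling} of a triple bounds.

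The main obstacle will be injectivity: given an $(R,\epsilon)$-multicurve $L$ with $\Phi([L]_{R,\epsilon})=0$, construct an $(R,\epsilon)$-panted surface bounded by $L$. My strategy is to fix a finite presentation $\langle g_1,\dots,g_n\mid r_1,\dots,r_m\rangle$ of $\pi_1(M)$ together with the corresponding presentation of $\pi_1(\SO(M))$ (adjoining one generator and one relation for the central $\ZZ/2$), cobord $L$ componentwise via the derived constructions of Section \ref{Sec-basicConstructions} into $(R,\epsilon)$-multicurves whose components are cyclic concatenations of generator-loops, and finally realize each defining relation as an $(R,\epsilon)$-panted surface. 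The identical-chirality rotation and antirotation (Constructions \ref{rotationConstruction} and \ref{antirotationConstruction}) produce respectively the two-copy and one-copy identities needed to capture the $\ZZ/2$-fiber relation, whereas swapping and splitting supply standard moves for commutator and trivial-loop cancellation relations. The delicate technical step is maintaining throughout that all intermediate curves remain in $\ocurves_{R,\epsilon}$ with controlled shape, which the Length and Phase Formula (Lemma \ref{lengthAndPhaseFormula}) together with Remark \ref{remarkLengthAndPhaseFormula} let us track by uniformly bounding the shape errors introduced at each reduction step.
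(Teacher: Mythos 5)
The definition of $\Phi$ in your first paragraph is the wrong lift, and this is exactly the subtle point the paper has to engineer around. You lift $\gamma$ by parallel transporting a normal vector and closing up with a small correction; the paper's canonical lift (Definition \ref{canonicalLift}) is that framing \emph{composed with an extra full $360^\circ$ rotation} in the fiber, i.e.\ the two lifts differ by the generator of $\pi_1(\SO(3))\cong\ZZ_2$, which is the nontrivial central class $\hat{c}$, not a small rotation. Your second paragraph then asserts that the framing on the cuffs induced by a lift of a pair of pants $\Pi$ (coming from a trivialization of $T\Sigma_{0,3}\oplus\epsilon^1$) differs from your parallel-transport framings ``only by small rotations''; this is false. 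As in the proof of Lemma \ref{canonicalLiftLemma}, the pants-induced framing restricts on each cuff to the canonical lift (parallel transport \emph{plus} the $2\pi$ twist), and indeed the paper points out that if one lifts all three cuffs by near-parallel transport the lift cannot be extended over $\Pi$ at all. Concretely, with your definition the multicurve consisting of the three cuffs of a single $(R,\epsilon)$-pair of pants, which is $(R,\epsilon)$-panted nullcobordant (it bounds $\Pi$ itself), would be sent to $3\hat{c}=\hat{c}\neq 0$ in $H_1(\SO(M);\ZZ)$. So your map does not descend to $\ocobordism_{R,\epsilon}(M)$, and the cancellation you invoke on internal cuffs cannot repair this, since the discrepancy sits on every cuff, glued or not.

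Once the lift is corrected, your outline for surjectivity and injectivity (use the Connection Principle to realize classes, and kill the kernel by building an inverse from a finite presentation of $\pi_1(\SO(M))$, realizing each relation by an $(R,\epsilon)$-panted surface via swapping, rotation and antirotation) is the same general route the paper takes, but it is substantially underspecified at the points where the real work happens: the paper needs a \emph{triangular} presentation whose generators are conjugated (by the element $h$ of Lemma \ref{setupData}) into $\delta$-sharp position so that each generator has an associated $\partial$-framed segment; it must prove the assignment $\Psi$ is independent of the auxiliary tripod and segment choices (Lemma \ref{wellDefinition}); and the verification of each triangular relation requires the chirality bookkeeping of Lemma \ref{sharpTripodLemma} and an interpolation argument (Steps 1--3 of Lemma \ref{triangularRelation}), not merely ``standard moves for commutator and trivial-loop cancellation.'' In particular the factor of two in the identical-chirality rotation is forced by the $\ZZ_2$ fiber, as you sense, but your sketch gives no mechanism for reducing an arbitrary relator to the triangular case or for keeping the intermediate elements inside $\hat{\mathcal{C}}_{300L}\cap\hat{\mathcal{B}}^\circ_{R-300L}$ where the constructions apply.
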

	
	\begin{remark}
		The last part of the statement implies that the integral module
		$\ocobordism_{R,\epsilon}(M)$ is equivalent
		to $H_1(\SO(M);\,\ZZ)$ as a split extension of
		$H_1(M;\,\ZZ)$ by $\ZZ_2$ (Subsection \ref{Subsec-Phi}).
		Therefore, the isomorphisms between
		$\ocobordism_{R,\epsilon}(M)$ and
		$H_1(\SO(M);\,\ZZ)$	that are extension equivalences
		are in natural bijection to $H^1(M;\,\ZZ_2)$,
		where the canonical isomorphism $\Phi$ corresponds to $0$.	
	\end{remark}
	
	In fact, one may require $\epsilon$ to be at most $10^{-2}$ and $R$
	to be at least $R(\epsilon,M)$ as provided by Lemma \ref{setupData},
	then the conclusion of Theorem \ref{theoremPantedCobordism} holds.
	
	The idea of Theorem \ref{theoremPantedCobordism}
	is developed from the (non-random) correction theory part of \cite{KM-Ehrenpreis}.
	In that paper, 
	the notion \emph{Good Pants Homology} of an oriented closed hyperbolic surface
	$S$ is informally introduced, and with our notations above,
	the Good Pants Homology of $S$ there means precisely
	the rational 
	$(R,\epsilon)$-panted cobordism group $\ocobordism_{R,\epsilon}(S)\otimes \QQ$,
	cf.~\cite[Definition 3.2]{KM-Ehrenpreis}.
	The proof of the Good Correction Theorem \cite[Theorem 3.2]{KM-Ehrenpreis}
	essentially implies that there is an isomorphism
	$\phi:\,\ocobordism_{R,\epsilon}(S)\otimes \QQ\to H_1(S;\QQ)$,
	such that $\phi([\gamma]_{R,\epsilon})$ equals $[\gamma]$.
	In fact, most part of the proof of 
	\cite[Theorem 3.2]{KM-Ehrenpreis}
	can be extended directly to the $3$-dimensional case,
	yielding an isomorphism 
	$\phi:\,\ocobordism_{R,\epsilon}(M)\otimes \QQ\to H_1(M;\QQ)$
	as above. Motivated by pushing the result to
	the integral coefficient case,
	the main innovation of Theorem \ref{theoremPantedCobordism}
	lies in the observation that in many senses, it should
	be more natural
	to replace $H_1(M;\ZZ)$	with $H_1(\SO(M);\ZZ)$.
	One reason, for instance, is that
	$(R,\epsilon)$-multicurves and $(R,\epsilon)$-pants admit 
	certain canonical lifts into $\SO(M)$
	because of their geometry; another reason
	is that technically, passing to $\SO(M)$ resolves
	certain ambiguity in the definition of the inverse of $\Phi$;
	the reader may also observe a vague analogy between the
	statement of Theorem \ref{theoremPantedCobordism}
	and the Thom--Pontrjagin correspondence in cobordism
	theory, thinking of $H_1(\SO(M);\ZZ)$
	as $\pi_1(\SO(M))$ modulo the action of conjugations.	
	
	The proof of Theorem \ref{theoremPantedCobordism} is organized slightly
	differently from the treatment of \cite{KM-Ehrenpreis}.
	Fix a basepoint $\pt$ of $M$ and a special orthonormal
	frame $\mathbf{e}\in\SO(M)|_\pt$ as a basepoint of $\SO(M)$.
	We will construct the homorphism $\Phi$ and a homomorphism $\Psi:\pi_1(\SO(M),\mathbf{e})\to \ocobordism_{R,\epsilon}$,
	which descends to be a homomorphism $\Psi^{\mathtt{ab}}:H_1(\SO(M);\ZZ)\to \ocobordism_{R,\epsilon}$
	by abelianization. We will verify that $\Phi\circ\Psi^{\mathtt{ab}}=\id$ and that $\Psi$ 
	surjects $\ocobordism_{R,\epsilon}$.
	This will imply that $\Phi$ is an isomorphism with the inverse $\Psi^{\mathtt{ab}}$.
	To briefly describe $\Phi$, note that any curve $\gamma\in\ocurves_{R,\epsilon}$ has
	a framing over its geodesic representative given by (nearly) parallel transporting a frame
	around $\gamma$. The \emph{canonical lift} $\hat{\gamma}:S^1\to \SO(M)$ of $\gamma$,
	well defined up to homotopy, is
	then a framing that differs from the the parallel-transportation framing by a loop 
	of matrices $S^1\to\SO(3)$ that represents the nontrivial element of $\pi_1(\SO(3))\cong\ZZ_2$
	(Definition \ref{canonicalLift}).
	The homomorphism $\Phi:\ocobordism_{R,\epsilon}\to H_1(\SO(M);\ZZ)$ will hence
	be uniquely defined so that $\Phi([\gamma]_{R,\epsilon})$ equals $[\hat{\gamma}]$.
	The definition of $\Psi$ will depend on a choice of a finite triangular generating set 
	$\hat{g}_1,\cdots,\hat{g}_s$ of $\pi_1(\SO(M),\mathbf{e})$, together with some other setup data.
	Here \emph{triangular} means that all the relators of length at most $3$ in
	the generating set gives rise to a finite presentation of $\pi_1(\SO(M),\mathbf{e})$.
	We will define $\Psi(\hat{g}_i)$ (or decorated with some
	setup data, $\Psi_h(\hat{g}_i)$) to be represented by some $(R,\epsilon)$-multicurve 
	constructed from $\hat{g}_i$, so that $[\Psi(\hat{g}_i)]$ is obviously equal
	to $[\hat{g}_i]$ in $H_1(\SO(M);\ZZ)$. With the derived constructions
	of Subsection \ref{Subsec-derivedConstructions}, we will verify that
	$\Psi(\hat{g}_i)+\Psi(\hat{g}_j)+\Psi(\hat{g}_k)=0$ in $\ocobordism_{R,\epsilon}$
	whenever there is a triangular relation $\hat{g}_i\hat{g}_j\hat{g}_k=\id$.
	It will follow that $\Psi:\pi_1(\SO(M),\mathbf{e})\to\ocobordism_{R,\epsilon}$
	is a well defined homomorphism. 
	We point out that the triangular generating set of $\pi_1(\SO(M),\mathbf{e})$,
	or more essentially, a finite triangular presentation of $\pi_1(M,\pt)$,
	is the source of topological information which makes the construction of $\Psi$ possible.
	This idea will be further investigated in Section \ref{Sec-pantifyingSecondHomologyClasses}
	when we pantify second homology classes.	
	
	The rest of this section is devoted to the proof of Theorem \ref{theoremPantedCobordism}.
	In Subsection \ref{Subsec-Phi}, we define the homomorphism $\Phi$. 
	In Subsection \ref{Subsec-theInverseOfPhi},
	we define the homomorphism $\Psi$ and
	verify that $\Psi^{\mathtt{ab}}$ is the inverse of $\Phi$.
	In Subsection \ref{Subsec-proofOfTheoremPantedCobordism}, we summarize
	the proof of Theorem \ref{theoremPantedCobordism}.
	
	Throughout this section, after fixing a basepoint $\pt$ of $M$,
	we will no longer distinguish a nontrivial element of $\pi_1(M,\pt)$
	from its pointed geodesic loop representative, 
	so it makes sense to speak of the length, or the initial
	or terminal direction of a nontrivial element in $\pi_1(M,\pt)$.

	\subsection{The homomorphism $\Phi$}\label{Subsec-Phi}
		In this subsection, we define the homomorphism $\Phi$. We also
		need to mention some basic facts
		about the $(R,\epsilon)$-panted cobordism gorup $\ocobordism_{R,\epsilon}$,
		and about the special orthonomal framing bundle	$\SO(M)$.
		
		The following assumptions of constants will be adopted throughout this subsection:
		\begin{itemize}
			\item $(L,\delta)$ is any pair of positve constants
			where $\delta\leq10^{-2}$ and $L\geq-2\log\delta+10\log2$,
			and $L$ satisfies the conclusion of the Connection Principle
			(Lemma \ref{connectionPrinciple}) with respect to $\delta$ and $M$.
			\item $(R,\epsilon)$ is any pair of positive constants where $\epsilon$ is at most $10^{-2}$.
			\item $R\geq 10L$ and $\epsilon\geq 10\delta$.
		\end{itemize}
		We will need some stronger assumptions in the next subsection, which will be
		specified by Lemma \ref{setupData}.
		
		\subsubsection{The panted cobordism group $\ocobordism_{R,\epsilon}$}\label{Subsubsec-pantedCobordism}
		%
		\begin{lemma}\label{pantedCobordismEquivalence}
			Being $(R,\epsilon)$-panted cobordant is an  equivalence relation
			on the set of $(R,\epsilon)$-multicurves.
		\end{lemma}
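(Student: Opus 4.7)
The plan is to verify the three defining properties of an equivalence relation in turn, with reflexivity being the only step requiring a nontrivial geometric construction, while symmetry and transitivity follow by formal manipulation of $(R,\epsilon)$-panted surfaces.

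For reflexivity, given any $(R,\epsilon)$-multicurve $L$, I would treat each component $\gamma$ of $L$ separately. The geodesic representative of $\gamma$ can be bisected at two antipodal points and endowed with parallel framings to present $\gamma$ as an $(L,\delta)$-tame bigon $[\mathfrak{s}\mathfrak{s}']$ with $\ell(\mathfrak{s})$ and $\ell(\mathfrak{s}')$ both $\delta$-close to $R/2$ and phases $\delta$-close to zero. Applying splitting (Construction \ref{splitting}), which is legitimate under the standing assumption $\epsilon \geq 10\delta$, produces a pair of pants $\Pi \in \opants_{R,\epsilon}$ with one cuff exactly $\gamma$ and two further cuffs $\gamma_1,\gamma_2 \in \ocurves_{R,\epsilon}$. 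Then $\Pi$ and its orientation reversal $\bar{\Pi}$ can be glued along their pairs of cuffs $\gamma_1$ and $\gamma_2$ (each matched with its orientation reversal in $\bar{\Pi}$), producing an oriented compact $(R,\epsilon)$-panted subsurface with boundary $\gamma \sqcup \bar{\gamma}$. Taking the disjoint union of these cobordisms over the components of $L$ yields an $(R,\epsilon)$-panted cobordism from $L$ to itself.

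For symmetry, if $F$ is an $(R,\epsilon)$-panted cobordism from $L$ to $L'$, so that $\partial F = L \sqcup \bar{L}'$, then the orientation reversal $\bar{F}$ inherits a pants decomposition whose components are the orientation reversals of those of $F$; each such reversal still lies in $\opants_{R,\epsilon}$ because $(R,\epsilon)$-near regularity is invariant under orientation reversion. The boundary $\partial \bar{F} = L' \sqcup \bar{L}$ is then the boundary of an $(R,\epsilon)$-panted cobordism from $L'$ to $L$.

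For transitivity, given panted cobordisms $F$ from $L$ to $L'$ and $G$ from $L'$ to $L''$, with $\partial F = L \sqcup \bar{L}'$ and $\partial G = L' \sqcup \bar{L}''$, I would form the quotient surface $F \cup_{L'} G$ by identifying each component of $L' \subset \partial G$ with the orientation reversal of the corresponding component in $\bar{L}' \subset \partial F$. Since pants components of $F$ and $G$ are unchanged by the gluing, the result is an $(R,\epsilon)$-panted subsurface of $M$ with boundary $L \sqcup \bar{L}''$. No shape issue arises because the notion of an $(R,\epsilon)$-panted subsurface (Definition \ref{pantedSubsurfaceDefinition}) imposes conditions only on each component pair of pants and not on the matching of framings at glued cuffs. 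The only substantive step is reflexivity, and it is secured by the splitting construction already developed in Section \ref{Sec-basicConstructions}.
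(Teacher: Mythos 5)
Your proposal is correct and follows essentially the same route as the paper: reflexivity via the splitting construction (presenting each component as a tame bigon, producing a pair of pants with that cuff, and gluing it to its orientation reversal along the other two cuff pairs), with symmetry and transitivity handled by the formal operations of orientation reversal and gluing of panted cobordisms, exactly as the paper treats them.
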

		
		\begin{proof} 
			The relation of being $(R,\epsilon)$-cobordant is clearly symmetric and transitive,
			so it remains to check the reflexivity.
			Any $(R,\epsilon)$-curve can be written as the cyclic concatenation
			of two consecutive $\partial$-framed segments of the same length and phase,
			which is $(L,\delta)$-tame by the assumptions of constants for this subsection.
			Then by splitting (Construction \ref{splitting}), any $(R,\epsilon)$-curve $\gamma$ is 
			the cuff of an $(R,\epsilon)$-pair-of-pants $\Pi$. Taking two oppositely oriented copies
			of $\Pi$ and gluing along the two pair of cuffs other than the pair of cuff $\gamma$ and 
			its orientation reversal yields an $(R,\epsilon)$-panted cobordism between $\gamma$ and
			itself. This implies that any $(R,\epsilon)$-multicurve is 
			$(R,\epsilon)$-panted cobordant to itself.
		\end{proof}
				
		\begin{lemma}\label{pantedCobordismGroupStructure}
			The set of $(R,\epsilon)$-panted cobordism classes of $(R,\epsilon)$-multicurves form
			a finitely generated abelian group $\ocobordism_{R,\epsilon}$.
			The addition is induced by the disjoint union operation,
			and the inverse is induced by orientation reversion.
		\end{lemma}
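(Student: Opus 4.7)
The plan is to verify the abelian group axioms for the operation $[L_1]_{R,\epsilon}+[L_2]_{R,\epsilon}:=[L_1\sqcup L_2]_{R,\epsilon}$ one by one, and then to address finite generation at the end. First I would check that the addition is well defined on cobordism classes: given $(R,\epsilon)$-panted cobordisms $F_i$ between $L_i$ and $L_i'$ for $i=1,2$, the disjoint union $F_1\sqcup F_2$ is an $(R,\epsilon)$-panted cobordism between $L_1\sqcup L_2$ and $L_1'\sqcup L_2'$, which descends the operation to classes. Associativity and commutativity will then follow immediately from the corresponding properties of disjoint union.

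For the identity element, I would formally adjoin the empty multicurve and declare $[\emptyset]_{R,\epsilon}$ to be the zero; equivalently, the identity is represented by any null-cobordant $(R,\epsilon)$-multicurve, and at least one such exists by the splitting construction appearing in the proof of Lemma~\ref{pantedCobordismEquivalence}. For inverses I would show that $[\bar{L}]_{R,\epsilon}=-[L]_{R,\epsilon}$, i.e., that $L\sqcup\bar{L}$ is null-cobordant for every $(R,\epsilon)$-multicurve $L$. Decomposing $L=\gamma_1\sqcup\cdots\sqcup\gamma_n$ into its components, I would apply the splitting-and-doubling construction of Lemma~\ref{pantedCobordismEquivalence} componentwise: splitting (Construction~\ref{splitting}) produces an $(R,\epsilon)$-pair-of-pants $\Pi_i$ with $\gamma_i$ as a cuff, and gluing $\Pi_i$ to its orientation reversal $\bar{\Pi}_i$ along the remaining two pairs of cuffs yields an $(R,\epsilon)$-panted surface bounded by $\gamma_i\sqcup\bar{\gamma}_i$; the disjoint union over $i$ would realize $L\sqcup\bar{L}$ as the boundary of an $(R,\epsilon)$-panted surface.

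The main obstacle is finite generation. My plan is to defer this claim to the construction of the next subsection, which produces a homomorphism $\Psi:\pi_1(\SO(M),\mathbf{e})\to\ocobordism_{R,\epsilon}$ that will be shown to be surjective on the way to proving Theorem~\ref{theoremPantedCobordism}. Because $\SO(M)$ is a compact smooth manifold, $\pi_1(\SO(M),\mathbf{e})$ is finitely generated, so surjectivity of $\Psi$ would force $\ocobordism_{R,\epsilon}$ to be finitely generated as well. In effect, the routine group-theoretic axioms are handled at the present stage, while finite generation is packaged with, and logically depends on, the surjectivity half of the main theorem of the section.
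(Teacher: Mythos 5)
Your verification of the group axioms (well-definedness of addition via disjoint union of cobordisms, identity via a null-cobordant multicurve since the empty multicurve is excluded by definition, and inverses via the splitting-and-doubling argument of Lemma~\ref{pantedCobordismEquivalence}) coincides with the paper's treatment. Where you genuinely diverge is finite generation: the paper does not defer this at all, but observes that $\ocurves_{R,\epsilon}$ and $\opants_{R,\epsilon}$ are nonempty \emph{finite} sets (nonempty by the Connection Principle; finite because a closed hyperbolic $3$-manifold has only finitely many closed geodesics of length at most $R+\epsilon$), and that every multicurve class is a sum of classes of its components, so $\ocobordism_{R,\epsilon}$ is generated by the finitely many classes $[\gamma]_{R,\epsilon}$, $\gamma\in\ocurves_{R,\epsilon}$; this even yields the explicit presentation $\ZZ\,\opants_{R,\epsilon}\stackrel{\partial}{\to}\ZZ\,\ocurves_{R,\epsilon}\to\ocobordism_{R,\epsilon}\to0$, which the paper records and uses as structural information. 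Your alternative---inferring finite generation from surjectivity of $\Psi$ and finite generation of $\pi_1(\SO(M),\mathbf{e})$---is not circular, since the construction of $\Psi$ and the proof of Lemma~\ref{spanning} only need the abelian group structure and the trivial fact that single-curve classes generate, not finite generation itself; but it is much heavier than necessary, creates a forward dependency of an early lemma on the main machinery of the section, and implicitly imports the stronger constant assumptions of Lemma~\ref{setupData} (the setup data and $R\geq R(\epsilon,M)$) needed to build $\Psi$, whereas the finiteness-of-$\ocurves_{R,\epsilon}$ argument works under the lemma's own mild hypotheses and gives more (a finite presentation) for less.
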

				
		\begin{proof}
			The Connection Principle (Lemma \ref{connectionPrinciple})
			implies that $\ocurves_{R,\epsilon}$ and $\opants_{R,\epsilon}$
			are nonempty finite sets. Hence $\ocobordism_{R,\epsilon}$ is nonempty.
			Define the addition on $\ocobordism_{R,\epsilon}$
			by $[L]_{R,\epsilon}+[L']_{R,\epsilon}=[L\sqcup L']_{R,\epsilon}$.
			It is straightforward to check that $\ocobordism_{R,\epsilon}$ forms
			an abelian group under the addition finitely generated over $\ocurves_{R,\epsilon}$.
			In fact, we have a natural presentation of $\ocobordism_{R,\epsilon}$
			given by the exact sequence
				$$\ZZ\opants_{R,\epsilon}\stackrel{\partial}{\longrightarrow}\ZZ\ocurves_{R,\epsilon}
				\longrightarrow\ocobordism_{R,\epsilon}\longrightarrow0.$$
			The zero of $\ocobordism_{R,\epsilon}$
			is represented by the sum of the three cuffs for any $\Pi\in\opants_{R,\epsilon}$.
			The same argument as the proof of Lemma \ref{pantedCobordismEquivalence}
			implies that $[L]_{R,\epsilon}=-[\bar{L}]_{R,\epsilon}$.
		\end{proof}
		
		\begin{proof} 
			It suffices to assume that $L$ has only one component.
			Then $[c]_{R,\epsilon}=-[\bar{c}]_{R,\epsilon}$
			if there is a pair of pants $\Pi_c\in\opants_{R,\epsilon}$
			with a cuff $c$, for each component $c$ of $L$.
			The condition certainly holds for universally small $\epsilon$
			if $R$ is sufficiently large (Construction \ref{splitting}).
			In fact, one may take two 
			oppositely oriented copies of $\Pi^\pm_c$ of $\Pi_c$, and glue them along their two
			cuffs other than $c$ or $\bar{c}$. The resulting $(R,\epsilon)$-panted surface
			has exactly two boundary component $c$ and $\bar{c}$.
		\end{proof}
		
		\begin{lemma}\label{finerCurves}
			$\ocobordism_{R,\epsilon}$ is generated by the $(R,\epsilon)$-panted
			cobordism classes of $(R,10\delta)$-multicurves.
		\end{lemma}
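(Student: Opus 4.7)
By Lemma \ref{pantedCobordismGroupStructure}, the group $\ocobordism_{R,\epsilon}$ is generated by classes $[\gamma]_{R,\epsilon}$ with $\gamma\in\ocurves_{R,\epsilon}$, so the plan is to show each such class lies in the subgroup generated by $(R,10\delta)$-multicurve classes. For each $\gamma$, I will build a pair of pants $\Pi\in\opants_{R,\epsilon}$ with cuffs $\gamma,\alpha_1,\alpha_2$ where $\alpha_1,\alpha_2\in\ocurves_{R,10\delta}$; the resulting pants relation $[\gamma]_{R,\epsilon}+[\alpha_1]_{R,\epsilon}+[\alpha_2]_{R,\epsilon}=0$ then gives $[\gamma]_{R,\epsilon}=[\bar\alpha_1]_{R,\epsilon}+[\bar\alpha_2]_{R,\epsilon}$, a combination of $(R,10\delta)$-curve classes (orientation reversal preserves complex length).

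The construction will be a symmetrized version of Construction \ref{splitting}. Setting $\varphi_0:=\varphi(\gamma)\in(-\epsilon,\epsilon)$, I would cut the geodesic representative of $\gamma$ at two points $p,q$ of arc distance $\ell(\gamma)/2$, obtaining two arcs of length $\delta$-close to $R/2$, and frame them so that the resulting oriented $\partial$-framed segments $\mathfrak{g}_1\colon p\to q$ and $\mathfrak{g}_2\colon q\to p$ each have phase exactly $\varphi_0/2$. This is possible because rotating the framing at $q$ by $\theta$ shifts $\varphi(\mathfrak{g}_1)$ by $\theta$ and $\varphi(\mathfrak{g}_2)$ by $-\theta$, while the sum is forced to equal $\varphi_0$; so the symmetric split is available. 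The cycle $[\mathfrak{g}_1\mathfrak{g}_2]$ is then $0$-consecutive and $(L,\delta)$-tame since $\gamma$ is smooth. Next I apply the Connection Principle (Lemma \ref{connectionPrinciple}) to draw an auxiliary oriented $\partial$-framed segment $\mathfrak{m}$ from $q$ to $p$ with endpoint direction/framing data exactly as prescribed in the proof of Construction \ref{splitting} (binormal directions at $p,q$, framings $\delta$-close to $\vec{n}_q$ and $\vec{n}_p$), length $\delta$-close to $R-\ell(\mathfrak{g}_1)+2I(\pi/2)$, and phase $\delta$-close to $-\varphi_0/2$. Exactly as in Construction \ref{splitting}, the curves $\gamma$, $\alpha_1:=\overline{[\mathfrak{g}_1\mathfrak{m}]}$, $\alpha_2:=\overline{[\bar{\mathfrak{m}}\mathfrak{g}_2]}$ bound a pair of pants $\Pi$ (spined on the theta graph formed by $\mathfrak{g}_1,\mathfrak{g}_2,\mathfrak{m}$). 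The Length and Phase Formula (Lemma \ref{lengthAndPhaseFormula}) applied to each two-segment cycle then shows $\ell(\alpha_i)$ is $\delta$-close to $R$ and $\varphi(\alpha_i)$ is $\delta$-close to $\varphi(\mathfrak{g}_i)+\varphi(\mathfrak{m})=\varphi_0/2-\varphi_0/2=0$, placing $\alpha_1,\alpha_2\in\ocurves_{R,10\delta}$ and thus $\Pi\in\opants_{R,\epsilon}$.

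The main obstacle is that $[\mathfrak{g}_1\mathfrak{g}_2]$ is not a bigon in the sense of Definition \ref{bigon}, since the individual segment phases $\varphi(\mathfrak{g}_i)=\varphi_0/2$ may be only $\epsilon/2$-close to $0$ rather than $\delta$-close; consequently Construction \ref{splitting} cannot be cited directly as a black box. What really drives the argument is clear from re-inspecting the proof of splitting: only the Connection Principle and the Length and Phase Formula are used, and the bigon hypothesis enters solely to ensure the simultaneous phase cancellation $\varphi(\mathfrak{g}_1)+\varphi(\mathfrak{m})\approx 0$ and $\varphi(\mathfrak{g}_2)+\varphi(\mathfrak{m})\approx 0$ with a single choice of $\varphi(\mathfrak{m})$. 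The symmetric distribution $\varphi(\mathfrak{g}_1)=\varphi(\mathfrak{g}_2)=\varphi_0/2$ together with $\varphi(\mathfrak{m})=-\varphi_0/2$ realises both cancellations at once, and this is the only point at which the hypotheses of Construction \ref{splitting} actually interact with its conclusion; so the verification there transports verbatim to the present setting.
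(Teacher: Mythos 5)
Your proposal is correct and takes essentially the same route as the paper, whose entire proof is ``this follows from splitting (Construction \ref{splitting})'': you cut each generator $\gamma\in\ocurves_{R,\epsilon}$ into two framed halves, run the splitting construction, and use the resulting pants relation $[\gamma]_{R,\epsilon}=-[\alpha_1]_{R,\epsilon}-[\alpha_2]_{R,\epsilon}$ with $\alpha_i\in\ocurves_{R,10\delta}$. Your extra care about the bigon phase hypothesis (the halves are only $\epsilon/2$-close to phase $0$, compensated by taking $\varphi(\mathfrak{m})$ close to $-\varphi_0/2$) tightens a point the paper glosses over; the only slip is that the two arcs have length $\epsilon/2$-close, not $\delta$-close, to $R/2$, which is harmless for the argument.
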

		
		\begin{proof} 
			This follows immediately from splitting (Construction \ref{splitting}).
		\end{proof}

		\subsubsection{The special orthonormal frame bundle $\SO(M)$}\label{Subsubsec-theSOM}
		Fix an orthonormal frame 
			$$\mathbf{e}\,=\,(\vec{t},\vec{n},\vec{t}\times\vec{n})$$
		at a fixed basepoint $\pt$ of $M$, and regard $\mathbf{e}$ as a basepoint of 
		the total space $\SO(M)$ of the bundle
		over $M$ of special orthonormal frames 
		with respect to the orientation of $M$. 
		One may naturally identify $\SO(M)$ as 
		$\Isom_0(\Hyp)\,/\,\pi_1(M)$.
		Since the tangent bundle of a closed 
		orientable $3$-manifold is always trivializable,
		$\SO(M)$ is a trivial $\SO(3)$-principal bundle
		over $M$.
		There are canonical short exact sequences
			$$1\longrightarrow\pi_1(\SO(3),I)\longrightarrow \pi_1(\SO(M),\mathbf{e})\longrightarrow \pi_1(M,\pt)\longrightarrow1,$$
		and
			$$0\longrightarrow\ZZ_2\longrightarrow H_1(\SO(M);\ZZ)\longrightarrow H_1(M;\ZZ)\longrightarrow0,$$
		both of which are splitting but not naturally.
		Note that $\pi_1(\SO(3),I)\cong\ZZ_2$ is the center
		of $\pi_1(\SO(M),\mathbf{e})$.
		We will usually write
			$$\hat{c}\,\in\,\pi_1(\SO(M),\mathbf{e})$$
		for the nontrivial central element.
		
		Certain noncentral
		elements of $\pi_1(\SO(M),\mathbf{e})$, namely, the $\delta$-sharp
		elements in the sense of Definition \ref{sharpElement} below,
		can be naturally represented by
		their associated oriented $\partial$-framed segments.
		This provides a convenient way to understand such elements,
		which will be especially useful 
		when we construct the inverse of $\Phi$.
		
		\begin{definition}\label{sharpElement}
			For any positive constant $\delta$ which is at most $10^{-2}$,
			a noncentral element $\hat{g}\in\pi_1(\SO(M),\mathbf{e})$
			is said to be \emph{$\delta$-sharp} if
			its image $g$ in $\pi_1(M,\pt)$ has 
			the initial and terminal directions
			$\delta$-close to $\vec{t}$ and $-\vec{t}$, respectively.
			For a $\delta$-sharp $\hat{g}$, we will
			say that an oriented $\partial$-framed segment
			$\mathfrak{g}$ is \emph{associated to} $\hat{g}$,
			and vice versa,
			if $\mathfrak{g}$ satisfies the following.
			\begin{itemize}
				\item The carrier segment of $\mathfrak{g}$ is $g$.
				The phase of $\mathfrak{g}$ is $\delta$-close to $0$.
				The initial and terminal framings of $\mathfrak{g}$ are $\delta$-close
				to each other.
				\item The element $\hat{g}$ is the homotopy class
				represented by a closed path of frames based
				at $\mathbf{e}\in\SO(M)$ as follows. The path first flows $\mathbf{e}$
				along $g$ by parallel transportation, and then
				rotates $180^\circ$ counterclockwise about $\vec{n}_\ter(\mathfrak{g})$, and then
				returns to $\mathbf{e}$ along a $\delta$-short path within $\SO(M)|_\pt$.
			\end{itemize}
		\end{definition}
		
		We will justify the existence of the associated $\partial$-segment in Lemma \ref{sharpElementLemma}.
		To understand the issue which the definition addresses, 
		consider for the moment a general nontrivial element $g\in\pi_1(M,*)$
		represented by a closed geodesic path based at $*$. The parallel transportation of $\mathbf{e}$ along
		$g$ gives rise to a path of frames $[0,1/2]\to\SO(M)$ which departs from $\mathbf{e}$ and arrives at
		another frame $\mathbf{e}'$ at $*$. Concatenate this path of frames with any path of frames
		$[1/2,1]\to\SO(M)|_\pt$, then we obtain a closed path based at $\mathbf{e}$ which represents
		a lift $\hat{g}\in\pi_1(\SO(M),\mathbf{e})$ of the element $g\in\pi_1(M,*)$. 
		Note that there are two possible lifts of $g$, namely, $\hat{g}$ and $\hat{c}\hat{g}$.
		In general, there is no natural way to distinguish these two possibilities by choosing the second path
		without ambiguity. However, when the angle of $g$ at $*$ is sufficiently sharp, 
		there are essentially (up to $\delta$-small difference)
		two ways to enrich $g$ with a $\partial$-framing	subject to the first condition of Definition \ref{sharpElement},
		and each of them specifies a lift of $g$ in $\pi_1(\SO(M),\mathbf{e})$
		by the description in second condition of Definition \ref{sharpElement}.
		We point out that our biased choice of the $180^\circ$ \emph{counterclockwise}
		rotation in Definition \ref{sharpElement} determines our choices of 
		chirality for tripods in the rest of this section.
		The essential difference between distinct chiralities
		is revealed by Lemma \ref{sharpTripodLemma} and 
		Remark \ref{sharpTripodLemmaRemark}.
				
		\begin{lemma}\label{sharpElementLemma}
			For any $\delta$-sharp element $\hat{g}\in\pi_1(\SO(M),\mathbf{e})$,
			there is an oriented $\partial$-framed segment
			$\mathfrak{g}$ associated to $\hat{g}$, unique up to $\delta$-small change of framings
			at endpoints.
			Moreover, $\mathfrak{g}^*$ is associated to $\hat{c}\hat{g}$, and $\bar{\mathfrak{g}}^*$ 
			is associated to $\hat{g}^{-1}$.
		\end{lemma}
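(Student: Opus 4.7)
The plan is to construct $\mathfrak{g}$ explicitly from $\hat g$, verify uniqueness up to $\delta$-small change of framings at endpoints, and then identify the lifts associated to $\mathfrak{g}^*$ and $\bar{\mathfrak{g}}^*$ by analyzing how rotation paths in the fiber $\SO(M)|_\pt\cong\SO(3)$ compose.

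First I would let $g$ be the geodesic loop representative in $\pi_1(M,\pt)$ of the image of $\hat g$. By $\delta$-sharpness, $\vec{t}_\ini(g)$ is $\delta$-close to $\vec{t}$ and $\vec{t}_\ter(g)$ is $\delta$-close to $-\vec{t}$. I would choose $\vec{n}_\ini$ to be the unit normal to $\vec{t}_\ini(g)$ closest to $\vec{n}$, and $\vec{n}_\ter$ to be the unit normal to $\vec{t}_\ter(g)$ whose parallel transport back to $\pt$ along $\bar g$ is closest to $\vec{n}_\ini$. This yields a $\partial$-framed segment $\mathfrak{g}$ with phase $\delta$-close to $0$; since parallel transport along $g$ approximately carries $(\vec{t},\vec{n})$ to $(\vec{t}_\ter(g),\vec{n}_\ter)$ and $\vec{t}_\ter(g)$ is $\delta$-close to $-\vec{t}$, the terminal framing $\vec{n}_\ter$ also ends up $\delta$-close to $\vec{n}$, so the two framings are $\delta$-close to each other in $T_\pt M$.

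The associated closed path of frames in Definition~\ref{sharpElement} represents some lift of $g$ in $\pi_1(\SO(M),\mathbf{e})$, hence either $\hat g$ or $\hat c\hat g$. Negating both framings (passing to $\mathfrak{g}^*$) changes only the middle portion of the path: the $180^\circ$ counterclockwise rotation is now about $-\vec{n}_\ter$ instead of $\vec{n}_\ter$. In $\SO(M)|_\pt$, these two rotation arcs end at the same element---the $180^\circ$ rotation about the unoriented axis line $\RR\vec{n}_\ter$---but cover complementary halves of the circle of rotations about that axis; their concatenation therefore traces a full loop in this $\SO(2)$ subgroup, and this loop generates $\pi_1(\SO(3))\cong\ZZ_2$. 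Thus exactly one of $\mathfrak{g}$ and $\mathfrak{g}^*$ realizes $\hat g$ and the other realizes $\hat c\hat g$, yielding both existence of the associated $\mathfrak{g}$ and the first clause of the moreover statement. Uniqueness up to $\delta$-small change of framings at endpoints is immediate: the phase and closeness conditions pin down each framing up to the two-fold $\pm\vec{n}$ ambiguity, and the lift condition then selects the correct sign.

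To handle $\bar{\mathfrak{g}}^*$, I would first show that $\bar{\mathfrak{g}}$ is associated to $\hat c\hat g^{-1}$, so that the same framing-flip argument gives $\bar{\mathfrak{g}}^*$ associated to $\hat c\cdot\hat c\hat g^{-1}=\hat g^{-1}$. To identify $\bar{\mathfrak{g}}$, I would concatenate its closed path of frames with the one for $\mathfrak{g}$ and determine which element of the kernel of $\pi_1(\SO(M),\mathbf{e})\to\pi_1(M,\pt)$ is represented. The two parallel transport pieces along $\bar g$ and $g$ combine to a loop over the null-homotopic $\bar g\cdot g$ in $M$; a null-homotopy in $M$ lifts to push this combined piece into the fiber $\SO(M)|_\pt$, where it reduces to the composition of the two $180^\circ$ rotations about $\vec{n}_\ter(\bar{\mathfrak{g}})=\vec{n}_\ini(\mathfrak{g})$ and $\vec{n}_\ter(\mathfrak{g})$, both $\delta$-close to $\vec{n}$. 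This composition is a full $360^\circ$ rotation about a vector near $\vec{n}$, which represents $\hat c$. The main obstacle throughout is careful bookkeeping of the $\ZZ_2$ ambiguity between $\hat g$ and $\hat c\hat g$ as the various operations are applied, essentially identifying the $360^\circ$ rotation about $\vec{n}$ with the generator of $\pi_1(\SO(3))$ inside $\pi_1(\SO(M),\mathbf{e})$.
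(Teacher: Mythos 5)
Your fiber-level $\ZZ_2$ bookkeeping (the two $180^\circ$ rotation arcs about $\pm\vec{n}_\ter$ differing by the generator of $\pi_1(\SO(3))$, and the reduction of the $\bar{\mathfrak{g}}$ computation to a composition of rotations in $\SO(M)|_\pt$) is sound and matches the spirit of what the paper leaves implicit. But the construction of $\mathfrak{g}$ itself has a genuine gap. You take $\vec{n}_\ini$ to be the normal closest to $\vec{n}$ and then claim that, because parallel transport along $g$ carries $\vec{t}_\ini(g)\approx\vec{t}$ to $\vec{t}_\ter(g)\approx-\vec{t}$, the transported framing $\vec{n}_\ter$ is again $\delta$-close to $\vec{n}$. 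That inference is false in general: $\delta$-sharpness constrains only the tangent directions at the endpoints, not the normal holonomy. The holonomy of parallel transport around $g$ is approximately a $180^\circ$ rotation about some axis $\vec{m}$ orthogonal to $\vec{t}$, and $\vec{m}$ depends on $g$, not on $\mathbf{e}$; if, say, $\vec{m}$ is orthogonal to $\vec{n}$, then the transport of $\vec{n}$ is close to $-\vec{n}$, so your $\mathfrak{g}$ violates the requirement of Definition \ref{sharpElement} that the initial and terminal framings be $\delta$-close to each other (equivalently, the frame path cannot return to $\mathbf{e}$ by a $\delta$-short path in the fiber). So existence is not established by your construction, and the same misstep propagates: your uniqueness claim (``the $\pm\vec{n}$ ambiguity'') and your final computation for $\bar{\mathfrak{g}}$ (``both $\delta$-close to $\vec{n}$'') quote the wrong axis.

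The paper's proof fixes exactly this point: one chooses $\vec{m}\perp\vec{t}$ such that the parallel transport of $\vec{m}$ along $g$ is $\delta$-close to $\vec{m}$ --- such $\vec{m}$ exists, and is unique up to sign and $\delta$-small change, precisely because the holonomy carries $\vec{t}$ near $-\vec{t}$ and is therefore nearly an involution with axis orthogonal to $\vec{t}$. Framing $g$ by (vectors $\delta$-close to) $+\vec{m}$ or $-\vec{m}$ gives the only two candidates $\mathfrak{g}_\pm$ satisfying the first bullet of Definition \ref{sharpElement}, and the $\ZZ_2$ argument you already have then shows exactly one of them represents $\hat{g}$ (the other represents $\hat{c}\hat{g}$), which yields existence, uniqueness up to $\delta$-small change of framings, and the ``moreover'' statements. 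If you replace your choice ``closest to $\vec{n}$'' by this holonomy-fixed direction $\vec{m}$ and rerun your fiber computations with axes near $\vec{m}$ rather than near $\vec{n}$, your argument becomes essentially the paper's.
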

		
		\begin{proof} 
			Let $\vec{m}$ be a unit vector orthogonal to $\vec{t}$
			such that the parallel transportation of $\vec{m}$ along $g$
			to the other end is $\delta$-close to $\vec{m}$.
			Up to $\delta$-small change, there are only two possible such vectors, namely,
			$\pm\vec{m}$. Enriching $g$ with initial and terminal framings 
			both $\delta$-close to $\pm\vec{m}$ yield
			oriented $\partial$-framed segments $\mathfrak{g}_\pm$ satisfying the first
			part of the listed properties in Definition \ref{sharpElement}.
			It is clear that exactly one of $\mathfrak{g}_\pm$
			fulfills the second part of the listed properties, so we pick it as $\mathfrak{g}$.
			The `moreover' part is straightforward from the construction above as well.
		\end{proof}
		
		\begin{lemma}\label{sharpTripodLemma}
			For any positive constant $\delta$ which is at most $10^{-2}$,
			suppose that $\mathfrak{t}_0\vee\mathfrak{t}_1\vee\mathfrak{t}_2$
			is a $(10,\delta)$-tame left-handed tripod with coincident terminal points
			of legs $*$ and with the terminal directions
			of legs $\delta$-close to $-\vec{t}$. Then there exist
			angles $\phi_0,\phi_1,\phi_2\in\RR/2\pi\ZZ$ satisfying 
			$$\phi_0+\phi_1+\phi_2=0,$$
			such that $2\phi_{i+2}\in\RR/2\pi\ZZ$
			is $\delta$-close to
			the directed angle from $\vec{n}_\ter(\mathfrak{t}_i)$
			to $\vec{n}_\ter(\mathfrak{t}_{i+1})$
			with respect to the common orthogonal vector at $\pt$ 
			which is $\delta$-close to $-\vec{t}$,
			and that each $\mathfrak{t}_{i,i+1}(\phi_{i+2})$
			(Definition \ref{partialFramedSegment})
			is associated to a $\delta$-sharp element $\hat{g}_{i+2}\in\pi_1(\SO(M),\mathbf{e})$
			for $i\in\ZZ_3$. For any such $\phi_i$ as above, the triangular relation
			$$\hat{g}_0\hat{g}_1\hat{g}_2=\id$$
			is satisfied. Moreover, adding two of the three $\phi_i$ by $\pi$ yields
			another triple of angles satisfying the conditions above,
			with two corresponding $\hat{g}_i$ changed
			into $\hat{c}\hat{g}_i$. 
		\end{lemma}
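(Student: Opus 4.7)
The plan is to translate the condition "$\mathfrak{t}_{i,i+1}(\phi_{i+2})$ is associated to a $\delta$-sharp element" into an explicit congruence on $\phi_{i+2}$ involving the terminal framings of the legs, and then use the cyclic cancellation of the three sides to establish the triangular relation. First I would observe that under the hypotheses the side $\mathfrak{t}_{i,i+1}=\bar{\mathfrak{t}}_i\mathfrak{t}_{i+1}$ is a $(10L,\delta)$-tame reduced concatenation whose initial and terminal directions at $\pt$ are $\delta$-close to $\vec{t}$ and $-\vec{t}$ respectively, and whose initial and terminal framings are $\delta$-close to $\vec{n}_\ter(\mathfrak{t}_i)$ and $\vec{n}_\ter(\mathfrak{t}_{i+1})$. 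The Length and Phase Formula (Lemma \ref{lengthAndPhaseFormula}) then shows the phase of $\mathfrak{t}_{i,i+1}$ is $O(\delta)$-close to $0$, so that the carrier loop $g_{i+2}$ already represents a $\delta$-sharp element of $\pi_1(M,\pt)$.

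Writing $\alpha_{i,i+1}\in\RR/2\pi\ZZ$ for the directed angle from $\vec{n}_\ter(\mathfrak{t}_i)$ to $\vec{n}_\ter(\mathfrak{t}_{i+1})$ about $-\vec{t}$, one has $\alpha_{01}+\alpha_{12}+\alpha_{20}\equiv 0\pmod{2\pi}$ because all three framings lie in the plane perpendicular to $\vec{t}$ up to $O(\delta)$. The framing rotation in Definition \ref{partialFramedSegment} rotates the initial framing about $\vec{t}_\ini\approx\vec{t}$ and the terminal framing about $\vec{t}_\ter\approx-\vec{t}$ by the same angle $\phi_{i+2}$, which translate into opposite-sign rotations when measured about the common axis $-\vec{t}$. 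A direct calculation shows that the two framings become $\delta$-close at $\pt$ precisely when $2\phi_{i+2}\equiv\alpha_{i,i+1}\pmod{2\pi}$, so each $\phi_{i+2}$ is determined modulo $\pi$. Since $2(\phi_0+\phi_1+\phi_2)\equiv 0\pmod{2\pi}$, the sum $\phi_0+\phi_1+\phi_2$ lies in $\{0,\pi\}\pmod{2\pi}$; among the eight admissible triples, the four with sum $0$ differ from one another by adding $\pi$ to exactly two of the three coordinates, which in view of the second half of Lemma \ref{sharpElementLemma} already yields the "moreover" assertion.

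For any such triple I invoke Lemma \ref{sharpElementLemma} to produce the $\delta$-sharp elements $\hat{g}_{i+2}$ associated to the segments $\mathfrak{t}_{i,i+1}(\phi_{i+2})$. To establish the triangular relation $\hat{g}_0\hat{g}_1\hat{g}_2=\id$, I would concatenate the three loops-of-frames prescribed by Definition \ref{sharpElement} and verify that the resulting loop is null-homotopic in $\SO(M)$. Its projection to $\pi_1(M,\pt)$ is automatically trivial: expanding each reduced concatenation, the composite base loop traverses each leg $\mathfrak{t}_i$ once outward and once inward in cancelling pairs. What remains is to identify the lift within the kernel $\pi_1(\SO(3),I)\cong\ZZ_2$ generated by $\hat{c}$. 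Each $\hat{g}_{i+2}$ contributes to the loop in $\SO(M)|_\pt$ a $180^\circ$ rotation about the axis $\vec{n}_\ter(\mathfrak{t}_{i,i+1}(\phi_{i+2}))$, which lies approximately in the plane perpendicular to $\vec{t}$; the composition of three such $180^\circ$ rotations about coplanar axes equals the rotation about $\vec{t}$ by twice the algebraic sum of their in-plane angular positions, and the constraints $2\phi_{i+2}\equiv\alpha_{i,i+1}$ together with $\phi_0+\phi_1+\phi_2=0$ force this total angular displacement to vanish rather than equal $\pi$.

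The main technical obstacle I anticipate is distinguishing the two lifts within $\pi_1(\SO(3),I)$ after this reduction, since the relevant elements of $\SO(3)$ coincide but their representing loops need not be homotopic. I would handle this by a continuous deformation argument: treating $(\phi_0,\phi_1,\phi_2)$ as varying from a fully symmetric reference position (where the composition is manifestly $\id$) to the given configuration, I track the homotopy class of the composition along the deformation, noting that the class in $\ZZ_2$ can only change when the sum $\phi_0+\phi_1+\phi_2$ crosses $\pi\pmod{2\pi}$, which never happens on the connected component of triples with sum $0$. Alternatively, one may compute directly using unit quaternions, representing each $180^\circ$ rotation as a pure-imaginary unit quaternion and verifying that the product is $+1$ rather than $-1$ under the stated constraints.
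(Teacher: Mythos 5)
Your first half is fine and matches the paper: reducing the association condition to a congruence determining each $\phi_{i+2}$ modulo $\pi$, choosing the lifts with $\phi_0+\phi_1+\phi_2=0$, and deducing the ``moreover'' clause from the framing-flip statement in Lemma \ref{sharpElementLemma}. The gap is in the crux, namely deciding whether $\hat{g}_0\hat{g}_1\hat{g}_2$ is $\id$ or $\hat{c}$ inside the kernel $\ZZ_2$ of $\pi_1(\SO(M),\mathbf{e})\to\pi_1(M,\pt)$. Your argument never uses the hypothesis that the tripod is \emph{left-handed}, and no argument that omits it can be correct: by Remark \ref{sharpTripodLemmaRemark}, a right-handed tripod with the same constraint $\phi_0+\phi_1+\phi_2=0$ gives $\hat{g}_0\hat{g}_1\hat{g}_2=\hat{c}$. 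Concretely, you treat the contribution of each $\hat{g}_{i+2}$ as only its $180^\circ$ fiber rotation and discard the parallel transport along the carrier loop $g_{i+2}$; but the $\ZZ_2$-lift is decided precisely by how the concatenated transport paths contract over the geodesic $2$-simplex spanned by $g_0,g_1,g_2$, and by whether the rotation axes lie on the positive or negative side of that simplex --- i.e.\ by the chirality. (Also, as a matter of $\SO(3)$ algebra, the composition of three $180^\circ$ rotations about coplanar axes is again a $180^\circ$ rotation about an in-plane axis, not a rotation about $\vec{t}$, so the identity you invoke is not correct even before the lifting issue.)

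Both of your proposed fixes inherit this problem. The deformation argument starts from a ``fully symmetric reference position where the composition is manifestly $\id$'' --- but that is exactly what has to be proved, and it is false for right-handed tripods; moreover, at intermediate values of the $\phi_i$ the rotated framings no longer nearly match, so the $\delta$-short return path of Definition \ref{sharpElement} is unavailable and the loop you want to deform is only defined up to the very $\ZZ_2$ ambiguity you are trying to track. The quaternion computation is fiber-only and again ignores the holonomy and the contraction over the triangle. The paper's route is different at this point: since $\hat{c}$ is nontrivial in $H_1(\SO(M);\ZZ)$, it suffices to show $[\hat{g}_0\hat{g}_1\hat{g}_2]=0$ in homology; the loop is split into the pieces $\hat{\beta}_i$ (parallel transport along $g_i$ followed by a $180^\circ$ rotation), whose concatenation is null-homotopic over the spanning $2$-simplex exactly because the tripod is left-handed, and the fiber corrections $\hat{\xi}_i,\hat{\eta}_i$, whose concatenation winds about $\vec{t}$ through the angle $2(\phi_0+\phi_1+\phi_2)$, an even multiple of $2\pi$. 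To repair your proof you need an ingredient of this kind that genuinely inputs the chirality.
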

		
		\begin{remark}\label{sharpTripodLemmaRemark}
			If $\mathfrak{t}_0\vee\mathfrak{t}_1\vee\mathfrak{t}_2$
			is right-handed, we must either replace
			the triangular relation in the conclusion with
			the twisted triangular relation
			$\hat{g}_0\hat{g}_1\hat{g}_2=\hat{c}$, or alternatively,
			replace the equation for the anlges 
			with $\phi_0+\phi_1+\phi_2=\pi$.
		\end{remark}
		
		\begin{proof}
			For each $i\in\ZZ_3$, pick a unit vector $\vec{n}_i$ at $\pt$ orthogonal to $\vec{t}$,
			such that $\vec{n}_i$ is $\delta$-close to $\vec{n}_\ini(\mathfrak{t}_i)$.
			Let $\psi_{i,i+1}\in\RR/2\pi\ZZ$ be the angle 
			from $\vec{n}_i$ to $\vec{n}_{i+1}$ with respect to $\vec{t}$,
			and let $\phi'_{i+2}$ be half of $\psi_{i,i+1}$, valued in $\RR/\pi\ZZ$.
			Choose a lift $\phi_{i+2}\in\RR/2\pi\ZZ$ for each $\phi'_{i+2}$, so that 
			$\phi_0+\phi_1+\phi_2=0$. Note that any other lift can be obtained
			from changing two $\phi_i$ by adding $\pi$.
			It is clear that $\mathfrak{t}_{i,i+1}(\phi_{i+2})$
			is associated to a $\delta$-sharp $\hat{g}_{i+2}\in\pi_1(\SO(M),\mathbf{e})$,
			and $\hat{g}_0\hat{g}_1\hat{g}_2$ equals either $\id$ or $\hat{c}$.
			We claim that it is the former case.
			
			It suffices to verify that $\hat{g}_0\hat{g}_1\hat{g}_2$
			is trivial in $H_1(\SO(M),\mathbf{e})$. The argument is routine
			and easy so we only include an outline below.
			Let $\hat{\beta}_{i+2}$ be a path of frames
			from $(\vec{t},\vec{n}_i,\vec{t}\times\vec{n}_i)|_\pt$ to 
			$(\vec{t},\vec{n}_{i+1},\vec{t}\times\vec{n}_{i+1})|_\pt$
			that first flows along $g_{i+2}$ by parallel transportation,
			and then rotates $180^\circ$ counterclockwise about $\vec{n}_{i+1}$,
			and then rotates to $(\vec{t},\vec{n}_{i+1},\vec{t}\times\vec{n}_{i+1})|_\pt$
			via a $\delta$-short path in $\SO(M)|_\pt$. Here $g_{i+2}\in\pi_1(M,\pt)$
			is the image of $\hat{g}_{i+2}$, also regarded as a pointed geodesic loop.
			Since $\vec{n}_i$ are all $\delta$-close to the normal vector of
			the $2$-simplex $\sigma$ 
			spanned by the concatenation of $g_0,g_1,g_2$, the loop of frames 
			$\hat{\beta}_0\hat{\beta}_1\hat{\beta}_2$ obtained by concatenation
			is homotopic to the constant loop $(\vec{t},\vec{n}_1,\vec{t}\times\vec{n}_1)|_\pt$
			in $\SO(M)|_*$ as $\mathfrak{t}_0\vee\mathfrak{t}_1\vee\mathfrak{t}_2$
			is left-handed. (Compare with the right-handed case where the
			resulting loop would be the $360^\circ$ counterclockwise rotation
			of the frame $(\vec{t},\vec{n}_1,\vec{t}\times\vec{n}_1)|_\pt$ about 
			$\vec{n}_1$.) In particular, 
				$$[\hat{\beta}_0\hat{\beta}_1\hat{\beta}_2]=0$$
			in $H_1(\SO(M),\mathbf{e})$. 
			Let $\vec{m}_{i+2}$ be a unit vector at $\pt$ orthogonal to $\vec{t}$,
			such that $\vec{m}_{i+2}$ is $\delta$-close to both the initial and terminal
			framings of $\mathfrak{t}_{i,i+1}(\phi_{i+2})$. Let $\hat{\xi}_{i+2}$ be
			the path of frames in $\SO(M)|_\pt$ 
			from $(\vec{t},\vec{m}_{i+2},\vec{t}\times\vec{m}_{i+2})$ to
			$(\vec{t},\vec{n}_{i},\vec{t}\times\vec{n}_{i})$
			by a rotation of angle $\delta$-close to $\phi_{i+2}$, and let
			$\hat{\eta}_{i+2}$ be the path of frames
			in $\SO(M)|_\pt$ 
			from $(\vec{t},\vec{n}_{i+1},\vec{t}\times\vec{n}_{i+1})$
			to $(\vec{t},\vec{m}_{i+2},\vec{t}\times\vec{m}_{i+2})$ 			
			by a rotation of angle $\delta$-close to $\phi_{i+2}$.
			Then the loop of frames
			$\hat{\xi}_{i+2}\hat{\beta}_{i+2}\hat{\eta}_{i+2}$ based at
			$(\vec{t},\vec{m}_{i+2},\vec{t}\times\vec{m}_{i+2})$ can be
			conjugated to the loop based at $\mathbf{e}$ representing
			$\hat{g}_{i+2}$ described in Definition \ref{sharpElement}.
			Thus 
				$$[\hat{g}_{i+2}]=[\hat{\xi}_{i+2}\hat{\beta}_{i+2}\hat{\eta}_{i+2}]$$
			in $H_1(\SO(M),\mathbf{e})$.
			Note that $\hat{\xi}_0\hat{\eta}_0\hat{\xi}_1\hat{\eta}_1\hat{\xi}_2\hat{\eta}_2$
			is a loop of frames in $\SO(M)|_\pt$ 
			that rotates counterclockwise about $\vec{t}$ of angle
			$2(\phi_0+\phi_1+\phi_2)$. As $\phi_0+\phi_1+\phi_2=0$
			modulo $2\pi$, the winding number of this
			loop around $\vec{t}$ is even. Hence  
				$$[\hat{\xi}_0\hat{\eta}_0\hat{\xi}_1\hat{\eta}_1\hat{\xi}_2\hat{\eta}_2]=0$$
			in $H_1(\SO(M);\ZZ)$. Therefore, in $H_1(\SO(M);\ZZ)$,
			\begin{eqnarray*}
				[\hat{g}_0\hat{g_1}\hat{g}_2]&=&
				[\hat{\xi}_{0}\hat{\beta}_{0}\hat{\eta}_{0}]+[\hat{\xi}_{1}\hat{\beta}_{1}\hat{\eta}_{1}]+[\hat{\xi}_{2}\hat{\beta}_{2}\hat{\eta}_{2}]\\
				&=&
				[\hat{\xi}_{0}\hat{\beta}_{0}\hat{\eta}_{0}\hat{\xi}_{1}\hat{\beta}_{1}\hat{\eta}_{1}\hat{\xi}_{2}\hat{\beta}_{2}\hat{\eta}_{2}]\\
				&=&
				[\hat{\beta}_{0}\hat{\beta}_{1}\hat{\beta}_{2}]+[\hat{\xi}_0\hat{\eta}_0\hat{\xi}_1\hat{\eta}_1\hat{\xi}_2\hat{\eta}_2]\\
				&=&0.
			\end{eqnarray*}
			This implies that $\hat{g}_0\hat{g}_1\hat{g}_2=\id$ in $\pi_1(\SO(M),\mathbf{e})$.			
		\end{proof}

		There is a canonical way to 
		lift $(R,\epsilon)$-curves and $(R,\epsilon)$-pants into $\SO(M)$, up to homotopy.
		
		\begin{definition}\label{canonicalLift}
			For any curve $\gamma\in\ocurves_{R,\epsilon}$,
			a \emph{canonical lift} of $\gamma$ is a loop of frames
				$$\hat{\gamma}:S^1\to \SO(M)$$
			as follows.
			Choose a point $p$ on the geodesic representative
			of $\gamma$, and a normal vector $\vec{n}_p$
			of $\gamma$ at $p$. Let $\vec{t}_p$ be the direction vector of
			$\gamma$ at $p$. The frame 
			$\mathbf{e}_{\gamma,\vec{n}_p}=(\vec{t}_p,\vec{n}_p,\vec{t}_p\times\vec{n}_p)$ is an element
			of $\SO(M)|_p$.	With these notations, 
			a base-point free loop of frames $\hat{\gamma}$ 
			starts from $\mathbf{e}_{\gamma,\vec{n}_p}$,
			and then flows once around $\gamma$ by parallel transportation,
			and then rotates $360^\circ$ counterclockwise
			about $\vec{n}_p$,
			and then rotates back to $\mathbf{e}_{\gamma,\vec{n}_p}$
			along an $\epsilon$-short path within $\SO(M)|_p$.
			For any pair of pants $\Pi\in\opants_{R,\epsilon}$,
			a \emph{canonical lift} of $\Pi$ is a lift 
				$$\hat{\Pi}:\,\Sigma_{0,3}\to \SO(M)$$
			of $\Pi$, such that the three cuffs are canonically lifted.
		\end{definition}
		
		The definition is justified by the following Lemma \ref{canonicalLiftLemma}. 
		Similar to the explanation after Definition \ref{sharpElement},
		the nearly geodesic assumption helps to distinguish the two possible lifts
		of a $(R,\epsilon)$-curve $\gamma$ in $\SO(M)$ up to homotopy,
		namely, the canonical lift above and the lift by almost parallel transportation
		along $\gamma$.
		The fundamental difference between the two possible lifts is that  
		if we lift all the cuffs of $\Pi$ in the latter way,
		it would be impossible to extend the lift to $\Pi$ up to homotopy.
		
		\begin{lemma}\label{canonicalLiftLemma}
			For any positive constant $\epsilon$ which is at most $10^{-2}$, suppose $\gamma\in\ocurves_{R,\epsilon}$ and
			$\Pi\in\opants_{R,10\delta}$. 
			The canonical lifts $\hat{\gamma}$ and $\hat{\Pi}$ as described in
			Definition \ref{canonicalLift} exist and are unique up to homotopy.
		\end{lemma}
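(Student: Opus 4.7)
The plan is to handle curves and pants separately via obstruction theory for the principal $SO(3)$-bundle $\SO(M) \to M$.

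For a curve $\gamma \in \ocurves_{R,\epsilon}$, the nearly-hyperbolic condition $|\clen(\gamma)-R|<\epsilon$ with $R \in \RR$ forces parallel transport of $\mathbf{e}_{\gamma,\vec{n}_p}$ once around the closed geodesic to return to a frame at $p$ that differs from $\mathbf{e}_{\gamma,\vec{n}_p}$ by a rotation of the normal plane through the phase $\varphi(\gamma)$, which lies within $\epsilon$ of $0 \bmod 2\pi$. After the $360^\circ$ rotation about $\vec{n}_p$, the endpoint is therefore $\epsilon$-close to $\mathbf{e}_{\gamma,\vec{n}_p}$ and can be closed up by an $\epsilon$-short arc in $\SO(M)|_p$, yielding the loop $\hat{\gamma}$. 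Uniqueness up to homotopy is immediate: changes of $p$ produce free homotopies of base-point-free loops, while changes of $\vec{n}_p$ by a rotation of the normal plane conjugate the $360^\circ$ rotation step without altering its homotopy class, since any two $360^\circ$ rotations in $SO(3)$ represent the same generator of $\pi_1(SO(3))\cong\ZZ_2$.

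For a pair of pants $\Pi: \Sigma_{0,3} \to M$ with cuffs $\gamma_1,\gamma_2,\gamma_3$, the surface $\Sigma_{0,3}$ has the homotopy type of a wedge of two circles, so $\Pi^*\SO(M) \to \Sigma_{0,3}$ is trivializable and admits a global section $\hat{\Pi}_0$. On each cuff $\gamma_i$, the restriction $\hat{\Pi}_0|_{\gamma_i}$ differs from the canonical lift $\hat{\gamma}_i$ (constructed in the first step) by an element $\delta_i \in \pi_1(SO(3)) \cong \ZZ_2$. Because $\pi_2(SO(3))=0$, the set $[\Sigma_{0,3}, SO(3)]$ equals $\mathrm{Hom}(\pi_1(\Sigma_{0,3}), \ZZ_2)$, and since $\pi_1(\Sigma_{0,3})$ is free on any two of the three cuff classes, the image of the restriction map $[\Sigma_{0,3}, SO(3)] \to \ZZ_2^3$ is precisely the kernel of the sum map $\ZZ_2^3 \to \ZZ_2$. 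The existence of the canonical lift $\hat{\Pi}$ thus reduces to the parity identity $\delta_1 + \delta_2 + \delta_3 = 0$ in $\ZZ_2$.

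To establish this identity I would pick $\hat{\Pi}_0$ geometrically by taking a nonvanishing tangent vector field $\vec{t}_0$ on $\Sigma_{0,3}$ together with the extrinsic normal $\vec{n}_0$ of $\Pi$ in $M$, and setting $\hat{\Pi}_0 = (\vec{t}_0, \vec{n}_0, \vec{t}_0\times\vec{n}_0)$. Along each cuff $\gamma_i$ the loop $\hat{\Pi}_0|_{\gamma_i}$ is homotopic to the parallel transport lift twisted by the winding $w_i$ of $\vec{t}_0$ against the cuff tangent, whereas the canonical lift $\hat{\gamma}_i$ adds one $360^\circ$ rotation to the parallel transport lift; hence $\delta_i \equiv w_i + 1 \pmod{2}$. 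The Poincar\'e--Hopf theorem for a nonvanishing tangent field on a surface with boundary gives $w_1+w_2+w_3 = \chi(\Sigma_{0,3}) = -1$, so $\delta_1+\delta_2+\delta_3 \equiv (-1)+3 \equiv 0 \pmod{2}$. Uniqueness of $\hat{\Pi}$ follows from the same framework: two lifts matching all three canonical lifts on $\partial\Sigma_{0,3}$ differ by a map $(\Sigma_{0,3}, \partial\Sigma_{0,3}) \to (SO(3), I)$ whose induced map on $\pi_1$ is trivial, hence null-homotopic. The main obstacle will be the orientation and sign bookkeeping in the parity computation: I must verify that the ``counterclockwise'' convention in Definition \ref{canonicalLift}, the boundary orientations induced by $\Sigma_{0,3}$ on its three cuffs, and the sign convention for Poincar\'e--Hopf all fit together to produce $+1$ rather than $-1$ in each contribution to $\delta_i$, so that the final totals really cancel modulo $2$. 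This will entail tracing through the effect of the nearly-isometric parallel transport along each cuff on $\vec{t}_0$ and $\vec{n}_0$, with errors controlled by $\epsilon$.
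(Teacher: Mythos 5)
Your proposal is correct and rests on the same underlying facts as the paper's proof --- framings over the homotopy--$1$-complex $\Sigma_{0,3}$ form a torsor over $[\Sigma_{0,3},\SO(3)]\cong\ZZ_2\oplus\ZZ_2$, the cuffs generate $\pi_1(\Sigma_{0,3})$, and existence reduces to a mod $2$ winding count --- but you organize the existence step for $\hat{\Pi}$ differently. The paper takes the splitting $\Pi^*(TM)\cong T\Sigma_{0,3}\oplus\epsilon^1$, trivializes $T\Sigma_{0,3}$ by a planar embedding, and asserts that this framing already restricts to the canonical lift on every cuff; you verify only the parity identity $\delta_1+\delta_2+\delta_3=0$ (via Poincar\'e--Hopf, the point being that $\chi(\Sigma_{0,3})=-1$ is odd) and then correct by a map $\Sigma_{0,3}\to\SO(3)$, using that the image of the restriction $[\Sigma_{0,3},\SO(3)]\to\ZZ_2^3$ is the kernel of the sum map. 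This variant is legitimate and somewhat more robust, since you never need cuff-by-cuff matching of a preferred framing, and the sign ambiguity in the Poincar\'e--Hopf convention is indeed harmless mod $2$. The one step you should make explicit is the identity $\delta_i\equiv w_i+1$: comparing $\hat{\Pi}_0|_{\gamma_i}=(\vec{t}_0,\vec{n}_0,\vec{t}_0\times\vec{n}_0)$ with the parallel-transport lift involves not only the intrinsic winding $w_i$ of $\vec{t}_0$ against the cuff tangent but also the winding $k_i$ of the surface normal $\vec{n}_0$ against the parallel-transported normal, so a priori $\delta_i\equiv w_i+k_i+1$. This is exactly where the nearly regular geometry enters: for the straightened representative each cuff has phase smaller than $10\delta$, the normal field along the cuff stays close to a parallel one, and $k_i=0$; moreover $\sum_i\delta_i$ is independent of the choice of extendable reference framing while $\sum_i w_i$ is intrinsic, so $\sum_i k_i$ mod $2$ does not depend on the chosen immersed representative and one good representative suffices. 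Your closing remark about tracing the effect of parallel transport on $\vec{n}_0$ covers this, so it is a point to write out rather than a flaw. Finally, in the uniqueness step be careful to conclude only free homotopy: a difference map trivial on $\pi_1$ is null-homotopic as an unbased map because $\pi_2(\SO(3))=0$, which is all the lemma claims, whereas uniqueness relative to the boundary genuinely fails by the $\ZZ_2$-Dehn twists of Remark \ref{canonicalLiftRemark}, so you should not assert null-homotopy rel $\partial\Sigma_{0,3}$.
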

		
		\begin{remark}\label{canonicalLiftRemark}
			However, if the canonical lifts of $\hat{\Pi}$ on the cuffs have been chosen,
			$\hat{\Pi}$ is unique only up to homotopy relative to cuffs 
			together with $\ZZ_2$-Dehn twists
			in the fiber $\SO(3)$ near the boundary. In other words,
			the relative homotopy class of $\hat{\Pi}$ is determined by any
			class of $H_2(\SO(\Pi),\SO(\partial\Pi);\ZZ)$ that projects to the
			fundamental class $[\Pi]$ in $H_2(\Pi,\partial\Pi;\ZZ)$.
		\end{remark}

		\begin{proof}
			The existence of $\hat{\gamma}$ is by definition.
			The uniqueness follows from the fact 
			that the set of homotopy classes of framings of 
			$TM|_\gamma$ is bijective to 
			$[S^1,\SO(3)]\cong \ZZ_2$.
			To see the existence of $\hat{\Pi}$, note that 
			the pull-back tangent bundle $TM|_{\Pi}$, namely, $\Pi^*(TM)$,
			is isomorphic to $T\Sigma_{0,3}\oplus \epsilon^1$. Consider a trivialization
			of $T\Sigma_{0,3}$, for example, by embedding $\Sigma_{0,3}$ into the plane and
			endowing with the standard framing of $\RR^2$. By direct summing with the trivialization
			induced by the orientation of $\Pi$, the trivialization of $T\Sigma_{0,3}$
			naturally induces a framing of $TM|_{\Pi}$ up to homotopy. The 
			restriction of this framing on any cuff $\gamma$ of $\Pi$ is the canonical lift of $\gamma$.
			Thus this framing of $TM|_{\Pi}$ is a canonical lift $\hat{\Pi}:\Sigma_{0,3}\to\SO(M)$
			of $\Pi$ by definition. To see the uniqueness 
			of $\hat{\Pi}$, note that the
			set of homotopy classes of framings of $TM|_{\Sigma_{0,3}}$ is bijective to
			$[\Sigma_{0,3},\SO(3)]\cong \ZZ_2\oplus\ZZ_2$. Thus the homotopy class of a framing
			of $TM|_{\Sigma_{0,3}}$ is uniquely determined by its restriction to the cuffs.
			This completes the proof.			
		\end{proof}

		\subsubsection{Construction of $\Phi$}
		Under the assumptions of constants introduced at the beginning of this subsection,
		we construct 
			$$\Phi:\ocobordism_{R,\epsilon}\to H_1(\SO(M);\,\ZZ)$$
		as follows.
		Suppose $\gamma$ is a geodesic representative of
		a curve in $\ocurves_{R,\epsilon}$.
		We define $\Phi([\gamma]_{R,\epsilon})$ in $H_1(\SO(M);\ZZ)$
		to be represented by the canonical lift of $\gamma$ (Definition \ref{canonicalLift}).
		For an $(R,\epsilon)$-multicurve $L$, we define
			$$\Phi([L]_{R,\epsilon})\,\in\,H_1(\SO(M);\,\ZZ)$$
		to be the sum of $\Phi$ defined for each of its components.
		We verify that $\Phi$ is well defined.
						
		\begin{lemma}\label{wellDefinedPhi}
			The homology class
			$\Phi([L]_{R,\epsilon})$
			in $H_1(\SO(M);\,\ZZ)$ constructed above depends
			only on the $(R,\epsilon)$-panted cobordism
			class $[L]_{R,\epsilon}\in\ocobordism_{R,\epsilon}$ of $L$.
			Moreover, the induced map $\Phi$ from $\ocobordism_{R,\epsilon}$
			to $H_1(\SO(M);\,\ZZ)$ is a homomorphism.
		\end{lemma}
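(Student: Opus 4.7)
The plan is to reduce the well-definedness to two basic relations in $H_1(\SO(M);\ZZ)$: the \emph{orientation relation} $[\hat\gamma] + [\widehat{\bar\gamma}] = 0$ for every $\gamma \in \ocurves_{R,\epsilon}$, and the \emph{pants relation} $[\hat\gamma_1] + [\hat\gamma_2] + [\hat\gamma_3] = 0$ for every $\Pi \in \opants_{R,\epsilon}$ whose cuffs $\gamma_1,\gamma_2,\gamma_3$ carry the induced boundary orientations. Granting these, suppose $F$ is an $(R,\epsilon)$-panted cobordism from $L$ to $L'$ with pants decomposition $\Pi_1,\ldots,\Pi_k$. Extend $\Phi$ $\ZZ$-linearly to $\ZZ\ocurves_{R,\epsilon}$ and sum the pants relation over all $\Pi_i$. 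The internal cuffs of the decomposition appear in $\sum_i \partial\Pi_i$ in pairs $c+\bar c$ (from the two sides of each gluing), and the orientation relation forces these contributions to cancel. The surviving sum is $\Phi(L) + \Phi(\bar L')$, which must therefore vanish. Applying the orientation relation componentwise to $L'$ gives $\Phi(\bar L') = -\Phi(L')$, hence $\Phi([L]_{R,\epsilon}) = \Phi([L']_{R,\epsilon})$.

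The pants relation is immediate from Lemma \ref{canonicalLiftLemma}: the canonical lift $\hat\Pi:\Sigma_{0,3} \to \SO(M)$ is a singular $2$-chain in $\SO(M)$ whose boundary, with induced orientations, is exactly $\hat\gamma_1 + \hat\gamma_2 + \hat\gamma_3$. For the orientation relation, I would identify $\widehat{\bar\gamma}$ with the reversed loop $\hat\gamma^{-1}$ up to free homotopy in $\SO(M)$. Both loops project to the geodesic of $\gamma$ traversed in the direction of $\bar\gamma$; and in the fiber direction, one contributes a $+360^\circ$ rotation about a normal vector $\vec{n}_p$ while the other contributes $-360^\circ$ about the same vector. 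Because $\pi_1(\SO(3)) \cong \ZZ_2$ has exponent two, both rotations represent the same nontrivial class in the fiber, and after connecting the two starting frames by a short path inside $\SO(M)|_p$ (which is possible since the fiber is connected and path changes are absorbed in $H_1$), the two loops become freely homotopic in $\SO(M)$. This yields $[\widehat{\bar\gamma}] = [\hat\gamma^{-1}] = -[\hat\gamma]$ in $H_1(\SO(M);\ZZ)$, as required.

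The homomorphism property is then immediate, since addition in $\ocobordism_{R,\epsilon}$ is induced by disjoint union of multicurves (Lemma \ref{pantedCobordismGroupStructure}) and $\Phi$ is defined as a componentwise sum, so $\Phi([L_1 \sqcup L_2]_{R,\epsilon}) = \Phi([L_1]_{R,\epsilon}) + \Phi([L_2]_{R,\epsilon})$. The main delicate point is the orientation relation, whose $\ZZ_2$-torsion argument is precisely what makes the internal-cuff pairs in the cobordism cancel; the analogous identity would fail for a general $\SO(n)$-bundle and is specific to the three-dimensional situation. Everything else is bookkeeping once the two relations are in hand.
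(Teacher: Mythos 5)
Your proposal is correct and follows essentially the same route as the paper: the paper glues the canonical lifts $\hat\Pi$ of the pants of a cobordism into a lift of the whole panted surface to $\SO(M)$, whose boundary consists of the canonical lifts of the boundary components, and your ``pants relation'' from Lemma \ref{canonicalLiftLemma} together with the cancellation over interior cuffs is just the homological bookkeeping of that same construction. Your explicit orientation relation $[\widehat{\bar\gamma}]=-[\hat\gamma]$ (which is true, and your $\pm360^\circ$/constant--offset argument is the right way to see it, though the ``short path'' phrasing should really be a translation of the whole loop by a path in the connected fiber) is precisely the glued-cuff compatibility that the paper's one-line gluing of the lifts $\hat\Pi$ uses implicitly.
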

		
		\begin{proof}
			If $[L]_{R,\epsilon}$ vanishes in $\ocobordism_{R,\epsilon}$, there exists
			an $(R,\epsilon)$-panted surface $F\looparrowright M$ bounded by $L$. The canonical 
			lifts of pairs of pants of $F$ (Definition \ref{canonicalLift})
			yield a canonical lift $F\to \SO(M)$, whose restriction to the boundary
			are the canonical lifts
			of components of $L$. This implies that $(R,\epsilon)$-panted cobordant
			multicurves yield homologous canonical lifts, or in other words,
			that $\Phi([L]_{R,\epsilon})$
			in $H_1(\SO(M);\,\ZZ)$ depends only $[L]_{R,\epsilon}$. The `moreover' part is
			straightforward from the definition.
		\end{proof}
		
		\begin{lemma}\label{canonicallyDefinedPhi}
			For any $(R,\epsilon)$-panted cobordism class
			$[L]_{R,\epsilon}$, the image of
			$\Phi([L]_{R,\epsilon})$ under the natural
			projection from $H_1(\SO(M);\,\ZZ)$ to $H_1(M;\ZZ)$
			is the homology class $[L]$.
		\end{lemma}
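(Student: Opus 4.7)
The plan is to unwind the definition of the canonical lift and observe that its image in $M$ under the bundle projection $\SO(M)\to M$ is freely homotopic to $\gamma$ itself, so the statement reduces to a tautology once we track what happens componentwise.

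More concretely, I would first reduce to the case where $L$ is a single curve $\gamma\in\ocurves_{R,\epsilon}$, since both $\Phi$ and the homology class $[L]$ are additive over components of the multicurve, and the bundle projection $\SO(M)\to M$ induces a homomorphism $H_1(\SO(M);\ZZ)\to H_1(M;\ZZ)$. Then I would recall the pieces that make up the canonical lift $\hat\gamma:S^1\to\SO(M)$ from Definition \ref{canonicalLift}: starting from the frame $\mathbf{e}_{\gamma,\vec n_p}\in\SO(M)|_p$, the loop (a) flows once around $\gamma$ by parallel transportation, (b) rotates $360^\circ$ counterclockwise about $\vec n_p$ within the fiber $\SO(M)|_p$, and then (c) traverses an $\epsilon$-short path inside the fiber $\SO(M)|_p$ back to $\mathbf{e}_{\gamma,\vec n_p}$.

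Next I would examine the image of $\hat\gamma$ under the bundle projection $\pi:\SO(M)\to M$. Pieces (b) and (c) take place entirely within the fiber $\SO(M)|_p$, hence project to the constant loop at $p\in M$. Piece (a), being parallel transportation along $\gamma$, projects to $\gamma$ itself parametrized once. Concatenating, the loop $\pi\circ\hat\gamma$ in $M$ equals $\gamma$ (up to a constant loop at the basepoint, which is homotopically trivial). Therefore $\pi_*[\hat\gamma]=[\gamma]$ in $H_1(M;\ZZ)$, which by definition of $\Phi$ gives $\pi_*\Phi([\gamma]_{R,\epsilon})=[\gamma]$.

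Finally I would extend componentwise: for an arbitrary $(R,\epsilon)$-multicurve $L=\gamma_1\sqcup\cdots\sqcup\gamma_k$, additivity yields
\[
\pi_*\Phi([L]_{R,\epsilon})\,=\,\sum_{i=1}^k\pi_*\Phi([\gamma_i]_{R,\epsilon})\,=\,\sum_{i=1}^k[\gamma_i]\,=\,[L]
\]
in $H_1(M;\ZZ)$. There is no genuine obstacle here; the only point to be careful about is checking that the choices involved in defining $\hat\gamma$ (basepoint $p$ and normal vector $\vec n_p$) really do not affect the projection, which is immediate since changing them only alters pieces (b) and (c), both of which live in the fiber over a point and thus disappear under $\pi$.
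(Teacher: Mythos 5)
Your proposal is correct and is exactly the content of the paper's (one-line) proof, which simply notes that the claim follows immediately from the construction of $\Phi$: the fiberwise rotations in the canonical lift vanish under the bundle projection, so $\pi\circ\hat\gamma$ is homotopic to $\gamma$, and additivity over components finishes the argument. You have merely spelled out the same immediate verification in more detail.
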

		
		\begin{proof}
			This follows immediately from the construction of $\Phi$.
		\end{proof}
	
	\subsection{The inverse of $\Phi$}\label{Subsec-theInverseOfPhi}
		Fix an orthonormal frame $\mathbf{e}\,=\,(\vec{t},\vec{n},\vec{t}\times\vec{n})$
		at a fixed basepoint $\pt$ of $M$ as before.
		In this subsection, we construct a homomorphism
			$$\Psi:\,\pi_1(\SO(M),\mathbf{e})\to\ocobordism_{R,\epsilon}$$
		which, descending to the abelianization,
			$$\Psi^{\mathtt{ab}}:\,H_1(\SO(M);\ZZ)\to\ocobordism_{R,\epsilon},$$
		yields the inverse of $\Phi$.
		We need to choose some setup data including a triangular finite generating
		set of $\pi_1(M,\pt)$, and define $\Psi$ on a subset of $\pi_1(M,\pt)$
		that contains the triangular generating set. We verify that
		$\Psi$ extends as a homomorphism by showing that it 
		vanishes on words corresponding to the triangular relations.
		We also verify that $\Psi^{\mathtt{ab}}$ is the inverse of $\Phi$ by showing that
		it is surjective and is the pre-inverse of $\Phi$.

		\subsubsection{Setup}\label{Subsubsec-setup}
		Given any small positive
		number $\epsilon$ at most $10^{-2}$, we need to fix some setup 
		data 
			$$(\mathcal{B}^\circ_K,\mathcal{B}^\circ_D,\tau_h)$$
		in terms of	$\pi_1(M,\pt)$, restrained by some 
		environmental data $(\delta,L)$, which is a pair of 
		constant fulfilling the conditions of the Connection Principle (Lemma \ref{connectionPrinciple}).
		Such a collection of data is provided by Lemma \ref{setupData}
		below, together with a positive constant
			$$R(\epsilon,M)$$
		such that the homomorphism $\Psi$ from $\pi_1(\SO(M),\mathbf{e})$
		to $\ocobordism_{R,\epsilon}$ can be constructed
		for any constant $R$ greater than $R(\epsilon,M)$.	
	
		We naturally identify the universal cover of $M$ 
		as the hyperbolic $3$-space $\Hyp$ with a given basepoint $O$ 
		together with an orthonormal frame $(\vec{t}_O,\vec{n}_O,\vec{t}_O\times\vec{n}_O)$,
		so $\pi_1(M,\pt)$ naturally acts on $\Hyp$ by isometries.
		For any positive constant $r$, we denote by $B_r$
		the open ball of radius $r$ centered at $O$, and by $U_r$
		the open half-space containing $O$ bounded by the
		hyperplane tangent to $\partial B_r$
		at the point in the direction $\vec{t}_O$. Define 
			$$\mathcal{B}^\circ_r\,=\,\{g\in\pi_1(M,\pt)\,|\,g.O\in B_r\textrm{, and }g\neq\id\}$$
		and
			$$\mathcal{C}_r\,=\,\{g\in\pi_1(M,\pt)\,|\,g.U_r\cap U_r=\emptyset\}.$$
		To gain some intuition about these subsets, one may verify that
		elements of $\mathcal{B}^\circ_r$ are represented by geodesic closed paths in $M$ based at $\pt$
		which have length at most $r$, and elements of $\mathcal{C}_r$
		are represented by geodesic closed paths which form an angle at most $2\pi e^{-r}$ at $\pt$.

		In the following,
		a \emph{triangular generating set} of $\pi_1(M,\pt)$ means a generating
		set such that there is a finite set of defining relations in these generators (or their inverses)
		consisting of words of length at most $3$,
		(cf.~Subsection \ref{Subsec-triangularPresentationComplexes} for discussion
		in more details);
		the conjugation
		$\tau_h$ induced by an element $h\in\pi_1(M,\pt)$ acts
		on $\pi_1(M,\pt)$ by $\tau_h(g)=h^{-1}gh$.
			
		\begin{lemma}\label{setupData}
			Let $M$ be an oriented closed hyperbolic $3$-manifold. Given any positive
			constant $\epsilon$ at most $10^{-2}$, 
			there is a collection of data 
			depending only on $M$ and $\epsilon$ as follows.
			\begin{enumerate}
				\item There exist positive constants $\delta$ and $L$.
				The constant $\delta$ is at most $\epsilon\times10^{-6}$;
				the constant $L$ is at least $-2\log\delta+10\log2$,
				and satisfies the conclusion of
				the Connection Principle (Lemma \ref{connectionPrinciple}) 
				with respect to $\delta$ and $M$.
				\item There exist positive constants $D$ and $K$.
				The constant $D$ is at least $30L$, and
				$\mathcal{B}^\circ_{D}$ contains a generating set
				of $\pi_1(M,\pt)$; the constant $K$ is at least $D$,
				and $\mathcal{B}^\circ_{K}$ 
				contains a triangular generating set
				of $\pi_1(M,\pt)$ which further contains $\mathcal{B}^\circ_{D}$.
				\item There exists a conjugation $\tau_h$ of $\pi_1(M,\pt)$
				such that $\tau_h(\mathcal{B}^\circ_K)$ is contained in $\mathcal{C}_{300L}$.
				Moreover, $h$ can be chosen in $\mathcal{C}_{300L}$. 
				\item There exists a positive constant 
					$$R(\epsilon,M)$$
				such that $\mathcal{B}^\circ_{R(\epsilon,M)-300L}$
				contains $\tau_h(\mathcal{B}^\circ_K)$.
			\end{enumerate}
		\end{lemma}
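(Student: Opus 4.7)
The plan is to select the required data in the order $\delta, L, D, K, \tau_h, R(\epsilon, M)$. Items (1), (2) and (4) are essentially bookkeeping; the real content is in item (3).

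For item (1), I take $\delta$ to be any positive number at most $\epsilon \cdot 10^{-6}$, invoke the Connection Principle (Lemma \ref{connectionPrinciple}) with respect to $\delta$ and $M$ to obtain a length threshold $L_0(\delta, M)$, and set $L = \max\{L_0,\, -2\log\delta + 10\log 2\}$. For item (2), the finite presentability of $\pi_1(M,\pt)$ (which holds because $M$ is a compact manifold) provides a finite generating set $S_0$; letting $D_0$ be the maximum geodesic length of elements of $S_0$ and setting $D = \max\{D_0, 30L\}$, we get $S_0 \subset \mathcal{B}^\circ_D$. A standard subdivision argument converts any finite presentation into a triangular one: for each defining relator of length $n \geq 4$, introduce auxiliary generators naming length-two subwords and split the long relator into $n-2$ relators of length at most $3$. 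The enlarged generating set $S \supseteq S_0$ is still finite and its elements have geodesic lengths bounded uniformly in terms of $S_0$, so taking $K$ large enough for $\mathcal{B}^\circ_K$ to contain $S \cup \mathcal{B}^\circ_D$ delivers the required triangular generating set.

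The main work is in item (3). Interpreting $g \in \mathcal{C}_r$ via the informal description just below the definition, this says that the pointed geodesic loop of $g$ at $\pt$ has bending angle at most $2\pi e^{-r}$; lifting to $\Hyp$, the initial tangent at $O$ and the isometry-pulled-back terminal tangent at $g.O$ project to nearly antiparallel vectors in $T_\pt M$. To find one $h$ simultaneously certifying this sharpness for itself and for every conjugate $\tau_h(g) = h^{-1}gh$ with $g \in \mathcal{B}^\circ_K$, I apply the Connection Principle (Lemma \ref{connectionPrinciple}) with length parameter $\rho$ much greater than $\max\{K, 300L\}$ and tolerance $\delta'$ much smaller than $2\pi e^{-300L}$, producing a $\partial$-framed segment at $\pt$ whose initial direction is $\vec{t}$ and whose terminal direction is $-\vec{t}$ (each to within $\delta'$) and whose length is approximately $\rho$. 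The corresponding element $h \in \pi_1(M,\pt)$ then lies in $\mathcal{C}_{300L}$ by construction. For each $g \in \mathcal{B}^\circ_K$, the lift from $O$ of the conjugate $\tau_h(g)$ is the geodesic $[O, h^{-1}gh.O]$, which by the isometry $h^{-1}$ is the $h^{-1}$-image of $[P, g.P]$ with $P = h.O$; since $P$ lies roughly along the $\vec{t}_O$-ray at distance $\rho$ and $g.P$ is within distance $K \ll \rho$ of $P$, the tangent vectors of $[P, g.P]$ at its two endpoints make angle $O(K/\rho)$ with the $\pm \vec{t}_O$-directions propagated to those endpoints. Pulled back to $O$ and pushed down, this gives a pair of nearly antiparallel vectors at $\pt$ with angular error at most $\delta' + O(K/\rho)$; forcing $\rho \gg K \cdot e^{300L}$ makes this error below $2\pi e^{-300L}$, so $\tau_h(g) \in \mathcal{C}_{300L}$ for every $g \in \mathcal{B}^\circ_K$ simultaneously.

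For item (4), the triangle inequality in $\Hyp$ gives $d(O, h^{-1}gh.O) = d(h.O, gh.O) \leq 2|h| + |g| \leq 2\rho + K$, so setting $R(\epsilon, M) = 2\rho + K + 300L + 1$ places $\tau_h(\mathcal{B}^\circ_K)$ inside $\mathcal{B}^\circ_{R(\epsilon, M) - 300L}$. The principal difficulty is the simultaneous angular control in step (3): the required tolerance $2\pi e^{-300L}$ is exponentially small in $L$, and controlling all conjugates at once requires balancing $\rho$ against this tolerance and against the diameter $K$ of the generating set; the Connection Principle supplies exactly this flexibility.
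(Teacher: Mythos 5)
Your items (1), (2) and (4) are essentially fine (modulo one small point: the statement requires the triangular generating set to contain \emph{all} of $\mathcal{B}^\circ_D$, not just some finite generating set $S_0\subset\mathcal{B}^\circ_D$; the paper arranges this by presenting $\pi_1(M,\pt)$ over the whole finite set $\mathcal{B}^\circ_D$ before passing to subwords of relators, and your subdivision argument should be run over that presentation). The genuine gap is in item (3), and it is twofold. First, membership in $\mathcal{C}_{300L}$ is by definition the half-space condition $\tau_h(g).U_{300L}\cap U_{300L}=\emptyset$; the sentence about bending angle at most $2\pi e^{-r}$ is offered only as intuition and is a necessary, not sufficient, condition (a short loop with tiny bending angle near $\vec{t}$ is never in $\mathcal{C}_r$, since disjointness of the two half-spaces also forces the translation length to exceed roughly $2r$). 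You never verify the actual half-space condition. Second, and more seriously, your key quantitative claim is false: for $P=h.O$ and $g\in\mathcal{B}^\circ_K$ one does \emph{not} have $d(P,g.P)\le K$; displacement is not conjugation-invariant at a far-away basepoint, and generically $d(P,g.P)\approx 2\rho$ (it is about $2\,d(P,\mathrm{axis}(g))$ up to bounded error). Consequently the ``angle $O(K/\rho)$'' estimate has no basis. The correct mechanism, if one wants to argue via angles, is that $[P,g.P]$ passes within bounded distance of $O$ while its endpoints are at distance $\approx\rho$ from $O$, giving exponentially small angular error --- but this requires excluding the degenerate case in which $h.O$ lands within bounded distance of the axis of some $g\in\mathcal{B}^\circ_K$ (possible when the axis leaves the vicinity of $O$ in a direction within the Connection Principle tolerance of $\vec{t}$), in which case $\tau_h(g)$ is a \emph{short} element and the conclusion genuinely fails. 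The Connection Principle, used as a black box, only provides existence of one segment and gives you no way to avoid this bad configuration; note also that the case your estimate implicitly assumes, $d(P,g.P)\le K$, is exactly this bad case.

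The paper sidesteps all of this with a ping-pong argument that certifies the defining property of $\mathcal{C}$ directly and needs no angle estimates and no Connection Principle: since $\mathcal{B}^\circ_K$ is finite, one can choose an open half-space $W$, disjoint from $U_K$ (and from $U_{300L}$), which is so far from the axis of every $g\in\mathcal{B}^\circ_K$ that $g.W\cap W=\emptyset$ for all such $g$; then one takes $h$ with axis inside $W$ and passes to a high power so that $h.W^c\cap W^c=\emptyset$. This gives $h.U_K\subset W$, hence $g.(h.U_K)\cap h.U_K=\emptyset$ for every $g\in\mathcal{B}^\circ_K$, i.e.\ $\tau_h(g).U_K\cap U_K=\emptyset$, and likewise $h\in\mathcal{C}_{300L}$. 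In particular the choice of $h$ is tied to the axes of the elements of $\mathcal{B}^\circ_K$, which is precisely the control your construction of $h$ lacks. To repair your route you would have to (i) prove that sharp directions plus length $>600L$ imply the half-space condition, and (ii) choose $h$ so that $h.O$ stays far from the axes of all elements of $\mathcal{B}^\circ_K$; at that point you have essentially reconstructed the paper's argument, so the half-space approach is the cleaner fix.
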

		
		In particular, the derived constructions of Subsection \ref{Subsec-derivedConstructions}
		can be applied with respect to the constants $(L,\delta)$ of Lemma \ref{setupData} (1).
		With Lemma \ref{setupData} (2) and (3), we have a triangular generating set $\tau_h(\mathcal{B}^\circ_K)$
		of $\pi_1(M,*)$, which are represented by (unframed) $\delta$-sharp elements, (cf.~Lemma \ref{sharpCK}).
		This will enable us to define the homomorphism $\Psi$.
		The assumption about $\mathcal{B}^\circ_D$ ensures that $\mathcal{B}^\circ_K$
		contains enough generators so that with Lemma \ref{setupData} (4)
		we will be able show the surjectivity of $\Psi$.
		With some needless awkward intuition, the geodesic representative of $h$ based at $\pt$
		may look like a long, sharp, closed segment,
		of which the direction vectors at the endpoints 
		are not extremely close to any direction vector of any element of $\mathcal{B}^\circ_K$.
		We would rather suggest the reader 
		to understand $h$ simply to be an element that guarantees Lemma \ref{setupData} (3) and (4).		
		
		\begin{proof}
			The statement (1) is implied by the Connection Principle (Lemma \ref{connectionPrinciple}).
						
			The statement (2) follows from the fact
			that any presentation $(\mathcal{S},\mathcal{R})$
			of a group $G$ induces a triangular generating set $\tilde{\mathcal{S}}$
			consisting of all the elements that are represented by subwords
			of all the relators from $\mathcal{R}$. Note that $\tilde{\mathcal{S}}$ is finite
			if $(\mathcal{S},\mathcal{R})$ is a finite presentation. 
			Since $\pi_1(M,\pt)$ is finitely presented, we may choose $D$ at least $30L$
			to be sufficiently
			large so that $\mathcal{B}^\circ_{D}$ generates $\pi_1(M,\pt)$. Then for any
			finite presentation of $\pi_1(M,\pt)$ over $\mathcal{B}^\circ_{D}$,
			the induced finite triangular generating
			set $\tilde{\mathcal{B}}^\circ_{D}$ is finite, and hence contained in $U_K$
			for some sufficiently large $K$.
			
			To prove the statement (3), we must find an element $h\in\pi_1(M,\pt)$
			such that $\tau_h(g).U_K$ is disjoint from $U_K$
			for all $g\in\mathcal{B}^\circ_K$.
			Note that for any nontrivial $g\in\pi_1(M,\pt)$, 
			if an open half-space $W$ is sufficiently far
			from the axis of $g$, depending only on the translation distance of $g$,
			then $g.W\cap W=\emptyset$.	Because $\mathcal{B}^\circ_K$ is a finite set, 
			there exists an open half-space $W$ so that $g.W\cap W=\emptyset$ for all
			$g\in\mathcal{B}^\circ_K$. Let $h\in\pi_1(M,\pt)$ be an element whose axis 
			is contained in $W$. We may further assume that $W\cap U_K=\emptyset$.
			Possibly after passing to a sufficiently great power
			of $h$, we may assume $h.W^c\cap W^c=\emptyset$ where $W^c$ denotes the complement
			of $W$. See Figure \ref{figSetupData}. For such $h$, the half-space
			$h.U_K$ is contained in $W$, so we have $g.(h.U_K)\cap h.U_K=\emptyset$,
			for all $g\in\mathcal{B}^\circ_K$.
			In other words, $\tau_h(g).U_K$ is disjoint from $U_K$ for all $g\in\mathcal{B}^\circ_K$
			as desired. Moreover, if we took $W$ so that $W\cap U_{300L}=\emptyset$,
			then $h$ would be contained in $\mathcal{C}_{300L}$. In fact,
			$h.W^c\cap W^c=\emptyset$ implies that $h.U_{300L}\cap U_{300L}=\emptyset$.
			
			\begin{figure}[htb]
				\centering
				\includegraphics{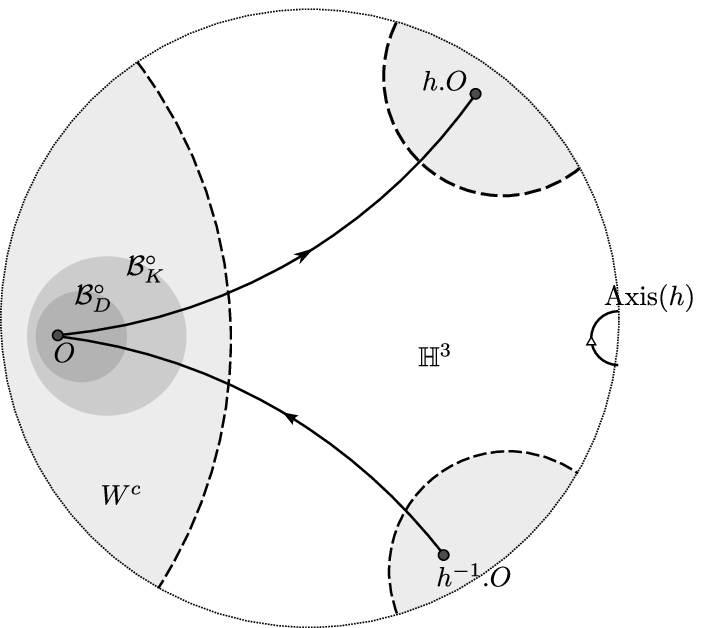}
				\caption{}\label{figSetupData}
			\end{figure}

			The statement (4) is now obvious since we have chosen $L$, $K$, and $\tau_h$ 
			in a way depending only on $M$ and $\epsilon$.			
		\end{proof}
		
		\begin{lemma}\label{sharpCK}
			With the constants from Lemma \ref{setupData}, for any $g\in\mathcal{C}_{300L}$, 
			the initial direction and the terminal direction of $g$
			at $\pt$ are $\delta$-close to $\vec{t}$ and $-\vec{t}$, respectively.
		\end{lemma}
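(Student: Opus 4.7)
My plan is to translate the half-space disjointness condition defining $\mathcal{C}_{300L}$ into a bound on the angle at $O$ between the geodesic from $O$ to $g.O$ and the direction $\vec{t}_O$, via the angle of parallelism.

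First I would observe that $\mathcal{C}_r$ is closed under taking inverses: if $g.U_r \cap U_r = \emptyset$, then applying $g^{-1}$ to both sides yields $U_r \cap g^{-1}.U_r = \emptyset$. Since reversing a loop negates its initial and terminal direction vectors at $\pt$, it suffices to prove the statement for the initial direction only; the bound on the terminal direction then follows by applying the initial-direction bound to $g^{-1}$.

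Next, let $H$ be the hyperplane bounding $U_{300L}$, so that $H$ is at distance $300L$ from $O$ and the foot of the perpendicular from $O$ to $H$ lies in the direction $\vec{t}_O$. The hypothesis $g \in \mathcal{C}_{300L}$ forces $g.O \in U_{300L}^c$ (otherwise $g.O$ would lie in $U_{300L}$ and hence $U_{300L} \cap g.U_{300L}$ would contain $g.O$). Therefore the geodesic segment from $O$ to $g.O$ crosses $H$, so its initial tangent vector at $O$ lies in the visual cone of directions pointing into the half-space $U_{300L}^c$. By a standard hyperbolic computation, this visual cone is centered at $\vec{t}_O$ and has half-angle equal to the angle of parallelism $\Pi(300L)$, characterized by $\tan(\Pi(300L)/2) = e^{-300L}$; in particular $\Pi(300L) < 2e^{-300L}$.

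The final step is to check that $2e^{-300L} < \delta$ with the constants supplied by Lemma~\ref{setupData}. Since $L \geq -2\log\delta + 10\log 2$, we have $e^{-L} \leq \delta^2/1024$, and hence $e^{-300L}$ is vastly smaller than $\delta$. This gives that the initial direction of $g$ at $\pt$ is $\delta$-close to $\vec{t}$, completing the argument. No real obstacle is expected here; the only care needed is to verify the standard formula $\tan(\Pi(r)/2) = e^{-r}$ for the angular radius of a half-space as seen from a point at distance $r$, which is elementary two-dimensional hyperbolic trigonometry applied in the totally geodesic plane through $O$ orthogonal to $H$ containing the chosen tangent direction.
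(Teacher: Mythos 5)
Your proposal is correct and follows essentially the same route as the paper: observe that $g.O$ and $g^{-1}.O$ lie in the complement of $U_{300L}$, and bound the visual angle of that complement from $O$ by the angle of parallelism, which is far smaller than $\delta$ since $L\geq-2\log\delta+10\log2$. The paper phrases the cone estimate via $\sin(\alpha/2)\cosh r=1$ rather than $\tan(\Pi(r)/2)=e^{-r}$, but these are equivalent formulations of the same elementary fact.
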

		
		\begin{proof}
			The complement of the half-space $U_r$ is contained in the cone
			at $O$ with axis along $\vec{t}_O$ of cone angle $\alpha$ where
			$\sin(\alpha/2)\cosh r=1$,
			so $\alpha$ is clearly much less than $\delta$ when $r$ equals $300L$.
			Since $g\in\mathcal{C}_r$, $g.U_r$ and $g^{-1}.U_r$ are both
			contained in the complement of $U_r$. This implies that $g$ is $\delta$-sharp
			with the initial direction and the terminal direction 
			$\delta$-close to $\vec{t}$ and $-\vec{t}$, respectively.
		\end{proof}

		\subsubsection{Construction of $\Psi$}\label{Subsubsec-constructionPsi}
		Given any positive
		constant $\epsilon$, we fix a collection of 
		setup data $(\mathcal{B}^\circ_K,\mathcal{B}^\circ_D,\tau_h)$ subject to
		$(\delta,L)$ as provided by Lemma \ref{setupData},
		and obtain a positive constant $R(\epsilon,M)$ accordingly.
		For any constant $R$ at least $R(\epsilon,M)$, 
		we will construct the homomorphism 
			$$\Psi:\,\pi_1(\SO(M),\mathbf{e})\to\ocobordism_{R,\epsilon}$$
		in the following. More precisely, let $\hat{\mathcal{B}}^\circ_r$ and $\hat{\mathcal{C}}_r$ 
		denote the preimages of $\mathcal{B}^\circ_r$ and $\mathcal{C}_r$
		in $\pi_1(\SO(M),\mathbf{e})$, respectively.
		Since $\pi_1(\SO(M),\mathbf{e})$ is a central extension
		of $\pi_1(M,\pt)$ by $\ZZ_2$, the conjugation $\tau_h$ of $\pi_1(M,\pt)$
		naturally induces a conjugation of $\pi_1(\SO(M),\mathbf{e})$,
		which will be denoted as $\hat{\tau}_h$.
		We will construct a set-theoretic map 
			$$\Psi_1:\,\hat{\mathcal{C}}_{300L}\,\cap\,\hat{\mathcal{B}}^\circ_{R-300L}\longrightarrow\ocobordism_{R,\epsilon}.$$
		The restriction of $\Psi_1\circ\hat{\tau}_h$ to $\hat{\mathcal{B}}^\circ_{K}$, denoted as
			$$\Psi_h:\,\hat{\mathcal{B}}^\circ_K\longrightarrow\ocobordism_{R,\epsilon},$$
		will be a partial homomorphism, so the
		further restriction to $\hat{\mathcal{B}}^\circ_D$ will
		extend uniquely over $\pi_1(\SO(M),\mathbf{e})$
		to be a homomorphism, which will still be denoted as
		$\Psi_h$. However, it will be 
		verified that $\Psi_h$
		descends to the abelianization, yielding a homomorphism
			$$\Psi_h^{\mathtt{ab}}:\,H_1(\SO(M);\ZZ)\longrightarrow\ocobordism_{R,\epsilon},$$
		which is exactly the inverse of $\Phi$
		(Subsubsection \ref{Subsubsec-verifications}), so eventually we may 
		drop the subscript simply writing $\Psi_h$ as $\Psi$.

		For any element $\hat{g}\in\hat{\mathcal{C}}_{300L}\,\cap\,\hat{\mathcal{B}}^\circ_{R-300L}$, 
		by Lemmas \ref{sharpElementLemma}, \ref{sharpCK}, we may
		choose an oriented $\partial$-framed segment 
			$$\mathfrak{s}_{\hat{g}}$$
		associated to the $\delta$-sharp element $\hat{g}$.
		Note that $\mathfrak{s}_{\hat{g}}$ is unique up to $\delta$-small
		change of the initial and terminal framings, and 
		$\mathfrak{s}_{\hat{c}\hat{g}}$ can be chosen as
		the framing flipping $\mathfrak{s}^*_{\hat{g}}$.
		
		We define the claimed set-theoretic map
		$\Psi_1$ as follows.				
		For each element
		$\hat{g}\in\hat{\mathcal{C}}_{300L}\,\cap\,\hat{\mathcal{B}}^\circ_{R-300L}$,
		choose an oriented $\partial$-framed
		segment $\mathfrak{s}_{\hat{g}}$
		which is 
		associated to $\hat{g}$ as above.
		For convenience, choose a unit vector
			$$\vec{n}_{\hat{g}}\in T_{\pt}M$$
		orthogonal to $\vec{t}$, such that
		$\vec{n}_\ini(\mathfrak{s}_{\hat{g}})$ and $\vec{n}_\ter(\mathfrak{s}_{\hat{g}})$
		are both $\delta$-close to $\vec{n}_{\hat{g}}$.
		By the Connection Principle (Lemma \ref{connectionPrinciple}), choose a right-handed nearly regular 
		tripod $\mathfrak{a}_0\vee\mathfrak{a}_1\vee\mathfrak{a}_2$
		and an oriented $\partial$-framed segment $\mathfrak{b}$ satisfying the following:
		\begin{itemize}
			\item The right-handed tripod $\mathfrak{a}_0\vee\mathfrak{a}_1\vee\mathfrak{a}_2$
			is $(\frac{R}2-\frac{\ell(\mathfrak{s}_{\hat{g}})}2+\frac12 I(\frac\pi3),10\delta)$-nearly regular. 
			For each $i\in\ZZ_3$, the terminal endpoint $p_\ter(\mathfrak{a}_i)$
			equals $\pt$, and the terminal direction $\vec{t}_\ter(\mathfrak{a}_i)$ is $(10\delta)$-close
			to $\vec{t}$, and the terminal framing $\vec{n}_\ter(\mathfrak{a}_i)$ is $(10\delta)$-close
			to $\vec{n}_{\hat{g}}$.
			\item The oriented $\partial$-framed segment $\mathfrak{b}$ has length 
			$(10\delta)$-close to $\frac{R}2-\ell(\mathfrak{s}_{\hat{g}})$ and 
			phase $(10\delta)$-close to $0$. The initial and terminal
			directions of $\mathfrak{b}$ are $\delta$-close to $-\vec{t}$ and $\vec{t}$ respectively,
			and the initial and 
			terminal framings of $\mathfrak{b}$ are both $\delta$-close to $\vec{n}_{\hat{g}}$.
		\end{itemize}
		
		\begin{figure}[htb]
				\centering
				\includegraphics{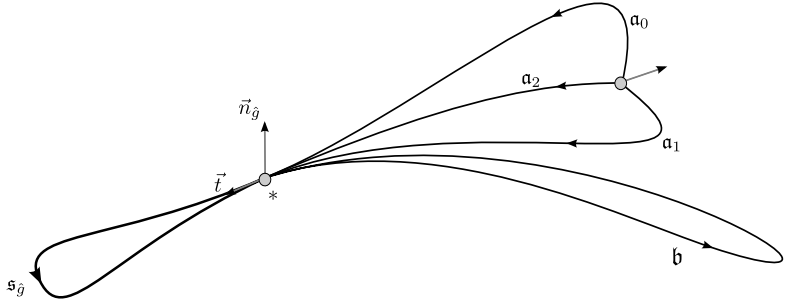}
				\caption{Definition of $\Psi_1(\hat{g})$}\label{figPsi1}
			\end{figure}
		
		In $\ocobordism_{R,\epsilon}$, for any
		element $\hat{g}\in\hat{\mathcal{C}}_{300L}\,\cap\,\hat{\mathcal{B}}^\circ_{R-300L}$, define
			$$\Psi_1(\hat{g})\,=\,
			[\mathfrak{s}_{\hat{g}}\mathfrak{a}_{01}]_{R,\epsilon}
			+[\mathfrak{s}_{\hat{g}}\mathfrak{a}_{12}]_{R,\epsilon}
			+[\mathfrak{s}_{\hat{g}}\mathfrak{a}_{20}]_{R,\epsilon}
			-\,[\mathfrak{s}_{\hat{g}}\mathfrak{b}]_{R,\epsilon}
			-\,[\mathfrak{s}_{\hat{g}}\bar{\mathfrak{b}}]_{R,\epsilon}.$$		
				
		For any element $\hat{g}\in\hat{\mathcal{B}}^\circ_K$, we define
			$$\Psi_h(\hat{g})\,=\,\Psi_1(\hat{\tau}_h(\hat{g})),$$
		(Lemma \ref{setupData} (3)(4)).

		\begin{lemma}\label{wellDefinition}
			The set-theoretic map $\Psi_1$ is well defined. In other words,
			\begin{enumerate}
				\item All the reduced cyclic concatenations involved are curves in $\ocurves_{R,100\delta}$;
				\item For any $\hat{g}\in \hat{\mathcal{C}}_{300L}\cap\hat{\mathcal{B}}^\circ_{R-300L}$,
				$\Psi_1(\hat{g})$ depends only on $\hat{g}$;
			\end{enumerate}
		\end{lemma}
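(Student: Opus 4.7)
The plan for (1) is to apply the Length and Phase Formula (Lemma \ref{lengthAndPhaseFormula} and Remark \ref{remarkLengthAndPhaseFormula}) to each of the five cyclic concatenations, viewed as a short cycle of two or three $\partial$-framed segments. Since $\hat g \in \hat{\mathcal{C}}_{300L}$, the carrier of $\mathfrak{s}_{\hat g}$ has length at least $300L$ with initial and terminal directions $\delta$-close to $\vec t$ and $-\vec t$ respectively by Lemma \ref{sharpCK}; the legs $\mathfrak{a}_i$ and the auxiliary $\mathfrak{b}$ are constructed long enough to be tame (using $R > 600L$) and with directions at $\pt$ matching $\pm\vec t$ to within $10\delta$. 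I will then check that every joint involving $\mathfrak{s}_{\hat g}$ has bending angle $(20\delta)$-close to $0$, while the internal joint within each $\mathfrak{a}_{i,i+1}$ has bending $(10\delta)$-close to $\pi/3$. Plugging these into the Length and Phase Formula, the $I(\pi/3)$ correction at the tripod center is exactly absorbed by the leg length $\ell(\mathfrak{a}_i) \approx R/2 - \ell(\mathfrak{s}_{\hat g})/2 + I(\pi/3)/2$, yielding total length $(100\delta)$-close to $R$ and total phase $(100\delta)$-close to $0$ for each of the five curves, so they all lie in $\ocurves_{R,100\delta}$.

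For (2), I will establish invariance of $\Psi_1(\hat g)$ under each type of choice separately. Invariance under $\delta$-small changes of the endpoint framings of $\mathfrak{s}_{\hat g}$ (the only freedom allowed by Lemma \ref{sharpElementLemma}) and of the auxiliary vector $\vec n_{\hat g}$ will be immediate, since the five constructed closed geodesics vary continuously with these framings and remain in the same free homotopy classes under $\delta$-small perturbations. For two choices $\mathfrak{b}, \mathfrak{b}'$ of the auxiliary segment, I will form the swap pair of bigons $[\mathfrak{s}_{\hat g}\mathfrak{b}]$ and $[\bar{\mathfrak{s}}_{\hat g}\mathfrak{b}']$: this satisfies the hypothesis of Construction \ref{swapping} because $\mathfrak{s}_{\hat g}$ and $\bar{\mathfrak{s}}_{\hat g}$ have identical length and phase (both invariant under orientation reversal by Definition \ref{lengthAndPhase}), and $\mathfrak{b},\mathfrak{b}'$ meet the same prescribed specifications. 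Swapping then yields the cobordism relation $[\mathfrak{s}_{\hat g}\mathfrak{b}]+[\bar{\mathfrak{s}}_{\hat g}\mathfrak{b}']=[\mathfrak{s}_{\hat g}\mathfrak{b}']+[\bar{\mathfrak{s}}_{\hat g}\mathfrak{b}]$ in $\ocobordism_{R,\epsilon}$. Since $[\bar{\mathfrak{s}}_{\hat g}\mathfrak{b}']$ and $[\mathfrak{s}_{\hat g}\bar{\mathfrak{b}}']$ represent mutually inverse conjugation classes in $\pi_1(M,\pt)$, so that $[\bar{\mathfrak{s}}_{\hat g}\mathfrak{b}']=-[\mathfrak{s}_{\hat g}\bar{\mathfrak{b}}']$ (and similarly for $\mathfrak{b}$), substituting yields the desired identity $[\mathfrak{s}_{\hat g}\mathfrak{b}]+[\mathfrak{s}_{\hat g}\bar{\mathfrak{b}}]=[\mathfrak{s}_{\hat g}\mathfrak{b}']+[\mathfrak{s}_{\hat g}\bar{\mathfrak{b}}']$.

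The hard part will be invariance under the choice of right-handed tripod $\mathfrak{a}$. Two valid tripods $\mathfrak{a},\mathfrak{a}'$ both have terminal direction $\vec t$ at $\pt$, so the chains $\mathfrak{a}_i,\bar{\mathfrak{a}}'_j$ have bending angle close to $\pi$, and the pair fails to be a rotation pair in the sense of Definition \ref{rotationPair}. My plan is to append a suitably chosen $\partial$-framed segment to each leg of $\mathfrak{a}'$, producing an extended tripod $\tilde{\mathfrak{a}}'$ whose terminal direction is reversed to $-\vec t$, so that $(\mathfrak{a},\tilde{\mathfrak{a}}')$ forms a genuine rotation pair of identical chirality. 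Applying Construction \ref{rotationConstruction} (2) together with a sequence of swappings to reshuffle the $\mathfrak{s}_{\hat g}$-factors, I aim to relate $\sum_i[\mathfrak{s}_{\hat g}\mathfrak{a}_{i,i+1}]$ to $\sum_i[\mathfrak{s}_{\hat g}\mathfrak{a}'_{i,i+1}]$ in $\ocobordism_{R,\epsilon}$. The delicate points will be choosing the appended segment so that all intermediate cyclic concatenations remain in $\ocurves_{R,\epsilon}$ rather than in some higher $\ocurves_{2R,\epsilon}$, and arranging the swappings to cancel the factor-of-two multiplicity introduced by the identical-chirality rotation.
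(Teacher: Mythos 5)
Your handling of statement (1), of the independence of the choices of $\mathfrak{s}_{\hat g}$ and $\vec{n}_{\hat g}$, and of the independence of $\mathfrak{b}$ (via the swap pair $[\mathfrak{s}_{\hat g}\mathfrak{b}]$, $[\bar{\mathfrak{s}}_{\hat g}\mathfrak{b}']$ together with $[\bar{\mathfrak{s}}_{\hat g}\mathfrak{b}']_{R,\epsilon}=-[\mathfrak{s}_{\hat g}\bar{\mathfrak{b}}']_{R,\epsilon}$) is essentially the paper's argument. The gap is in the tripod part, which is the heart of statement (2). Your plan to append a segment $\mathfrak{r}$ to each leg of $\mathfrak{a}'$ so that $(\mathfrak{a},\tilde{\mathfrak{a}}')$ becomes a rotation pair cannot satisfy the hypothesis $[\mathfrak{a}_{i,i+1}\bar{\tilde{\mathfrak{a}}}'_{j,j\pm1}]\in\ocurves_{R,\epsilon}$ of Construction \ref{rotationConstruction}: the sides of $\mathfrak{a}$ and of $\tilde{\mathfrak{a}}'$ have lengths about $R-\ell(\mathfrak{s}_{\hat g})$ and $R-\ell(\mathfrak{s}_{\hat g})+2\ell(\mathfrak{r})$, so the concatenations have complex length near $2R-2\ell(\mathfrak{s}_{\hat g})+2\ell(\mathfrak{r})$; forcing this to be near $R$ requires $\ell(\mathfrak{r})\approx\ell(\mathfrak{s}_{\hat g})-\frac{R}2$, which is negative for the elements $\Psi_1$ must handle (for instance $\hat g\in\hat{\tau}_h(\hat{\mathcal{B}}^\circ_K)$ has length bounded by a constant depending only on $M$ and $\epsilon$, while $R$ is arbitrarily large). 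The same length mismatch blocks the swappings you would need to reintroduce the $\mathfrak{s}_{\hat g}$-factors, since a swap pair requires the exchanged segments to have nearly equal length and phase. Moreover, appending at the terminal ends cannot change chirality, so your extended tripod stays right-handed and you are locked into the identical-chirality rotation (Construction \ref{rotationConstruction} (2)), which only produces relations of the form $2\sum(\cdots)=0$; since $\ocobordism_{R,\epsilon}$ is an extension of $H_1(M;\ZZ)$ by $\ZZ_2$ (Theorem \ref{theoremPantedCobordism}), you cannot divide by $2$, so ending with $2\left(\Psi_1(\hat g)-\Psi'_1(\hat g)\right)=0$ does not give well-definedness. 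This is precisely the subtlety highlighted in Remark \ref{remarkTriangularRelation}, and your hope of ``cancelling the factor of two by swappings'' is not substantiated.

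The paper avoids both obstructions by not tying the auxiliary tripod to $\mathfrak{a}'$ at all: it draws a fresh \emph{left-handed} tripod $\mathfrak{c}_0\vee\mathfrak{c}_1\vee\mathfrak{c}_2$ which is $(\frac{\ell(\mathfrak{s}_{\hat g})}2+\frac12 I(\frac\pi3),\delta)$-nearly regular, with legs terminating at $\pt$ in direction $\delta$-close to $-\vec{t}$ and framing close to $\vec{n}_{\hat g}$, so that its sides $\mathfrak{c}_{i,i+1}$ have length and phase matching those of $\mathfrak{s}_{\hat g}$. Then $(\mathfrak{a},\mathfrak{c})$ and $(\mathfrak{a}',\mathfrak{c})$ are rotation pairs of \emph{opposite} chirality, so Construction \ref{rotationConstruction} (1) gives $\sum_i[\mathfrak{a}_{i,i+1}\bar{\mathfrak{c}}_{i,i+1}]_{R,\epsilon}=0$ and $\sum_i[\mathfrak{a}'_{i,i+1}\bar{\mathfrak{c}}_{i,i+1}]_{R,\epsilon}=0$, and swapping $\mathfrak{s}_{\hat g}\leftrightarrow\bar{\mathfrak{c}}_{i,i+1}$ (legitimate exactly because their lengths and phases agree) converts $\sum_i\bigl([\mathfrak{s}_{\hat g}\mathfrak{a}_{i,i+1}]_{R,\epsilon}-[\mathfrak{s}_{\hat g}\mathfrak{a}'_{i,i+1}]_{R,\epsilon}\bigr)$ into that vanishing difference, with no factor of $2$ anywhere. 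To repair your argument you would have to replace your extension $\tilde{\mathfrak{a}}'$ by an auxiliary tripod of opposite chirality whose side length is $\approx\ell(\mathfrak{s}_{\hat g})$ — something an extension of $\mathfrak{a}'$ can never be.
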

		
		\begin{proof}
			The statement (1) follows from straightforward
			verification using 
			the Length and Phase Formula (Lemma \ref{lengthAndPhaseFormula})
			under our fixed choice of setup data (Lemma \ref{setupData}).
						
			To prove the statement (2), observe that $\Psi_1(\hat{g})$ is
			clearly independent of the choice of $\mathfrak{s}_{\hat{g}}$
			and $\vec{n}_{\hat{g}}$,
			since the carrier segment of
			$\mathfrak{s}_{\hat{g}}$ is unique
			and $\vec{n}_{\hat{g}}$ is unique up to $\delta$-closeness.
			Suppose that $\mathfrak{a}'_0\vee\mathfrak{a}'_1\vee\mathfrak{a}'_2$ 
			is another oriented $\partial$-framed segment
			satisfying the same conditions as of $\mathfrak{a}_0\vee\mathfrak{a}_1\vee\mathfrak{a}_2$, 
			and similarly for $\mathfrak{b}'$. We write the new $\Psi_1({\hat{g}})$
			as $\Psi'_1({\hat{g}})$ to distinguish. 
			We must show that $\Psi_1({\hat{g}})$ equals $\Psi'_1({\hat{g}})$ 
			in $\ocobordism_{R,\epsilon}$.			

			Choose an auxiliary left-handed nearly regular 
			tripod $\mathfrak{c}_0\vee\mathfrak{c}_1\vee\mathfrak{c}_2$
			satisfying the following. 
			\begin{itemize}
				\item The left-handed tripod 
				$\mathfrak{c}_0\vee\mathfrak{c}_1\vee\mathfrak{c}_2$ is 
				$(\frac{\ell(\mathfrak{s}_{\hat{g}})}2+\frac12I(\frac{\pi}3),\delta)$-nearly regular.
				For each $i\in\ZZ_3$, the terminal endpoint $p_\ter(\mathfrak{c}_i)$
				equals $*$, and the terminal direction $\vec{t}_\ter(\mathfrak{c}_i)$ is $\delta$-close
				to $-\vec{t}$, and the terminal framing $\vec{n}_\ter(\mathfrak{c}_i)$ is $\delta$-close
				to $\vec{n}_{\hat{g}}$.
			\end{itemize}
			Since both
			$\mathfrak{a}_0\vee\mathfrak{a}_1\vee\mathfrak{a}_2$ 
			and $\mathfrak{a}'_0\vee\mathfrak{a}'_1\vee\mathfrak{a}'_2$ have the
			opposite chirality to that of $\mathfrak{c}_0\vee\mathfrak{c}_1\vee\mathfrak{c}_2$,
			it follows from rotation (Construction \ref{rotationConstruction} (1)) that
				$$\sum_{i\in\ZZ_3}\,[\mathfrak{a}_{i,i+1}\bar{\mathfrak{c}}_{i,i+1}]_{R,\epsilon}\,=\,0$$
			and 
				$$\sum_{i\in\ZZ_3}\,[\mathfrak{a}'_{i,i+1}\bar{\mathfrak{c}}_{i,i+1}]_{R,\epsilon}\,=\,0,$$
			where $\mathfrak{a}_{i,i+1}$ means $\bar{\mathfrak{a}}_i\mathfrak{a}_{i+1}$ for $i\in\ZZ_3$,
			and similarly for the notations $\mathfrak{a}'_{i,i+1}$ and $\mathfrak{c}_{i,i+1}$.
			On the other hand, by swapping (Construction \ref{swapping}),
				$$[\mathfrak{s}_{\hat{g}}\mathfrak{a}_{i,i+1}]_{R,\epsilon}-[\mathfrak{s}_{\hat{g}}\mathfrak{a}'_{i,i+1}]_{R,\epsilon}=
				[\bar{\mathfrak{c}}_{i,i+1}\mathfrak{a}_{i,i+1}]_{R,\epsilon}-[\bar{\mathfrak{c}}_{i,i+1}\mathfrak{a}'_{i,i+1}]_{R,\epsilon},$$
			and
				$$[\mathfrak{s}_{\hat{g}}\mathfrak{b}]_{R,\epsilon}-[\mathfrak{s}_{\hat{g}}\mathfrak{b}']_{R,\epsilon}=
				[\bar{\mathfrak{s}}_{\hat{g}}\mathfrak{b}]_{R,\epsilon}-[\bar{\mathfrak{s}}_{\hat{g}}\mathfrak{b}']_{R,\epsilon}.$$
			For convenience, we write $\mathfrak{b}_{01}$ and $\mathfrak{b}_{10}$ for $\mathfrak{b}$
			and $\bar{\mathfrak{b}}$, respectively, and similarly for $\mathfrak{b}'_{01}$ and $\mathfrak{b}'_{10}$.
			Then $\Psi_1({\hat{g}})-\Psi'_1({\hat{g}})$ equals
			\begin{eqnarray*}
				&&\sum_{i\in\ZZ_3}\,([\mathfrak{s}_{\hat{g}}\mathfrak{a}_{i,i+1}]_{R,\epsilon}
				-[\mathfrak{s}_{\hat{g}}\mathfrak{a}'_{i,i+1}]_{R,\epsilon})
				-\sum_{j\in\ZZ_2}\,([\mathfrak{s}_{\hat{g}}\mathfrak{b}_{j,j+1}]_{R,\epsilon}
				-[\mathfrak{s}_{\hat{g}}\mathfrak{b}'_{j,j+1}]_{R,\epsilon})\\
				&=&
					\sum_{i\in\ZZ_3}\,([\bar{\mathfrak{c}}_{i,i+1}\mathfrak{a}_{i,i+1}]_{R,\epsilon}-
					[\bar{\mathfrak{c}}_{i,i+1}\mathfrak{a}'_{i,i+1}]_{R,\epsilon})
					-0\\
				&=&0,					
			\end{eqnarray*}
			or in other words, $\Psi_1({\hat{g}})$ equals $\Psi'_1({\hat{g}})$ in $\ocobordism_{R,\epsilon}$. This proves
			the statement (2).
		\end{proof}
		
		\begin{lemma}\label{triangularRelation}
			The set-theoretic map $\Psi_h$ on $\hat{\mathcal{B}}^\circ_{K}$
			is a partial homomorphism. In other words,
			\begin{enumerate}
				\item For any $\hat{g}\in\hat{\mathcal{B}}^\circ_{K}$,
					$$\Psi_h(\hat{g})+\Psi_h(\hat{g}^{-1})\,=\,0.$$
				\item For any triple
				$\hat{g}_0,\hat{g}_1,\hat{g}_2\in\hat{\mathcal{B}}^\circ_{K}$
				satisfying the triagular relation $\hat{g}_0\hat{g}_1\hat{g}_2=\id$,
					$$\Psi_h(\hat{g}_0)+\Psi_h(\hat{g}_1)+\Psi_h(\hat{g}_2)\,=\,0.$$
			\end{enumerate}
		\end{lemma}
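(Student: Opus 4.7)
The plan is to exploit the flexibility in the setup data allowed by Lemma \ref{wellDefinition} and to realize the required relations in $\ocobordism_{R,\epsilon}$ via the derived constructions of Subsection \ref{Subsec-derivedConstructions}. For part (1), set $\hat{f}=\hat{\tau}_h(\hat{g})$; by Lemma \ref{sharpElementLemma} take $\mathfrak{s}_{\hat{f}^{-1}} = \bar{\mathfrak{s}}^*_{\hat{f}}$, so that $\vec{n}_{\hat{f}^{-1}} = -\vec{n}_{\hat{f}}$. For the auxiliary data entering $\Psi_1(\hat{f}^{-1})$, take $\mathfrak{a}'_i := \mathfrak{a}^*_{-i}$, which the remark after Definition \ref{tripod} ensures remains right-handed with terminal framings close to $-\vec{n}_{\hat{f}} = \vec{n}_{\hat{f}^{-1}}$, together with $\mathfrak{b}' := \mathfrak{b}^*$. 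A direct inspection of reduced cyclic concatenations then shows that each of the five bigons in $\Psi_1(\hat{f}^{-1})$ is the orientation reversal of a corresponding bigon in $\Psi_1(\hat{f})$; five applications of $[\bar{L}]_{R,\epsilon} = -[L]_{R,\epsilon}$ from Lemma \ref{pantedCobordismGroupStructure} give $\Psi_h(\hat{g})+\Psi_h(\hat{g}^{-1})=0$ term by term.

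For part (2), conjugate by $\hat{\tau}_h$ to rewrite the relation as $\hat{f}_0\hat{f}_1\hat{f}_2=\id$ with each $\hat{f}_i = \hat{\tau}_h(\hat{g}_i)$ in $\hat{\mathcal{C}}_{300L}\cap\hat{\mathcal{B}}^\circ_{R-300L}$ and $\delta$-sharp (Lemma \ref{sharpCK}). Lift the loops to $\Hyp$ from the basepoint $O$ to form a closed geodesic triangle with vertices $O, f_0.O, f_0 f_1.O$; the sharpness forces all interior angles well below $120^\circ$, so Lemma \ref{FermatPoint} supplies a Fermat point $F$ with three Fermat segments of length at least $L$. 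Projection to $M$ yields a left-handed nearly regular tripod $\mathfrak{t}_0\vee\mathfrak{t}_1\vee\mathfrak{t}_2$ with coincident terminal point $\pt$ and terminal directions close to $-\vec{t}$, and Lemma \ref{sharpTripodLemma} supplies angles $\phi_i$ summing to $0$ such that $\mathfrak{t}_{i,i+1}(\phi_{i+2})$ is associated to $\hat{f}_{i+2}$; I take these as the canonical representatives $\mathfrak{s}_{\hat{f}_{i+2}}$. Near regularity of $\mathfrak{t}$ makes the three lengths $\ell(\mathfrak{s}_{\hat{f}_i})$ nearly equal and the three vectors $\vec{n}_{\hat{f}_i}$ compatible, so a single right-handed auxiliary tripod $\mathfrak{a}$ and single segment $\mathfrak{b}$ may serve as auxiliary data for all three $\Psi_1(\hat{f}_i)$.

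With these coordinated choices, $\sum_i \Psi_1(\hat{f}_i)$ splits into a $\ZZ_3\times\ZZ_3$ grid of nine bigons $B_{ij}=[\mathfrak{s}_{\hat{f}_i}\mathfrak{a}_{j,j+1}]$ together with six bigons of $\mathfrak{b}^{\pm 1}$-type. I organize the nine bigons by $(j-i)\bmod 3$ into three triples and apply rotation of opposite chirality (Construction \ref{rotationConstruction} (1)) to the rotation pairs $(\mathfrak{a},\sigma_*^k\mathfrak{t})$ for $k=0,1,2$, where $\sigma_*$ cyclically reindexes the legs of $\mathfrak{t}$ while preserving chirality: each application produces a pair of pants whose cuff set, after appropriate orientation reversals, accounts for exactly one triple of bigons, yielding an integral vanishing in $\ocobordism_{R,\epsilon}$. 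The six $\mathfrak{b}$-bigons are handled by a coordinated application of swapping (Construction \ref{swapping}) across the three $\hat{f}_i$, in which the angle relation $\phi_0+\phi_1+\phi_2=0$ is exactly what prevents a stray central element $\hat{c}\in\pi_1(\SO(M),\mathbf{e})$ from contributing. Combining these vanishings gives $\Psi_h(\hat{g}_0)+\Psi_h(\hat{g}_1)+\Psi_h(\hat{g}_2)=0$.

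The main obstacle is part (2): the nine-bigon cancellation must be integral rather than merely $2$-torsion (a naive use of antirotation would give only $2\sum_{ij}B_{ij}=0$), so favoring rotation via the reindexings $\sigma_*^k\mathfrak{t}$ is essential, and tracking the orientation reversals relating the rotation output cuffs to the target bigons $B_{ij}$ requires careful bookkeeping. Equally delicate is the cancellation of the $\mathfrak{b}$-bigons: the spinor-level condition $\sum_i\phi_i=0$ (as opposed to the twisted case $\equiv \pi \pmod{2\pi}$) from Lemma \ref{sharpTripodLemma} is precisely what distinguishes the untwisted triangular relation $\hat{f}_0\hat{f}_1\hat{f}_2=\id$ from the twisted $\hat{f}_0\hat{f}_1\hat{f}_2=\hat{c}$, and losing track of it would invalidate the cancellation. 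All tameness hypotheses of rotation and swapping must be verified using the Length and Phase Formula (Lemma \ref{lengthAndPhaseFormula}) under the universal constraint $\epsilon\geq\delta\times 10^3$.
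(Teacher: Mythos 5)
Your part (1) is essentially the paper's own argument: taking $\mathfrak{s}_{\hat{f}^{-1}}=\bar{\mathfrak{s}}^*_{\hat{f}}$, the auxiliary tripod $\mathfrak{a}^*_0\vee\mathfrak{a}^*_{-1}\vee\mathfrak{a}^*_{-2}$ and the flipped segment, and cancelling the five bigons pairwise via $[\bar{L}]_{R,\epsilon}=-[L]_{R,\epsilon}$; that part is fine. Part (2), however, has two genuine gaps. First, you assert that the tripod $\mathfrak{t}_0\vee\mathfrak{t}_1\vee\mathfrak{t}_2$ obtained from the Fermat point is nearly regular, with the three lengths $\ell(\mathfrak{s}_{\hat{f}_i})$ nearly equal, the framings $\vec{n}_{\hat{f}_i}$ compatible, and (implicitly) the angles $\phi_i$ near $0$, so that one auxiliary tripod $\mathfrak{a}$ and one segment $\mathfrak{b}$ serve for all three $\Psi_1(\hat{f}_i)$. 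Lemma \ref{FermatPoint} gives only a lower bound on the distances from the Fermat point to the vertices; the legs have lengths governed by the (generally very different) lengths of the generators $\tau_h(g_i)$, and neither the terminal framings nor the $\phi_i$ are controlled. The definition of $\Psi_1(\hat{g})$ ties the size of $\mathfrak{a}$ and $\mathfrak{b}$ to $\ell(\mathfrak{s}_{\hat{g}})$ and $\vec{n}_{\hat{g}}$, so common auxiliary data is available only in the special case where all three lengths and framings nearly agree and $\phi_i\approx 0$. Reducing the general triangular relation to that special case is a substantial part of the proof (it requires a connecting claim proved by swapping, plus an interpolation of tripods via the Connection Principle \ref{connectionPrinciple}), and your proposal omits it entirely.

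Second, even in the special case your cancellation scheme cannot work as stated: you claim the nine grid bigons $[\mathfrak{s}_{\hat{f}_i}\mathfrak{a}_{j,j+1}]$ vanish by themselves (three applications of rotation) and the six $\mathfrak{b}$-bigons vanish by themselves (swapping). Applying the already well-defined homomorphism $\Phi$ (Lemma \ref{wellDefinedPhi}, and the computation as in Lemma \ref{preInverse}), the nine grid terms map to $3\left([\hat{f}_0]+[\hat{f}_1]+[\hat{f}_2]\right)+3[\hat{a}_{01}\hat{a}_{12}\hat{a}_{20}]=3[\hat{c}]=[\hat{c}]\neq 0$, since the auxiliary tripod is right-handed (Remark \ref{sharpTripodLemmaRemark}); likewise the six $\mathfrak{b}$-terms map to $[\hat{c}]$. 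Only the combined sum is zero, so no splitting into these two separately vanishing groups exists. Moreover, rotation (Construction \ref{rotationConstruction} (1)) applied to the pairs $(\mathfrak{a},\sigma_*^k\mathfrak{t})$ produces cuffs of the form $[\mathfrak{a}_{i,i+1}\bar{\mathfrak{t}}_{j,j+1}]=[\mathfrak{a}_{i,i+1}\mathfrak{t}_{j+1,j}]$, whose free homotopy classes involve the inverses of the sides of $\mathfrak{t}$, not the grid bigons $[\mathfrak{t}_{j,j+1}\mathfrak{a}_{i,i+1}]$; an ``appropriate orientation reversal'' changes the sign of the class but does not convert one family into the other. The mechanism that actually makes the integral cancellation go through is different: one chooses the auxiliary segment for $\hat{g}_{r+2}$ to be the side $\mathfrak{a}_{r,r+1}$ of the common tripod, cancels the diagonal terms against part of the $\mathfrak{b}$-terms, converts the off-diagonal terms to diagonal ones by swapping (Construction \ref{swapping}), kills $\sum_r[\mathfrak{t}_{r,r+1}\bar{\mathfrak{a}}_{r,r+1}]$ by rotation, and finally disposes of the leftover $2\sum_r[\mathfrak{t}_{r,r+1}\bar{\mathfrak{a}}_{r+1,r}]$ by antirotation with opposite chiralities (Construction \ref{antirotationConstruction} (1)), whose built-in coefficient $2$ is exactly what is needed; your plan has no substitute for this step.
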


		\begin{remark}\label{remarkTriangularRelation}
			We point out that in the proof of Lemma \ref{triangularRelation} (2),
			the last equality in Step 1 uses antirotation of tripod pairs with
			opposite charalities (Construction \ref{antirotationConstruction} (1)).
			The presence of the coefficient $2$ there is not only
			indispensible but also crucial for 
			Theorem \ref{theoremPantedCobordism} to work in the integral 
			coefficient case.			
			The corresponding fact is
			that in the paper \cite{KM-Ehrenpreis}, 
			the conclusion of the Second Rotation Lemma (Lemma 8.2) should be
				$$2\,\sum_{i=0}^2(R_i\bar{R}_{i+1})_T=0$$
			if one attempts to state with integral coefficients.			
		\end{remark}
		
		\begin{proof}
			To simplify notations, we prove the statements with respect to $\Psi_1$
			assuming $\hat{g}_i$ in $\tau_h(\hat{\mathcal{B}}^\circ_K)$.
%
%
			
			To prove the statement (1), suppose that
			$\hat{g}$ is an element of
			$\hat{\tau}_h(\hat{\mathcal{B}}_K)$.
			Observe that $\mathfrak{s}_{\hat{g}^{-1}}$ can be
			chosen as the orientation reversed framing
			flipping $\bar{\mathfrak{s}}_{\hat{g}}^*$,
			and that the defining right-handed tripod and $\partial$-framed segment can be chosen as 
			$\mathfrak{a}^*_0\vee\mathfrak{a}^*_{-1}\vee\mathfrak{a}^*_{-2}$
			and $\bar{\mathfrak{b}}^*$ respectively, provided
			that $\mathfrak{s}_{\hat{g}}$, $\mathfrak{a}_0\vee\mathfrak{a}_1\vee\mathfrak{a}_2$,
			and $\mathfrak{b}$ have been chosen to define $\Psi_1(\hat{g})$.
			This should be compared to the wrong
			choice $\bar{\mathfrak{s}}_{\hat{g}}$,
			$\mathfrak{a}_0\vee\mathfrak{a}_1\vee\mathfrak{a}_2$,
			and $\mathfrak{b}$, which is actually
			right for defining $\Psi_1(\hat{c}\hat{g}^{-1})$
			by Lemma \ref{setupData} (4).
			For convenience, we write $\mathfrak{b}_{01}$ and $\mathfrak{b}_{10}$ for $\mathfrak{b}$
			and $\bar{\mathfrak{b}}$, respectively.
			Then $\Psi_1(\hat{g})+\Psi_1(\hat{g}^{-1})$ equals
			\begin{eqnarray*}
				&&	\sum_{i\in\ZZ_3}\,
					([\mathfrak{s}_{\hat{g}}\mathfrak{a}_{i,i+1}]_{R,\epsilon}+[\bar{\mathfrak{s}}^*_{\hat{g}}\mathfrak{a}^*_{-i,-i-1}]_{R,\epsilon})
					-\sum_{j\in\ZZ_2}\,
					([\mathfrak{s}_{\hat{g}}\mathfrak{b}_{j,j+1}]_{R,\epsilon}
					+[\bar{\mathfrak{s}}^*_{\hat{g}}\bar{\mathfrak{b}}^*_{j,j+1}]_{R,\epsilon})\\
				&=&	\sum_{i\in\ZZ_3}\,
					([\mathfrak{s}_{\hat{g}}\mathfrak{a}_{i,i+1}]_{R,\epsilon}+[\bar{\mathfrak{s}}_{\hat{g}}\mathfrak{a}_{-i,-i-1}]_{R,\epsilon})
					-\sum_{j\in\ZZ_2}\,
					([\mathfrak{s}_{\hat{g}}\mathfrak{b}_{j,j+1}]_{R,\epsilon}+
					[\bar{\mathfrak{s}}_{\hat{g}}\bar{\mathfrak{b}}_{j,j+1}]_{R,\epsilon})\\
				&=&	\sum_{i\in\ZZ_3}\,
					([\mathfrak{s}_{\hat{g}}\mathfrak{a}_{i,i+1}]_{R,\epsilon}+[\bar{\mathfrak{s}}_{\hat{g}}\mathfrak{a}_{i+1,i}]_{R,\epsilon})
					-0\\
				&=&	0.	
			\end{eqnarray*}
			This proves the statement (1).

			To prove the statement (2), suppose that
			$\hat{g}_0,\hat{g}_1,\hat{g}_2$ is a triple
			of elements in $\tau_h(\hat{\mathcal{B}}^\circ_{K})$
			satisfying the triagular relation $\hat{g}_0\hat{g}_1\hat{g}_2=\id$.
			From the relation $\hat{g}_0\hat{g}_1\hat{g}_2=\id$, 
			the carrier segments of $\mathfrak{s}_{\hat{g}_i}$ form
			the boundary cycle of an oriented $2$-simplex $\sigma$ in $M$, 
			so by Simplex Subdivision (Definition \ref{axiomsOfConstructions} (2),
			cf.~Subsubsection \ref{Subsubsec-describingAConstruction})
			and by Lemma \ref{FermatPoint},
			there is a left-handed $(100L,\delta)$-tame
			tripod $\mathfrak{t}_0\vee\mathfrak{t}_1\vee\mathfrak{t}_2$,
			such that $\mathfrak{t}_{i,i+1}$ and $\mathfrak{s}_{\hat{g}_{i+2}}$
			are carried by the same segment for all $i$. 
			By Lemma \ref{sharpTripodLemma},
			there are $\phi_0,\phi_1,\phi_2\in\RR/2\pi\ZZ$ with $\phi_0+\phi_1+\phi_2=0$
			and $2\phi_{i+2}$ $\delta$-close to the angle from $\vec{n}_\ter(\mathfrak{t}_i)$
			to $\vec{n}_\ter(\mathfrak{t}_{i+1})$ with respect to $\vec{t}$,
			such that $\mathfrak{t}_{i,i+1}(\phi_{i+2})$ is the same
			as $\mathfrak{s}_{\hat{g}_{i+2}}$ up to $\delta$-small change of the
			initial and terminal framings.			
			In other words,
			we may choose $\mathfrak{s}_{\hat{g}_{i+2}}$
			to be $\mathfrak{t}_{i,i+1}(\phi_{i+2})$ instead.
				
			The strategy is to prove $\Psi_1(\hat{g}_0)+\Psi_1(\hat{g}_1)+\Psi_1(\hat{g}_2)=0$
			using the defining expression by manipulation and cancellation.
			The fundamental case is when $\mathfrak{t}_0\vee\mathfrak{t}_1\vee\mathfrak{t}_2$
			is nearly regular with the terminal framing of legs close to each other,
			or equivalently, when $\ell(\mathfrak{t}_i)$ are close to each other
			and $\phi_i$ are all close to $0$.
			As we can choose the defining right-handed tripod and the defining $\partial$-framed segment to be the same for all
			$\Psi_1(\hat{g}_i)$, we expect the most cancellation in this case and
			expect the triangular relation $\hat{g}_0\hat{g}_1\hat{g}_2=\id$ to be translated
			into a cancellation by an antirotation with distinct chiralities.
			In general, the difficulty to cancellation lies in that
			$\ell(\mathfrak{t}_i)$ may not be close to each other and 
			$\phi_i$ may not be close to $0$. Then we write $\mathfrak{t}_i$ as
			a nearly consecutive tame concatenation $\mathfrak{c}_i\mathfrak{r}_i$,
			in which $\ell(\mathfrak{c}_i)$ are close to each other.
			Apply the Connection Principle to construct a sequence of nearly consecutive tame concatenations 
			$\mathfrak{c}_i\mathfrak{r}^{(0)}_i,\cdots,\mathfrak{c}_i\mathfrak{r}^{(N)}_i$
			so that $\mathfrak{r}^{(k)}_i$ have the same initial and terminal endpoints respectively,
			and $\mathfrak{r}^{(N)}_i$ equals $\mathfrak{r}_i$. 
			Assume that $\mathfrak{r}^{(k)}_i$ and $\mathfrak{r}^{(k+1)}_i$ 
			have length and framings close to each other 
			for $0\leq k<N$, and that the initial and terminal directions of $\mathfrak{r}^{(k)}_i$ are
			close to those of $\mathfrak{r}_i$ for all $k$. 
			In a way similar to $\mathfrak{t}_0\vee\mathfrak{t}_1\vee\mathfrak{t}_2$ and $\hat{g}_i$,
			we obtain a sequence of left-handed tripods 
			$\mathfrak{t}^{(k)}_0\vee\mathfrak{t}^{(k)}_1\vee\mathfrak{t}^{(k)}_2$,
			and sharp elements $\hat{g}^{(k)}_i$.
			We expect to use swapping to obtain the equations
			$\Psi_1(\hat{g}^{(k)}_0)+\Psi_1(\hat{g}^{(k)}_1)+\Psi_1(\hat{g}^{(k)}_2)=
			\Psi_1(\hat{g}^{(k+1)}_0)+\Psi_1(\hat{g}^{(k+1)}_1)+\Psi_1(\hat{g}^{(k+1)}_2)$
			for $0\leq k<N$.
			Once this is done, we can immediately derive the general case 
			by requiring that $\mathfrak{t}^{(0)}_0\vee\mathfrak{t}^{(0)}_1\vee\mathfrak{t}^{(0)}_2$
			falls into the fundamental case.
			Note that we will use the triangular relation $\hat{g}_0\hat{g}_1\hat{g}_2=\id$
			essentially once, and that the number $N$ will 
			essentially depend on the geometry of $\hat{g}_i$.			
			
			\medskip\noindent\textbf{Step 1}. Suppose
			in addition that $\mathfrak{t}_0\vee\mathfrak{t}_1\vee\mathfrak{t}_2$
			is $(\frac{l}2+\frac{I(\frac{\pi}3)}2,10\delta)$-nearly regular
			for some constant $l$ with $\vec{n}_\ter(\mathfrak{t}_i)$ 
			are all $\delta$-close to the unit vector $\vec{n}$, and
			that $\phi_i$ are all $0$. We prove $\Psi_1(\hat{g}_0)+\Psi_1(\hat{g}_1)+\Psi_1(\hat{g}_2)=0$
			in this basic case.
			
			Observe that in this case, for each $\hat{g}_{r+2}$
			where $r\in\ZZ_3$,
			$\mathfrak{s}_{\hat{g}_{r+2}}$
			may be chosen as $\mathfrak{t}_{r,r+1}$,
			and the defining tripod can be chosen
			as $\mathfrak{a}_0\vee\mathfrak{a}_1\vee\mathfrak{a}_2$,
			and the defining $\partial$-framed segment $\mathfrak{b}$
			may be chosen as $\mathfrak{a}_{r,r+1}$.
			Then $\Psi_1(\hat{g}_0)+\Psi_1(\hat{g}_1)+\Psi_1(\hat{g}_2)$ equals
			\begin{eqnarray*}
				&&\sum_{r\in\ZZ_3}
				\left(\left(\sum_{i\in\ZZ_3}[\mathfrak{t}_{r,r+1}\mathfrak{a}_{i,i+1}]_{R,\epsilon}\right)
				-[\mathfrak{t}_{r,r+1}\mathfrak{a}_{r,r+1}]_{R,\epsilon}-[\mathfrak{t}_{r,r+1}\bar{\mathfrak{a}}_{r,r+1}]_{R,\epsilon}
				\right)\\
				&=&\sum_{r\in\ZZ_3}[\mathfrak{t}_{r,r+1}\mathfrak{a}_{r+1,r+2}]_{R,\epsilon}
				+\sum_{r\in\ZZ_3}[\mathfrak{t}_{r,r+1}\mathfrak{a}_{r+2,r}]_{R,\epsilon}
				-\sum_{r\in\ZZ_3}[\mathfrak{t}_{r,r+1}\bar{\mathfrak{a}}_{r,r+1}]_{R,\epsilon}\\
				&=&\sum_{r\in\ZZ_3}[\mathfrak{t}_{r,r+1}\mathfrak{a}_{r+1,r+2}]_{R,\epsilon}
				+\sum_{r\in\ZZ_3}[\mathfrak{t}_{r,r+1}\mathfrak{a}_{r+2,r}]_{R,\epsilon}
				-\sum_{r\in\ZZ_3}[\mathfrak{t}_{r,r+1}\bar{\mathfrak{a}}_{r,r+1}]_{R,\epsilon}\\
				&=&2\sum_{r\in\ZZ_3}[\mathfrak{t}_{r,r+1}\mathfrak{a}_{r,r+1}]_{R,\epsilon}-0\\
				&=&2\sum_{r\in\ZZ_3}[\mathfrak{t}_{r,r+1}\bar{\mathfrak{a}}_{r+1,r}]_{R,\epsilon}\\
				&=&0.
			\end{eqnarray*}
			In the third equality, the last summation equals zero by rotation (Construction \ref{rotationConstruction} (1));
			the first two summations are both equal to the summation of 
			$[\mathfrak{t}_{r,r+1}\mathfrak{a}_{r,r+1}]_{R,\epsilon}$ over $r\in\ZZ_3$, 
			since by swapping (Construction \ref{swapping}),
			\begin{eqnarray*}
				&&[\mathfrak{t}_{01}\mathfrak{a}_{12}]_{R,\epsilon}+
				[\mathfrak{t}_{12}\mathfrak{a}_{20}]_{R,\epsilon}+
				[\mathfrak{t}_{20}\mathfrak{a}_{01}]_{R,\epsilon}\\
				&=&[\mathfrak{t}_{01}\mathfrak{a}_{12}]_{R,\epsilon}+
				[\mathfrak{t}_{12}\mathfrak{a}_{01}]_{R,\epsilon}+
				[\mathfrak{t}_{20}\mathfrak{a}_{20}]_{R,\epsilon}\\
				&=&[\mathfrak{t}_{01}\mathfrak{a}_{01}]_{R,\epsilon}+
				[\mathfrak{t}_{12}\mathfrak{a}_{12}]_{R,\epsilon}+
				[\mathfrak{t}_{20}\mathfrak{a}_{20}]_{R,\epsilon},
			\end{eqnarray*}
			and similarly for $[\mathfrak{t}_{01}\mathfrak{a}_{20}]_{R,\epsilon}+
			[\mathfrak{t}_{12}\mathfrak{a}_{01}]_{R,\epsilon}+
			[\mathfrak{t}_{20}\mathfrak{a}_{12}]_{R,\epsilon}$.
			The last equality follows from antirotation (Construction \ref{antirotationConstruction} (1)).
			This proves $\Psi_1(\hat{g}_0)+\Psi_1(\hat{g}_1)+\Psi_1(\hat{g}_2)=0$
			in the basic case.
			
			\medskip\noindent\textbf{Step 2}. We prove a connecting step
			which is the following claim.
			Suppose that $\mathfrak{c}_0\mathfrak{r}_0\vee\mathfrak{c}_1\mathfrak{r}_1\vee\mathfrak{c}_2\mathfrak{r}_2$ and
			$\mathfrak{c}_0\mathfrak{r}'_0\vee\mathfrak{c}_1\mathfrak{r}'_1\vee\mathfrak{c}_2\mathfrak{r}'_2$
			are $(100L,10\delta)$-tame left-handed tripods satisfying the following:
			\begin{itemize}
				\item The left-handed tripod 
				$\mathfrak{c}_0\vee\mathfrak{c}_1\vee\mathfrak{c}_2$ is $(L+\frac12I(\frac\pi3),\delta)$-nearly
				regular.
				\item For each $i\in\ZZ_3$, the chain 
				$\mathfrak{c}_i,\mathfrak{r}_i$ is $(10\delta)$-consecutive and 
				$(L,10\delta)$-tame.
				The terminal direction of $\mathfrak{c}_i\mathfrak{r}_i$ is
				$\delta$-close to $\vec{t}$. 
				The same holds
				for $\mathfrak{c}_i\mathfrak{r}'_i$.
				\item For each $i\in\ZZ_3$, 
				$\ell(\mathfrak{r}_i)$ is $(10\delta)$-close to $\ell(\mathfrak{r}'_i)$,
				and $\vec{n}_\ter(\mathfrak{r}_i)$ is 
				$(10\delta)$-close to $\vec{n}_\ter(\mathfrak{r}'_i)$.
			\end{itemize}
			Let $\phi_0,\phi_1,\phi_2\in\RR/2\pi\ZZ$ be the angles 
			guaranteed by Lemma \ref{sharpTripodLemma} with respect to
			$\mathfrak{c}_0\mathfrak{r}_0\vee\mathfrak{c}_1\mathfrak{r}_1\vee\mathfrak{c}_2\mathfrak{r}_2$,
			which, hence, 
			works for 
			$\mathfrak{c}_0\mathfrak{r}'_0\vee\mathfrak{c}_1\mathfrak{r}'_1\vee\mathfrak{c}_2\mathfrak{r}'_2$
			up to error of $30\delta$.
			Let $\hat{g}_{i+2},\hat{g}'_{i+2}\in\pi_1(\SO(M),\mathbf{e})$ 
			be the $\delta$-sharp element associated to 
			$(\bar{\mathfrak{r}}_i\mathfrak{c}_{i,i+1}\mathfrak{r}_i)(\phi_{i+2})$,
			$(\bar{\mathfrak{r}}'_i\mathfrak{c}_{i,i+1}\mathfrak{r}_i')(\phi_{i+2})$,
			respectively. Assuming that $\hat{g}_i,\hat{g}'_i$
			lie in $\hat{\mathcal{C}}_{300L}\cap\hat{\mathcal{B}}^\circ_{R-300L}$ for all $i\in\ZZ_3$,
			we claim
				$$\Psi_1(\hat{g}_0)+\Psi_1(\hat{g}_1)+\Psi_1(\hat{g}_2)=
				\Psi_1(\hat{g}'_0)+\Psi_1(\hat{g}'_1)+\Psi_1(\hat{g}'_2).$$
				
			To prove the claim, observe that it suffices to prove a simple case
			that $\mathfrak{r}_i$ equals $\mathfrak{r}'_i$ except for one 
			$i\in\ZZ_3$. Then the claim follows by applying the simple case
			successively to each neighboring pair in the sequence
			of tripods
			$\mathfrak{c}_0\mathfrak{r}_0\vee\mathfrak{c}_1\mathfrak{r}_1\vee\mathfrak{c}_2\mathfrak{r}_2$,
			$\mathfrak{c}_0\mathfrak{r}_0\vee\mathfrak{c}_1\mathfrak{r}_1\vee\mathfrak{c}_2\mathfrak{r}'_2$,
			$\mathfrak{c}_0\mathfrak{r}_0\vee\mathfrak{c}_1\mathfrak{r}_1'\vee\mathfrak{c}_2\mathfrak{r}'_2$,
			$\mathfrak{c}_0\mathfrak{r}'_0\vee\mathfrak{c}_1\mathfrak{r}_1'\vee\mathfrak{c}_2\mathfrak{r}'_2$.
			Without loss of generality, we may assume that $\mathfrak{r}_i=\mathfrak{r}'_i$
			except for $i$ being $0$. Then $\Psi_1(\hat{g}_0)=\Psi_1(\hat{g}'_0)$, and we
			must show $\Psi_1(\hat{g}_1)+\Psi_1(\hat{g}_2)=\Psi_1(\hat{g}'_1)+\Psi_1(\hat{g}'_2)$.
			Observe further that for both $\hat{g}_1$ and $\hat{g}'_1$,
			we may choose the same defining right-handed tripod
			$\mathfrak{a}^{(1)}_0\vee\mathfrak{a}^{(1)}_1\vee\mathfrak{a}^{(1)}_2$
			and the same defining $\partial$-framed segment $\mathfrak{b}^{(1)}$.
			Similarly, for $\hat{g}_2$ and $\hat{g}'_2$
			we choose the same $\mathfrak{a}^{(2)}_0\vee\mathfrak{a}^{(2)}_1\vee\mathfrak{a}^{(2)}_2$
			and	$\mathfrak{b}^{(2)}$ for both $\hat{g}_2$ and $\hat{g}'_2$.
			By swapping the bigon pair 
			$[\mathfrak{r}_0\,(\mathfrak{a}^{(2)}_{01}(-\phi_1)\bar{\mathfrak{r}}_2\mathfrak{c}_{20})]$ and
			$[\mathfrak{r}'_0\,(\bar{\mathfrak{a}}^{(1)}_{01}(\phi_2)\bar{\mathfrak{r}}_1\bar{\mathfrak{c}}_{01})]$
			into
			$[\mathfrak{r}'_0\,(\mathfrak{a}^{(2)}_{01}(-\phi_1)\bar{\mathfrak{r}}_2\mathfrak{c}_{20})]$ and
			$[\mathfrak{r}_0\,(\bar{\mathfrak{a}}^{(1)}_{01}(\phi_2)\bar{\mathfrak{r}}_1\bar{\mathfrak{c}}_{01})]$
			(Construction \ref{swapping}), after rearrangement, we have
			$$[\mathfrak{t}_{01}(\phi_2)\mathfrak{a}^{(1)}_{01}]_{R,\epsilon}+[\mathfrak{t}_{20}(\phi_1)\mathfrak{a}^{(2)}_{01}]_{R,\epsilon}
			\,=\,[\mathfrak{t}'_{01}(\phi_2)\mathfrak{a}^{(1)}_{01}]_{R,\epsilon}+[\mathfrak{t}'_{20}(\phi_1)\mathfrak{a}^{(2)}_{01}]_{R,\epsilon},$$
			where $\mathfrak{t}_{i,i+1}=\bar{\mathfrak{r}}_i\mathfrak{c}_{i,i+1}\mathfrak{r}_{i+1}$
			and $\mathfrak{t}'_{i,i+1}=\bar{\mathfrak{r}}'_i\mathfrak{c}_{i,i+1}\mathfrak{r}'_{i+1}$;
			similarly,
			$$[\mathfrak{t}_{01}(\phi_2)\mathfrak{a}^{(1)}_{12}]_{R,\epsilon}+[\mathfrak{t}_{20}(\phi_1)\mathfrak{a}^{(2)}_{12}]_{R,\epsilon}
			\,=\,[\mathfrak{t}'_{01}(\phi_2)\mathfrak{a}^{(1)}_{12}]_{R,\epsilon}
			+[\mathfrak{t}'_{20}(\phi_1)\mathfrak{a}^{(2)}_{12}]_{R,\epsilon};$$
			$$[\mathfrak{t}_{01}(\phi_2)\mathfrak{a}^{(1)}_{20}]_{R,\epsilon}+[\mathfrak{t}_{20}(\phi_1)\mathfrak{a}^{(2)}_{20}]_{R,\epsilon}
			\,=\,[\mathfrak{t}'_{01}(\phi_2)\mathfrak{a}^{(1)}_{20}]_{R,\epsilon}
			+[\mathfrak{t}'_{20}(\phi_1)\mathfrak{a}^{(2)}_{20}]_{R,\epsilon};$$
			and
			$$-[\mathfrak{t}_{01}(\phi_2)\mathfrak{b}^{(1)}]_{R,\epsilon}-[\mathfrak{t}_{20}(\phi_1)\mathfrak{b}^{(2)}]_{R,\epsilon}
			\,=\,-[\mathfrak{t}'_{01}(\phi_2)\mathfrak{b}^{(1)}]_{R,\epsilon}-[\mathfrak{t}'_{20}(\phi_1)\mathfrak{b}^{(2)}]_{R,\epsilon};$$
			$$-[\mathfrak{t}_{01}(\phi_2)\bar{\mathfrak{b}}^{(1)}]_{R,\epsilon}-[\mathfrak{t}_{20}(\phi_1)\bar{\mathfrak{b}}^{(2)}]_{R,\epsilon}
			\,=\,-[\mathfrak{t}'_{01}(\phi_2)\bar{\mathfrak{b}}^{(1)}]_{R,\epsilon}-[\mathfrak{t}'_{20}(\phi_1)\bar{\mathfrak{b}}^{(2)}]_{R,\epsilon}.$$
			Summing up the five equations above yields 
			$\Psi_1(\hat{g}_1)+\Psi_1(\hat{g}_2)=\Psi_1(\hat{g}'_1)+\Psi_1(\hat{g}'_2)$.
			This finishes the proof the claim of the connecting step.
			
			\medskip\noindent\textbf{Step 3}. We finish the 
			proof in the general case.
			Let $\mathfrak{t}_0\vee\mathfrak{t}_1\vee\mathfrak{t}_2$
			and $\phi_0,\phi_1,\phi_2$ be as before so that $\hat{g}_{i+2}$
			is associated to $\mathfrak{t}_{i,i+1}(\phi_{i+2})$.
			We may write $\mathfrak{t}_i$ as concatenation of
			consecutive $\partial$-framed segments $\mathfrak{c}_i\mathfrak{r}_i$,
			so that $\mathfrak{c}_0\vee\mathfrak{c}_1\vee\mathfrak{c}_2$
			is $(L+\frac12I(\frac{\pi}3),\delta)$-nearly regular. 
			By the Connection Principle (Lemma \ref{connectionPrinciple}),
			we may interpolate a sequence of tripods 
			$\mathfrak{t}^{(k)}_0\vee\mathfrak{t}^{(k)}_1\vee\mathfrak{t}^{(k)}_2$
			where $k$ runs over $0,\cdots,N$,
			such that $\mathfrak{t}^{(k)}_i$ is the $\delta$-concatenation
			$\mathfrak{c}_i\mathfrak{r}^{(k)}_i$, and $\mathfrak{r}^{(k)}_i$ satisfies the
			following.
			\begin{itemize}
				\item For all $i\in\ZZ_3$ and $0\leq k\leq N$,
				$\mathfrak{r}^{(k)}_i$ have length at least $303L$
				and phase $(10\delta)$-close to $0$.
				\item For $i\in\ZZ_3$ and $0\leq k<N$, $\mathfrak{r}^{(k)}_i$
				and $\mathfrak{r}^{(k+1)}_i$
				have length $(10\delta)$-close to each other,
				and terminal framings $(10\delta)$-close to each other.				
				\item For $i\in\ZZ_3$, $\mathfrak{r}^{(0)}_i$ equals
				$\mathfrak{r}_i$.
				\item For $i\in\ZZ_3$, $\mathfrak{r}^{(N)}_i$ have length
				$(10\delta)$-close to each other, and terminal framings $(10\delta)$-close
				to each other.
			\end{itemize}
			For each $\mathfrak{t}^{(k)}_0\vee\mathfrak{t}^{(k)}_1\vee\mathfrak{t}^{(k)}_2$,
			let $\phi^{(k)}_0,\phi^{(k)}_1,\phi^{(k)}_2\in\RR/2\pi\ZZ$ be a triple of 
			angles guaranteed by Lemma \ref{sharpTripodLemma}, and let $\hat{g}^{(k)}_{i+2}\in
			\pi_1(\SO(M),\mathbf{e})$ be the $\delta$-sharp 
			element associated to $\mathfrak{t}^{(k)}_{i,i+1}(\phi_{i+2})$.
			It follows that $\hat{g}^{(k)}_0\hat{g}^{(k)}_1\hat{g}^{(k)}_2=\id$ for $0\leq k\leq N$.
			Moreover, we may assume without loss of generality that
			$\phi^{(0)}_0,\phi^{(0)}_1,\phi^{(0)}_2$ are all $\phi_0,\phi_1,\phi_2$ respectively,
			and that $\phi^{(N)}_0,\phi^{(N)}_1,\phi^{(N)}_2$ are all $0$.
			Note that we may also require $\hat{g}^{(k)}_i$ to lie in 
			$\hat{\mathcal{C}}_{300L}\cap\hat{\mathcal{B}}^\circ_{R-300L}$, for example,
			by constructing $\mathfrak{r}^{(k)}_i$ as a
			$\delta$-consecutive $(L,\delta)$-tame 
			concatenation $\mathfrak{n}^{(k)}_i\tilde{\mathfrak{r}}^{(k)}_i$,
			for $0<k\leq N$,
			where $\mathfrak{n}^{(k)}_i$ is carried by a segment of length $300L$
			with the initial direction $\vec{t}$.
			The $\partial$-framed segment $\mathfrak{n}^{(k)}_i$
			can be constructed using Simplex Subdivision in dimension $0$
			(Definition \ref{axiomsOfConstructions} (2), cf.~Subsubsection \ref{Subsubsec-describingAConstruction}),
			and $\tilde{\mathfrak{r}}^{(k)}_i$ can be constructed by the Connection Principle
			(Lemma \ref{connectionPrinciple}).
			
			Therefore, Step 1 implies that
				$$\Psi_1(\hat{g}^{(N)}_0)+\Psi_1(\hat{g}^{(N)}_1)+\Psi_1(\hat{g}^{(N)}_2)=0,$$
			and Step 2 implies that
				$$\Psi_1(\hat{g}^{(k)}_0)+\Psi_1(\hat{g}^{(k)}_1)+\Psi_1(\hat{g}^{(k)}_2)
				\,=\,\Psi_1(\hat{g}^{(k+1)}_0)+\Psi_1(\hat{g}^{(k+1)}_1)+\Psi_1(\hat{g}^{(k+1)}_2),$$
			for $0\leq k<N$. It follows that when $k$ equals $0$, we have
				$$\Psi_1(\hat{g}_0)+\Psi_1(\hat{g}_1)+\Psi_1(\hat{g}_2)=0.$$
			This completes the proof.			
		\end{proof}
		
		\begin{lemma}\label{homomorphismExtension}
			The restriction of $\Psi_h$ to $\hat{\mathcal{B}}^\circ_D$ extends uniquely to be
			a homomorphism $\Psi_h$ from $\pi_1(\SO(M),\mathbf{e})$ to
			$\ocobordism_{R,\epsilon}$.
		\end{lemma}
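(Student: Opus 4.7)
The strategy is to establish uniqueness via generation, then to prove existence by reducing to the abelianization and invoking Lemma \ref{triangularRelation} on a suitable triangular presentation.

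For uniqueness, I first show that $\hat{\mathcal{B}}^\circ_D$ generates $\pi_1(\SO(M),\mathbf{e})$. By Lemma \ref{setupData}(2), $\mathcal{B}^\circ_D$ contains a generating set of $\pi_1(M,\pt)$; since $\hat{\mathcal{B}}^\circ_D$ is the full preimage of $\mathcal{B}^\circ_D$, both lifts $\hat{g}$ and $\hat{c}\hat{g}$ of every $g \in \mathcal{B}^\circ_D$ lie in $\hat{\mathcal{B}}^\circ_D$, so $\hat{c} = \hat{g}^{-1}(\hat{c}\hat{g})$ lies in $\langle\hat{\mathcal{B}}^\circ_D\rangle$. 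Combined with surjection onto $\pi_1(M,\pt)$, this shows $\hat{\mathcal{B}}^\circ_D$ generates $\pi_1(\SO(M),\mathbf{e})$, so any homomorphic extension is unique.

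For existence, since $\ocobordism_{R,\epsilon}$ is abelian (Lemma \ref{pantedCobordismGroupStructure}), any homomorphism $\pi_1(\SO(M),\mathbf{e}) \to \ocobordism_{R,\epsilon}$ factors through $H_1(\SO(M);\ZZ)$. It therefore suffices to show that the $\ZZ$-linear map $\ZZ\hat{\mathcal{B}}^\circ_D \to \ocobordism_{R,\epsilon}$ sending $\hat{g} \mapsto \Psi_h(\hat{g})$ vanishes on the kernel of the natural map $\ZZ\hat{\mathcal{B}}^\circ_D \to H_1(\SO(M);\ZZ)$. Lemma \ref{setupData}(2) furnishes a triangular generating set $\tilde{\mathcal{S}} \subset \mathcal{B}^\circ_K$ of $\pi_1(M,\pt)$ containing $\mathcal{B}^\circ_D$, with finite triangular relators $\mathcal{R}$. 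I would argue that $\pi_1(\SO(M),\mathbf{e})$ admits a presentation over $\hat{\mathcal{B}}^\circ_K$ with all relators of length at most three: each $r = s_1s_2s_3 \in \mathcal{R}$ lifts, after flipping some $\hat{s}_i$ to $\hat{c}\hat{s}_i \in \hat{\mathcal{B}}^\circ_K$ when necessary, to a length-three relator $\hat{s}_1\hat{s}_2\hat{s}_3 = \id$; the identity $\hat{c}^2 = \id$ is captured by the length-two relator $(\hat{c}\hat{g})(\hat{c}\hat{g}^{-1}) = \id$ in $\hat{\mathcal{B}}^\circ_K$ (which is symmetric under inverses since $\mathcal{B}^\circ_K$ is), because $(\hat{c}\hat{g})^{-1} = \hat{c}\hat{g}^{-1}$ by the centrality of $\hat{c}$ together with $\hat{c}^2 = \id$; and the centrality of $\hat{c}$ is implicit in the fact that $\hat{\mathcal{B}}^\circ_K$ already contains both lifts of every element, so no separate commutator relations need to be imposed as new generators.

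Granted this triangular presentation, Lemma \ref{triangularRelation} closes the argument: part (2) gives $\Psi_h(\hat{s}_1) + \Psi_h(\hat{s}_2) + \Psi_h(\hat{s}_3) = 0$ for every length-three triangular relation with factors in $\hat{\mathcal{B}}^\circ_K$, and part (1) gives $\Psi_h(\hat{g}) + \Psi_h(\hat{g}^{-1}) = 0$ for every length-two inverse relation. Consequently $\Psi_h|_{\hat{\mathcal{B}}^\circ_D}$ descends to a homomorphism from $H_1(\SO(M);\ZZ)$ to $\ocobordism_{R,\epsilon}$, which pulls back to the desired homomorphism on $\pi_1(\SO(M),\mathbf{e})$; by construction this extension agrees with $\Psi_h$ on $\hat{\mathcal{B}}^\circ_K$, since any identity $\hat{g}_1\cdots\hat{g}_n = \hat{s}$ with $\hat{g}_i \in \hat{\mathcal{B}}^\circ_D$ and $\hat{s} \in \hat{\mathcal{B}}^\circ_K$ reduces to a finite sequence of triangular identities in $\hat{\mathcal{B}}^\circ_K$. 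The main obstacle I foresee is the bookkeeping required to certify that the length-$\leq 3$ relations over $\hat{\mathcal{B}}^\circ_K$ really do present the central extension $\pi_1(\SO(M),\mathbf{e})$ — that is, that the $\ZZ_2$-kernel and the centrality of $\hat{c}$ are entirely captured by the chosen lifts of triangular relators together with the length-two inverse relations, without any further relations being needed. This hinges delicately on both lifts of each element of $\mathcal{B}^\circ_K$ being independent generators in $\hat{\mathcal{B}}^\circ_K$, so that commutation of $\hat{c}$ with other elements is deducible from triangular identities rather than imposed as an additional length-four relation.
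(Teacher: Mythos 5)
Your skeleton is the paper's: uniqueness because $\hat{\mathcal{B}}^\circ_D$, being the full preimage of a generating set of $\pi_1(M,\pt)$, generates $\pi_1(\SO(M),\mathbf{e})$ (your $\hat{c}=\hat{g}^{-1}(\hat{c}\hat{g})$ observation is exactly the point), and existence because Lemma \ref{triangularRelation} kills all relators of a presentation of $\pi_1(\SO(M),\mathbf{e})$ whose generators lie in $\hat{\mathcal{B}}^\circ_K$ and whose relators have length at most three. (The detour through $H_1(\SO(M);\ZZ)$ is harmless but unnecessary: since $\ocobordism_{R,\epsilon}$ is abelian, it suffices to check directly that the generator assignment annihilates the relators of such a presentation.)

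However, your justification that the length-$\leq 3$ relations over $\hat{\mathcal{B}}^\circ_K$ really present the central extension contains a genuine error, and it sits exactly at the point you flag as the obstacle. The relator $(\hat{c}\hat{g})(\hat{c}\hat{g}^{-1})=\id$ does \emph{not} capture $\hat{c}^2=\id$: in the group presented by your relators it only declares the generator labelled $\hat{c}\hat{g}^{-1}$ to be the inverse of the one labelled $\hat{c}\hat{g}$, and it imposes nothing on the discrepancy elements $z_g=(\hat{c}\hat{g})\hat{g}^{-1}$. (Test case: for $G=\ZZ$ with inverse-closed generating set $\{s,s^{-1}\}$ and $\hat{G}=\ZZ\times\ZZ_2$, every word of length at most three in the four lifts that is trivial in $\hat{G}$ is a consequence of the inverse-pair relations, so the presented group is free of rank two, not $\ZZ\times\ZZ_2$; so ``preimage of a triangular generating set'' is not automatically triangular.) Likewise, centrality of $\hat{c}$ is not ``implicit'' in having both lifts as generators; it must be derived. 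What actually makes the claim work in this setting is that for each triangular relator $g_0g_1g_2=\id$ one may include \emph{all} of its lifts obtained by flipping two of the three factors by $\hat{c}$ (each is again a trivial word of length three in $\hat{\mathcal{B}}^\circ_K$, because $\hat{c}$ is central of order two in $\pi_1(\SO(M),\mathbf{e})$); from these interlocking relations one deduces, inside the presented group, that the elements $z_g$ for generators occurring in a common relator coincide, are of order at most two, and are conjugate-invariant, and then one uses the abundance of triangular relations among the elements of $\mathcal{B}^\circ_K$ (it is closed under subwords of the defining relators and under inversion) to propagate this to a single central involution. That derivation is precisely the bookkeeping you left open, so as written the existence half of your argument does not go through; once the triangular presentation of $\pi_1(\SO(M),\mathbf{e})$ over $\hat{\mathcal{B}}^\circ_K$ is actually certified, the rest of your proposal (apply Lemma \ref{triangularRelation}(1)(2) to the relators, restrict to $\hat{\mathcal{B}}^\circ_D$) coincides with the paper's proof.
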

		
		\begin{proof}
%
%
%
			This follows from the facts that $\mathcal{B}^\circ_{D}$
			contains a generating set of $\pi_1(M,\pt)$, and
			that $\mathcal{C}_{300L}$ contains $\tau_h(\mathcal{B}^\circ_K)$ which
			further contains a triangular generating set
			of $\pi_1(M,\pt)$ (Lemma \ref{setupData} (2)(4)).
			In fact, $\hat{\mathcal{B}}^\circ_{D}$ contains a generating set 
			of $\pi_1(\SO(M),\mathbf{e})$ which is the preimage of
			the generating set inside $\mathcal{B}^\circ_{D}$, so
			the extension is unique; $\hat{\tau}_h(\hat{\mathcal{B}}^\circ_K)$
			contains a triangular generating set of $\pi_1(\SO(M),\mathbf{e})$
			which is the preimage of the triangular generating
			set inside $\tau_h(\mathcal{B}^\circ_K)$, so the extension exists
			by Lemma \ref{triangularRelation}.
		\end{proof}
		
		By Lemma \ref{homomorphismExtension}, we conclude that the restriction of
		$\Psi_h$ to $\hat{\mathcal{B}}^\circ_{D}$ extends uniquely to be a homomorphism
			$$\Psi_h:\,\pi_1(\SO(M),\mathbf{e})\to\ocobordism_{R,\epsilon}.$$
		Descending to the abelianization, we denote the induced homomorphism as
			$$\Psi_h^{\mathtt{ab}}:\,H_1(\SO(M);\ZZ)\to\ocobordism_{R,\epsilon}.$$
		
		\begin{remark}
			The reader should compare the definition of $\Psi_h$ with the definition
			of the operator $A_T$ in \cite[Subsection 7.1]{KM-Ehrenpreis}.
			Since $A_T$ was defined with a coefficient $\frac12$, it does not work
			in integral coefficients. In fact, the argument of Good Correction Theorem
			\cite[Theorem 3.2]{KM-Ehrenpreis} essentially implies that
			$A_T$ induces an isomorphism $\psi^{\mathtt{ab}}:H_1(S;\QQ)\to \ocobordism_{R,\epsilon}(S)$
			for any closed oriented hyperbolic surface $S$, which is
			the inverse of the homomorphism $\phi:\ocobordism_{R,\epsilon}(S)\to H_1(S;\QQ)$
			given by $\phi([\gamma]_{R,\epsilon})=[\gamma]$.
			By introducing a right-handed tripod $\mathfrak{a}_0\vee\mathfrak{a}_1\vee\mathfrak{a}_2$
			in addition to segment $\mathfrak{b}$, 
			we may get rid of the coefficient $\frac12$
			and write down an expression of $\Psi_h$ with integral coefficients. 
			However, the ambiguity of the 
			choice of $\vec{n}_{\hat{g}}$ makes it necessary to pass to $\SO(M)$
			rather than to stay in $M$.
		\end{remark}

		\subsubsection{Verifications}\label{Subsubsec-verifications}
		It remains to verify that $\Psi_h^{\mathtt{ab}}$ is the inverse of $\Phi$.
		We complete this by proving that $\Psi_h^{\mathtt{ab}}$ is the pre-inverse of $\Phi$ 
		(Lemma \ref{preInverse}),
		and that $\Psi_h$ is onto
		(Lemma \ref{spanning}).
		
		\begin{lemma}\label{preInverse}	
			For any element $\hat{g}$ in $\hat{\mathcal{B}}^\circ_{D}$, 
				$$\Phi(\Psi_h(\hat{g}))\,=\,[\hat{g}]$$
			in $H_1(\SO(M);\ZZ)$. Hence the composition
			$\Phi\circ\Psi_h^{\mathtt{ab}}$ is
			the identity transformation of $H_1(\SO(M);\ZZ)$.
		\end{lemma}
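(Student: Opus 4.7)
Since conjugation acts trivially on first homology, $[\hat{\tau}_h(\hat{g})]=[\hat{g}]$ in $H_1(\SO(M);\ZZ)$, so setting $\hat{g}':=\hat{\tau}_h(\hat{g})\in\hat{\mathcal{C}}_{300L}\cap\hat{\mathcal{B}}^\circ_{R-300L}$ (Lemma \ref{setupData}), the task reduces to verifying $\Phi(\Psi_1(\hat{g}'))=[\hat{g}']$. Unpacking the defining expression of $\Psi_1(\hat{g}')$ and applying $\Phi$ termwise produces a sum of five canonical-lift classes (Definition \ref{canonicalLift}); the plan is to identify each such lift as an explicit element of $\pi_1(\SO(M),\mathbf{e})$ via sharp-element products, and then to sum.

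For the three tripod-side curves $[\mathfrak{s}_{\hat{g}'}\mathfrak{a}_{i,i+1}]$, invoke Lemma \ref{sharpTripodLemma} (in its right-handed form, Remark \ref{sharpTripodLemmaRemark}, suitably adapted to the direction convention of the $\Psi_1$-construction, in which the tripod legs end at $\pt$ with terminal directions near $\vec{t}$). This produces $\delta$-sharp elements $\hat{a}_0,\hat{a}_1,\hat{a}_2\in\pi_1(\SO(M),\mathbf{e})$ satisfying $\hat{a}_0\hat{a}_1\hat{a}_2=\hat{c}$, and identifies each $\mathfrak{a}_{i,i+1}$ (after a small framing rotation) with the sharp association of $\hat{a}_{i+2}$. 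The canonical lift of $[\mathfrak{s}_{\hat{g}'}\mathfrak{a}_{i,i+1}]$ is then represented by the product $\hat{g}'\hat{a}_{i+2}$ in $\pi_1(\SO(M),\mathbf{e})$: the two $180^\circ$ frame twists entering through the sharp associations of $\mathfrak{s}_{\hat{g}'}$ and $\mathfrak{a}_{i,i+1}$ compose to the $360^\circ$ twist of Definition \ref{canonicalLift}. Summing over $i\in\ZZ_3$ and using $[\hat{a}_0]+[\hat{a}_1]+[\hat{a}_2]=[\hat{c}]$ gives
$$\sum_{i\in\ZZ_3}\Phi\bigl([\mathfrak{s}_{\hat{g}'}\mathfrak{a}_{i,i+1}]_{R,\epsilon}\bigr)\,=\,3[\hat{g}']+[\hat{c}].$$
A parallel analysis for the remaining two curves $[\mathfrak{s}_{\hat{g}'}\mathfrak{b}]$ and $[\mathfrak{s}_{\hat{g}'}\bar{\mathfrak{b}}]$ --- in which the non-sharp segment $\mathfrak{b}$ is handled by inserting an auxiliary sharp piece via the Connection Principle (Lemma \ref{connectionPrinciple}) to identify the canonical lifts of these curves --- yields
$$\Phi\bigl([\mathfrak{s}_{\hat{g}'}\mathfrak{b}]_{R,\epsilon}\bigr)+\Phi\bigl([\mathfrak{s}_{\hat{g}'}\bar{\mathfrak{b}}]_{R,\epsilon}\bigr)\,=\,2[\hat{g}']+[\hat{c}],$$
so that the two $[\hat{c}]$ contributions cancel and $\Phi(\Psi_1(\hat{g}'))=[\hat{g}']$ as required.

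For the ``hence'' assertion, note that $\hat{\mathcal{B}}^\circ_D$ contains both lifts $\hat{g},\hat{c}\hat{g}$ of each element of a generating set of $\pi_1(M,\pt)$ (Lemma \ref{setupData}(2)); hence their images in $H_1(\SO(M);\ZZ)$ generate the whole group, including the central class $[\hat{c}]$ as a difference of lifts of a single $g$. Since $\Phi\circ\Psi_h^{\mathtt{ab}}$ and $\id$ are homomorphisms that agree on this generating set, they coincide. The central technical obstacle is the $\ZZ_2$-bookkeeping of the identification above: each canonical lift differs from a pure parallel-transport lift by a $180^\circ$-twist factor, and tracking how these $\hat{c}$-parts accumulate through the right-handed tripod (via Remark \ref{sharpTripodLemmaRemark}) and through the framing-inverted pair $\mathfrak{b},\bar{\mathfrak{b}}$ --- producing precisely two $[\hat{c}]$-terms that must cancel --- is the reason for the particular combinatorial structure of the defining expression of $\Psi_1$.
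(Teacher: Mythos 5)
Your argument is correct and follows essentially the same route as the paper: reduce via $\hat{\tau}_h$ to $\Psi_1$, apply $\Phi$ termwise by identifying each canonical lift $\Phi([\mathfrak{s}_{\hat{g}'}\mathfrak{a}_{i,i+1}]_{R,\epsilon})$, $\Phi([\mathfrak{s}_{\hat{g}'}\mathfrak{b}]_{R,\epsilon})$, $\Phi([\mathfrak{s}_{\hat{g}'}\bar{\mathfrak{b}}]_{R,\epsilon})$ with the homology classes of the products $\hat{g}'\hat{a}_{i,i+1}$, $\hat{g}'\hat{b}$, $\hat{g}'\hat{c}\hat{b}^{-1}$ (the two $180^\circ$ twists composing to the $360^\circ$ twist of the canonical lift), and then cancel the two central $[\hat{c}]$-terms using the right-handed twisted triangular relation of Remark \ref{sharpTripodLemmaRemark} and the fact that $\bar{\mathfrak{b}}$ is associated to $\hat{c}\hat{b}^{-1}$. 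The only cosmetic deviation is that the paper associates $\hat{b}$ to $\mathfrak{b}$ directly rather than inserting an auxiliary sharp piece via the Connection Principle, but the resulting bookkeeping, including the generation argument for the ``hence'' part, is the same.
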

		
		\begin{proof}
			It suffices to prove
			for any $\hat{g}\in\hat{\mathcal{C}}_{300L}\cap\hat{\mathcal{B}}^\circ_{R-300L}$
			with respect to $\Psi_1$.
			Let $\hat{a}_{i,i+1}$ in $\pi_1(\SO(M),\mathbf{e})$ be 
			the $\delta$-sharp element associated to 
			$\mathfrak{a}_{i,i+1}$ for $i\in\ZZ_3$, 
			and $\hat{b}\in\pi_1(\SO(M),\mathbf{e})$ 
			be associated to $\mathfrak{b}$ (Definition \ref{sharpElement}).
			Hence $\hat{c}\hat{b}^{-1}$ is associated to $\bar{\mathfrak{b}}$.
			It is clear from the construction of $\Phi$ that 
			the the image of 
			$[\mathfrak{s}_{\hat{g}}\mathfrak{a}_{i,i+1}]_{R,\epsilon}$
			under $\Phi$ is equal to $[\hat{g}\hat{a}_{i,i+1}]$.
			Similarly, $\Phi([\mathfrak{s}_{\hat{g}}\mathfrak{b}]_{R,\epsilon})=[\hat{g}\hat{b}]$,
			and $\Phi([\mathfrak{s}_{\hat{g}}\bar{\mathfrak{b}}]_{R,\epsilon})=[\hat{g}\hat{c}\hat{b}^{-1}]$.			
			Then in $H_1(\SO(M);\ZZ)$, $\Phi(\Psi_h(\hat{g}))$ equals
			\begin{eqnarray*}
				&&[\hat{g}\hat{a}_{01}]+[\hat{g}\hat{a}_{12}]+[\hat{g}\hat{a}_{20}]
				-[\hat{g}\hat{b}]-[\hat{g}\hat{c}\hat{b}^{-1}]\\
				&=&[\hat{g}]+[\hat{a}_{01}]+[\hat{a}_{12}]+
					[\hat{a}_{20}]-[\hat{c}]\\
				&=&[\hat{g}]+[\hat{a}_{01}\hat{a}_{12}\hat{a}_{20}\hat{c}]\\
				&=&[\hat{g}],
			\end{eqnarray*}
			where $[\hat{a}_{01}\hat{a}_{12}\hat{a}_{20}\hat{c}]=0$
			because $\mathfrak{a}_0\vee\mathfrak{a}_1\vee\mathfrak{a}_2$ is right-handed,
			(Remark \ref{sharpTripodLemmaRemark}).
			This shows $\Phi(\Psi_1(\hat{g}))=[\hat{g}]$.
			The `hence' part
			is because $\hat{\mathcal{B}}^\circ_{D}$ generates $\pi_1(\SO(M),\mathbf{e})$
			(Lemma \ref{setupData} (2)).
		\end{proof}

		\begin{lemma}\label{spanning}
			For any curve $\gamma\in\ocurves_{R,\epsilon}$, 
			the $(R,\epsilon)$-panted cobordism class
			$[\gamma]_{R,\epsilon}$ is equal to an integral
			linear combination of elements in the image
			of $\hat{\mathcal{B}}^\circ_{D}$ under $\Psi_h$.
			Hence the homomorphism 
			$\Psi_h^{\mathtt{ab}}$ surjects $\ocobordism_{R,\epsilon}$.
		\end{lemma}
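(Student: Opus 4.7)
The plan is to show that every single panted cobordism class $[\gamma]_{R,\epsilon}$ for $\gamma\in\ocurves_{R,\epsilon}$ lies in the image of $\Psi_h$, since that image is an additive subgroup generated by $\Psi_h(\hat{\mathcal{B}}^\circ_D)$, and the $(R,\epsilon)$-curve classes generate $\ocobordism_{R,\epsilon}$ (Lemma \ref{pantedCobordismGroupStructure}). By Lemma \ref{finerCurves} it suffices to treat $\gamma\in\ocurves_{R,10\delta}$. The core idea is to translate $\gamma$, which carries no preferred basepoint, into an element of $\pi_1(\SO(M),\mathbf{e})$ and then invoke the derived constructions of Subsection \ref{Subsec-derivedConstructions} to realize $[\gamma]_{R,\epsilon}$ as the specific signed sum of bigons appearing in the defining formula of $\Psi_1$.

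First I would produce, via the Connection Principle (Lemma \ref{connectionPrinciple}), an auxiliary $\partial$-framed segment $\mathfrak{t}$ from $\pt$ to a chosen point $p$ on the geodesic representative of $\gamma$, with initial direction $\delta$-close to $\vec{t}$ and with matching framings at both ends. The based concatenation $\mathfrak{t}\cdot\gamma_p\cdot\bar{\mathfrak{t}}$ defines, together with its canonical framing, an element $\hat{g}\in\pi_1(\SO(M),\mathbf{e})$ projecting to a conjugate of the free-homotopy class of $\gamma$. Because this element typically has translation length larger than $R-300L$, I would further subdivide $\mathfrak{t}$ and $\gamma$ into shorter pieces, inserting sharp returns to $\pt$ (produced again by Lemma \ref{connectionPrinciple}) at each subdivision point, so as to express $\hat{g}$ as a product of $\delta$-sharp elements $\hat{g}_1,\ldots,\hat{g}_n\in\hat{\mathcal{C}}_{300L}\cap\hat{\mathcal{B}}^\circ_{R-300L}$. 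Each $\hat{g}_i$ is then further written as a word over $\hat{\mathcal{B}}^\circ_D$ by Lemma \ref{setupData}(2) and the triangular extension provided by Lemmas \ref{triangularRelation} and \ref{homomorphismExtension}.

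Next I would construct an explicit $(R,\epsilon)$-panted surface $F$ realizing the cobordism identity $[\gamma]_{R,\epsilon}=\sum_i\Psi_1(\hat{g}_i)$ in $\ocobordism_{R,\epsilon}$. The surface $F$ is assembled from three kinds of pieces: splittings (Construction \ref{splitting}) that cut $\gamma$ into bigons along the chosen subdivision points, each having half-cuffs forming the associated segments $\mathfrak{s}_{\hat{g}_i}$; opposite-chirality rotations (Construction \ref{rotationConstruction}(1)) that absorb each defining right-handed tripod $\mathfrak{a}^{(i)}_0\vee\mathfrak{a}^{(i)}_1\vee\mathfrak{a}^{(i)}_2$ into a pair of pants with cuffs the three bigons $[\mathfrak{s}_{\hat{g}_i}\mathfrak{a}^{(i)}_{j,j+1}]$; and swappings (Construction \ref{swapping}) that reroute the resulting bigons so that all auxiliary tripod legs and the segments $\mathfrak{b}^{(i)},\bar{\mathfrak{b}}^{(i)}$ (after cancellation between two oppositely oriented copies) pair up and cancel in the resulting boundary, leaving only $\gamma$ on one side and the defining bigons of each $\Psi_1(\hat{g}_i)$ on the other.

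The main obstacle is the explicit construction of $F$ with all pieces satisfying the $(L,\delta)$-tame and $(R,\epsilon)$-nearly-regular hypotheses required by the derived constructions. In particular, the setup data of Lemma \ref{setupData}, with its hierarchy $\delta\ll\epsilon$ and $L\ll D\leq K\ll R$, was calibrated precisely so that each intermediate segment produced by the Connection Principle gives a sharp return to $\pt$ and each composite bigon lands in $\ocurves_{R,\epsilon}$; once this geometric bookkeeping is handled, the algebraic conclusion follows automatically from the homomorphism property of $\Psi_h$. The surjectivity of $\Psi_h^{\mathtt{ab}}$ asserted in the ``Hence'' clause is then immediate, since $\ocobordism_{R,\epsilon}$ is generated by single-curve classes, each of which has just been placed in the image of $\Psi_h$.
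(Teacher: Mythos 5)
Your overall strategy (base $\gamma$ at $\pt$, cut it into sharp elements, and realize the resulting relation by panted surfaces built from the derived constructions) is in the right spirit, but the final reduction to $\hat{\mathcal{B}}^\circ_{D}$ has a genuine gap. Writing a sharp element $\hat{g}_i\in\hat{\mathcal{C}}_{300L}\cap\hat{\mathcal{B}}^\circ_{R-300L}$ as a word over $\hat{\mathcal{B}}^\circ_{D}$ does not let you expand $\Psi_1(\hat{g}_i)$ as the corresponding sum of $\Psi_h$-values: $\Psi_1$ is only a set-theoretic map, and its additivity is known only along triangular relations all of whose entries are $\delta$-sharp and in the admissible length range (Lemma \ref{triangularRelation}); the partial products of an arbitrary word over $\hat{\mathcal{B}}^\circ_{D}$ are not sharp and are not even in the domain of $\Psi_1$, and the extended homomorphism of Lemma \ref{homomorphismExtension} is not known at this stage to agree with $\Psi_1$ (or with $\Psi_1\circ\hat{\tau}_h$) on long elements --- that agreement is essentially what must be proved. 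The paper bridges exactly this gap with a halving argument (its Steps 2 and 3): a long element $\hat{z}\in\hat{\mathcal{A}}$ is written as $\hat{z}=\hat{y}_-\hat{y}_+$ with both $\hat{y}_\pm$ again in $\hat{\mathcal{A}}$ and of roughly half the length, so that each splitting is a triangular relation among elements whose $\hat{\tau}_h$-conjugates are sharp and in range, and after finitely many halvings the pieces have length at most $30L\leq D$. Your proposal needs this (or an equivalent device). Note also the conjugation discrepancy: $\Psi_h$ on $\hat{\mathcal{B}}^\circ_{D}$ is by definition $\Psi_1\circ\hat{\tau}_h$, so the pieces must be produced on the pre-conjugated side (the paper builds the wing $\mathfrak{h}$ directly into the elements $\hat{x}_\pm$, which therefore lie in $\hat{\tau}_h(\hat{\mathcal{A}})$), not simply as sharp elements based at $\pt$.

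Two further points. First, the identity $[\gamma]_{R,\epsilon}=\sum_i\Psi_1(\hat{g}_i)$ for a many-piece decomposition is itself not routine: the paper establishes it only for a two-piece decomposition (its Step 1), where $\gamma$ is bisected into $[\mathfrak{s}_-\mathfrak{s}_+]$ and the two elements $\hat{x}_\pm$ are given symmetric defining data (the tripod together with its flip, and $\mathfrak{b}$ together with $\bar{\mathfrak{b}}^*$), so that the five splitting identities sum to $3[\gamma]_{R,\epsilon}-2[\gamma]_{R,\epsilon}=[\gamma]_{R,\epsilon}$; with $n$ pieces you must explain how all the auxiliary bigons involving the $n$ tripods and the segments $\mathfrak{b}^{(i)},\bar{\mathfrak{b}}^{(i)}$ cancel, which your sketch does not do, and the natural way to arrange such cancellation is again the homomorphism property on sharp triples rather than one explicit surface. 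Second, ``the canonical framing'' of $\mathfrak{t}\cdot\gamma_p\cdot\bar{\mathfrak{t}}$ is ambiguous: the two lifts of this based loop to $\pi_1(\SO(M),\mathbf{e})$ differ by the central element $\hat{c}$, whose class in $H_1(\SO(M);\ZZ)$ is nontrivial, so choosing the wrong lift makes the claimed identity false by a $2$-torsion error; controlling this $\ZZ_2$ bookkeeping (via associated $\partial$-framed segments and framing flips) is precisely what the explicit choices in the paper's Step 1 accomplish, and it cannot be dismissed as routine calibration of constants.
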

		
		\begin{proof}
			The `hence' part follows immediately from the main statement since
			$\hat{\mathcal{B}}^\circ_{D}$ generates
			$\pi_1(\SO(M),\mathbf{e})$ (Lemma \ref{setupData} (2)).
			It remains to prove the main statement.
			
			By Lemma \ref{finerCurves}, it suffices to
			assume $\gamma\in\ocurves_{R,10\delta}$.
			Let $\mathcal{A}$ denote the subset of $\pi_1(M,\pt)$
			consisting of elements $u$ such that
			the length of $u$ is at most $\frac{R}2+10L$,
			and that the initial and terminal directions 
			of $u$ is $(10\delta)$-close to 
			$-\vec{t}_\ini(h)$ and $\vec{t}_\ini(h)$,
			respectively. Note that $\tau_h(\mathcal{A})$ is contained in $\mathcal{C}_{300L}\cap\mathcal{B}^\circ_{R-300L}$.
			Let $\hat{\mathcal{A}}$ denote the preimage of $\mathcal{A}$ in $\pi_1(\SO(M),\mathbf{e})$.
			
			\medskip\noindent\textbf{Step 1}. We find $\hat{x}_\pm\in\hat{\tau}_h(\hat{\mathcal{A}})$,
			such that
				$$[\gamma]_{R,\epsilon}=\Psi_1(\hat{x}_-)+\Psi_1(\hat{x}_+).$$
			
			Since $\gamma\in\ocurves_{R,10\delta}$, we may bisect $\gamma$
			into an $(\frac{R}2,5\delta)$-nearly regular bigon $[\mathfrak{s}_-\mathfrak{s}_+]$
			by interpolating a pair of antipodal points with suitably chosen normal vectors.
			By Lemmas \ref{setupData} (3) and \ref{sharpCK},
			we can lift $h$ to be a $\delta$-sharp element $\hat{h}\in\pi_1(\SO(M),\mathbf{e})$
			with an associated $\partial$-framed segment $\mathfrak{h}$.
			By the Connection Principle (Lemma \ref{connectionPrinciple})
			there are oriented $\partial$-framed segments  $\mathfrak{u}_\pm$
			from $p_\ter(\mathfrak{s}_\pm)$ to $\pt$, satisfying the following:
			\begin{itemize}
				\item The length and phase of $\mathfrak{u}_\pm$ are $\delta$-close
				to $2L+I(\frac\pi2)+1$ and $0$ respectively.
				The initial direction $\mathfrak{u}_\pm$ is $\delta$-close to 
				$\pm\vec{n}_\ter(\mathfrak{s}_\pm)\times\vec{t}_\ter(\mathfrak{s}_\pm)$,
				and the initial framing of $\mathfrak{u}_\pm$ is $\delta$-close
				to $\vec{n}_\ter(\mathfrak{s}_\pm)$. The terminal direction of
				$\mathfrak{u}_\pm$ is $\delta$-close to $\vec{t}_\ini(\mathfrak{h})$, 
				and the terminal framing of $\mathfrak{u}_\pm$ is
				$\delta$-close to $\vec{n}_\ini(\mathfrak{h})$.
			\end{itemize}
			See Figure \ref{figOnto1}.
			Let $\mathfrak{x}_\pm=\bar{\mathfrak{w}}_\mp\mathfrak{s}_\pm\mathfrak{w}_\pm$,
			where $\mathfrak{w}_\pm=\mathfrak{u}_\pm\mathfrak{h}$, be the reduced concatenation.
			Then $\mathfrak{x}_+$ and $\mathfrak{x}^*_-$ are associated to
			$(10\delta)$-sharp elements $\hat{x}_+$ and $\hat{x}_-$ 
			in $\hat{\tau}_h(\hat{\mathcal{A}})$.
			
			\begin{figure}[htb]
				\centering
				\includegraphics{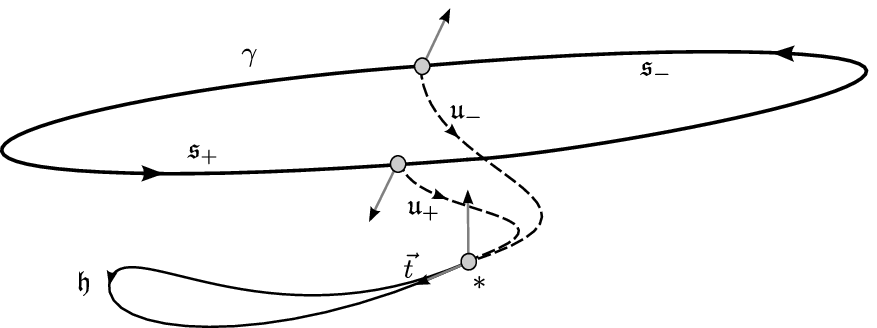}
				\caption{}\label{figOnto1}
			\end{figure}

			Choose a right-handed tripod $\mathfrak{a}_0\vee\mathfrak{a}_1\vee\mathfrak{a}_2$
			and a $\partial$-framed segment $\mathfrak{b}$ for defining $\Psi_1(\hat{x}_+)$, and 
			choose $\mathfrak{a}^*_0\vee\mathfrak{a}^*_{-1}\vee\mathfrak{a}^*_{-2}$
			with the indices understood in $\ZZ_3$ and $\bar{\mathfrak{b}}^*$ for defining 
			$\Psi_1(\hat{x}_-)$.
			Note that for $i\in\ZZ_3$,
			\begin{eqnarray*}
				&&[\mathfrak{x}_+\mathfrak{a}_{i,i+1}]_{R,\epsilon}+
				[\mathfrak{x}_-^*\mathfrak{a}^*_{-i,-i-1}]_{R,\epsilon}\\				
				&=&[\mathfrak{x}_+\mathfrak{a}_{i,i+1}]_{R,\epsilon}+
				[\mathfrak{x}_-\overline{\mathfrak{a}_{i,i+1}}]_{R,\epsilon}\\
				&=&[\mathfrak{s}_+(\mathfrak{w}_+\mathfrak{a}_{i,i+1}\bar{\mathfrak{w}}_-)]_{R,\epsilon}+
				[\mathfrak{s}_-\overline{(\mathfrak{w}_+\mathfrak{a}_{i,i+1}\bar{\mathfrak{w}}_-)}]_{R,\epsilon}\\
				&=&[\gamma]_{R,\epsilon},
			\end{eqnarray*}
			where the last equality follows from splitting $\mathfrak{s}_+\mathfrak{s}_-$ (Construction \ref{splitting}).
			Thus
				$$[\mathfrak{x}_+\mathfrak{a}_{01}]_{R,\epsilon}+
				[\mathfrak{x}_-^*\mathfrak{a}^*_{02}]_{R,\epsilon}=
				[\gamma]_{R,\epsilon};$$
				$$[\mathfrak{x}_+\mathfrak{a}_{12}]_{R,\epsilon}+
				[\mathfrak{x}_-^*\mathfrak{a}^*_{21}]_{R,\epsilon}=
				[\gamma]_{R,\epsilon};$$
				$$[\mathfrak{x}_+\mathfrak{a}_{20}]_{R,\epsilon}+
				[\mathfrak{x}_-^*\mathfrak{a}^*_{10}]_{R,\epsilon}=
				[\gamma]_{R,\epsilon};$$
			Similarly,
				$$-[\mathfrak{x}_+\mathfrak{b}]_{R,\epsilon}
				-[\mathfrak{x}_-^*\bar{\mathfrak{b}}^*]_{R,\epsilon}=
				-[\gamma]_{R,\epsilon};$$
				$$-[\mathfrak{x}_+\bar{\mathfrak{b}}]_{R,\epsilon}
				-[\mathfrak{x}_-^*\mathfrak{b}^*]_{R,\epsilon}=
				-[\gamma]_{R,\epsilon}.$$
			Summing up the five equations above shows that
			$\Psi_1(\hat{x}_+)+\Psi_1(\hat{x}_-)=[\gamma]_{R,\epsilon}.$
				
			\medskip\noindent\textbf{Step 2}. For any $\hat{z}\in\hat{\mathcal{A}}$
			of length at least $30L$
			we find $\hat{y}_\pm\in\hat{\mathcal{A}}$, such that
				$$\hat{z}=\hat{y}_-\hat{y}_+$$
			and that for the images $y_\pm,z\in\pi_1(M,\pt)$ of $\hat{y}_\pm,\hat{z}$
			respectively, both $y_\pm$ have length
			less than $\frac12\ell(z)+10L$.
			
			In fact, we may write the pointed geodesic loop as the concatenation
			of to geodesic segments $\zeta_-\zeta_+$ joint at the midpoint of $z$.
			By the Connection Principle (Lemma \ref{connectionPrinciple}),
			applied to the unframed case simply by ignoring the framings,
			there is a path $\upsilon$ from the midpoint of $z$ to $\pt$, satisfying the
			following: the length of $\upsilon$ is $\delta$-close to $2L+I(\frac{\pi}2)$;
			the initial direction of $\upsilon$ is $\delta$-closely perpendicular to $z$;
			and the terminal direction of $\upsilon$ is $\delta$-close to $\vec{t}$.
			Let $y_-$ and $y_+$ in $\pi_1(M,\pt)$ be $\zeta_-\upsilon$ and $\bar{\upsilon}\zeta_+$,
			respectively. Since $z=y_-y_+$ in $\pi_1(M,\pt)$, we may choose lifts $\hat{y}_\pm$
			of $y_\pm$ in $\pi_1(\SO(M),\mathbf{e})$ so that $\hat{z}=\hat{y}_-\hat{y}_+$.
			It is straightforward to see that $\hat{y}_\pm$ are as desired.
			
			\medskip\noindent\textbf{Step 3}. We complete the proof of the main statement.
			As mentioned above, we may assume that $\gamma\in\ocurves_{R,10\delta}$. By
			Step 1, $[\gamma]_{R,\epsilon}$ can be written as a sum of elements
			in the image of $\hat{\mathcal{A}}$ under $\Psi_h$. By 
			iterately applying Step 2,
			any element in $\Psi_h(\hat{\mathcal{A}})$ can be replaced with 
			a sum of elements of the form $\Psi_h(\hat{y})$ where 
			the image of $\hat{y}\in\hat{\mathcal{A}}$ in $\pi_1(M,\pt)$ 
			has length at most $30L$. In particular, $\hat{y}\in\hat{\mathcal{B}}_{D}$
			since $D$ is assumed to be at least $30L$ (Lemma \ref{setupData} (2)).
			Thus $[\gamma]_{R,\epsilon}$ is equal to an integral
			linear combination of elements in the image
			of $\hat{\mathcal{B}}_{D}$ under $\Psi_h$.
			This completes the proof.
		\end{proof}
		
		\subsection{Proof of Theorem \ref{theoremPantedCobordism}}\label{Subsec-proofOfTheoremPantedCobordism}
		In summary, given any oriented closed hyperbolic $3$-manifold $M$,
		and any universally small positive constant $\epsilon$,
		with the positive constant $R(\epsilon,M)$
		guaranteed by Lemma \ref{setupData} (4), suppose that
		$R$ is a positive constant greater than $R(\epsilon,M)$.
		Then the homomorphism
			$$\Phi:\,\ocobordism_{R,\epsilon}(M)\to H_1(\SO(M),\mathbf{e})$$
		constructed in Subsection \ref{Subsec-Phi} is a canonically defined
		isomorphism (Lemma \ref{wellDefinedPhi} and Subsection \ref{Subsec-theInverseOfPhi}).
		By Lemma \ref{canonicallyDefinedPhi}, for all $[L]_{R,\epsilon}\in\ocobordism_{R,\epsilon}(M)$, 
		the image of $\Phi([L]_{R,\epsilon})$ under the bundle projection is the homology class $[L]\in H_1(M;\ZZ)$.
		This completes the proof of Theorem \ref{theoremPantedCobordism}.
	
\section{Pantifying second homology classes}\label{Sec-pantifyingSecondHomologyClasses}

	In this section, we show that second homology classes of an oriented closed
	hyperbolic $3$-manifold $M$ can be represented by $(R,\epsilon)$-panted
	surfaces, as precisely stated in Theorem \ref{secondHomologyClass}.
	This will imply the absolute case
	of Theorem \ref{homologyViaPants} (1), namely,
	when the collection of curves $\mathcal{L}\subset\ocurves_{R,\epsilon}$
	is empty (Subsection \ref{Subsec-proofOfHomologyViaPants}). 
	Roughly speaking, Theorem \ref{secondHomologyClass} follows
	from inspecting the homology classes of
	the $(R,\epsilon)$-panted
	surfaces constructed in the proof of Theorem \ref{theoremPantedCobordism},
	so our argument and notations will heavily rely on 
	Section \ref{Sec-pantedCobordismGroup}.
	In particular, throughout this section, it will
	suffice to assume $\epsilon$ to be a universally
	small positive constant
	and $R$ to be a positive constant
	greater than the constant $R(\epsilon,M)$
	as guaranteed by Lemma \ref{setupData} (4).
	
	\begin{theorem}\label{secondHomologyClass}
		Let $M$ be an oriented closed hyperbolic $3$-manifold.
		For any small positive $\epsilon$ 
		and sufficiently large positive $R$ depending
		on $M$ and $\epsilon$, the following holds. 
		For any homology class $\alpha\in H_2(M;\,\ZZ)$,
		there exists an (oriented) closed $(R,\epsilon)$-panted subsurface 
		$j:F\looparrowright M$ so that $j_*[F]$ equals $\alpha$.
	\end{theorem}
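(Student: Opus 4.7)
The strategy is to pantify an integral $2$-cycle representing $\alpha$ in a triangular presentation $2$-complex of $\pi_1(M,\pt)$, in close parallel with the construction of $\Psi_h$ in Section \ref{Sec-pantedCobordismGroup}. Fix the setup data $(\mathcal{B}^\circ_K,\mathcal{B}^\circ_D,\tau_h)$ from Lemma \ref{setupData}, so that $\mathcal{S}=\tau_h(\mathcal{B}^\circ_K)$ contains a triangular generating set of $\pi_1(M,\pt)$ together with a finite set $\mathcal{R}$ of triangular relators $g_0^r g_1^r g_2^r=\id$. Let $P=P(\mathcal{S},\mathcal{R})$ be the associated presentation $2$-complex and $f:P\to M$ the tautological map sending each edge to the corresponding pointed geodesic loop and each $2$-cell $\sigma_r$ to the geodesic $2$-simplex at $\pt$ spanned by $g_0^r,g_1^r,g_2^r$. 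Since $M$ is aspherical, $f$ classifies $\pi_1$ and induces a surjection $f_*:H_2(P;\ZZ)\twoheadrightarrow H_2(\pi_1 M;\ZZ)=H_2(M;\ZZ)$; hence one may write $\alpha=f_*([c_\alpha])$ for some integral $2$-cycle $c_\alpha=\sum_{r\in\mathcal{R}}n_r\,\sigma_r$ in $P$.

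For each $g\in\mathcal{S}\cup\mathcal{S}^{-1}$, fix once and for all a reference $(R,\epsilon)$-multicurve $L_g$ representing $\Psi_h(\hat{g})\in\ocobordism_{R,\epsilon}$, together with an $(R,\epsilon)$-panted cobordism $E_g$ bounded by $L_g\sqcup L_{g^{-1}}$ provided by Lemma \ref{triangularRelation}~(1). For each relator $r$, the proof of Lemma \ref{triangularRelation}~(2) produces an explicit $(R,\epsilon)$-panted surface $F_r$ with boundary $L_{g_0^r}\sqcup L_{g_1^r}\sqcup L_{g_2^r}$; moreover, this $F_r$ is a constructible extension, in the sense of Definition \ref{constructibleExtensions}, of a partially-$\Delta$ space whose distinguished $2$-simplex is the geodesic triangle $f(\sigma_r)\subset M$. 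Assemble a closed $(R,\epsilon)$-panted subsurface $j:F\looparrowright M$ by taking $|n_r|$ copies of each $F_r$, reversing orientation if $n_r<0$, and pairing up boundary components: an occurrence of $L_g$ is glued either directly to an oppositely oriented copy of itself, or, through an inserted copy of $E_g$ with suitable orientation, to an occurrence of $L_{g^{-1}}$. The cycle condition $\partial c_\alpha=0$ in $C_1(P;\ZZ)$ is exactly what ensures that this pairing can be carried out consistently over all boundary components.

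The remaining, and main, task is to verify $j_*[F]=\alpha$ in $H_2(M;\ZZ)$. The key claim is that each $F_r$, viewed as a relative $2$-chain in $M$ with prescribed boundary, is homologous to the geodesic $2$-simplex $f(\sigma_r)$ modulo $1$-chains in $M$ cobounding the discrepancies $L_{g_i^r}-g_i^r$. This is an instance of the Spine Principle (Lemma \ref{spinePrinciple}): the construction of $F_r$ extends a partially-$\Delta$ space containing $f(\sigma_r)$ only by cells of dimension at most one, so the relative $H_2$-class of $F_r$ agrees with that of $f(\sigma_r)$ up to $1$-dimensional correction. Similarly, each inserted $E_g$ is homologically trivial modulo $1$-chains, as $E_g$ is itself constructible from a partially-$\Delta$ space of dimension at most one. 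Summing over $r$, and observing that the cycle condition $\partial c_\alpha=0$ forces the aggregate $1$-dimensional correction to assemble into a boundary of $1$-chains in $M$, one concludes $j_*[F]=\sum_r n_r\,f_*[\sigma_r]=f_*([c_\alpha])=\alpha$.

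The main obstacle is precisely this final homological bookkeeping: one must carefully track the $1$-chains introduced by every application of the Connection Principle (Lemma \ref{connectionPrinciple}) during the constructions of $F_r$ and $E_g$ — that is, by rotation, antirotation, swapping, and splitting — and show that the sum of these contributions is null-homologous in $M$ after the global gluing dictated by $c_\alpha$. A secondary technical issue is the interplay between $\bar{L}_g$ and $L_{g^{-1}}$, which a priori differ by a $\ZZ_2$-torsion class corresponding to the central element $\hat{c}\in\pi_1(\SO(M),\mathbf{e})$; the insertion of copies of $E_g$ is exactly what resolves this sign ambiguity and makes the gluing compatible with the triangular presentation.
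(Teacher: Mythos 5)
Your overall strategy is the paper's own: pantify a triangular presentation complex of $\pi_1(M,\pt)$, replacing each edge $g$ by the multicurve from the definition of $\Psi_h$ and each triangular $2$-cell by the panted surface of Lemma \ref{triangularRelation}, and control the homology class via the Spine Principle (the paper packages this as the panted complex $\mathcal{K}$ of Lemma \ref{homologousSubstitution} together with Lemma \ref{pantedRepresentative}, whereas you glue copies of the $F_r$ by hand according to a chosen cycle $c_\alpha$, inserting the cobordisms $E_g$; incidentally the paper avoids the $E_g$ insertions altogether by taking $L_{g^{-1}}=\overline{L(g)}$, which is a legitimate representative of $\Psi_h(\hat{g}^{-1})$ for the flipped defining data). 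But there are two genuine gaps. First, Lemma \ref{triangularRelation}~(2) applies only when the chosen lifts satisfy $\hat{g}^r_0\hat{g}^r_1\hat{g}^r_2=\id$ in $\pi_1(\SO(M),\mathbf{e})$; if a relator lifts instead to $\hat{c}$, then $\Psi_h(\hat{g}^r_0)+\Psi_h(\hat{g}^r_1)+\Psi_h(\hat{g}^r_2)$ is the nontrivial torsion class of $\ocobordism_{R,\epsilon}$ and the surface $F_r$ simply does not exist. You therefore must choose the lifts $\hat{g}$ coherently for all generators at once, i.e.\ fix a lift of the presentation map to $\SO(M)$ (the paper's Lemma \ref{liftToSOM}, available because the central extension splits); your proposal never does this, and the $\ZZ_2$ issue you do mention (between $\bar{L}_g$ and $L_{g^{-1}}$) is a different, and milder, one.

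Second, and more seriously, your key claim is not a consequence of the Spine Principle alone, and you acknowledge leaving it unproved (``the main obstacle''), yet it is exactly where the theorem's difficulty lies. Lemma \ref{spinePrinciple} only says the recipe space $X'$ is, up to homotopy, the starting partially-$\Delta$ space with $1$-cells attached. If the starting space carries only the geodesic simplex $f(\sigma_r)$ and the relevant segments, then in $H_1$ of the relevant subspaces the boundary $\sum_i[L_{g^r_i}]$ of $F_r$ differs from $\sum_i[g^r_i]$ by the classes of the auxiliary loops $a_{01}+a_{12}+a_{20}$ coming from the defining tripods, which are not killed in $X'$; so no correction $2$-chain cobounding $L_{g^r_i}-g^r_i$ exists inside $X'$, and comparing the two relative classes in $M$ at large leaves an ambiguity by an arbitrary element of $H_2(M;\ZZ)$ unless the correction chains are pinned down and, crucially, chosen once per generator and reused in every relator so that the cancellation forced by $\partial c_\alpha=0$ is real. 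The paper resolves precisely this by enlarging the starting space to $X(g)$, which contains an extra $2$-simplex spanning $a_{01}a_{12}a_{20}$ and hence forces $[L(g)]=[g]$ in $H_1(X(g))$, and then running the diagram chase of Lemma \ref{phipsi} (using injectivity of $\partial_*$, i.e.\ vanishing of $H_2$ of the recipe spaces) to get $\phi''\circ\psi''=\iota''$; the same mechanism is what shows your inserted $E_g$'s contribute nothing. Your outline identifies the right ingredients, but the deferred bookkeeping is the actual content of the proof and must be supplied along these lines.
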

	
	\begin{remark}
		There is a canonical free resolution of the integral module $\ocobordism_{R,\epsilon}$ given
		by
			$$0\longrightarrow N\longrightarrow \ZZ\,\opants_{R,\epsilon}\stackrel{\partial}\longrightarrow
			\ZZ\,\ocurves_{R,\epsilon}\longrightarrow \ocobordism_{R,\epsilon}\longrightarrow 0,$$
		where $N$ denotes the kernel of of the boundary homomorphism.
		There is also a natural homomorphism $N\to H_2(M;\,\ZZ)$ since the natural homomorphism
		$\ZZ\,\opants_{R,\epsilon}\to H_2(M,|\ocurves_{R,\epsilon}|;\ZZ)$ uniquely
		lifts to $H_2(M;\ZZ)$ restricted to $N$. Therefore, Theorem \ref{secondHomologyClass} 
		asserts that $N$ surjects $H_2(M;\ZZ)$. In this sense,
		it reveals certain finer structure
		of $(R,\epsilon)$-panted cobordisms
		in addition to Theorem \ref{theoremPantedCobordism}.
	\end{remark}
	
	The key idea of the proof of Theorem \ref{secondHomologyClass}
	is to apply a process called \emph{homologous substitution}.
	To illustrate how it works, suppose that $S$ is a connected
	oriented closed surface
	and that $f:S\to M$ is a map so that $f_*[S]$ equals $\alpha$.
	To replace $f:S\to M$ with a homologous $(R,\epsilon)$-panted subsurface
	$j:F\looparrowright M$, we endow $S$ with a triangulation with a single
	vertex $\pt$, and assume that $f$ has been homotoped so that $\pt$ is sent
	to a chosen basepoint of $M$, and that the $1$-simplices of $S$ are long 
	geodesic segments, and that the $2$-simplices of $S$ are totally geodesic
	in $M$. Following the construction of $\Psi$ in Subsection \ref{Subsec-theInverseOfPhi},
	we may replace any (oriented) $1$-simplex $e$ with an $(R,\epsilon)$-multicurve
	$L(e)$ as in the definition of $\Psi$, without passing to the $(R,\epsilon)$-panted
	cobordism class. By convention, we define $L(\bar{e})$ to be
	$\overline{L(e)}$. Moreover, we may replace any $2$-simplex $\sigma$ with an $(R,\epsilon)$-panted
	surface $F(\sigma)$, so that if $\partial\sigma$ is a cycle $e_0,e_1,e_2$, $F(\sigma)$
	will be bounded by $L(e_0)\sqcup L(e_1)\sqcup L(e_2)$.
	The $(R,\epsilon)$-panted surface $F(\sigma)$ can be obtained explicitly by
	the constructions in Lemmas \ref{wellDefinition}, \ref{triangularRelation}.
	Thus the $(R,\epsilon)$-panted surface $F$ can be obtained by naturally gluing 
	the $(R,\epsilon)$-panted surfaces $F(\sigma)$
	along the $(R,\epsilon)$-multicurves $L(e)$ on their boundary, according to the triangulation
	structure of $S$. Intuitively, it should follow from the Spine Principle (Lemma \ref{spinePrinciple})
	that there are natural isomorphisms $H_2(F(\sigma),\partial F(\sigma);\ZZ)\cong H_2(\sigma,\partial\sigma;\ZZ)$.
	Then a Mayer--Vietoris argument will imply that there is a natural
	isomorphism $H_2(F;\ZZ)\cong H_2(S;\ZZ)$ that commutes with the homomorphisms
	$f_*:H_2(S;\ZZ)\to H_2(M;\ZZ)$ and $j_*:H_2(F;\ZZ)\to H_2(M;\ZZ)$.
	In other words, the $(R,\epsilon)$-panted surface $F$ is homologous to $S$ in $M$,
	and hence represents $\alpha$ as desired. In practice, it is actually more convenient not to specify the homology class
	$\alpha\in H_2(M;\ZZ)$. Instead, we consider a triangular presentation complex
	$f:(K,\pt)\to (M,\pt)$ (cf.~Subsection \ref{Subsec-triangularPresentationComplexes})
	of $\pi_1(M,\pt)$ rather than the triangulated $(S,\pt)\to (M,\pt)$.
	Then a similar process of homologous subsitution will yield an $(R,\epsilon)$-panted
	complex $j:\mathcal{K}\looparrowright M$ 
	(a $2$-complex obtained by gluing $(R,\epsilon)$-panted surfaces
	along $(R,\epsilon)$-multicurves on the boundary, cf.~Subsection \ref{Subsec-pantedComplexes}).
	In general, there will be a natural epimorphism $H_2(\mathcal{K};\ZZ)\to H_2(K;\ZZ)$
	that commutes with $f_*$ and $j_*$. Because of
	the easy observation that $f_*:H_2(K;\ZZ)\to H_2(M;\ZZ)$ is onto,
	$j_*:H_2(\mathcal{K};\ZZ)\to H_2(M;\ZZ)$ will also be onto.
	In other words, any homology class $\alpha\in H_2(M;\ZZ)$ comes
	from some $\tilde\alpha\in H_2(\mathcal{K};\ZZ)$, so it
	is represented by some $(R,\epsilon)$-panted surface 
	obtained by a composition $F\to \mathcal{K}\to M$.
		
	The rest of this section is devoted to the 
	proof of Theorem \ref{secondHomologyClass}.
	In Subsections \ref{Subsec-triangularPresentationComplexes}, \ref{Subsec-pantedComplexes},
	we introduce some notations that we will adopt, namely, triangular presentation complexes
	and $(R,\epsilon)$-panted complexes;
	Subsection \ref{Subsec-homologousSubstitution} is the homologous substitution
	argument, which is core	of the proof;
	Subsection \ref{Subsec-proofOfTheoremSecondHomologyClass}
	completes the proof of Theorem \ref{secondHomologyClass}
	by a brief summary.
	
	Let $M$ be an oriented closed $3$-manifold. 
	The bundle of special orthonormal frames of $M$ will be denoted as $\SO(M)$.
	Fix an orthonormal frame $\mathbf{e}\,=\,(\vec{t},\vec{n},\vec{t}\times\vec{n})$
	at a fixed basepoint $\pt$ of $M$, so the special orthonormal bundle $\SO(M)$
	has a preferred basepoint $\mathbf{e}$.
	
	\subsection{Triangular presentation complexes}\label{Subsec-triangularPresentationComplexes}
		Recall that a \emph{presentation} of a group $G$
		is a pair $(\mathcal{S},\mathcal{R})$, where $\mathcal{S}$ 
		is a set of independent letters and $\mathcal{R}$ is a set of words
		in $x$ and $x^{-1}$ for $x\in\mathcal{S}$, such that
		the canonical quotient $\langle\mathcal{S}|\mathcal{R}\rangle$
		is isomorphic to $G$.
		For a presentation of $G$,
		there is a naturally associated CW $2$-complex $K$ with a single
		vertex $\pt$ as the basepoint, called the 
		\emph{presentation complex}, such that the $1$-cells are in correspondence with the
		generators, and the $2$-cells are in correspondence with the relators.
		Hence the fundamental group $\pi_1(K,\pt)$ is naturally isomorphic to $G$.
		In fact, if $(X,\pt)$ is a pointed topological space, any homomorphism
		$\pi_1(K,\pt)\to \pi_1(X,\pt)$ can be realized by a map $(K,\pt)\to (X,\pt)$.
		Combinatorially, any $2$-cell of $K$
		has polygonal boundary,
		and the number of edges is equal to the word length of the corresponding
		relator. A presentation is said to be \emph{finite} if the sets of
		generators and relators are finite. 
		
		\begin{lemma}\label{homologyOnto}
			If $(K,\pt)$ is a presentation complex of $(M,\pt)$, then the presentation map
				$$f:\,(K,\pt)\to (M,\pt)$$
			which induces the natural isomorphism on $\pi_1$ induces an epimorphism
				$$f_*:\,H_2(K;\,\ZZ)\to H_2(M;\,\ZZ).$$			
		\end{lemma}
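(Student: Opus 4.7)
The plan is to exploit the fact that a closed hyperbolic $3$-manifold is aspherical: its universal cover $\Hyp$ is contractible, so $M$ is a $K(\pi_1(M),\pt)$. Consequently, $H_2(M;\ZZ)$ coincides with the group homology $H_2(\pi_1(M);\ZZ)$, and the standard comparison between a presentation $2$-complex and an Eilenberg--MacLane space will give the desired surjection.

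More concretely, I would proceed as follows. Starting from $K$, build an Eilenberg--MacLane space $K(\pi_1(M),\pt)$ by inductively attaching cells of dimension $\geq 3$ to $K$ so as to kill all higher homotopy groups, in the usual manner. Call the resulting CW complex $\tilde{K}$, and let $\iota:K\hookrightarrow \tilde{K}$ denote the inclusion. Since the cellular chain complex of $\tilde{K}$ agrees with that of $K$ in degrees $\leq 2$, and the only new cellular boundary operator affecting $C_2$ comes from the attaching maps of $3$-cells, the inclusion $\iota$ induces a surjection
\[
\iota_*:\,H_2(K;\ZZ)\,\twoheadrightarrow\,H_2(\tilde{K};\ZZ).
\]
On the other hand, $M$ is aspherical with the same fundamental group as $\tilde{K}$, and the map $f:K\to M$ realizes the canonical identification $\pi_1(K,\pt)\cong\pi_1(M,\pt)$ by hypothesis. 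The standard obstruction theory for maps into aspherical CW complexes shows that there exists a map $\tilde{f}:\tilde{K}\to M$, unique up to homotopy, extending $f$ along $\iota$ and inducing the same isomorphism on $\pi_1$; moreover $\tilde{f}$ is a weak (hence homotopy) equivalence between CW complexes.

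Combining these two observations, the composition $\tilde{f}_*\circ\iota_*$ is surjective, and it equals $f_*$ up to the homotopy equivalence $\tilde{f}$. Hence
\[
f_*:\,H_2(K;\ZZ)\,\twoheadrightarrow\,H_2(M;\ZZ),
\]
which is the claim. No serious obstacle is expected: both ingredients are standard, and the only point one must verify carefully is that attaching $3$-cells only quotients $H_2$ (rather than introducing new classes), which is immediate from the cellular chain complex. The asphericity of $M$, provided by its hyperbolic structure, is what makes the argument go through.
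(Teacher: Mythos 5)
Your proposal is correct and follows essentially the same route as the paper: attach cells of dimension at least $3$ to $K$ to obtain a $K(\pi_1(M),1)$, extend $f$ to a homotopy equivalence with the aspherical manifold $M$, and observe that the inclusion of $K$ into the enlarged complex is surjective on $H_2$. The extra detail you supply (the cellular chain argument and the obstruction-theoretic extension) merely fleshes out what the paper leaves as a sketch.
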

	
		\begin{proof}
			This follows from the fact that
			$M$ is aspherical. In fact, we may obtain an Eilenberg--MacLane space
			$K'\simeq K(\pi_1(M,\pt),1)$ by attaching to $K$ cells of dimension
			greater than $2$, and we may
			extend $f$ to obtain a homotopy equivalence $f':(K',\pt)\to (M,\pt)$.
			This implies the surjectivity
			of the induced homomorphism $f_*$ on the second homology.
		\end{proof}

		\begin{lemma}\label{liftToSOM}
			If $(K,\pt)$ is a presentation complex of $(M,\pt)$, then the presentation map
				$$f:\,(K,\pt)\to (M,\pt)$$
			lifts to a map
				$$\hat{f}:\,(K,\pt)\to (\SO(M),\mathbf{e}).$$		
		\end{lemma}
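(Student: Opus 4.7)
The plan is to exploit the triviality of the special orthonormal frame bundle $\SO(M) \to M$, which was already pointed out in the passage preceding Definition \ref{canonicalLift}: because the tangent bundle of any closed orientable $3$-manifold is parallelizable, $\SO(M)$ is a trivial $\SO(3)$-principal bundle over $M$. In particular, the bundle admits a global section.

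Concretely, I will first fix any trivialization $\Theta: \SO(M) \to M \times \SO(3)$. This trivialization corresponds to a global section $s_0: M \to \SO(M)$ defined by $s_0 = \Theta^{-1}(\cdot, I)$. In general $s_0(\pt)$ will be some frame $\mathbf{e}_0$ at $\pt$ differing from the chosen basepoint $\mathbf{e}$. Since the principal $\SO(3)$-action is simply transitive on each fiber, there is a unique $g_0 \in \SO(3)$ with $\mathbf{e}_0 \cdot g_0 = \mathbf{e}$, and post-composing $\Theta$ with right multiplication by $g_0^{-1}$ produces an adjusted trivialization whose associated section $s$ satisfies $s(\pt) = \mathbf{e}$.

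With such an $s$ in hand, I will simply take $\hat{f} = s \circ f$. Continuity is immediate from continuity of $s$ and $f$; the basepoint condition $\hat{f}(\pt) = s(f(\pt)) = s(\pt) = \mathbf{e}$ is built into the construction; and composing $\hat{f}$ with the bundle projection $\SO(M) \to M$ returns $f$, so $\hat{f}$ is genuinely a lift. No obstruction theory is invoked because the bundle is trivial, and nothing is used about the cellular structure of $K$ beyond its being a topological space mapping to $M$.

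There is no serious obstacle here. The only potentially subtle point, namely that the lift could be blocked by the class in $H^2(K;\pi_1(\SO(3)))=H^2(K;\ZZ_2)$ that measures the non-triviality of the pulled-back bundle, is already circumvented by the parallelizability of $M$: the pulled-back bundle $f^*\SO(M)$ is trivial since $\SO(M)$ itself is, so the relevant obstruction class vanishes tautologically.
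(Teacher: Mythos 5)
Your proof is correct, and it takes a slightly different route from the paper's. The paper deduces the lemma from the group-theoretic fact that the extension $1\to\pi_1(\SO(3),I)\to\pi_1(\SO(M),\mathbf{e})\to\pi_1(M,\pt)\to1$ splits: a splitting homomorphism lets one lift $f$ cell by cell over the presentation complex, and the paper's proof also records (using $\pi_2(\SO(M),\mathbf{e})=0$) that the homotopy classes of such lifts are classified by the splittings, i.e.\ by $H^1(M;\ZZ_2)$ --- a piece of information it reuses elsewhere (cf.\ the remark after Theorem \ref{theoremPantedCobordism}). You instead invoke the triviality of the principal bundle $\SO(M)\to M$ directly, adjust a global section by the free transitive fiberwise $\SO(3)$-action so that it hits $\mathbf{e}$ at $\pt$, and set $\hat f=s\circ f$. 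Both arguments ultimately rest on the parallelizability of the closed orientable $3$-manifold $M$ (the splitting in the paper's proof comes from exactly such a section), but yours is more economical and more general: it produces a basepointed lift for any basepointed map into $M$, making no use of the cell structure of $K$, and your observation that $f^*\SO(M)$ is trivial correctly disposes of the only possible obstruction in $H^2(K;\ZZ_2)$. What you do not get, and what the paper's phrasing is set up to provide, is the enumeration of all lifts up to homotopy; since the lemma as stated only asserts existence, this omission is harmless here.
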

	
		\begin{proof}
			This follows from the fact that $\pi_1(\SO(M))$ is the splitting
			extension of $\pi_1(M)$ by $\pi_1(\SO(3),I)\cong \ZZ_2$. 
			Moreover, the homotopy classes of lifts
			of $f$ are determined by the splittings, which are in bijection
			with $H^1(M;\ZZ_2)$ since $\pi_2(\SO(M),\mathbf{e})$ is trivial.			
		\end{proof}

		A presentation is said to be \emph{triangular}, if the word length of
		the relators are at most $3$. The complex $(K,\pt)$ associated to
		a triangular finite presentation is compact with
		only monogonal, bigonal, or triangular $2$-cells, 
		and we say that $(K,\pt)$ is \emph{triangular} and \emph{finite}.
		Note that any finite presentation gives rise to a 
		triangular finite presentation, by adding a maximal 
		collection of mutually non-intersecting
		diagonals to subdivide the $2$-cells of $K$.
		Furthermore, if we assume that $K$ minimizes the number of
		generators and the number of relators 
		in the lexicographical order among triangular finite
		presentations of $G$, it is easy to see that
		$K$ will not contain any monogonal $2$-cells,
		and that any bigonal $2$-cells of $K$ will be attached to $1$-cells 
		representing elements of order $2$.
		
		For our application, $\pi_1(M,\pt)$ has no elements of order $2$, and in fact, it
		is torsion free. It suffices to consider a specific triangular finite presentation
		of $\pi_1(M,\pt)$ associated to the triangular generating set guaranteed by
		Lemma \ref{setupData} (2). In particular,
		the associated triangular finite presentation complex $(K,\pt)$
		is a $\Delta$-complex, and
		we denote the presentation map as
			$$f:\,(K,\pt)\to (M,\pt),$$
		which is unique up to homotopy relative to the basepoint.
		
	\subsection{Panted complexes}\label{Subsec-pantedComplexes}
		By a (topological) \emph{panted complex} $\mathcal{K}$ we mean a compact 
		CW space obtained from a finite disjoint union of
		circles by attaching finitely many disjoint pairs of pants 
		via homeomorphisms from cuffs.
		Recording the pants decomposition of $\mathcal{K}$ 
		as part of data, we will refer to the defining
		circles and pairs of pants as the \emph{structure curves} and
		\emph{structure pants} of $\mathcal{K}$. 
		When a panted complex $\mathcal{K}$
		is immersed into a closed hyperbolic $3$-manifold $M$,
		we will say that the immersion 
		is \emph{$(R,\epsilon)$-panted}, if the restriction to
		each pair of pants is $(R,\epsilon)$-nearly regular up to 
		homotopy, or ambiguously, we will say
		that $\mathcal{K}$ is an \emph{$(R,\epsilon)$-panted complex}.
		Note that any connected $(R,\epsilon)$-panted surface is naturally 
		an $(R,\epsilon)$-panted complex.
		A \emph{panted map}	between two panted complexes is a map that sends each
		structural pair of pants homeomorphically onto a structural pair of pants
		of the target. 
	
		\begin{lemma}\label{pantedRepresentative}
			For any nontrivial element $\alpha\in H_2(\mathcal{K};\,\ZZ)$, 
			there is a closed oriented panted surface $F$ 
			and a panted map $F\to\mathcal{K}$,
			such that the fundamental class of $F$ in $H_2(F;\,\ZZ)$
			is sent to $\alpha$.
		\end{lemma}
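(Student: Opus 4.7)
The plan is to realize $\alpha$ by a cellular $2$-cycle in $\mathcal{K}$ and then assemble a surface $F$ by taking appropriate signed multiplicities of structure pants and pairing up cuffs along each structure curve. Concretely, regard $\mathcal{K}$ as a CW $2$-complex in which the $2$-cells are the structure pants $P_1,\dots,P_N$ (each equipped with a chosen orientation) attached to the $1$-skeleton consisting of the structure curves $c_1,\dots,c_M$ via the cuff attaching maps. Since $\mathcal{K}$ has no cells of dimension $\geq 3$, the cellular chain complex gives $H_2(\mathcal{K};\ZZ) = Z_2(\mathcal{K};\ZZ) \subset \bigoplus_i \ZZ \cdot P_i$, where a chain $\sum_i n_i P_i$ is a cycle precisely when, for each structure curve $c_j$, the signed count of cuffs from the $P_i$ attached to $c_j$ (weighted by $n_i$ and by whether the induced boundary orientation agrees with or reverses that of $c_j$) is zero.

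First I would fix a cellular cocycle representative $\sum_i n_i P_i$ of $\alpha$. Second, I would form the disjoint union $\tilde F$ of $|n_i|$ copies of each $P_i$, orienting the copies of $P_i$ compatibly with $P_i$ if $n_i > 0$ and oppositely if $n_i < 0$. The boundary of $\tilde F$ is a finite disjoint union of cuff circles, each of which maps homeomorphically to some structure curve $c_j$; the cycle condition says that, for each fixed $c_j$, the number of boundary cuffs mapping to $c_j$ in an orientation-preserving fashion equals the number mapping to $c_j$ in an orientation-reversing fashion. Consequently we may choose, for each $j$, a bijective pairing between these two sets and glue each paired cuff of $\tilde F$ to its partner via the identification furnished by the common attaching map to $c_j$.

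Third, I would verify that the quotient $F$ obtained by these cuff identifications is an oriented closed surface and that the obvious map $j \colon F \to \mathcal{K}$, sending each copy of $P_i$ homeomorphically onto $P_i$ and each glued cuff onto $c_j$, is panted by construction. Orientedness is immediate because within each pair of glued cuffs the induced boundary orientations are opposite, so the orientations on the copies of the $P_i$ fit together consistently on $F$; closedness follows from the fact that every boundary cuff of $\tilde F$ has been paired and identified. Finally, $j_*[F] = \alpha$ is transparent from the construction: the cellular $2$-chain that $j$ carries forward from the fundamental class of $F$ is precisely $\sum_i n_i P_i$, our chosen representative of $\alpha$.

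I do not anticipate a serious obstacle here; the one point that deserves attention is that different choices of cuff pairing may yield topologically different $F$ (the genus can change, and $F$ may even be disconnected), but the statement only requires some such $F$ to exist, so any pairing works. The construction is purely combinatorial and does not invoke any hyperbolic geometry, and in particular does not require $\mathcal{K}$ to be $(R,\epsilon)$-panted or immersed in $M$.
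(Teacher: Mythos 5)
Your proposal is correct and follows essentially the same route as the paper: the paper phrases your ``cycle condition'' via the long exact sequence of the pair $(\mathcal{K},\mathcal{C})$ and the excision isomorphism $H_2(\mathcal{K},\mathcal{C};\ZZ)\cong H_2(\mathcal{P},\partial\mathcal{P};\ZZ)$, which identifies $\alpha$ with a signed collection of structure pants whose cuff contributions cancel over each structure curve, and then glues paired cuffs exactly as you do. The only cosmetic caveats are that pants are not literally $2$-cells (so the relative-homology formulation is the rigorous version of your cellular bookkeeping) and your ``cocycle'' should read ``cycle''.
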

		
		\begin{proof}
			Let $\mathcal{P}$ be the disjoint union of 
			structure pants of $\mathcal{K}$,
			and $\mathcal{C}$ be the disjoint union of structure curves of $\mathcal{K}$.
			The long exact sequence of homology yields
			\begin{displaymath}
			\xymatrix{
				0 \ar[r] &H_2(\mathcal{K};\ZZ) \ar[r] &H_2(\mathcal{K},\mathcal{C};\ZZ) \ar[r]^{\partial_*} 
					&H_1(\mathcal{C})\\
				& &H_2(\mathcal{P},\partial\mathcal{P};\ZZ) \ar[r]^{\partial_*} \ar[u]^{\cong} &H_1(\partial\mathcal{P};\ZZ) \ar[u]^{\chi}
			}
			\end{displaymath}
			where $\chi$ means the homomorphism 
			induced by the \emph{characteristic map}
			of the panted complex $\mathcal{K}$ 
			that identifies the cuffs of structure pants 
			with the structure curves. Now any element $\alpha\in H_2(\mathcal{K};\ZZ)$ can be identified 
			as an element $\alpha'$ of $H_2(\mathcal{P},\partial\mathcal{P};\ZZ)$ 
			in the kernel of $\chi\circ\partial_*$.
			Since $H_2(\mathcal{P},\partial\mathcal{P};\ZZ)$ has a basis formed
			by the fundamental classes $[P]$ of the components $P\subset\mathcal{P}$,
			the element $\alpha'$ naturally yields a collection of copies of
			structure pants with suitable orientations, 
			and $\chi(\partial_*[\alpha'])=0$
			implies that these copies of pants can be glued up along cuffs, 
			resulting in a closed oriented panted surface $F$.
			The naturally induced panted map $F\to \mathcal{K}$ is as desired.
		\end{proof}
	
	\subsection{Homologous substitution}\label{Subsec-homologousSubstitution}
		
		Let $(K,\pt)$ be the CW complex	associated to the triangular generating
		set guaranteed by Lemma \ref{setupData} (2),
		(cf.~Subsection \ref{Subsec-triangularPresentationComplexes}),
		and 
		$f:(K,\pt)\to (M,\pt)$ be the basepoint-preserving map associated
		to the presentation.
		Let $\tau_h$ be the conjugation provided by Lemma \ref{setupData} (3). The
		presentation conjugated by $\tau_h$ induces an isomorphism $\tau_h\circ f_{\sharp}:\,
		\pi_1(K,\pt)\to \pi_1(M,\pt)$. We denote the corresponding basepoint-preserving map
		as
			$$f^h:\,(K,\pt)\to (M,\pt).$$
		It can be topologically obtained from $f$ by pushing the image of $\pt\in K$ along the loop 
		corresponding to $h^{-1}\in\pi_1(M,\pt)$.
				
		\begin{lemma}\label{homologousSubstitution}
			There exists a CW complex $K'$ obtained from $K$ by attaching $1$-cells, 
			and a compact panted complex
			$\mathcal{K}$, and there exist maps
				$$(\mathcal{K},\emptyset)\longrightarrow (K',\pt)\longrightarrow (M,\pt)$$
			satisfying the following.
			\begin{itemize}
				\item The map $(K',\pt)\to (M,\pt)$ restricts to be the basepoint-preserving map
					$$f^h:\,(K,\pt)\to(M,\pt).$$
				\item The map $\mathcal{K}\to K'$ induces an epimorphism
					$$H_2(\mathcal{K};\ZZ)\,\to\, H_2(K';\ZZ).$$
				\item The composition $\mathcal{K}\to M$ yields an $(R,\epsilon)$-panted complex.
			\end{itemize}
		\end{lemma}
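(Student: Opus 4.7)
The plan is to execute the homologous substitution outlined at the beginning of this section. By Lemma \ref{setupData}~(3), $h\in\mathcal{C}_{300L}$, so every oriented $1$-cell $e$ of $K$, interpreted via $f^h$, represents an element of $\mathcal{C}_{300L}\cap\mathcal{B}^\circ_{R-300L}$; this element is $\delta$-sharp by Lemma \ref{sharpCK} and hence admits a lift $\hat{g}_e\in\pi_1(\SO(M),\mathbf{e})$ with an associated $\partial$-framed segment $\mathfrak{s}_{\hat{g}_e}$ (Lemma \ref{sharpElementLemma}). Following the recipe for $\Psi_1$ in Subsection \ref{Subsubsec-constructionPsi}, I would choose auxiliary data $\mathfrak{a}^{(e)},\mathfrak{b}^{(e)}$ once and for all, arranged so that $L(\bar e)=\overline{L(e)}$, and define $L(e)$ to be the resulting $(R,\epsilon)$-multicurve. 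For each $2$-cell $\sigma$ of $K$ (bigonal or triangular), the attaching map encodes a relation $\hat{g}_{e_0}\hat{g}_{e_1}\cdots=\id$ in $\pi_1(\SO(M),\mathbf{e})$, so running the construction in the proof of Lemma \ref{triangularRelation}~(1) or~(2) with these pre-chosen $L(e_i)$ as input produces an oriented $(R,\epsilon)$-panted surface $F(\sigma)$ with $\partial F(\sigma)=\bigsqcup_{e\subset\partial\sigma}L(e)$.

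The panted complex $\mathcal{K}$ is then defined by gluing the $F(\sigma)$'s along their shared boundary multicurves, and the induced immersion $\mathcal{K}\to M$ is $(R,\epsilon)$-panted by construction, which handles the third bullet. To define $K'$ and the two maps, view $(K,K,f^h)$ as a partially-$\Delta$ space over $M$ (Definition \ref{partiallyDeltaSpace}) and let $X$ be the constructible extension (Definition \ref{constructibleExtensions}) that records every auxiliary $\partial$-framed segment and every pair of pants used above, so that $\mathcal{K}\hookrightarrow X$. By the Spine Principle (Lemma \ref{spinePrinciple}), there is a CW complex $K'$ obtained from $K$ by attaching $1$-cells only and a homotopy equivalence $X\simeq K'$ extending the inclusion $K\hookrightarrow X$. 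Take $K'\to M$ to be the composition with the structure map $f_X$, which restricts to $f^h$ on $K$ and thereby fulfills the first bullet, and take $\mathcal{K}\to K'$ to be $\mathcal{K}\hookrightarrow X\simeq K'$.

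To verify the second bullet, note that attaching $1$-cells preserves $H_2$, so $H_2(K';\ZZ)\cong H_2(K;\ZZ)$, and every class is represented by a cellular $2$-cycle $z=\sum n_\sigma\sigma$ satisfying $\sum n_\sigma\partial\sigma=0$ in $C_1(K)$. The symmetry convention $L(\bar e)=\overline{L(e)}$ then forces the $2$-chain $w=\sum n_\sigma[F(\sigma)]$ in $\mathcal{K}$ to have boundary $\sum_e\bigl(\sum_{\sigma\supset e}\pm n_\sigma\bigr)[L(e)]=0$, so $w$ is a $2$-cycle in $\mathcal{K}$. The claim is that $[w]$ is sent to $[z]$ under $\mathcal{K}\hookrightarrow X\simeq K'$, from which the surjectivity follows by running over a generating set of $H_2(K)$.

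The main obstacle is establishing this last identification. Although $F(\sigma)$ and $\sigma$ are distinct as cellular $2$-chains in $X$, the plan is to show that they are homologous in $X$: the $2$-dimensional Simplex Subdivision moves used to carve the tripod $\mathfrak{t}_0\vee\mathfrak{t}_1\vee\mathfrak{t}_2$ inside $\sigma$ in Lemma \ref{triangularRelation}~(2), together with all the further Simplex Fillings used in the subsequent splitting, swapping, rotation, and antirotation constructions, contribute cells whose boundaries assemble into a cobordism between $\sigma$ and $F(\sigma)$ inside $C_*(X)$. Making this rigorous amounts to tracing the deformation retractions supplied by the Spine Principle through each construction axiom and checking inductively that at every step the current ``filling'' of $\sigma$ is replaced by one differing only by a boundary. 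The bookkeeping is tedious but essentially mechanical once the setup above is in place.
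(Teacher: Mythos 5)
Your construction of $L(e)$, $F(\sigma)$, $\mathcal{K}$, and the use of the Spine Principle to produce $K'$ all match the paper's route, but the proof as written has a genuine gap exactly at the heart of the lemma, namely the second bullet. You reduce it to the claim that $q_*[F(\sigma)]$ and $[\sigma]$ have the same image, and you propose to prove this by tracing the cells contributed by Simplex Subdivision, Simplex Filling, etc., asserting that the bookkeeping is ``essentially mechanical.'' It is not: the map $F(\sigma)\to X$ is only a partially combinatorial map that essentially collapses each pair of pants onto its spine together with the annuli coming from Free Loop Reduction, and $F(\sigma)$ is assembled from a long chain of derived constructions (splitting, swapping, rotation, antirotation, plus the $N$-step interpolation in the proof of Lemma \ref{triangularRelation}), so there is no cell-by-cell correspondence between $\sigma$ and $F(\sigma)$ to trace, nor is an inductive statement formulated that would survive each construction step. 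The paper avoids this entirely: it compares the relative classes in $H_2(\,\cdot\,,\text{boundary data})$ for the single-cell recipe complexes, observes that $H_2(F(\sigma))$, $H_2(Y(\sigma))$, and $H_2(\sigma)$ all vanish (the middle one precisely because, by the Spine Principle, constructions only add $1$-dimensional material), so that $\partial_*$ is injective on relative $H_2$, and then the whole identification reduces to the easy $H_1$ computation that $L(g)$ is homologous to $g$ inside $X(g)$ (the tripod cuffs cancel against the attached $2$-cell and $\mathfrak{b}$ cancels against $\bar{\mathfrak{b}}$); naturality of Mayer--Vietoris then glues these per-cell identities into the commutative diagram that yields surjectivity. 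You have all the ingredients for this argument available (you already invoke the Spine Principle), but you use it only to build $K'$, not to kill the relative ambiguity, and without that your final step is an unproved assertion rather than a routine verification.

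A second, smaller issue: you choose a lift $\hat{g}_e\in\pi_1(\SO(M),\mathbf{e})$ separately for each $1$-cell. Each edge has two lifts differing by the central element $\hat{c}$, and if the choices are not coherent, a $2$-cell of $K$ may produce the twisted relation $\hat{g}_0\hat{g}_1\hat{g}_2=\hat{c}$ rather than $\id$; in that case $[L(g_0)]_{R,\epsilon}+[L(g_1)]_{R,\epsilon}+[L(g_2)]_{R,\epsilon}=\Psi(\hat{c})\neq 0$ by Theorem \ref{theoremPantedCobordism}, and no panted surface $F(\sigma)$ with the required boundary exists. The fix is what the paper does: fix a global lift $\hat{f}^h\colon(K,\pt)\to(\SO(M),\mathbf{e})$ of $f^h$ (Lemma \ref{liftToSOM}) and set $\hat{g}_e=\hat{f}^h_\sharp(e)$, so that every attaching relation of $K$ lifts to a genuine relation in $\pi_1(\SO(M),\mathbf{e})$ and Lemma \ref{triangularRelation} applies.
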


		We prove Lemma \ref{homologousSubstitution} in the rest of this subsection.

		\subsubsection{Construction of $\mathcal{K}$ and $K'$}

			We construct $\mathcal{K}$, $K'$ and maps
			$(\mathcal{K},\emptyset)\to (K',\pt)\to (M,\pt)$ following
			the construction of 
			$\Psi:\pi_1(\SO(M),\mathbf{e})\to\ocobordism_{R,\epsilon}$
			in Subsection \ref{Subsec-theInverseOfPhi}.
			Fix a lift of $f^h$ into $\SO(M)$, denoted as
				$$\hat{f}^h:\,(K,\pt)\to (\SO(M),\mathbf{e})$$
			(Lemma \ref{liftToSOM}).
			Note that by our assumption,
			$K$ is a $\Delta$-complex,
			so each $1$-cell of $K$ can be conveniently
			denoted by the element $g\in\pi_1(K,\pt)\cong \pi_1(M,\pt)$
			that it represents, and each $2$-cell of $K$ 
			can be conveniently denoted by a triangular
			relation $g_0g_1g_2=\id$ of three generators or their inverses.
			As $K$ and $f^h$ are provided from Lemma \ref{setupData}, for each $1$-cell
			$g$ of $K$, $\hat{f}^h(g)$ is a
			$\delta$-sharp element.
			Thus there
			is an associated oriented $\partial$-framed segment $\mathfrak{s}_{\hat{f}^h(g)}$,
			or simply written as $\mathfrak{s}_{\hat{g}}$ in order
			to be consistent with the notation in Subsection \ref{Subsec-theInverseOfPhi}.

			\medskip\noindent\textbf{Step 1}. For each $1$-cell $g$ of $K$,
			fix a right-handed nearly regular tripod $\mathfrak{a}_0\vee\mathfrak{a}_1\vee\mathfrak{a}_2$
			and an oriented $\partial$-framed segment $\mathfrak{b}$ as in the definition of
			$\Psi$.
			We construct a $2$-dimensional 
			$\Delta$-complex $X(g)$ and a multcurve $L(g)$, together with
			maps 
				$$(L(g),\emptyset)\to (X(g),\pt)\to (M,\pt)$$
			such that the composition yields an
			$(R,\epsilon)$-multicurve $L(g)\looparrowright M$,
			which represents the $(R,\epsilon)$-panted cobordism
			class $\Psi(\hat{g})\in\ocobordism_{R,\epsilon}$,
			where $\hat{g}\in\pi_1(\SO(M),\mathbf{e})$
			is the element $\hat{f}_{\sharp}(g)$. 
			
			The construction is as follows.
			Take the subcomplex $*\cup g$ of $K$;
			attach $1$-cells $a_{i,i+1}$ for $i\in\ZZ_3$ and $b$, corresponding to the
			carrier segments of $\mathfrak{a}_{i,i+1}$ and $\mathfrak{b}$, respectively;
			attach a simplicial $2$-cell with boundary the cycle $a_{01}a_{12}a_{20}$.
			The resulting $2$-complex will be denoted as $X(g)$.
			There is a naturally induced map $(X(g),\pt)\to (M,\pt)$
			extending $f^h|_{\pt\cup g}$. Let $L(g)\,=\,[\mathfrak{s}_{\hat{g}}\mathfrak{a}_{01}]
			\sqcup[\mathfrak{s}_{\hat{g}}\mathfrak{a}_{12}]
			\sqcup[\mathfrak{s}_{\hat{g}}\mathfrak{a}_{20}]
			\sqcup\overline{[\mathfrak{s}_{\hat{g}}\mathfrak{b}]}
			\sqcup\overline{[\mathfrak{s}_{\hat{g}}\bar{\mathfrak{b}}]}$
			be the multicurve of five components, which are the reduced cyclic concatenations
			of the defining oriented $\partial$-framed segments.
			The natural immersion $L(g)\looparrowright M$ can be homotoped to factor through
			$X(g)$ via a composition $(L(g),\emptyset)\to (X(g),\pt)\to (M,\pt)$
			in the explicit way as indicated by the construction. 
			
			We define $L(g^{-1})$ to be $\overline{L(g)}$, and $X(g^{-1})$
			for $X(g)$. Note
			that $L(g^{-1})$ is the representative of $\Phi(\hat{g}^{-1})\in\ocobordism_{R,\epsilon}$
			corresponding to the defining right-handed tripod 
			$\mathfrak{a}^*_2\vee\mathfrak{a}^*_1\vee\mathfrak{a}^*_0$
			and $\mathfrak{b}^*$ since the $\delta$-sharp element $\hat{g}^{-1}$ is associated
			to $\bar{\mathfrak{s}}^*_{\hat{g}}$.
			Thus there are also maps 
				$$(L(g^{-1}),\emptyset)\to (X(g^{-1}),\pt)\to (M,\pt),$$
			and the composition is homotopic to an $(R,\epsilon)$-multicurve 
			$L(g^{-1})\looparrowright M$, which represents $\Psi(\hat{g}^{-1})\in\ocobordism_{R,\epsilon}$.
				
			\medskip\noindent\textbf{Step 2}. For each simplicial $2$-cell $\sigma$ of $K$ 
			corresponding to a triangular relation $g_0g_1g_2=\id$,
			write 
				$$X(\partial \sigma)=X(g_0)\vee X(g_1) \vee X(g_2),$$
			where the wedge is over the basepoint $\pt$,
			and 
				$$X(\sigma)=X(\partial\sigma)\cup\sigma$$
			by identifying the copies of $g_i$ in $X(g_i)$ and $\sigma$,
			and
				$$L(\partial\sigma)=L(g_0)\sqcup L(g_1)\sqcup L(g_2).$$
			We construct a $2$-complex $Y(\sigma)$ obtained from $X(\sigma)$ by 
			attaching $1$-cells
			and a panted surface $F(\sigma)$ bounded by $L(\partial\sigma)$, together with
			maps 
				$$(F(\sigma),\emptyset)\to (Y(\sigma),\pt)\to (M,\pt),$$
			such that the composition is homotopic to an $(R,\epsilon)$-panted surface
			$F(\sigma)\looparrowright M$; moreover, the following diagram of maps
			commutes:
			\begin{displaymath}
			\xymatrix{
					F(\sigma) \ar[r]& Y(\sigma)\\
					L(\partial\sigma) \ar[r] \ar[u]^{\cup}& X(\partial\sigma) \ar[u]^{\cup}.
			}
			\end{displaymath}
			
			The construction is as follows. 
			Since $[L(g_0)]_{R,\epsilon}+[L(g_1)]_{R,\epsilon}+[L(g_2)]_{R,\epsilon}=0$
			in $\ocobordism_{R,\epsilon}$ (Lemma \ref{triangularRelation}), 
			there exists an $(R,\epsilon)$-panted surface $F(\sigma)$ with
			$\partial F(\sigma)$ equal to $L(\partial\sigma)$.
			In fact, the construction of $F(\sigma)$ relies on Lemmas 
			\ref{wellDefinition}, \ref{triangularRelation}.
			Checking the constructions there, we see that $F(\sigma)$
			can be constructed based only on the $\Delta$-complex $X(\sigma)$ and the
			map	$X(\sigma)\to M$ induced from the constructed 
			maps $X(g_i)\to M$ and the given map $f^h|:\sigma\to M$.
			Formally, we regard $X(\sigma)$ as a partially-$\Delta$
			space over $M$ where the partially-$\Delta$ structure
			is given by the entire $\Delta$-complex $X(\sigma)$,
			and the map $X(\sigma)\to M$ is as described above (Definition \ref{partiallyDeltaSpace})
			Then the construction of $F(\sigma)$ implies that there is a partially-$\Delta$
			space $X'$ over $M$ which is an extension of $X(\sigma)$, such that
			there is a commutative diagram of maps:
			\begin{displaymath}
			\xymatrix{
					F(\sigma) \ar[r]& X' \ar[rd] \\
					L(\partial\sigma) \ar[r] \ar[u]^{\cup}& X(\partial\sigma) \ar[u]^{\cup} \ar[r]&M.
			}
			\end{displaymath}
			By the Spine Principle (Lemma \ref{spinePrinciple}),
			$X'$ is $1$-spined over $X(\sigma)$, so we may replace
			$X'$ with a CW complex $Y(\sigma)$, which is obtained from $X(\sigma)$
			by attaching $1$-cells. Then $F(\sigma)$, $Y(\sigma)$ and
			the involved maps from the diagram are as desired.
			
			\medskip\noindent\textbf{Step 3}. Now
			we may naturally attach the disjoint union
			of all $F(\sigma)$ to the disjoint union of all $L(g)$
			according to the attaching maps of $K$. The result
			is a panted complex $\mathcal{K}$. Similarly,
			we may attach the disjoint union of $Y(\sigma)$
			to the disjoint union of all $X(g)$ by naturally identifying
			the copies of $X(g)$ (possibly marked by $g^{-1}$).
			The result is a CW $2$-complex $Y(K)$
			containing the subcomplex $K$. Moreover,
			there are naturally induced maps
				$$(\mathcal{K},\emptyset)\to (Y(K),\pt)\to (M,\pt).$$
			The composition is homotopic to
			an $(R,\epsilon)$-panted complex $\mathcal{K}\looparrowright M$;
			the restriction of $Y(K)\to M$ 
			to $K$ is $f^h$. 
			
			It is clear from
			the construction that $Y(K)$ deformation retracts
			relative to $K$ to a CW subspace
				$$K'\hookrightarrow Y(K),$$
			which can be obtained from $K$ by attaching $1$-cells.
			Therefore, replacing $Y(K)$ with $K'$,
			we obtain maps
				$$(\mathcal{K},\emptyset)\to (K',\pt)\to (M,\pt).$$			
			
		\subsubsection{Verifications}			
			To verify that $\mathcal{K}$, $K'$ and the maps 
			$(\mathcal{K},\emptyset)\to (K',\pt)\to (M,\pt)$
			above are as desired, 
			it suffices to prove that $\mathcal{K}\to K'$
			is surjective on the second homology, as the other
			listed properties are obviously satisfied. 
			By the construction, we may equivalently
			prove with $Y(K)$ instead of $K'$.
			
			Write $\mathcal{C}$ for the disjoint union of all $L(g)$,
			and $X(K^{(1)})$ for the wedge of all $X(g)$ over $\pt$.
			There is a commutative diagram of homomorphisms
			\begin{displaymath}
			\xymatrix{
					0 \ar[r] & H_2(\mathcal{K};\ZZ)\ar[r] \ar[d]^{\phi}
						&H_2(\mathcal{K},\mathcal{C};\ZZ) \ar[r]^{\partial_*} \ar[d]^{\phi''} &H_1(\mathcal{C};\ZZ) \ar[d]^{\phi'}\\
					0 \ar[r]& H_2(Y(K);\ZZ)\ar[r]  
						&H_2(Y(K),X(K^{(1)});\ZZ) \ar[r]^{\partial_*}  &H_1(X(K^{(1)});\ZZ) \\
					0 \ar[r]& H_2(K;\ZZ)\ar[r] \ar[u]_{\iota}^\cong  
						&H_2(K,K^{(1)};\ZZ) \ar[r]^{\partial_*} \ar[u]_{\iota''}^\cong &H_1(K^{(1)};\ZZ) \ar[u]_{\iota'}^\cup					
			}
			\end{displaymath}
			where the rows are part of the long exact sequences of
			homology, and the homomorphisms $\phi,\phi'',\phi'$
			are induced from 
			the map $(\mathcal{K},\mathcal{C})\to (Y(K), X(K^{(1)})$
			of our construction, and the homomorphisms $\iota,\iota'',\iota'$
			are induced from the natural inclusion
			$(K,K^{(1)})\hookrightarrow (Y(K),X(K^{(1)}))$.
			
			Write the quotient map defining $\mathcal{K}$ as
				$$q:\,\bigsqcup_{\sigma\subset K}\,F(\sigma)\,\to\, \mathcal{K}.$$			
			Define a homomorphism
				$$\psi'':\,H_2(K,K^{(1)};\ZZ)\to H_2(\mathcal{K},\mathcal{C};\ZZ)$$
			by assigning 
				$$\psi''([\sigma])\,=\,q_*[F(\sigma)]$$
			where $[F(\sigma)]\in H_2(F(\sigma),L(\partial\sigma);\ZZ)$
			is the fundamental class.
			Define a homomorphism
				$$\psi':\,H_1(K^{(1)};\ZZ)\to H_1(\mathcal{C};\ZZ)$$
			by assigning 
				$$\psi'([g])\,=\,q_*[L(g)]$$
			where $[L(g)]\in H_1(L(g);\ZZ)$ is the fundamental
			class. There is a commutative diagram
			\begin{displaymath}
			\xymatrix{
					H_2(\mathcal{K},\mathcal{C};\ZZ) \ar[r]^{\partial_*} &H_1(\mathcal{C};\ZZ) \\
					H_2(K,K^{(1)};\ZZ) \ar[r]^{\partial_*} \ar[u]^{\psi''} &H_1(K^{(1)};\ZZ) \ar[u]^{\psi'}.
			}
			\end{displaymath}
			
			\begin{lemma}\label{phipsi}
				With the notations above, $\phi'\circ\psi'=\iota'$ and $\phi''\circ\psi''=\iota''$.
			\end{lemma}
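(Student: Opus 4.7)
The strategy is to verify each identity by evaluating both sides on generators. For $\iota'$, recall $H_1(K^{(1)};\ZZ)$ is freely generated by the classes $[g]$ of the $1$-cells $g$ of $K$, so it suffices to show $\phi'(\psi'([g])) = [g]$ inside $H_1(X(g);\ZZ) \subset H_1(X(K^{(1)});\ZZ)$. For $\iota''$, since $Y(K)$ is obtained from $K$ by attaching only $1$-cells to each $X(\sigma)$ (by the Spine Principle applied in Step 2), cellular considerations show that $H_2(Y(K),X(K^{(1)});\ZZ)$ is freely generated by the $2$-cells $\sigma$ of $K$; thus it suffices to show $\phi''(\psi''([\sigma])) = [\sigma]$ inside $H_2(Y(\sigma),X(\partial\sigma);\ZZ) \cong \ZZ \cdot [\sigma]$ for each $\sigma$.

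For the first identity, I will compute $q_*[L(g)] \in H_1(X(g);\ZZ)$ directly from the explicit description of $L(g)$. The three components $[\mathfrak{s}_{\hat{g}}\mathfrak{a}_{01}]$, $[\mathfrak{s}_{\hat{g}}\mathfrak{a}_{12}]$, $[\mathfrak{s}_{\hat{g}}\mathfrak{a}_{20}]$ contribute $([g]+[a_{01}])+([g]+[a_{12}])+([g]+[a_{20}])$, which equals $3[g]$ after applying the $2$-cell relation $[a_{01}]+[a_{12}]+[a_{20}]=0$ coming from the simplicial $2$-cell of $X(g)$. The two components $\overline{[\mathfrak{s}_{\hat{g}}\mathfrak{b}]}$ and $\overline{[\mathfrak{s}_{\hat{g}}\bar{\mathfrak{b}}]}$ contribute $-([g]+[b])-([g]-[b]) = -2[g]$. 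Adding yields $[g]$, proving $\phi' \circ \psi' = \iota'$.

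For the second identity, I will apply naturality of the long exact sequence of the pair $(Y(\sigma),X(\partial\sigma))$. Writing $\phi''(\psi''([\sigma])) = n[\sigma]$ for some $n \in \ZZ$, applying $\partial_*$ and using commutativity of the square in the statement of the lemma I proved in the first identity gives
\[
\partial_*(n[\sigma]) = \partial_* q_*[F(\sigma)] = q_*[L(\partial\sigma)] = \psi'([g_0])+\psi'([g_1])+\psi'([g_2]),
\]
whose image under $\phi'$ equals $[g_0]+[g_1]+[g_2]$ by the first identity, which is also $\partial_*[\sigma]$. Since $X(\partial\sigma)=X(g_0)\vee X(g_1)\vee X(g_2)$ and each $[g_i]$ is a nonzero generator of a $\ZZ$-summand of $H_1(X(g_i);\ZZ)$ (as $g_i$ is disjoint from the single $2$-cell of $X(g_i)$, which has boundary $a_{01}a_{12}a_{20}$), the class $[g_0]+[g_1]+[g_2]$ is nonzero in $H_1(X(\partial\sigma);\ZZ)$. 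Hence $n=1$, completing the proof.

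The main obstacle is the second identity: one must confirm that $F(\sigma)$ represents $[\sigma]$ with multiplicity exactly $1$ rather than some nontrivial integer. This is handled by the boundary-injectivity argument above, whose success hinges on the explicit auxiliary cellular structure of $X(g_i)$, which ensures $[g_i]$ survives in $H_1(X(g_i);\ZZ)$; if $X(g_i)$ had had further relations killing $[g_i]$, additional bookkeeping would be required.
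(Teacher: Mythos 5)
Your proposal is correct and follows essentially the same route as the paper: check both identities on the generators $[g]$ and $[\sigma]$, with your explicit computation $3[g]+([a_{01}]+[a_{12}]+[a_{20}])-2[g]=[g]$ filling in what the paper calls a straightforward check, and your determination of the coefficient $n$ via $\partial_*$ being exactly the paper's argument that $\partial_*$ is injective on $H_2(Y(\sigma),X(\partial\sigma);\ZZ)$ combined with $\phi'\circ\psi'=\iota'$. The only (harmless) notational slip is equating $\partial_*(n[\sigma])\in H_1(X(\partial\sigma);\ZZ)$ with $q_*[L(\partial\sigma)]\in H_1(\mathcal{C};\ZZ)$ before applying $\phi'$, but the intended chain of equalities is the same commutative-square argument as in the paper.
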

			
			\begin{proof}
				It is straightforward to check that $\phi'(\psi'([g]))=\iota'([g])$
				for any $1$-cell $g$ of $K$, by the construction of $L(g)$.
				Since $H_1(K^{(1)};\ZZ)$ is freely generated by $[g]$ where $g$ runs over all $1$-cells
				of $K$, we see that $\phi'\circ\psi'=\iota'$.
				
				We claim that for any $2$-cell $\sigma$ of $K$, 
				$\phi''(\psi''([\sigma]))=\iota''([\sigma])$.
				In fact, applying the discussion to the special case
				when $K$ consists of a single $2$-simplex $\sigma$ together with 
				the $1$-skeleton $\partial\sigma$, we obtain the commutative diagrams 
				\begin{displaymath}
				\xymatrix{
					0 \ar[r] &H_2(F(\sigma),L(\partial\sigma);\ZZ) \ar[r]^{\partial_*} \ar[d]^{\phi''_\sigma} 
						&H_1(L(\partial\sigma);\ZZ) \ar[d]^{\phi'_\sigma}\\
					0 \ar[r] &H_2(Y(\sigma),X(\partial\sigma);\ZZ) \ar[r]^{\partial_*}  &H_1(X(\partial\sigma);\ZZ) \\
					0 \ar[r] &H_2(\sigma,\partial\sigma;\ZZ) \ar[r]^{\partial_*} \ar[u]_{\iota''_\sigma}^\cong 
						&H_1(\partial\sigma;\ZZ) \ar[u]_{\iota'_\sigma}^\cup					}
				\end{displaymath}
				where the rows are exact sequences, and
				\begin{displaymath}
				\xymatrix{
						H_2(F(\sigma),L(\partial\sigma);\ZZ) \ar[r]^{\partial_*} &H_1(L(\partial\sigma);\ZZ) \\
						H_2(\sigma,\partial\sigma;\ZZ) \ar[r]^{\partial_*} \ar[u]^{\psi''_\sigma} &H_1(\partial\sigma;\ZZ) \ar[u]^{\psi'_\sigma}.
				}
				\end{displaymath}
				Because 
					$$\partial_*(\phi''_\sigma\circ\psi''_\sigma([\sigma]))\,=\,\phi'_\sigma\circ\psi'_\sigma(\partial_*([\sigma]))
					\,=\,\iota'_\sigma(\partial_*([\sigma]))=\partial_*(\iota''_\sigma[\sigma]),$$
				the injectivity of $\partial_*$ in this case implies that
					$$\phi''_\sigma\circ\psi''_\sigma([\sigma])\,=\,\iota''_\sigma([\sigma]).$$
				By naturality of Mayer--Vietoris sequences, it follows that
					$$\phi''\circ\psi''([\sigma])\,=\,\iota''([\sigma]),$$
				as claimed.				
				
				Since $H_2(K,K^{(1)};\ZZ)$ is
				freely generated by $[\sigma]$ where $\sigma$ runs over all $2$-cells
				of $K$, we conclude that $\phi''\circ\psi''=\iota''$.			
			\end{proof}
			
			Now an easy diagram chase will show the surjectivity
			of
				$$\phi:\,H_2(\mathcal{K};\ZZ)\,\to\, H_2(Y(K);\ZZ).$$
			In fact, we may identify $H_2(\mathcal{K};\ZZ)$ and $H_2(Y(K))$ as kernels
			of $\partial_*$ in $H_2(\mathcal{K},\mathcal{C};\ZZ)$ and $H_2(Y(K),X(K^{(1)};\ZZ)$,
			respectively. If $\alpha\in H_2(Y(K),X(K^{(1)};\ZZ)$ vanishes under $\partial_*$,
			$\beta=\psi''\circ(\iota'')^{-1}(\alpha)$ 
			in $H_2(\mathcal{K},\mathcal{C};\ZZ)$ also vanishes under $\partial_*$,
			using the injectivity of $\iota'$.
			Moreover, $\phi''(\beta)=\alpha$ by Lemma \ref{phipsi}.
			This implies that $\phi''$ is surjective between the kernels of $\partial_*$,
			or in other words, $\phi$ is surjective.
			
			This completes the verification, and hence completes the proof
			of Lemma \ref{homologousSubstitution}.
			
	\subsection{Proof of Theorem \ref{secondHomologyClass}}\label{Subsec-proofOfTheoremSecondHomologyClass}
		We summarize the proof of Theorem \ref{secondHomologyClass} as follows.
		Let $M$ be a closed oriented hyperbolic $3$-manifold. By Lemma \ref{homologousSubstitution},
		there exists a CW complex $K'$ obtained from a presentation complex $K$ of $\pi_1(M,\pt)$
		by attaching $1$-cells, and a compact panted complex
		$\mathcal{K}$, together with maps
				$$(\mathcal{K},\emptyset)\longrightarrow (K',\pt)\longrightarrow (M,\pt),$$
		satisfying the listed properties. In particular,
		the composed map $\mathcal{K}\to M$ is an $(R,\epsilon)$-panted complex
		in $M$. It is surjective on the second homology by Lemmas \ref{homologousSubstitution}
		and \ref{homologyOnto}.
		In other words, any element $\alpha$ of $H_2(M;\ZZ)$ can be lifted to be an element $\tilde\alpha$
		of $H_2(\mathcal{K};\ZZ)$.
		Moreover, $\tilde\alpha$ can be represented
		by a panted surface via a panted map $F\to\mathcal{K}$ (Lemma \ref{pantedRepresentative}),
		which yields an $(R,\epsilon)$-panted surface via the map $\mathcal{K}\to M$.
		Therefore, the $(R,\epsilon)$-panted surface $F\looparrowright M$ is an representative
		of $\alpha$ as desired. This completes the proof of Theorem \ref{secondHomologyClass}.

\section{Panted connectedness between curves}\label{Sec-pantedConnectedness}
	In this section, we show that in a closed hyperbolic $3$-manifold $M$,
	the collection of $(R,\epsilon)$-curves $\ocurves_{R,\epsilon}$ are
	\emph{$(R,\epsilon)$-panted connected} in the sense of the following
	Proposition \ref{pantedConnectedness}. This will be applied to show
	that an ubiquitous measure of 
	$(R,\epsilon)$-pants $\mu\in\meas(\opants_{R,\epsilon})$
	is irreducible in the proof of Theorem \ref{homologyViaPants} (Subsection
	\ref{Subsec-proofOfHomologyViaPants}).
	
	\begin{proposition}\label{pantedConnectedness}
		Let $M$ be a closed hyperbolic $3$-manifold.
		For any universally small positive $\epsilon$ and any sufficiently large 
		positive $R$ depending
		on $M$ and $\epsilon$, the following holds. 
		For any two curves $\gamma_0,\gamma_1\in\ocurves_{R,\epsilon}$,
		there exists a connected $(R,\epsilon)$-panted subsurface $j:F\looparrowright M$,
		and $\partial F$ contains two
		components homotopic to $\gamma_0$ and $\gamma_1$, respectively.
	\end{proposition}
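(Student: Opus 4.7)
The plan is to construct an explicit connected $(R,\epsilon)$-panted subsurface $F$ of $M$ containing $\gamma_0$ and $\gamma_1$ among its boundary components, by bridging the two curves through intermediate $(R,\epsilon)$-curves and the derived constructions of Section~\ref{Sec-basicConstructions}.

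First I would bisect each $\gamma_i$ ($i\in\{0,1\}$) as an $(R/2,\delta)$-nearly regular bigon $[\mathfrak{s}_i^+\mathfrak{s}_i^-]$ at antipodal corners $p_i,q_i$, with framings chosen so that the phase is nearly zero. This presents each $\gamma_i$ as the output of a reduced cyclic concatenation of two $\partial$-framed segments of length nearly $R/2$ joining the corners.

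Next I would produce a sequence of intermediate $(R,\epsilon)$-curves $\gamma_0=\zeta^{(0)},\zeta^{(1)},\ldots,\zeta^{(N)}=\gamma_1$ together with a sequence of connected $(R,\epsilon)$-panted surfaces $E_k$ whose boundaries each contain $\zeta^{(k)}$ and $\zeta^{(k+1)}$. When $\zeta^{(k)}$ and $\zeta^{(k+1)}$ are bigons at a common corner pair, $E_k$ arises from the swapping construction (Construction~\ref{swapping}) applied to the swap pair they form (Definition~\ref{swapPair}), with the swap partner built via the Connection Principle (Lemma~\ref{connectionPrinciple}) in a prescribed homotopy class of geodesic arcs of length and phase $\delta$-close to the other segment. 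To shift the bigon corners along the chain, from $(p_0,q_0)$ toward $(p_1,q_1)$, I would interpose $(R,\epsilon)$-pants obtained from the splitting construction (Construction~\ref{splitting}), whose three cuffs, combined with swap-generated bigons at adjacent corner pairs, effect a corner transition in a bounded number of intermediate steps. The Length and Phase Formula (Lemma~\ref{lengthAndPhaseFormula}) certifies that all intermediate bigons and pants remain in the $(R,\epsilon)$-regular regime for $R$ sufficiently large relative to $\epsilon$ and $M$. Gluing $E_0,E_1,\ldots,E_{N-1}$ along the shared curves $\zeta^{(k)}$, which appear on the boundary of consecutive $E_k$ with opposite induced orientations, then yields the desired connected $(R,\epsilon)$-panted subsurface $F$ with $\gamma_0=\zeta^{(0)}$ and $\gamma_1=\zeta^{(N)}$ among the components of $\partial F$.

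The hard part will be the corner-shift step across the chain. A swap pair requires both bigons to share \emph{both} corners, and a splitting pants has all three cuffs at a single common corner pair, so no single derived construction moves the corners directly. The resolution is to use the full flexibility of the Connection Principle to arrange geodesic $\partial$-framed segments in prescribed homotopy classes with prescribed length and phase, thereby manufacturing swap partners whose mixed curves pass through alternative corners that a subsequent splitting pants can then anchor at the shifted position. Carrying enough corner, direction, and framing data through the chain to guarantee that consecutive $E_k$ share their respective boundary components as genuinely coinciding oriented curves in $\ocurves_{R,\epsilon}$, rather than merely in the same free-homotopy class, is the main technical content of the argument.
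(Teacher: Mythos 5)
Your plan assembles the right tools (splitting, swapping, the Connection Principle, the Length and Phase Formula) and the right overall gluing strategy, but the step you yourself flag as the hard part --- passing from the corner pair of $\gamma_0$ to that of $\gamma_1$ --- is exactly the step that is never resolved, and the mechanism you propose for it does not work. Both derived constructions you invoke keep the corners fixed: splitting a bigon $[\mathfrak{s}\mathfrak{s}']$ produces a pair of pants all three of whose cuffs still run through the two endpoints of $\mathfrak{s}$, and all four boundary curves produced by swapping a pair of bigons again run through the shared corner pair; so neither "interposing splitting pants" nor "swap partners whose mixed curves pass through alternative corners" effects a corner transition. Moreover, the Connection Principle does not give a segment "in a prescribed homotopy class", nor can a single connecting segment be forced through specified intermediate points: it only prescribes the endpoints, the endpoint directions and framings, and the length and phase up to $\delta$. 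As written, the chain $\zeta^{(0)},\dots,\zeta^{(N)}$ is never actually constructed, so there is no bridge between two curves sitting at arbitrary places in $M$.

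The missing idea is that no chain and no corner shifting are needed: a single hybrid intermediate curve suffices. In the paper's proof, after a preliminary splitting step that replaces $\gamma_0,\gamma_1\in\ocurves_{R,\epsilon}$ by curves in $\ocurves_{R,\epsilon/10000}$ (this is what splitting is used for here --- sharpening the constants, not moving corners), one writes $\gamma_0=[\mathfrak{a}_-\mathfrak{a}_+]$ with $\ell(\mathfrak{a}_-)=R/4$ and $\gamma_1=[\mathfrak{b}_-\mathfrak{b}_+]$ with $\ell(\mathfrak{b}_+)=R/4$, and applies the Connection Principle twice to draw segments $\mathfrak{s},\mathfrak{t}$ of length about $R/4$ and phase about $0$ so that $\mathfrak{a}_-,\mathfrak{s},\mathfrak{b}_+,\mathfrak{t}$ is a tame cycle; the Length and Phase Formula then shows $\gamma'=[\mathfrak{a}_-\mathfrak{s}\mathfrak{b}_+\mathfrak{t}]\in\ocurves_{R,\epsilon/100}$. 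Since $\gamma'$ shares the arc $\mathfrak{a}_-$ with $\gamma_0$ and the arc $\mathfrak{b}_+$ with $\gamma_1$, the pairs $[\mathfrak{a}_-\mathfrak{a}_+]$, $[\mathfrak{a}_-(\mathfrak{s}\mathfrak{b}_+\mathfrak{t})]$ and $[\mathfrak{b}_-\mathfrak{b}_+]$, $[(\mathfrak{t}\mathfrak{a}_-\mathfrak{s})\mathfrak{b}_+]$ are swap pairs, and two applications of Construction \ref{swapping}, glued along $\gamma'$ and $\bar{\gamma}'$, give the connected panted surface. (Your closing worry about boundary components "genuinely coinciding" rather than being freely homotopic is not an issue: elements of $\ocurves_{R,\epsilon}$ are free homotopy classes, and panted subsurfaces are glued along homotopic cuffs with opposite orientations.)
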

	
	The proof of Proposition \ref{pantedConnectedness} follows from an easy construction
	using the Connection Principle (Lemma \ref{connectionPrinciple}).
	However, we leave it as a separate section as its statement has
	certain independent geometric interest.	
	
	\begin{proof}
		We say that two curves $\gamma_0,\gamma_1\in\ocurves_{R,\epsilon}$ are 
		\emph{$(R,\epsilon)$-panted connected} if there exists a connected
		$(R,\epsilon)$-panted subsurface $j:F\looparrowright M$ as in the conclusion
		of the proposition. 
		This defines 
		an equivalence relation between curves in $\ocurves_{R,\epsilon}$.
		Suppose that $\epsilon$ universally small and $R$ is sufficiently large,
		for instance, as guaranteed by Lemma \ref{setupData}.
				
		By splitting (Construction \ref{splitting}), every curve in $\ocurves_{R,\epsilon}$
		is $(R,\epsilon)$-panted connected to a curve in $\ocurves_{R,\frac{\epsilon}{10000}}$.
		It suffices to show that any two curves in $\ocurves_{R,\frac{\epsilon}{10000}}$
		are $(R,\epsilon)$-panted connected.
		
		Let $\gamma_0,\gamma_1$ be any two curves in $\ocurves_{R,\frac{\epsilon}{10000}}$.
		By interpolating a pair of points of distance $\frac{R}4$ 
		on $\gamma$ with suitably assigned normal vectors,
		we may decompose $\gamma_0$ into a $(1,\frac{\epsilon}{10000})$-tame bigon
		$[\mathfrak{a}_-\mathfrak{a}_+]$ with $\mathfrak{a}_-$ of length $\frac{R}4$.
		Similarly, we decompose 
		$\gamma_1$ into a $(1,\frac{\epsilon}{10000})$-tame bigon 
		$[\mathfrak{b}_-\mathfrak{b}_+]$ with $\mathfrak{b}_+$ of length $\frac{R}4$.
		By the Connection Principle (Lemma \ref{connectionPrinciple}),
		there exist oriented $\partial$-framed segments
		$\mathfrak{s}$ and $\mathfrak{t}$ of length $(\frac{\epsilon}{10000})$-close
		to $\frac{R}4$ and phase $(\frac{\epsilon}{10000})$-close
		to $0$, so that $\mathfrak{a}_-,\mathfrak{s},\mathfrak{b}_+,\mathfrak{t}$
		form a $(100,\frac{\epsilon}{100})$-tame cycle.
		Let $\gamma'$ be the reduced cyclic concatenation
		$[\mathfrak{a}_-\mathfrak{s}\mathfrak{b}_+\mathfrak{t}]$,
		then $\gamma'\in\ocurves_{R,\frac{\epsilon}{100}}$ by the Length and
		Phase Formula (Lemma \ref{lengthAndPhaseFormula}).
		Furthermore, 
		$[\mathfrak{a}_-\mathfrak{a}_+]$ and $[\mathfrak{a}_-(\mathfrak{s}\mathfrak{b}_+\mathfrak{t})]$
		form an $(\frac{\epsilon}{100})$-swap pair, and 
		$[\mathfrak{b}_-\mathfrak{b}_+]$ and $[(\mathfrak{t}\mathfrak{a}_-\mathfrak{s})\mathfrak{b}_+]$
		form an $(\frac{\epsilon}{100})$-swap pair (Definition \ref{swapPair}).
		By swapping (Construction \ref{swapping}), we see that
		$\gamma_0$ is $(R,\epsilon)$-panted connected with $\gamma'$,
		and that $\gamma'$ is $(R,\epsilon)$-panted connected with
		$\gamma_1$. Thus
		$\gamma_0$ and $\gamma_1$ are $(R,\epsilon)$-panted connected.
		This completes the proof.
	\end{proof}
	
	An application of Proposition \ref{pantedConnectedness} is that we can replace any
	$(R,\epsilon)$-panted surface with a connected one with the same boundary without changing
	its relative homology class.
	
	\begin{lemma}\label{connectedPantedSurface}
		If $M$ is an oriented closed hyperbolic $3$-manifold and $(R,\epsilon)$ are constants
		so that $\ocurves_{R,\epsilon}$ is $(R,\epsilon)$-panted connected
		in the sense of Proposition \ref{pantedConnectedness}, then for any oriented compact
		$(R,\epsilon)$-panted subsurface $j:F\looparrowright M$ bounded by
		an $(R,\epsilon)$-multicurve $L$, there exists an oriented compact
		connected $(R,\epsilon)$-panted subsurface $j':F'\looparrowright M$ bounded by $L$
		such that $j_*[F]$ equals $j'_*[F']$ in $H_2(M,L;\ZZ)$. 
	\end{lemma}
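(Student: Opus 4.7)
My plan is to reduce the number of connected components of $F$ one at a time while preserving the boundary and the relative homology class, so that iteration produces a connected representative. Suppose $F$ has components $F_1,\ldots,F_k$ with $k\geq 2$; I will build an $(R,\epsilon)$-panted subsurface $F'\looparrowright M$ with $\partial F'=L$ and $j'_*[F']=j_*[F]$ in $H_2(M,L;\ZZ)$ but with only $k-1$ components.

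As a preliminary reduction, I would arrange that every component of $F$ contains an interior cuff of the pants decomposition. If some component $F_i$ is a single pair of pants $\Pi$, pick any cuff $\gamma$ of $\Pi$, apply splitting (Construction \ref{splitting}) to obtain an $(R,\epsilon)$-pair-of-pants $\Pi'$ with $\gamma$ as one cuff and two others $c',c''$, and form the compact $(R,\epsilon)$-panted surface $A=\Pi'\cup_{c'\sqcup c''}\bar{\Pi}'$, for which $\partial A=\gamma\sqcup\bar\gamma$. Attaching $A$ to $\Pi$ along $\gamma$ introduces $\gamma$ as an interior cuff, leaves $\partial F_i$ unchanged as a union of curves in $M$, and preserves the homology class because $A$ equals $\Pi'+\bar\Pi'=0$ as a $2$-chain in $M$.

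For the merging step, pick interior cuffs $\gamma_1\subset F_1$ and $\gamma_2\subset F_2$, both in $\ocurves_{R,\epsilon}$. By Proposition \ref{pantedConnectedness} applied to $\gamma_1$ and $\bar\gamma_2$, there is a connected $(R,\epsilon)$-panted subsurface $E\looparrowright M$ with $\partial E=\gamma_1\sqcup\bar\gamma_2\sqcup L_E$ for some remaining boundary $L_E$. Define
\[
G\;=\;E\cup_{L_E}\bar E
\]
by gluing $L_E\subset\partial E$ to $\bar L_E\subset\partial\bar E$ via orientation reversal. Then $\partial G=\gamma_1\sqcup\bar\gamma_1\sqcup\gamma_2\sqcup\bar\gamma_2$, and $G$ is null-homologous rel boundary because its fundamental class equals $[E]+[\bar E]=0$ as a $2$-chain in $M$. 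Cutting $F$ along $\gamma_1$ and $\gamma_2$ produces $F_{\mathrm{cut}}$ with $\partial F_{\mathrm{cut}}=L\sqcup\gamma_1\sqcup\bar\gamma_1\sqcup\gamma_2\sqcup\bar\gamma_2$, representing the same chain as $F$, and regluing $F_{\mathrm{cut}}$ to $G$ along the four newly exposed cuffs by orientation-reversing identifications yields the desired $F'$, which is $(R,\epsilon)$-panted with $\partial F'=L$ and $j'_*[F']=j_*[F_{\mathrm{cut}}]+[G]=j_*[F]$ in $H_2(M,L;\ZZ)$.

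The main obstacle is verifying that $F'$ actually has strictly fewer components than $F$. Tracing the gluings shows that $E$ joins the side of $F_{1,\mathrm{cut}}$ abutting $\bar\gamma_1$ to the side of $F_{2,\mathrm{cut}}$ abutting $\gamma_2$, while $\bar E$ joins the complementary sides; these two partial merges amalgamate into a single connected component of $F'$ precisely when $L_E$ is non-empty, which forces $G$ itself to be connected. To secure $L_E\neq\emptyset$, I would inspect the construction of $E$ in the proof of Proposition \ref{pantedConnectedness}: $E$ is assembled from two applications of swapping (Construction \ref{swapping}) glued along an intermediate auxiliary curve, preceded by splittings linking $\gamma_1,\bar\gamma_2$ to curves in $\ocurves_{R,\epsilon/10000}$, and each of these steps deposits further boundary components into $L_E$. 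Consequently the merging step reduces the component count by one, and iterating it $k-1$ times produces the required connected $(R,\epsilon)$-panted subsurface $F'$.
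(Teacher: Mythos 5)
Your proposal is correct, and at its core it is the same strategy as the paper's proof: cut along an interior cuff in each of two components, bridge them with the connecting surface from Proposition \ref{pantedConnectedness} doubled against its orientation reversal (so the bridge is null-homologous rel boundary as a $2$-chain), and reglue, checking boundary and homology at the chain level. The one substantive divergence is how connectivity of the result is secured. The paper first arranges that the chosen cuffs $c_i$ are \emph{nonseparating} (closed panted components automatically have such cuffs, and components with boundary are modified by attaching a doubled pair of pants $Q$ with $\partial Q=c'_+\sqcup c'_-$), so the cut pieces $E_i$ stay connected and the bare statement of Proposition \ref{pantedConnectedness} suffices, even when the doubled bridge $W_+\cup W_-$ is disconnected. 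You instead allow separating cuffs and rely on $L_E\neq\emptyset$ to make $G=E\cup_{L_E}\bar E$ connected, which indeed reattaches every cut piece; but note that this forces you to appeal to the \emph{proof} of Proposition \ref{pantedConnectedness} (the swapping and splitting steps do produce extra boundary components), not just its statement, which is the literal hypothesis of the lemma — acceptable under the paper's standing constants, but a dependence the paper avoids. Also, your "precisely when $L_E$ is non-empty" is an overstatement: if at least one of $\gamma_1,\gamma_2$ is nonseparating the merge succeeds even with $L_E=\emptyset$; only the (correct) implication you actually use matters. Your single-pants preliminary step and the paper's nonseparating-cuff step play analogous roles and both quietly use splitting beyond the stated hypothesis, so neither is at a disadvantage there.
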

	
	\begin{proof}
		By induction, it suffices to prove the case when $F$ has only two components
		$F_1$ and $F_2$. 
		 
		Without loss of generality, we may assume that each $F_i$ has a nonseparating
		glued cuff $c_i$ of its pants structure. In fact, this is automatically true if 
		$F_i$ is closed. If some $F_i$ is has a nonempty boundary component $c$, there is
		an $(R,\epsilon)$-pants $P$ with a boundary component $c'$ homotopic to $c$.
		Take two oppositely oriented copies $P_\pm$ of $P$ and glue up along the two cuffs other
		that $c'_\pm$. Denote the resulting $(R,\epsilon)$-panted surface as $Q$,
		so $\partial Q$ is $c'_+\sqcup c'_-$. 
		If $P_+$ is has the same orientation as that of $P$,
		we glue up $Q$ and $F_i$ identifying $c'_-$ with $\overline{c}$. The resulting
		$(R,\epsilon)$-panted surface $F'_i$ has the same boundary as that of $F_i$ up to homotopy,
		and $F'_i$ is homologous to $F_i$ in $M$ relative to their boundary.
		After replacing $F_i$ with $F'_i$, each component of $F$ has
		a nonseparating glued cuff $c$ as claimed.
		
		Now suppose that $c_i\subset F_i$ is a nonseparating glued cuff for each $F_i$.
		Let $E_i$ be the $(R,\epsilon)$-panted surface obtained by cutting $F_i$ along $c_i$,
		so $\partial E_i$ has two components $c_{i+}$ and $c_{i-}$ homotopic to $c$ and
		its orientation reversal, respectively.
		By Proposition \ref{pantedConnectedness}, there exists an $(R,\epsilon)$-panted
		surface $W$ with boundary, so that there are two components 
		$d_1$ and $d_2$ of $\partial W$ homotopic to $c_1$ and $c_2$, respectively.
		Take a copy $W_+$ of $W$ and a copy $W_-$ of the orientation reversal of $W$.
		Denote the components of $\partial W_\pm$ corresponding to $d_i$ as $d_{i\pm}$.  
		We glue up $W_\pm$ along the opposite pairs of boundary components 
		other than $d_{i\pm}$, and glue them with $E_i$ by identifying $d_{i\pm}$
		with $c_{i\mp}$, respectively. The resulting $(R,\epsilon)$-panted surface $F'$
		has the same boundary as that of $F$, and $F'$ is homologous to
		$F$ in $M$ relative to their boundary. Since $F'$ is connected by the
		construction, we see that $F'$ is a connected $(R,\epsilon)$-panted 
		surface as desired.
	\end{proof}
		
\section{Bounded quasi-Fuchsian subsurfaces}\label{Sec-boundedQuasiFuchsianSubsurfaces}
	In this section, we prove Theorem \ref{homologyViaPants} by
	applying Theorem \ref{theoremPantedCobordism} and Propositions \ref{secondHomologyClass},
	\ref{pantedConnectedness} (Subsection \ref{Subsec-proofOfHomologyViaPants});
	we prove Theorem \ref{main-qfSurface} by applying Theorems \ref{gluing}
	and \ref{homologyViaPants}, following the methodology of 
	Section \ref{Sec-methodology}; we prove Theorem \ref{main-pantedSurface} 
	by applying Theorems \ref{theoremPantedCobordism}, \ref{secondHomologyClass} 
	and Proposition \ref{pantedConnectedness}. 
	Subsection \ref{Subsec-descriptionOfTheProblem}
	contains some remarks about the formulation of Theorem \ref{main-qfSurface}

	\subsection{Description of the problem}\label{Subsec-descriptionOfTheProblem}
		Generally speaking, the construction problem
		of geometrically finite surface subgroups in a Kleinian group
		is concerned about the existence of such surface subgroups subject
		to various conditions. For instance, one may ask about the existence
		specifying the boundary of the subsurface, or requiring it to represent
		some homology class. We will allow ourselves to pass to finite covers of the designated
		boundary loops or positive multiples of the homology class,
		and we will only look for immersed subsurfaces rather than embedded ones.
		In many motivating applications,
		these are the usual assumptions under similar circumstances.
		On the other hand, we will only consider closed hyperbolic $3$-manifolds.
		This is due to the restriction of our current techniques, and
		it would certainly be interesting to study
		the construction problem for cusped hyperbolic $3$-manifolds.
		
		Let $M$ be a closed hyperbolic $3$-manifold. Suppose $\gamma_1,\cdots,\gamma_m$
		are $\pi_1$-injectively immersed loops $\gamma_i:S^1\looparrowright M$,
		and let $L:\sqcup^mS^1\looparrowright M$ be their disjoint union:
			$$L=\gamma_1\sqcup\cdots\sqcup\gamma_m.$$
		The relative homology of the mapping cone $H_*(M\cup_L\vee^mD^2,\vee^mD^2;\ZZ)$ is well defined, depending only
		on the free homotopy classes of the loops. Without loss of generality,
		we may hence assume that they are embedded.
		Identifying $L$ with its image, the relative homology of the cone becomes
		$H_*(M,L;\ZZ)$ by excision. If
			$$j:(F,\partial F)\looparrowright (M,L)$$
		is an immersion of
		an oriented compact surface $F$, then $F$ naturally represents a
		relative homology class
			$$j_*[F,\partial F]\,\in\, H_2(M,L;\,\QQ),$$
		where the rational coefficient is taken since we are not interested in
		the torsion. Moreover, it is clear that the restriction of $j$
		on $\partial F$ is a covering if $F$ has no disk or sphere component
		and if $j$ is $\pi_1$-injective on each component of $F$.
		
		With the notations above, the construction problem that we are concerned about in this paper
		is the following:
		
		\begin{question}
			For any element $\alpha\in H_2(M,L;\,\QQ)$, is there a positive multiple
			of $\alpha$ represented by a connected oriented surface
			$F$ and a $\pi_1$-injective quasi-Fuchsian immersion
			$j$?
		\end{question}
		
		The answer is affirmative as stated in Theorem \ref{main-qfSurface}. 
		
	\subsection{Proof of Theorem \ref{homologyViaPants}}\label{Subsec-proofOfHomologyViaPants}
		We summarize the proof of Theorem \ref{homologyViaPants}.
		Note that the `furthermore' part follows immediately
		from the main statements because the boundary homomorphism
		$\partial$ is integral coefficiented over the natural basis
		of $\meas(\opants_{R,\epsilon})$ and $\meas(\ocurves_{R,\epsilon})$.
				
		To prove the first statement, it suffices to find an $(R,\epsilon)$-panted surface
		representing any class $\alpha\in H_2(M,|\mathcal{L}|;\QQ)$ up to a scalar multiple.
		We may assume without loss of generality that $\alpha$ is integral.
		Under the boundary homomorphism
			$$\partial: H_2(M,|\mathcal{L}|;\ZZ)\to H_1(|\mathcal{L}|;\ZZ),$$
		$\partial{\alpha}$ can be represented by a multicurve $L$ all of whose components
		are carried by components of $|\mathcal{L}|$. 
		Under the composition of the 
		canonical isomorphism $\ocobordism_{R,\epsilon}\cong H_1(\SO(M);\ZZ)$ (Theorem \ref{theoremPantedCobordism}) 
		and	the projection $H_1(\SO(M);\ZZ)\to H_1(M;\ZZ)$,
		$[L]_{R,\epsilon}$ is sent to $0$ since $L$ is a boundary.
		This means $2[L]_{R,\epsilon}$ is $0$ in $\ocobordism_{R,\epsilon}$,
		so there is an $(R,\epsilon)$-panted surface $F_L$ with boundary $2L$.
		Let
			$$\beta\,=\,2\alpha-[F],$$
		in $H_2(M,|\mathcal{L}|;\ZZ)$. Since $\partial\beta=0$ in $H_1(|\mathcal{L}|;\ZZ)$,
		$\beta$ can be regarded as an element of $H_2(M;\ZZ)$. By Theorem \ref{secondHomologyClass},
		$\beta$ is represented by an $(R,\epsilon)$-panted surface $F_\beta$.
		Therefore $2\alpha$ is represented by the $(R,\epsilon)$-panted surface $F_L\sqcup F_\beta$.
		
		To prove the second statement, 
		recall that by \cite[Theorem 3.4, cf.~Section 4]{KM-surfaceSubgroup},
		there is a measure of $(R,\epsilon)$-nearly regular pants
		$\mu\in\meas(\opants_{R,\epsilon})$ which is $(R,\frac{\epsilon}2)$-nearly 
		evenly footed (Definition \ref{gluingConditions}),
		such that $\partial\mu$ is positive on any curve $\gamma\in\ocurves_{R,\epsilon}$.
		Note that the existence of $\mu$ 
		relies on the assumption that $\epsilon$ is bounded by the injectivity radius of $M$,
		\cite[Subsection 4.4]{KM-surfaceSubgroup}.		
		Let $\mu_1$ be the sum $\mu+\bar{\mu}$, where $\bar{\mu}(\{\Pi\})=\mu(\{\bar\Pi\})$,
		so $\mu_1\in\bmeas(\opants_{R,\epsilon})$.
		Let $\mu_2\in\bmeas(\opants_{R,\epsilon})$ be the sum of all $\Pi$ for all $\Pi\in\opants_{R,\epsilon}$.
		For some sufficiently large positive integer $N$, let
			$$\mu_0\,=\,N\mu_1+\mu_2.$$
		Then $\mu_0\in\bmeas_{R,\epsilon}$ is ubiquitous,
		$(R,\epsilon)$-nearly evenly footed. It is certainly rich as $\partial^\flat\mu_0$ vanishes
		in this case. By Proposition \ref{pantedConnectedness}, $\mu_0$ is irreducible. In fact,
		suppose otherwise that $\mu_0$ were the sum $\mu'+\mu''$, such that $\partial\mu'$ and $\partial\mu''$
		have disjoint supports on $\ocurves_{R,\epsilon}$. 
		Then for $\gamma',\gamma''\in\ocurves_{R,\epsilon}$ lying in the
		supports of $\partial\mu'$ and $\partial\mu''$ respectively,
		$\gamma'$ and $\gamma''$ cannot appear simultaneously on an $(R,\epsilon)$-panted
		surface $F$ whose pants are all from the support of $\mu_0$. However,
		$\mu_0$ is ubiquitous so the support of $\mu_0$ is $\opants_{R,\epsilon}$.
		Thus we reach a contradiction 
		since $\gamma'$ and $\gamma''$ should be $(R,\epsilon)$-panted connected
		in the sense of the conclusion of Proposition \ref{pantedConnectedness}.
		Therefore, $\mu_0\in\bmeas(\opants_{R,\epsilon})$ is a measure as claimed in the 
		second statement of Theorem \ref{homologyViaPants}.
		
		It remains to prove that if $\xi\in\zmeas(\opants_{R,\epsilon},|\mathcal{L}|)$,
		then for some sufficiently large integer $m$,
			$$\xi'=\xi+m\mu_0$$
		in $\zmeas(\opants_{R,\epsilon},|\mathcal{L}|)$ is ubiquitous, irreducible,
		$(R,\epsilon)$-nearly evenly footed, and rich. 
		Technically, one may assume 
		here that $\mu_0$ is $(R,\frac{\epsilon}2)$,
		and such a $\mu_0$ can be achieved by
		using $\frac\epsilon2$ instead of $\epsilon$ 
		in the construciton above.
		It is clear that $\xi'$	is ubiquitous and irreducible as so is $\mu_0$. 
		On the other hand, $\mu_0$ being ubiquitous also implies that $\partial^\sharp\mu_0$ 
		is positive on $\vistorus_\gamma$ for any curve $\gamma$, so when $m$
		is sufficiently large, the normalization of the measure $\partial^{\sharp}\xi'$
		can be $(\frac{\epsilon}{2R})$-equivalent to 
		the normalization of $\partial^\sharp\mu_0$ restricted to $\vistorus_\gamma$,
		for all $\gamma\in\ocurves_{R,\epsilon}$. Hence 
		$\xi'$ is $(R,\epsilon)$-nearly evenly footed. It is also 
		clear that $\xi'$ is rich if $m$ is so large that
		$\partial\xi'(\{\gamma\})$ is less than, for instance, $\frac{m}3\partial\mu_0(\{\gamma\})$
		for all $\gamma\in\ocurves_{R,\epsilon}$.
		This completes the proof of the second statement,
		and hence the proof of Theorem \ref{homologyViaPants}.
				
	\subsection{Proof of Theorem \ref{main-qfSurface}}
		We derive Theorem \ref{main-qfSurface} from Theorems \ref{homologyViaPants}
		and \ref{gluing} as follows.
		
		Let $M$ be a closed hyperbolic $3$-manifold, and $L\subset M$
		be the union of finitely many mutually disjoint, $\pi_1$-injectively embedded
		loops. 
		
		\begin{lemma}\label{reductionToOrientable}
			If Theorem \ref{main-qfSurface} is true
			when $M$ is orientable, 
			it holds in the general case as well.
		\end{lemma}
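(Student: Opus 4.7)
The plan is to pass to the orientation double cover and transfer the problem upstairs, where by hypothesis the orientable case of Theorem \ref{main-qfSurface} is available, and then push the resulting surface back down to $M$. Concretely, let $p\colon \tilde{M}\to M$ denote the orientation double cover, which is a closed orientable hyperbolic $3$-manifold since $M$ is closed hyperbolic. Let $\tilde{L}=p^{-1}(L)\subset\tilde{M}$; every component of $L$ is a $\pi_1$-injectively embedded loop, so each component lifts either to two disjoint embedded loops in $\tilde{M}$ (if it preserves local orientation) or to a single embedded loop double covering it (if it reverses), and in either case $\tilde{L}$ remains a disjoint union of $\pi_1$-injectively embedded loops.

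For the homology class, I will invoke the transfer homomorphism associated to the double covering,
$$\tau\colon H_2(M,L;\,\QQ)\,\longrightarrow\, H_2(\tilde{M},\tilde{L};\,\QQ),$$
which satisfies $p_*\circ\tau=2\cdot\id$. Setting $\tilde{\alpha}=\tau(\alpha)$ and applying the hypothesized orientable case of Theorem \ref{main-qfSurface} to the oriented closed hyperbolic $3$-manifold $\tilde{M}$, the multicurve $\tilde{L}$, and the class $\tilde{\alpha}$ produces an oriented connected compact surface $\tilde{F}$ together with a $\pi_1$-injective quasi-Fuchsian immersion of pairs $\tilde{j}\colon(\tilde{F},\partial\tilde{F})\looparrowright(\tilde{M},\tilde{L})$ such that $\tilde{j}_*[\tilde{F},\partial\tilde{F}]=m\,\tilde{\alpha}$ for some positive integer $m$.

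I will then take $F=\tilde{F}$ with immersion $j=p\circ\tilde{j}\colon(F,\partial F)\looparrowright(M,L)$. The $\pi_1$-injectivity of $j$ follows from the $\pi_1$-injectivity of $\tilde{j}$ together with the injectivity of $p_\sharp\colon\pi_1(\tilde{M})\to\pi_1(M)$, and quasi-Fuchsian-ness is preserved since the image subgroup $j_\sharp\pi_1(F)=p_\sharp\tilde{j}_\sharp\pi_1(\tilde{F})$ sits inside $p_\sharp\pi_1(\tilde{M})$ as a quasi-Fuchsian subgroup; an explicit closed quasi-Fuchsian subsurface of $M$ essentially containing $j(F)$ is obtained by post-composing the closed quasi-Fuchsian surface witnessing quasi-Fuchsian-ness of $\tilde{j}(\tilde{F})$ with $p$. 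Connectedness of $F$ is immediate. The homology computation gives
$$j_*[F,\partial F]\,=\,p_*\tilde{j}_*[\tilde{F},\partial\tilde{F}]\,=\,m\cdot p_*\tilde{\alpha}\,=\,2m\,\alpha,$$
which is the required positive integral multiple of $\alpha$.

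There is essentially no obstacle here beyond routine bookkeeping; the only points demanding a moment of care are the well-definedness of the transfer at the relative level (which is standard, using the fact that $p^{-1}(L)=\tilde{L}$ so the covering restricts to a covering of pairs) and the observation that quasi-Fuchsian-ness, being a property of the conjugacy class of the subgroup $j_\sharp\pi_1(F)$ in $\Isom(\Hyp)$, is inherited through the finite covering $p$. Once these are noted, the reduction is complete.
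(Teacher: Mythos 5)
Your proposal is correct and follows essentially the same route as the paper: pass to the orientation double cover, lift the class, apply the orientable case there, and push the resulting quasi-Fuchsian surface back down via the covering map. The only cosmetic difference is that you lift $\alpha$ explicitly via the transfer (getting $2m\alpha$), whereas the paper invokes surjectivity of rational $H_2$ under the double cover (itself a transfer consequence); both are fine since only a positive integral multiple is required.
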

		
		\begin{proof}
			Assume that Theorem \ref{main-qfSurface} is true
			for the orientable case.
			If $M$ is not orientable, 
			we consider an orientable $2$-fold cover $\kappa:\tilde{M}\to M$,
			and let $\tilde{L}$ be the preimage of $L$.
			By excision, $H_2(M,L;\QQ)$ is isomorphic to $H_2(N;\QQ)$ where
			$N$ is the compact $3$-manifold with tori boundary obtained 
			from $M$ with $L$ removed, and similarly, 
			$H_2(\tilde{M},\tilde{L};\QQ)$ is isomorphic to $H_2(\tilde{N};\QQ)$
			where $\tilde{N}$ is the $2$-fold cover of $N$ obtained from $\tilde{M}$
			with $\tilde{L}$ removed.
			Because $H_2(\tilde{N};\QQ)$ surjects $H_2(N;\QQ)$ under the covering,
			the same holds for $H_2(\tilde{M},\tilde{L};\QQ)$ and $H_2(M,L;\QQ)$.
			Therefore, for any element $\alpha\in H_2(M,L;\QQ)$, 
			we may take an element $\tilde{\alpha}\in H_2(\tilde{M},\tilde{L};\QQ)$
			which is projected to be $\alpha$. The orientable
			case implies that a positive integral multiple of $\tilde\alpha$
			can be represented by an orientable compact $\pi_1$-injectively immersed
			quasi-Fuchsian surface $\tilde{j}:(F,\partial F)\looparrowright (\tilde{M},\tilde{L})$,
			so $\kappa\circ\tilde{j}:(F,\partial F)\looparrowright (M,L)$ 
			represents a positive integral multiple
			of $\alpha$ as desired.
		\end{proof}
			
		By Lemma \ref{reductionToOrientable}, we may assume without loss of generality
		that $M$ is an oriented closed hyperbolic $3$-manifold.
		
		\begin{lemma}\label{goodCurveCovers}
			With the notations of Theorem \ref{main-qfSurface},
			for any constant $\epsilon>0$, and for any constant $\hat{R}>0$,
			there exist some $R>\hat{R}$, 
			such that every component of $L$ has
			a finite cyclic cover which is homotopic to
			a curve of $\ocurves_{R,\epsilon}$.
		\end{lemma}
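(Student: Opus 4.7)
The plan is to reduce Lemma \ref{goodCurveCovers} to a simultaneous Diophantine approximation problem on a compact torus and invoke Kronecker's theorem. Let $\gamma_1, \ldots, \gamma_m$ denote the components of $L$ with complex lengths $\lambda_j = \ell_j + \imunit\theta_j$ in the sense of Definition \ref{lengthAndPhase}. The straightened $n_j$-fold cyclic cover of $\gamma_j$ has complex length $n_j \lambda_j$ modulo $2\pi\imunit$, so the assertion reduces to the following: given $\epsilon, \hat{R} > 0$, find $R > \hat{R}$ and positive integers $n_1, \ldots, n_m$ satisfying $|n_j \ell_j - R| < \epsilon$ and $|n_j \theta_j|_{\mathrm{S}^1} < \epsilon$ for each $j$, where $|\cdot|_{\mathrm{S}^1}$ denotes distance in $\RR/2\pi\ZZ$ to the origin.

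First I would recast this as a closest-vector question for a lattice. Let $(\mathbf{x}_j, \mathbf{y}_j)_{j=1}^m$ be the standard basis of $\RR^{2m}$, let $\Lambda \subset \RR^{2m}$ be the rank-$2m$ lattice generated by $e_j = \ell_j \mathbf{x}_j + \theta_j \mathbf{y}_j$ together with $f_j = 2\pi \mathbf{y}_j$, and set $v = \sum_{j=1}^m \mathbf{x}_j$. A lattice point $\lambda = \sum_j (n_j e_j + a_j f_j)$ with $n_j, a_j \in \ZZ$ has coordinates $(n_j \ell_j,\, n_j \theta_j + 2\pi a_j)_j$, so the approximation condition is equivalent to $\|Rv - \lambda\|_\infty < \epsilon$ for some $R > \hat{R}$ and $\lambda \in \Lambda$. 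Once $R > \epsilon$, positivity of every $n_j$ follows automatically from $n_j \ell_j > R - \epsilon > 0$.

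On the compact quotient torus $T = \RR^{2m}/\Lambda$, the assignment $R \mapsto Rv \bmod \Lambda$ is a one-parameter subgroup that passes through the identity at $R = 0$. By Kronecker's theorem its closure is a sub-torus $T_v \subseteq T$ containing the identity, and Weyl equidistribution on $T_v$ implies that the set of $R \in \RR$ for which $Rv \bmod \Lambda$ lies in the $\epsilon$-neighborhood of the identity has positive lower density in $\RR$. Hence arbitrarily large such $R$ exist; selecting one that exceeds $\max(\hat{R}, \epsilon)$ and decoding the corresponding nearest lattice point recovers the required positive integers $n_j$.

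The main obstacle is verifying the recurrence step cleanly in the presence of possible $\QQ$-linear relations among $\ell_1, \theta_1, \ldots, \ell_m, \theta_m$ and $2\pi$; in such non-generic situations the orbit of $v$ fails to fill $T$ but lies in a proper sub-torus $T_v \subsetneq T$. The key observation is that $T_v$ always contains the identity, since the orbit passes through it at $R = 0$, so equidistribution within $T_v$ suffices for the needed return statistics. The uniform lattice formulation has the virtue of handling rational and irrational values of $\theta_j/(2\pi)$ in one stroke, avoiding the usual case split between periodic and equidistributed orbits.
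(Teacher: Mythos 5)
Your proposal is correct and follows essentially the same route as the paper: both reduce the statement to recurrence, near the identity, of the diagonal linear flow $R\mapsto Rv$ on the compact torus $\RR^{2m}/\Lambda\cong\CC^{m}/\Lambda$ whose lattice is generated by the complex lengths $\ell_j+\imunit\theta_j$ and $2\pi\imunit$ in each coordinate. The only difference is that you spell out the recurrence step (orbit closure is a subtorus through the identity, plus Kronecker/Weyl) which the paper dismisses as a well-known fact about linear flows on Euclidean tori.
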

		
		\begin{proof}
			Let $c_1,\cdots,c_n$ be the components of $L$. Let
			$\ell_k\in(0,+\infty)$ be the length of $c_k$,
			and $\varphi_k\in\RR\,/\,2\pi\ZZ$ 
			be the phase of $c_k$. Then for any positive integer $m_k$,
			the length and phase of the $m_k$-fold cyclic cover of 
			$c_k$ are $m_k\ell_k$ and $m_k\varphi_i$, respectively.
			We must show that for any positive constant $\epsilon$ and $\hat{R}$
			as above, there exists $R>\hat{R}$ and positive integers
			$m_1,\cdots,m_n$, such that 
			the complex numbers $m_k(\ell_k+\varphi_k\imunit)$
			are all $\epsilon$ close to the real number $R$.
			
			Consider the complex torus $T$ obtained by $\CC^n$
			modulo the lattice spanned over $\ZZ$
			by $2\pi\imunit\,\vec{e}_k$,
			and $(\ell_k+\varphi_k\imunit)\,\vec{e}_k$, for $k=1,\cdots,n$,
			where $\vec{e}_1,\cdots,\vec{e}_n$ denotes 
			the standard basis. The origin of $\CC^n$ is projected to be
			the basepoint $\pt$ of $T$. The diagonal
			ray 
				$$\vec{v}:[0,+\infty)\to\CC^n$$
			defined by $\vec{v}(r)=r(\vec{e}_1+\cdots+\vec{e}_n)$
			is projected to be a ray 
				$$v:[0,+\infty)\to T$$ 
			start from $v(0)=\pt$. Then to find $R$ and $m_1,\cdots,m_n$
			as above, it suffices to show that for any positive constants
			$\epsilon$ and $\hat{R}$, there exists
			$R>\hat{R}$ so that $v(R)$ is $\epsilon$-close to $\pt$.
			However, this is a well known fact, which can be
			derived easily from dynamics
			of geodesic flow on a Euclidean torus, 
			so we omit the details.			
		\end{proof}
		
		Choose a sufficiently small positive $\epsilon$ depending on $M$
		and a sufficiently large $R$
		depending on $M$ and $\epsilon$ so that Lemma \ref{goodCurveCovers}, 
		and Theorems \ref{gluing} and \ref{homologyViaPants} can be applied.
		By Lemma \ref{goodCurveCovers}, there is a finite cover
		$\tilde{L}$ of $L$ with all components homotopic 
		to curves of $\ocurves_{R,\epsilon}$. 
		Let $|\mathcal{L}|\subset|\ocurves_{R,\epsilon}|$ be
		all the unoriented curve 
		classes which are realized by some 
		component of $\tilde{L}$.
		
		Consider the essential case
		when $\tilde{L}$ does not have two components that
		are homotopic to each other up to the orientation reversion.
		In this case, $|\mathcal{L}|$ is in correspondence with components of
		$\tilde{L}$. The relative homology
		$H_2(M,|\mathcal{L}|;\ZZ)$ defined
		in Subsection \ref{Subsec-homologyViaPants}
		is naturally isomorphic to $H_2(M,\tilde{L};\ZZ)$ 
		(cf.~Subsection \ref{Subsec-descriptionOfTheProblem}), and that there
		is a natural homomorphism $H_2(M,\tilde{L};\ZZ)\to H_2(M,L;\ZZ)$
		induced by the covering $\tilde{L}\to L$.
		We may realize any homology class
		$\alpha\in H_2(M,L;\ZZ)$ as follows.
		Passing to some large multiple of $\alpha$,
		we may lift $\alpha$ to be an element 
			$$\tilde{\alpha}\in H_2(M,|\mathcal{L}|;\ZZ).$$
		Then by Theorem \ref{homologyViaPants} (1), 
		there exists a measure $\mu\in\zmeas(\opants_{R,\epsilon},|\mathcal{L}|)$
		which is mapped onto $\tilde\alpha$, and by Theorem \ref{homologyViaPants} (2),
		we may assume that $\mu$ is ubiquitous,
		irreducible, $(R,\epsilon)$-nearly evenly footed, and rich. 
		Then there exists an oriented, connected, compact,
		$\pi_1$-injectively immersed quasi-Fuchsian subsurface: 
				$$j:\,F\looparrowright M,$$
		which is $(R,\epsilon)$-nearly regularly
		panted subordinate to a positive integral multiple of $\mu$,
		by Theorem \ref{gluing}. It is clear from the construction that (up to homotopy) $[F]$
		can be regarded as an element in $H_2(M,L;\ZZ)$ representing a positive multiple of $\alpha$.
		
		From the essential case above one can derive
		the general case when $\tilde{L}$ may have components that
		are homotopic to each other up to the orientation reversion.
		We will only sketch the argument, as the tricks used below 
		should be easy and less important. 
		
		Let $\tilde{L}_0$ be a maximal subunion of
		components of $\tilde{L}$ so that $\tilde{L}_0$ 
		does not have two components that
		are homotopic to each other up to the orientation reversion.
		As in the essential case, $|\mathcal{L}|$ is in correspondence with components of
		$\tilde{L}$, so $H_2(M,|\mathcal{L}|;\ZZ)$ can be realized
		as $H_2(M,\tilde{L}_0;\ZZ)$. There is also a natural homomorphism
		$H_2(M,\tilde{L}_0;\ZZ)\to H_2(M,\tilde{L};\ZZ)$ induced by the inclusion
		$\tilde{L}_0\subset \tilde{L}$. It is easy to verify that
		$H_2(M,\tilde{L};\ZZ)$ is generated by $H_2(M,\tilde{L}_0;\ZZ)$
		together with all $[A]$ where $A\looparrowright M$ 
		is any annulus between two homotopic components of $\tilde{L}_0$.
		If the boundary of any such annulus $A$ is a curve $\gamma\in\ocurves_{R,\epsilon}$,
		we may take a null-homologous $(R,\epsilon)$-panted surface $E$
		which has a glued cuff $c$ homotopic to $\gamma$. Such an $E$ can be
		obtained, for example, by gluing pairs of pants prescribed by 
		an ubiquitous $\mu_0\in\bmeas(\opants_{R,\epsilon})$.
		Cutting $E$ along $c$ and homotoping 
		the two boundary components to the two components
		of $\partial A$ gives rise to an $(R,\epsilon)$-panted surface $E_A$
		such that $[E_A]$ equals $[A]$ as in $H_2(M,\tilde{L};\ZZ)$.
		Now for any homology class $\alpha\in H_2(M,L;\QQ)$, we may pass to 
		a positive integral multiple of $\alpha$ and lift it as $\tilde{\alpha}
		\in H_2(M,\tilde{L};\QQ)$.
		From the above $\tilde{\alpha}$ equals the sum of 
		some $\tilde{\alpha}_0\in H_1(M,\tilde{L}_0;\QQ)$ together with
		some positive rational multiple $[E_A]$. Possibly after passing to
		further positive multiple, the essential case
		implies that $\tilde{\alpha}_0$
		can be represented by an $(R,\epsilon)$-panted surface $F_0$ with
		$\partial F_0$ mapped to $\tilde{L}_0$. 
		We may also represent the difference term
		$\tilde{\alpha}-\tilde{\alpha}_0$ 
		by a union of $(R,\epsilon)$-panted surfaces $E_{A_1},\cdots,E_{A_s}$
		as above. Thus the union 
		$F=F_0\cup E_{A_1}\cup\cdots\cup E_{A_s}$ 
		is an $(R,\epsilon)$-panted surface representing $\tilde{\alpha}$
		in $H_2(M,\tilde{L};\QQ)$. We may assume that $F_0$ and each $E_{A_1},\cdots,E_{A_s}$
		to be obtained by a $(R,\epsilon)$-nearly unit-shearing gluing
		of a collection of $(R,\epsilon)$-pants prescribed by
		a ubiquitous, irreducible, $(R,\epsilon)$-nearly evenly footed,
		and rich measure, then modifying the gluing by a hybriding argument
		(cf.~Lemma \ref{hybriding})
		will yield a connected $\pi_1$-injectively immersed quasi-Fuchsian 
		surface $F'$, which still represents $\tilde{\alpha}$.
		This completes the argument of the general case,
		and hence completes the proof of Theorem \ref{main-qfSurface}.

	\subsection{Proof of Theorem \ref{main-pantedSurface}}
		We derive Theorem \ref{main-pantedSurface} from Theorems \ref{theoremPantedCobordism},
		\ref{secondHomologyClass} and Proposition \ref{pantedConnectedness} as follows.
		We point out that as those results rely only on the constructions
		of $\partial$-framed segments (Section \ref{Sec-basicConstructions}),
		the input from dynamics necessary for the proof
		is only	the mixing property of the frame flow on the closed hyperbolic $3$-manifold $M$,
		but not the fact that the mixing rate is exponential.
		
		The invariant $\sigma$ can be defined for any null-homologous $(R,\epsilon)$-multicurve
		as
			$$\sigma(L)\,=\,\Phi([L]_{R,\epsilon}),$$
		where $\Phi:\ocobordism_{R,\epsilon}\to H_1(\SO(M);\ZZ)$ is the canonical isomorphism 
		by Theorem \ref{theoremPantedCobordism}. Note that since $L$ is null homologous,
		$\sigma(L)$ lies in the canonical submodule of $H_1(\SO(M);\ZZ)$ coming from the center
		$\ZZ_2$ of $\pi_1(\SO(M))$. Hence $\sigma(L)$ has well defined value in $\ZZ_2$.
		
		It follows that $\sigma(L_1\sqcup L_2)$ equals $\sigma(L_1)+\sigma(L_2)$
		because $\Phi$ is a homomorphism. It also follows that $\sigma(L)$ vanishes 
		if and only if $L$ is the boundary of an $(R,\epsilon)$-panted subsurface $F$ of $M$.
		Moreover, Proposition \ref{pantedConnectedness} implies that we may
		assume $F$ to be connected (Lemma \ref{connectedPantedSurface}).
		
		It remains to show the last statement in the conclusion of Theorem \ref{main-pantedSurface}.
		Let $L$ be an $(R,\epsilon)$-multicurve
		with vanishing $\sigma(L)$. Fix an $(R,\epsilon)$-panted surface $F_0$ bounded by
		$L$, and denote the relative homology class of $F_0$ as $\alpha_0\in H_2(M,L;\ZZ)$.
		For any homology class $\alpha\in H_2(M,L;\ZZ)$ with $\partial \alpha$ equal to
		$[L]\in H_1(L;\ZZ)$, there exists some $\beta\in H_2(M;\ZZ)$ such that 
		$\alpha=\alpha_0+\beta$. By Theorem \ref{secondHomologyClass},
		$\beta$ can also be represented by a closed $(R,\epsilon)$-panted
		subsurface $E$. Thus we may take $F$ to be $F_0\sqcup E$ so that $F$
		is an oriented compact $(R,\epsilon)$-panted subsurface of $M$ representing $\alpha$.
		By applying Lemma \ref{connectedPantedSurface} again, we may substitute
		$F$ with another oriented compact connected
		$(R,\epsilon)$-panted subsurface $F'$ representing $\alpha$, as desired.
		This completes the proof of Theorem \ref{main-pantedSurface}.

\section{Conclusions}\label{Sec-conclusions}
	In conclusion, we are able to construct homologically interesting connected
	immersed nearly geodesic nearly regularly panted subsurfaces in a closed hyperbolic $3$-manifold $M$ 
	by knowning a finite presentation of its fundamental group. 
	The existence of plenty of nearly regular pairs
	of pants in $M$ is a consequence of the exponential mixing property
	of the frame flow, and is the essential reason 
	for the connectedness and the $\pi_1$-injective quasi-Fuchsian property. 
	Even if we did not know the mixing rate, 
	the Connection Principle can still be deduced from the mixing property, 
	so homologically interesting connected $(R,\epsilon)$-panted subsurfaces
	can still be constructed.
		
	We propose a few further questions regarding generalization of results from this paper.
	
	\begin{question}\label{changeCoefficient}
		Is it possible to generalize Theorem \ref{main-qfSurface} to other coefficients?
		For example, if $\mathbb{F}$ is any field,
		does every homology class $\alpha\in H_2(M,L;\,\mathbb{F})$
		have represented an $\mathbb{F}$-oriented compact $\pi_1$-injectively immersed 
		quasi-Fuchsian subsurface?
	\end{question}
	
	It seems that our argument can be modified without difficulty to confirm
	Question \ref{changeCoefficient} when $\mathbb{F}$
	is any field of characteristic other than $2$. However, the $\ZZ_2$
	coefficient case is not clear since the subsurface constructed
	might be non-orientable, so	an unoriented
	$(R,\epsilon)$-panted cobordism theory needs to be developed.
	
	\begin{question}\label{changeDimension}
		Is it possible to generalize Theorems \ref{theoremPantedCobordism}
		and \ref{secondHomologyClass} to other dimensions?
		In particular, can we define and determine the $(R,\epsilon)$-panted cobordism group
		$\ocobordism_{R,\epsilon}(M)$ 
		for any oriented closed hyperbolic manifold $M$?
	\end{question}
	
	We expect that Theorem \ref{theoremPantedCobordism}
	should hold for all dimensions at least $3$. 
	In dimension $2$, it seems that $\ocobordism_{R,\epsilon}(S)$ 
	should be a split extension of $H_1(S;\ZZ)$ by $\ZZ_2$.
	This is basically because 
	the special orthonormal frame bundle $\SO(S)$ 
	should be replaced with the special orthonormal frame bundle of
	a stabilization $T(S)\oplus \epsilon^1$ of the tangent bundle $T(S)$,
	in order that Lemma \ref{canonicalLiftLemma} holds.
	
	The following two questions are much more difficult but significant. 
	To answer Question \ref{gfSubmanifolds}, we expect a notion of good basic
	pieces playing the role of nearly regular pairs of pants. 
	To answer Question \ref{cuspCase}, we need a modified version of 
	the Connection Principle since the mixing property of frame flow
	no longer holds.
	
	\begin{question}\label{gfSubmanifolds}
		How to construct (homologically interesting) 
		connected $\pi_1$-injectively immersed geometrically finite submanifolds
		in a closed hyperbolic manifold $M$?
	\end{question}
	
	\begin{question}\label{cuspCase}
		How to construct (homologically interesting) connected
		$\pi_1$-injectively immersed quasi-Fuchsian subsurfaces
		in a cusped hyperbolic $3$-manifold $M$ of finite volume?
	\end{question}

\bibliographystyle{amsalpha}

\end{document}